\newtheorem{theorem}{Theorem}
\newtheorem{example}{Example}
\newtheorem{lemma}[theorem]{Lemma}
\newtheorem{corollary}[theorem]{Corollary}
\newtheorem{defn}{Definition}
\newtheorem{prop}[theorem]{Proposition}
\newcommand{\holderExponentEst}[1]{\hat{\holderExponent}_{n,#1}}
\newcommand{\E}{\mathbb{E}}
\newcommand{\N}{\mathbb{N}}
\newcommand{\R}{\mathbb{R}}
\newcommand{\Lebesgue}{\mathcal{L}_d}
\newcommand{\Z}{\mathbb{Z}}
\newcommand{\marginalDistribution}{\mu}
\newcommand{\distEuclidean}{\mathrm{dist}_{2}}
\DeclareMathOperator*{\argmin}{argmin}
\DeclareMathOperator*{\argmax}{argmax}
\DeclareMathOperator*{\sargmin}{sargmin}
\DeclareMathOperator*{\sargmax}{sargmax}
\newcommand{\one}{\mathbbm{1}}
\newcommand{\probDistribution}{P}
\newcommand{\Prob}{\mathbb{P}}
\newcommand{\holderConstant}{\lambda}
\newcommand{\holderExponent}{\beta}
\newcommand{\measureClass}{\mathcal{P}}
\newcommand{\sample}{\mathcal{D}}
\newcommand{\sampleX}{\sample_X}
\newcommand{\taylorSeries}{\mathcal{T}}
\newcommand{\totalVariationDistance}{\mathrm{TV}}
\newcommand{\hellingerDistance}{\mathrm{H}}
\newcommand{\borel}{\mathcal{B}}
\newcommand{\classOfRestrictedHolderFunctions}[1]{\mathcal{F}_{\mathrm{H\ddot{o}l}}(\holderExponent,\holderConstant,#1)}
\newcommand{\classOfHolderDistributionsSuperLevelSet}{\measureClass_{\mathrm{H\ddot{o}l}}(\holderExponent,\holderConstant,\tau)}
\newcommand{\classOfHolderDistributionsSuperLevelSetPrime}{\measureClass_{\mathrm{H\ddot{o}l}}(\holderExponent',\holderConstant',\tau)}
\newcommand{\classOfHolderDistributions}{\measureClass_{\mathrm{H\ddot{o}l}}(\holderExponent,\holderConstant)}
\newcommand{\regressionFunction}{\eta}
\newcommand{\diamSup}{\mathrm{diam}_{\infty}}
\newcommand{\supNorm}[1]{\|#1\|_{\infty}}
\newcommand{\distSup}{\mathrm{dist}_{\infty}}
\newcommand{\regularityConstant}{\upsilon}
\newcommand{\muRegularSet}[1][\regularityConstant]{\mathcal{R}_{#1}(\mu)}
\newcommand{\muRegularSetArg}[1]{\mathcal{R}_{\regularityConstant}(#1)}
\newcommand{\kl}{\mathrm{kl}}
\newcommand{\esssup}{\mathrm{ess\hspace{.5mm}sup}}
\newcommand{\powerSet}{\mathrm{Pow}}
\newcommand{\vcDim}{\mathrm{dim}_{\mathrm{VC}}}
\newcommand{\lowerDensity}{\omega}
\newcommand{\etaSuperLevelSet}[1]{\mathcal{X}_{#1}(\regressionFunction)}
\newcommand{\omegaSuperLevelSet}[1]{\mathcal{X}_{#1}({\lowerDensity})}
\newcommand{\approximableDensityExponent}{\kappa}
\newcommand{\approximableMarginExponent}{\gamma}
\newcommand{\approximableSetsConstant}{C_{\mathrm{App}}}
\newcommand{\closedMetricBallSupNorm}[2]{\bar{B}_{ #2}(#1)}
\newcommand{\openMetricBallSupNorm}[2]{B_{#2}(#1)}
\newcommand{\openMetricBallEuclideanNorm}[2]{B_{2,#2}(#1)}
\newcommand{\classOfWellApproximableSets}{\mathcal{P}_{\mathrm{App}}(\mathcal{A},\approximableDensityExponent,\approximableMarginExponent,\tau,\approximableSetsConstant)}
\newcommand{\classOfWellApproximableSetsWithHyperCubes}{\mathcal{P}_{\mathrm{App}}(\mathcal{A}_{\mathrm{hpr}},\approximableDensityExponent,\approximableMarginExponent,\tau,\approximableSetsConstant)}
\newcommand{\classOfWellApproximableSetsWithIntervals}{\mathcal{P}_{\mathrm{App}}(\mathcal{A}_{\mathrm{int}},\approximableDensityExponent,\approximableMarginExponent,\tau,\approximableSetsConstant)}
\newcommand{\classOfLBDistributions}{\mathcal{P}^{\dagger}(\mathcal{A},\holderExponent,\approximableDensityExponent,\approximableMarginExponent,\regularityConstant,\holderConstant,\tau,\approximableSetsConstant)}
\newcommand{\classOfWellApproximableSetsHO}{\mathcal{P}^+_{\mathrm{App}}(\mathcal{A},\approximableDensityExponent,\approximableMarginExponent,\regularityConstant,\tau,\approximableSetsConstant)}
\newcommand{\classOfHolderDistributionsHTE}{\measureClass^{\mathrm{HTE}}_{\mathrm{H\ddot{o}l}}(\holderExponent,\holderConstant,\pi)}
\newcommand{\classOfWellApproximableSetsHTE}{\mathcal{P}^{\mathrm{HTE}}_{\mathrm{App}}(\mathcal{A},t,\approximableDensityExponent,\approximableMarginExponent,\regularityConstant,\approximableSetsConstant)}
\newcommand{\densitySuperLevelSet}[1]{\mathcal{X}_{#1}({f_\mu})}
\newcommand{\support}{\mathrm{supp}}
\newcommand{\numberOfConsideredHyperCubes}{\hat{L}}
\newcommand{\thetaNDelta}{\theta}
\newcommand{\pValueN}{\hat{p}_{n}}
\newcommand{\pValueLong}{\hat{p}_{n,\holderExponent,\holderConstant}}
\newcommand{\empiricalMarginalDistribution}{\hat{\mu}_n}
\newcommand{\subSampleEmpiricalMarginalDistribution}{\hat{\mu}_m}
\newcommand{\empiricalRegressionFunction}{\hat{\eta}_n}
\newcommand{\pValueNHO}{\pValueN^+}
\newcommand{\pValueLongHO}{\pValueLong^+}
\newcommand{\setOfHypercubes}{\mathcal{H}}
\newcommand{\setOfHypercubesHO}{\setOfHypercubes^+}
\newcommand{\baseMeasure}{\mu}
\newcommand{\eigenValueMinimal}{\lambda_{\min}}
\newcommand{\eigenValueMaximal}{\lambda_{\max}}
\newcommand{\holderConstantEstimator}{\hat{\lambda}_{n,\holderExponent,\delta}}
\newcommand{\holderConstantEstimatorArgs}[2]{\hat{\lambda}_{n,#1,#2}}
\newcommand{\holderConstantUpperBound}{{\holderConstant}}
\newcommand{\classOfDistributionsSatisfyingRegularityCondition}{\measureClass_{\mathrm{Reg}}(\tau)}
\newcommand{\classOfHolderDistributionsSuperLevelSetIdentifiable}{\measureClass_{\mathrm{H\ddot{o}l}}^+(\holderExponent,\holderConstantUpperBound,\tau,\boldsymbol{\epsilon})}
\newcommand{\lowerHolderConstantSelfSimilar}{\holderConstant_0}
\newcommand{\lowerDensityConstantSelfSimilar}{c_0}
\newcommand{\regularSetSelfSimilar}{\mathcal{R}_\circ(\mu,\lowerDensityConstantSelfSimilar)}
\newcommand{\minRadiusSelfSimilar}{r_0}
\newcommand{\classOfSelfSimilarHolderDistributions}{\measureClass_{\mathrm{H\ddot{o}l}}^\dagger(\holderExponent,\holderConstant,\lowerHolderConstantSelfSimilar,\lowerDensityConstantSelfSimilar,\minRadiusSelfSimilar)}
\newcommand*{\addFileDependency}[1]{
  \typeout{(#1)}
  \@addtofilelist{#1}
  \IfFileExists{#1}{}{\typeout{No file #1.}}
}
\begin{document}

\begin{frontmatter}

\title{Optimal subgroup selection}
\runtitle{Optimal subgroup selection}

\begin{aug}
\author[A]{\fnms{Henry} W. J. \snm{Reeve}\ead[label=e1]{henry.reeve@bristol.ac.uk}},
\author[B]{\fnms{Timothy} I. \snm{Cannings}\thanksref{t1}\ead[label=e2]{timothy.cannings@ed.ac.uk}} \\
\thankstext{t1}{Research supported by Engineering and Physical Sciences Research Council (EPSRC) New Investigator Award EP/V002694/1.}
\and
\author[C]{\fnms{Richard} J. \snm{Samworth}\thanksref{t2}\ead[label=e3]{r.samworth@statslab.cam.ac.uk}}
\thankstext{t2}{Research supported by Engineering and Physical Sciences Research Council (EPSRC) Programme grant EP/N031938/1, EPSRC Fellowship EP/P031447/1 and European Research Council Advanced Grant 101019498.}

\runauthor{H. W. J. Reeve, T. I. Cannings and R. J. Samworth}

  \address[A]{School of Mathematics, University of Bristol\\\href{mailto:henry.reeve@bristol.ac.uk}{henry.reeve@bristol.ac.uk} 
}

  \address[B]{School of Mathematics and Maxwell Institute for Mathematical Sciences,\\ The University of Edinburgh\\\href{mailto:timothy.cannings@ed.ac.uk}{timothy.cannings@ed.ac.uk}
}
        
  \address[C]{Statistical Laboratory, University of Cambridge\\
  \href{mailto:r.samworth@statslab.cam.ac.uk}{r.samworth@statslab.cam.ac.uk}
}
\end{aug}

\begin{abstract}
In clinical trials and other applications, we often see regions of the feature space that appear to exhibit interesting behaviour, but it is unclear whether these observed phenomena are reflected at the population level.  Focusing on a regression setting, we consider the subgroup selection challenge of identifying a region of the feature space on which the regression function exceeds a pre-determined threshold.  We formulate the problem as one of constrained optimisation, where we seek a low-complexity, data-dependent selection set on which, with a guaranteed probability, the regression function is uniformly at least as large as the threshold; subject to this constraint, we would like the region to contain as much mass under the marginal feature distribution as possible.  This leads to a natural notion of regret, and our main contribution is to determine the minimax optimal rate for this regret in both the sample size and the Type I error probability.  The rate involves a delicate interplay between parameters that control the smoothness of the regression function, as well as exponents that quantify the extent to which the optimal selection set at the population level can be approximated by families of well-behaved subsets.  Finally, we expand the scope of our previous results by illustrating how they may be generalised to a treatment and control setting, where interest lies in the heterogeneous treatment effect. 
\end{abstract}

\end{frontmatter}

\section{Introduction}
\label{Sec:Introduction}

Consider a clinical trial that assesses the effectiveness of a drug or vaccine.  It will typically be the case that efficacy is heterogeneous across the population, in the sense that the probability of a successful outcome depends on several recorded covariates.  As a consequence, we may be unable to recommend the treatment for all individuals; nevertheless, it may be too conservative to reject it entirely.  It is very tempting to trawl through the data to identify a subset of the population for which the treatment appears to perform well, but statisticians are well-versed in the dangers of this type of data snooping \citep{senn1997wisdom,feinstein1998problem,rothwell2005subgroup,wang2007Statistics,kaufman2013these,altman2015subgroup,zhang2015subgroup,gabler2016no,lipkovich2017tutorial}.

The aim of this paper is to study a \emph{subgroup selection} problem, where we seek to identify a subset of the population for which a regression function exceeds a pre-determined threshold.  In the clinical trial example above, this threshold would represent the level at which the treatment is deemed effective.  Subgroup selection forms an important component of the more general field of \emph{subgroup analysis} \citep{wang2007Statistics,herrera2011overview,ting2020design}, which refers to the problem of understanding the association between a response and subgroups of subjects under study, as defined by one or more subgrouping variables.  The main challenge is to provide valid inference, given that the subgroup will be chosen after seeing the data \citep{lagakos2006challenge}. 

Our first contribution is to formulate subgroup selection as a constrained optimisation problem.  Given independent covariate-response pairs and a family $\mathcal{A}$ of subsets of our feature space, we seek a data-dependent selection set $\hat{A}$ taking values in $\mathcal{A}$ with the Type~I error control property that, with probability at least $1-\alpha$, the regression function is uniformly no smaller than the level $\tau$ on $\hat{A}$; subject to this constraint, we would like the proportion of the population belonging to $\hat{A}$ to be as large as possible.  In practice, $\mathcal{A}$ would typically be chosen to be of relatively low complexity, so as to lead to an interpretable decision rule. 


After introducing this new framework, our first result (Proposition~\ref{prop:lipschitzNotEnoughForConsistency} in Section~\ref{Sec:MainResults}) reveals the extent of the challenge.  We show that if our regression function belongs to a H\"older class, but the corresponding H\"older constant is unknown, then there is a sense in which no algorithm that respects the Type I error guarantee can do better in terms of power than one that ignores the data.   We therefore work initially over H\"older classes of known smoothness~$\holderExponent$, and with a known upper bound $\holderConstant$ on the H\"older constant; see Definition~\ref{def:holder}.   This enables us to define a data-dependent selection set that satisfies our Type I error guarantee.  The idea is to construct, for each hyper-cube $B$ in a suitable collection within our feature space $\mathbb{R}^d$, a $p$-value for testing the null hypothesis that the regression function is not uniformly above the level $\tau$ on~$B$.  The $p$-values are then combined via Holm's procedure \citep{holm1979simple} to identify a finite union of hyper-cubes that satisfy our Type I error control property.  Our final selection set~$\hat{A}_{\mathrm{OSS}}$ maximises the empirical measure among all elements of~$\mathcal{A}$ that lie within this finite union of hyper-cubes.

Next, we define a notion of regret $R_\tau(\hat{A})$ that quantifies the power discrepancy between a particular algorithm $\hat{A}$ and an oracle choice.  Our aim is to study the optimal regret that can be attained while maintaining Type I error control.  We find that the minimax optimal regret is determined by a combination of the smoothness $\holderExponent$ (initially assumed to lie in $(0,1]$) and two further exponents $\approximableDensityExponent, \approximableMarginExponent > 0$ that quantify the extent to which the oracle selection set can be approximated by families of well-behaved subsets in $\mathcal{A}$.  In particular, $\approximableDensityExponent$ and $\approximableMarginExponent$ control respectively the degree of concentration of the marginal measure, and the separation between the regression function and the critical level $\tau$ on these well-behaved subsets.  See Definition~\ref{defn:approximableMeasureClass} for a formal description.

Our main contribution in Section~\ref{Sec:MainResults} is to establish in Theorem~\ref{thm:minimaxRate}, that, with a sample size of~$n$, the minimax optimal rate of convergence of the regret over these distributional classes and over all algorithms that respect the Type I error guarantee at significance level $\alpha \in (0,1/2)$ is of order\footnote{Here, $\log_+ x:=\log x$ when $x \geq  e$ and $\log_+ x := 1$ otherwise. To be fully precise, the upper bound holds when $\mathcal{A}$ is a Vapnik--Chervonenkis class; the lower bound holds when $\holderExponent\approximableMarginExponent (\approximableDensityExponent-1) < d \approximableDensityExponent$ and the class $\mathcal{A}$ consists of convex sets and contains all axis-aligned hyper-rectangles.}
\begin{equation}
\label{Eq:OptimalOrder}
\min \biggl\{ \biggl(\frac{\log_+(n/\alpha)}{n}\biggr)^{\frac{\holderExponent \approximableDensityExponent \approximableMarginExponent}{\approximableDensityExponent(2\holderExponent+d)+\holderExponent\approximableMarginExponent}}+\frac{1}{n^{1/2}} \, , \, 1\biggr\}.
\end{equation}
The second term in the sum reflects the parametric rate, which corresponds to the difficulty of uniformly estimating the population measure of sets in a Vapnik--Chervonenkis class $\mathcal{A}$.  The primary interest, however, is in the first term in the sum, which reveals an intricate interplay between the distributional parameters, the sample size and the significance level.

As mentioned above, the algorithm that achieves the upper bound in Theorem~\ref{thm:minimaxRate} takes $\holderConstant$ and $\holderExponent$ as inputs.  In Section~\ref{Sec:Adaptation}, therefore, we describe how these parameters can be chosen in a data-driven manner.  Since Proposition~\ref{prop:lipschitzNotEnoughForConsistency} reveals the impossibility of adaptation in full generality, in Section~\ref{SubSec:lambda}, we impose mild additional regularity conditions on our classes, and show that under a sample size condition, we can with high probability estimate the H\"older constant of the regression function to within a factor of 2.  Moreover, in Section~\ref{SubSec:beta}, we show that under a self-similarity condition, we can also estimate $\holderExponent$ accurately, so that under a sample size condition, our fully data-driven algorithm maintains Type I error control, and has the same regret as the original algorithm up to a sub-logarithmic factor.

A limitation of our constructions for the upper bounds in Sections~\ref{Sec:MainResults} and~\ref{Sec:Adaptation} are that they are unable to take advantage of higher orders of smoothness beyond $\holderExponent = 1$.  To overcome this, in Section~\ref{Sec:HigherOrder}, we introduce a modified algorithm based on a local polynomial approximation of the regression function, and prove in Theorem~\ref{Thm:minimaxRateHOS} that this new construction both respects the Type I error at significance level $\alpha$ and has a regret of optimal order~\eqref{Eq:OptimalOrder} for general smoothness $\beta \in (0,\infty)$.  The price we pay for this is a stronger assumption on the marginal feature distribution: we now ask for it to have a well-behaved density with respect to Lebesgue measure (though we do not require this density to be bounded away from zero on its support). 

The lower bound constructions for Theorems~\ref{thm:minimaxRate} and~\ref{Thm:minimaxRateHOS} are addressed in Section~\ref{Sec:LowerBound}.  They involve three different finite collections of distributions within our classes, each designed to highlight different aspects of the challenge.  The first is a two-point construction, with both distributions having regression functions that are close to $\tau$ on disconnected regions, but with each such function only being uniformly above $\tau$ on one of these regions; this identifies the dependence of the lower bound on $\alpha$.  The second extends this construction to many distributions, each having its own region where the regression function is uniformly above~$\tau$, which underlines the necessity of the logarithmic factor in $n$ in~\eqref{Eq:OptimalOrder}.  Finally, the third family, which identifies the parametric rate, is another two-point construction with a shared regression function, but whose marginal feature distributions assign slightly different masses to the different connected components of the $\tau$-super level set of this regression function.  

Finally, in Section~\ref{sec:hteApplication}, we consider the more general setting where individuals may belong to either a treatment or control group, and where interest lies in the heterogeneous treatment effect.  We show that this heterogeneous treatment effect plays a very similar role to that of the regression function in earlier sections, so that our results generalise almost immediately.  Proofs of all of our results, as well as auxiliary results and their proofs, are deferred to the appended supplementary material.

One of the interesting messages of our work from an applied perspective is that, when carefully formulated, it is possible to make formally-justified, post-hoc observations concerning subgroup analyses from clinical studies.  When attempted without due care, such observations have been rightly criticised in the medical literature; for example:

\begin{quotation}
Analyses  must  be  predefined,  carefully justified,  and  limited  to  a  few  clinically  important  questions,  and  post-hoc  observations  should  be  treated  with scepticism irrespective of their statistical significance. \hfill \citep{rothwell2005subgroup}
\end{quotation}
\begin{quotation}
 The  statisticians  are  right  in denouncing subgroups that are formed post hoc from exercises in pure data dredging. \hfill \citep{feinstein1998problem}
\end{quotation}
A standard approach to handle subgroup analysis is via statistical tests of interaction \citep{brookes2001subgroup,brookes2004subgroup,kehl2006responder}.  \cite{zhang2017subgroup} propose a procedure to select a subgroup defined by a halfspace that seeks to maximise the expected difference in treatment effect in the context of an adaptive signature design trial.  Several other methods have been proposed for studying subgroups defined through heterogeneous treatment effects.  For instance, \citet{foster2011subgroup} propose an approach to identify subgroups having enhanced treatment effect via the construction of `virtual twins', while \citet{ballarini2018subgroup} consider maximum likelihood and Lasso-type approaches for estimating a difference in treatment effect in a parametric linear model setting.  \citet{su2009subgroup}, \citet{dusseldorp2010combining}, \citet{lipkovich2011subgroup} and \citet{seibold2016model} propose tree-based procedures to explore the heterogeneity structure of a treatment effect across subgroups that are defined after seeing the data; \citet{huber2019comparison} provide a simulation comparison of the relative performance of these methods, as well as the algorithm for adaptive refinement by directed peeling proposed by \citet{patel2016identifying}.  \cite{crump2008nonparametric} and \cite{watson2020machine} introduce tests of the global null hypothesis of no treatment effect heterogeneity (no subgroups).  

One can think of subgroup selection in our context as a super-level set estimation problem, with a key feature being the asymmetry of the way in which we handle cases where~$\hat{A}$ contains regions where the regression function is below $\tau$, and where it misses regions where the regression function is at least at level $\tau$.  This is motivated by applications such as clinical trials, where the primary concern is the retention of Type I error control despite the post-selection inference.  In this respect, our framework has some similarities with that of Neyman--Pearson classification \citep{cannon2002learning,scott2005neyman,tong2016survey,xia2021intentional}.  There, our covariate-response pairs $(X,Y)$ take values in $\mathbb{R}^d \times \{0,1\}$, and we seek a classifier $C:\mathbb{R}^d \rightarrow \{0,1\}$ that minimises $\mathbb{P}\bigl(C(X) = 0|Y=1\bigr)$ subject to an upper bound on $\mathbb{P}\bigl(C(X) = 1|Y=0\bigr)$.  Thus, as in our setting, the way in which the two types of error are handled is asymmetric.  On the other hand, as well as allowing continuous responses, our notions of loss are very different.  In particular, in our context, we incur a Type I error whenever our selected set $\hat{A}$ contains a single point that does not belong to the $\tau$-super-level set of the regression function.  In other words, our framework provides guarantees at an individual level, instead of on average over sub-populations.  This may well be ethically and practically advantageous, e.g.~in medical contexts, as discussed above.


Related work on the estimation of super-level sets of a regression function includes \citet{cavalier1997nonparametric}, \citet{scott2007regression}, \citet{willett2007minimax}, \citet{gotovos2013active}, \citet{laloe2013estimation}, \citet{zanette2018robust} and \citet{dau2020exact}; likewise, in a density estimation context, there is a large literature on highest density region estimation \citep{polonik1995measuring,hyndman1996computing,tsybakov1997nonparametric,mason2009asymptotic,samworth2010asymptotics,chen2017density,doss2018bandwidth,qiao2019nonparametric,rodriguez2019minimax,qiao2020asymptotics}.  The formulations of the problems studied in these works are rather different from ours, tending to focus on measures of the set difference or Hausdorff distance between the estimated and true sets of interest.  \citet{mammen2013confidence} study bootstrap confidence regions for level sets of nonparametric functions, with a particular emphasis on kernel estimation of density level sets.

We conclude this introduction with some notation used throughout the paper.  We adopt the convention that $\inf \emptyset := \infty$, and write $[n] := \{1,\ldots,n\}$ for $n \in \mathbb{N} \cup \{0\}$, with $[0] := \emptyset$.  Given a set $S$, we denote its power set by $\powerSet(S)$ and its cardinality and complement by $|S|$ and $S^{\mathrm{c}}$ respectively.  If $S$ has a strict total ordering, and $g:S \rightarrow \R$ is a function that attains its maximum, then we write $\sargmax\{ g(s) : s \in S\}$ for the smallest element of $\argmax\{g(s):s \in S\}$.  The $\sigma$-algebra of Borel measurable subsets of $\R^d$ is denoted by $\borel(\R^d)$.  We write $\vcDim(\mathcal{A})$ for the Vapnik--Chervonenkis dimension of a class of sets $\mathcal{A}$ \citep[e.g.][Chapter~2]{vershynin2018high}.  We also let $\mathcal{A}_{\mathrm{hpr}}$ and $\mathcal{A}_{\mathrm{conv}}$ denote the class of compact axis-aligned hyper-rectangles in $\R^d$ (i.e.~sets of the form $\prod_{j=1}^d [a_j,b_j]$ for some $a_j \leq b_j$ and $j \in [d]$), and the set of convex subsets of $\R^d$ respectively.

Given $x \in \mathbb{R}$, we write $x_+:=\max(x,0)$ and $\log_+ x:=\log x$ when $x \geq  e$ and $\log_+ x := 1$ otherwise.  Let $\supNorm{\cdot}$ and $\|\cdot\|_2$ denote the supremum and Euclidean norms on $\R^d$ respectively.  We denote the $d$-dimensional Lebesgue on $\R^d$ by $\Lebesgue$, and let $V_d :=\Lebesgue(\{x\in \R^d:\|x\|_2\leq 1\}) = \pi^{d/2}/\Gamma(1+d/2)$.  Given a set $S \subseteq \R^d$, we denote its $\ell_\infty$-norm \emph{diameter} by $\diamSup(S) := \sup_{x,y \in S} \supNorm{x-y}$ and write $\distSup(x,S) := \inf_{y \in S} \supNorm{x-y}$.  For $r > 0$, let $\openMetricBallSupNorm{x}{r}$ and $\closedMetricBallSupNorm{x}{r}$ denote the open and closed $\ell_\infty$ balls of radius $r$ about $x \in \R^d$ respectively.  For a function $f:\R^d\rightarrow \R$, and for $\xi \in \mathbb{R}$, we also let $\mathcal{X}_\xi(f):=\{x \in \R^d: f(x) \geq \xi \}$ denote its super-level set at level $\xi$.  Given a symmetric matrix $A \in \R^{q\times q}$, we write $A^+$ for its Moore--Penrose pseudo-inverse and $\eigenValueMinimal(A)$ and $\eigenValueMaximal(A)$  for its minimal and maximal eigenvalues, respectively.

For $p \in [0,1]$, we let $\mathrm{Bern}(p)$ denote the Bernoulli distribution on $\{0,1\}$ with mean $p$.  Given a Borel probability measure $\mu$ on $\R^d$, we write $\support(\mu)$ for its \emph{support}, i.e.~the intersection of all closed sets $C \subseteq \R^d$ with $\mu(C) = 1$.  Given Borel subsets $B_0$, $B_1 \subseteq \R^d$ and a measure $\mu$ on $\R^d$, we write $B_0 \subseteq B_1$ if $\mu(B_0 \setminus B_1) = 0$ and $B_0 \not\subseteq B_1$ if $\mu(B_0 \setminus B_1) > 0$; the dependence on $\mu$ in our notation here is left implicit since it will be clear from context, and we are thus equating sets whose symmetric difference has $\mu$-measure zero.  For probability measures $P,Q$ on a measurable space $(\Omega,\mathcal{F})$, we denote their total variation distance by $\mathrm{TV}(P,Q) := \sup_{B \in \mathcal{F}}|P(B) - Q(B)|$.  If these measures are absolutely continuous with respect to a $\sigma$-finite measure $\baseMeasure$, with Radon--Nikodym derivatives $f$ and $g$ respectively, then we write $\hellingerDistance(P,Q) := \bigl\{\int_{\Omega} (f^{1/2} - g^{1/2})^2 \, d\baseMeasure\bigr\}^{1/2}$ for their Hellinger distance, and $\chi^2(P,Q) := \int_{\Omega} f^2/g \, d\baseMeasure - 1$ for their $\chi^2$-divergence.  For $a \in [0,1]$, $b \in (0,1)$, we define $\kl(a,b)$ to be the Kullback--Leibler divergence between the $\mathrm{Bern}(a)$ and $\mathrm{Bern}(b)$ distributions; i.e., for $a \in (0,1)$,
\begin{align*}
\kl(a,b):=  a\log\biggl(\frac{a}{b}\biggr) +(1-a)\log\biggl(\frac{1-a}{1-b}\biggr),
\end{align*}
with $\kl(0,b) := -\log(1-b)$ and $\kl(1,b) := -\log b$.  

\section{Subset selection framework and minimax rates}
\label{Sec:MainResults}

Suppose that the covariate-response pair $(X,Y)$ has joint Borel probability distribution $\probDistribution$ on $\R^d \times [0,1]$.  Let $\mu \equiv \mu_{\probDistribution}$ denote the marginal distribution of $X$.  We say that $\regressionFunction \equiv \regressionFunction_{\probDistribution}:\R^d \rightarrow [0,1]$ is a \emph{regression function} for $P$ if $\eta$ is a version of the conditional expectation $\E(Y|X)$.  In other words, $\eta:\R^d \rightarrow [0,1]$ is a Borel measurable function such that $\int_B \eta(x) \, d\mu(x) = \int_{B\times [0,1]} y \, dP(x,y)$ for all $B \in \borel(\R^d)$. We let $\mathcal{A} \subseteq \borel(\R^d)$ denote our class of candidate selection sets, and assume that $\emptyset \in \mathcal{A}$.  Given a threshold $\tau \in (0,1)$, and recalling the notation $\etaSuperLevelSet{\tau} := \{x \in \R^d:\regressionFunction(x) \geq \tau\}$ for the $\tau$-super level set of $\regressionFunction$, an ideal output set in our class would have measure
\begin{align*}
M_\tau \equiv M_\tau(\probDistribution,\mathcal{A}):= \sup\bigl\{ \mu(A): A \in \mathcal{A} \cap \mathrm{Pow}\bigl(\mathcal{X}_\tau(\regressionFunction)\bigr)\bigr\}.
\end{align*}
Since $P$ is unknown, it will typically not be possible to output such an ideal subset.  Instead, we will assume that the practitioner has access to a sample $\sample \equiv \bigl((X_1,Y_1),\ldots, (X_n,Y_n)\bigr)$ of independent copies of $(X,Y)$.   We define the class of \emph{data-dependent selection sets}, denoted~$\hat{\mathcal{A}}_n$, to be the set of functions $\hat{A}:(\R^d \times [0,1])^n \rightarrow \mathcal{A}$ such that $(x,D) \mapsto \mathbbm{1}_{\hat{A}(D)}(x)$ is a Borel measurable function on $\R^d \times (\R^d \times [0,1])^n$.  Given a family $\mathcal{P}$ of distributions on $\R^d \times [0,1]$ and a significance level $\alpha \in (0,1)$, we relax the hard requirement that our output set should be a subset of $\mathcal{X}_\tau(\regressionFunction)$ by seeking a data-dependent selection set $\hat{A} \in \hat{\mathcal{A}}_n$, with 
\begin{equation}
\label{Eq:TypeIError}
\inf_{P \in \mathcal{P}} \Prob_P\bigl( \hat{A}(\sample) \subseteq{} \etaSuperLevelSet{\tau}\bigr) \geq 1-\alpha.
\end{equation}
Note that the condition $A \subseteq \etaSuperLevelSet{\tau}$ is independent of our choice of regression function (Lemma~\ref{lemma:tauSuperLevelSetConditionWellDefined}). When~\eqref{Eq:TypeIError} holds, we will say that $\hat{A}$ controls the Type~I error at level $\alpha$ over the class $\mathcal{P}$, and denote the set of data-dependent selection sets that satisfy this requirement as $\hat{\mathcal{A}}_n(\alpha,\mathcal{P})$.  For $\hat{A} \in \hat{\mathcal{A}}_n(\alpha,\mathcal{P})$, we would also like that for each $P \in \mathcal{P}$, the random quantity $\mu\bigl(\hat{A}(\sample)\bigr)$ should be close to $M_\tau$, i.e.~we will seek upper bounds for the regret 
\[
R_\tau(\hat{A}) \equiv R_\tau(\hat{A},P,\mathcal{A}) := M_\tau - \E_P\big\{\mu\bigl(\hat{A}(\sample)\bigr)\bigm|\hat{A}(\sample) \subseteq{} \etaSuperLevelSet{\tau}\big\}.
\]
In several places below, we abbreviate $\hat{A}(\sample)$ as $\hat{A}$ where the argument is clear from context.  

Our first result reveals that even Lipschitz restrictions on the regression function in our class~$\mathcal{P}$ do not suffice to obtain a data-dependent selection set $\hat{A}$ that satisfies both~\eqref{Eq:TypeIError} and $\Prob_P\bigl(\mu(\hat{A}) > 0\bigr) > \alpha$ for some $P \in \mathcal{P}$.  The negative implication is that, regardless of smoothness properties of the true regression function, the regret of any~$\hat{A}$ satisfying~\eqref{Eq:TypeIError} can be no smaller than the infimum of the regrets of all selection sets that ignore the data while still controlling the Type I error over our Lipschitz class.  

Given a probability measure~$\mu$ on $\R^d$, we let $\mathcal{P}_{\mathrm{Lip}}(\mu)$ denote the set of all Borel probability distributions on $\R^d \times [0,1]$ with marginal $\mu$ on $\R^d$, and for which the corresponding regression function~$\regressionFunction$ is Lipschitz. 
We say that $\bar{A} \in \hat{\mathcal{A}}_n$ is \emph{data independent} if $\one_{\{\bar{A}(\sample)=A\}}$ and $\sample$ are independent for all $A \in \mathcal{A}$, and write $\bar{\mathcal{A}}$ for the set of data-independent selection sets.  
\begin{prop}\label{prop:lipschitzNotEnoughForConsistency} 
Let $\mu$ be a distribution on $\R^d$ without atoms and take $\mathcal{A} \subseteq \borel(\R^d)$ with $\vcDim(\mathcal{A}) < \infty$.  Further, let $\hat{A} \in \hat{\mathcal{A}}_n\bigl(\alpha,\mathcal{P}_{\mathrm{Lip}}(\mu)\bigr)$. Then for all $\probDistribution \in \mathcal{P}_{\mathrm{Lip}}(\mu)$, we have 
\begin{align}
\label{Eq:FirstConc}
\Prob_P\bigl(\mu(\hat{A}) = 0 \mid \hat{A} \subseteq{} \etaSuperLevelSet{\tau}\bigr) \geq \Prob_P\big(\bigl\{\mu(\hat{A}) = 0\bigr\} \cap \bigl\{\hat{A} \subseteq{} \etaSuperLevelSet{\tau}\bigr\} \big)\geq 1-\alpha.
\end{align}
Hence
\begin{align}\label{eq:conclusionPowerWithoutHolderKnoweldge}
R_\tau(\hat{A}) \geq M_\tau \cdot (1-\alpha) =  \inf\bigl\{R_\tau(\bar{A}): \bar{A} \in \mathcal{\bar{A}} \cap \hat{\mathcal{A}}_n\bigl(\alpha,\mathcal{P}_{\mathrm{Lip}}(\mu)\bigr)\bigr\}.   
\end{align}
\end{prop}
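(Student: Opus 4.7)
The first inequality in~\eqref{Eq:FirstConc} is immediate from the trivial bound $\Prob(A\mid B) \geq \Prob(A \cap B)$, which holds whenever $\Prob(B) \leq 1$. The regret identity in~\eqref{eq:conclusionPowerWithoutHolderKnoweldge} is then a short consequence of the joint bound: once we know $\Prob_P(\mu(\hat A) = 0 \mid \hat A \subseteq \etaSuperLevelSet{\tau}) \geq 1-\alpha$, combining with $\mu(\hat A) \leq M_\tau$ on $\{\hat A \subseteq \etaSuperLevelSet{\tau}\}$ yields $\E_P[\mu(\hat A) \mid \hat A \subseteq \etaSuperLevelSet{\tau}] \leq \alpha M_\tau$ and hence $R_\tau(\hat A) \geq M_\tau(1-\alpha)$. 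The matching identification with the data-independent infimum is witnessed by the (possibly $P$-dependent) rule that returns a near-maximiser $A^{\star} \in \mathcal{A} \cap \powerSet(\etaSuperLevelSet{\tau})$ of $\mu$-measure with probability $\alpha$ and $\emptyset$ otherwise: this respects Type~I control over $\mathcal{P}_{\mathrm{Lip}}(\mu)$ since $\Prob(\bar A \neq \emptyset) \leq \alpha$, and its regret equals $M_\tau(1-\alpha)$.

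The substantive step is therefore to show $\Prob_P\bigl(\{\mu(\hat A) > 0\} \cup \{\hat A \not\subseteq \etaSuperLevelSet{\tau}\}\bigr) \leq \alpha$. The plan is to exploit Type~I control at many Lipschitz perturbations of $P$ simultaneously. Given $K \in \N$ and $\delta > 0$, sample $X_0^{(1)}, \dots, X_0^{(K)}$ i.i.d.\ from $\mu$ independently of $\sample$, and set
\[
\eta_{P'}(x) := \eta_P(x) \prod_{k=1}^K \bigl(1 - \phi_\delta(x - X_0^{(k)})\bigr),
\]
where $\phi_\delta \colon \R^d \to [0,1]$ is a Lipschitz bump with $\phi_\delta(0) = 1$ and support contained in $\closedMetricBallSupNorm{0}{\delta}$. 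Taking $Y \mid X$ to agree with the $P$-kernel outside $\bigcup_k \closedMetricBallSupNorm{X_0^{(k)}}{\delta}$ and to be Bernoulli with mean $\eta_{P'}(X)$ inside yields, for each realisation of $\boldsymbol{X}_0 := (X_0^{(k)})_{k=1}^K$, a distribution $P'_{\boldsymbol{X}_0} \in \mathcal{P}_{\mathrm{Lip}}(\mu)$ with $\eta_{P'}(X_0^{(k)}) = 0 < \tau$ and $\mathcal{X}_\tau(\eta_{P'}) \subseteq \etaSuperLevelSet{\tau}$. Moreover,
\[
\totalVariationDistance(P^{\otimes n}, P'^{\otimes n}_{\boldsymbol{X}_0}) \leq n\sum_{k=1}^K \mu\bigl(\closedMetricBallSupNorm{X_0^{(k)}}{\delta}\bigr),
\]
whose expectation over $\boldsymbol{X}_0 \sim \mu^{\otimes K}$ equals $nK \int \mu(\closedMetricBallSupNorm{x}{\delta})\,d\mu(x)$, and this tends to $0$ as $\delta \to 0$ by non-atomicity of $\mu$ and dominated convergence.

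Applying Type~I control at $P'_{\boldsymbol{X}_0}$ together with the disjoint decomposition
\[
\{\hat A \not\subseteq \mathcal{X}_\tau(\eta_{P'})\} = \{\hat A \not\subseteq \etaSuperLevelSet{\tau}\} \sqcup \bigl\{\hat A \subseteq \etaSuperLevelSet{\tau},\ \hat A \not\subseteq \mathcal{X}_\tau(\eta_{P'})\bigr\},
\]
and the inclusion $\{\exists k: X_0^{(k)} \in \hat A\} \cap \{\hat A \subseteq \etaSuperLevelSet{\tau}\} \subseteq \{\hat A \not\subseteq \mathcal{X}_\tau(\eta_{P'})\}$ (since each $X_0^{(k)} \notin \mathcal{X}_\tau(\eta_{P'})$), then averaging over $\boldsymbol{X}_0$ and transferring to the true joint law $\mu^{\otimes K} \otimes P^{\otimes n}$ via the TV bound gives, in the limit $\delta \downarrow 0$,
\[
\Prob_P(\hat A \not\subseteq \etaSuperLevelSet{\tau}) + \Prob\bigl(\exists k: X_0^{(k)} \in \hat A,\ \hat A \subseteq \etaSuperLevelSet{\tau}\bigr) \leq \alpha.
\]
Conditioning on $\hat A$ and letting $K \to \infty$, the inner probability $1 - (1-\mu(\hat A))^K$ increases monotonically to $\one\{\mu(\hat A) > 0\}$, so monotone convergence yields $\Prob_P(\hat A \not\subseteq \etaSuperLevelSet{\tau}) + \Prob_P(\mu(\hat A) > 0,\, \hat A \subseteq \etaSuperLevelSet{\tau}) \leq \alpha$, which rearranges to the desired $\Prob_P\bigl(\{\mu(\hat A) = 0\} \cap \{\hat A \subseteq \etaSuperLevelSet{\tau}\}\bigr) \geq 1-\alpha$.

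The hardest steps are verifying that $\eta_{P'}$ is genuinely Lipschitz for every realisation of $\boldsymbol{X}_0$ (the Lipschitz constant is allowed to grow like $1/\delta$) and jointly measurable in $(x, \boldsymbol{X}_0)$, so that the integrated form of Type~I control at $P'_{\boldsymbol{X}_0}$ is well defined; and sequencing the two limits $\delta \downarrow 0$ (which kills the TV remainder through non-atomicity of $\mu$) and $K \to \infty$ (which converts the union event to $\{\mu(\hat A) > 0\}$) in that order.
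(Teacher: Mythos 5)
Your proof is correct, but it takes a genuinely different route from the paper. The paper exploits the assumption $\vcDim(\mathcal{A})<\infty$ to construct a \emph{deterministic} finite set $\mathbb{T}$ such that $\sup_{A\in\mathcal{A}}|\tfrac{1}{|\mathbb{T}|}\sum_{t\in\mathbb{T}}\one_{\{t\in A\}}-\mu(A)|\leq\epsilon$; it then perturbs the regression function to vanish on $\mathbb{T}$ via a single Lipschitz multiplier $\rho$, applies Type I control at the single resulting distribution $Q$, and concludes that $\hat{A}\cap\mathbb{T}=\emptyset$ forces $\mu(\hat{A})\leq\epsilon$. You replace this deterministic net by a \emph{Monte-Carlo} sample $X_0^{(1)},\ldots,X_0^{(K)}\stackrel{\mathrm{iid}}{\sim}\mu$, use Type I control at each realisation $P'_{\boldsymbol{X}_0}$, and convert the event $\{\exists k:X_0^{(k)}\in\hat{A}\}$ into $\{\mu(\hat{A})>0\}$ by monotone convergence as $K\to\infty$. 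The trade-offs: the paper's argument invokes the VC concentration inequality but requires only a single perturbing distribution and a single limit $\epsilon\searrow 0$; yours requires integrating the Type I bound over an uncountable family $\{P'_{\boldsymbol{X}_0}\}$ and a two-stage limit ($\delta\searrow 0$ then $K\to\infty$), but needs no uniform law of large numbers at all — in fact your argument establishes~\eqref{Eq:FirstConc} and~\eqref{eq:conclusionPowerWithoutHolderKnoweldge} without the hypothesis $\vcDim(\mathcal{A})<\infty$ (which, for~\eqref{Eq:FirstConc}, the paper uses only to obtain the finite discretisation $\mathbb{T}$). Both proofs rely on the non-atomicity of $\mu$ in the same essential way (to drive the total-variation remainder to zero), and both implicitly rely on disintegration to describe the perturbed conditional of $Y$ given $X$; your crude bound $\totalVariationDistance(P^{\otimes n},Q^{\otimes n})\leq n\,\totalVariationDistance(P,Q)$ suffices where the paper uses the sharper Hellinger tensorisation. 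The only place where you are a little terse is the measurability of $(\boldsymbol{x}_0,\sample)\mapsto\one\{\hat{A}(\sample)\subseteq\mathcal{X}_\tau(\eta_{P'_{\boldsymbol{x}_0}})\}$ needed to justify averaging, and the construction of a genuinely data-independent $\bar A\in\bar{\mathcal{A}}\cap\hat{\mathcal{A}}_n$ realising the randomisation (which requires extracting a $U[0,1]$ variable from $X_1\sim\mu$, possible since $\mu$ is non-atomic on a Polish space); but neither point is a real gap, and the paper treats the latter at the same level of informality.
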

In the light of Proposition~\ref{prop:lipschitzNotEnoughForConsistency}, we will assume initially that our regression function belongs to a H\"older class for which both the H\"older exponent and the associated constant are known.  In this section, we will work with smoothness exponents that are at most 1.
\begin{defn}[H\"{o}lder class]\label{def:holder} Given $\holderExponent \in (0,1]$, $\holderConstant \in (0,\infty)$ and $A\subseteq \R^d$, we let $\classOfRestrictedHolderFunctions{A}$ denote the set of all continuous functions $\eta:\R^d \rightarrow [0,1]$ such that 
\begin{align*}
|\eta(x')-\eta(x)|\leq \holderConstant \cdot \supNorm{x'-x}^{\holderExponent},
\end{align*}
for all $x, x' \in A$. We then let $\classOfHolderDistributionsSuperLevelSet$ denote the class of all distributions $\probDistribution$ on $\R^d \times [0,1]$ with a regression function $\regressionFunction \in \mathcal{F}_{\mathrm{H\ddot ol}}\bigl(\holderExponent,\holderConstant,\etaSuperLevelSet{\tau}\bigr)$.
\end{defn}
Observe that in this definition, the H\"older smoothness condition is only required to hold on the restriction of $\eta$ to $\mathcal{X}_\tau(\eta)$.  While our algorithms will control the Type I error over H\"older classes, we will see that the optimal regret for a data-dependent selection set depends on further aspects of the underlying data generating mechanism.  To describe the relevant classes, we first define a function $\lowerDensity\equiv \lowerDensity_{\mu,d}:\R^d \rightarrow [0,1]$ by 
\begin{align*}
\lowerDensity(x):=\inf_{r \in (0,1)}\frac{\mu\bigl(\closedMetricBallSupNorm{x}{r}\bigr)}{r^d}.
\end{align*}
Borrowing the terminology of \citet{reeve2021adaptive}, we will refer to $\omega$ as a \emph{lower density}, even though our definition is slightly different as we work with an $\ell_\infty$-ball instead of a Euclidean ball.  A nice feature of this definition is that it allows us to avoid assuming that $\mu$ is absolutely continuous with respect to Lebesgue measure; see \citet{reeve2021adaptive} for several basic properties of lower density functions.  

We are now in a position to define what we refer to as an \emph{approximable} class of distributions; these are ones for which we can approximate $M_\tau$ well by $\mu(A)$, where $A \in \mathcal{A}$ is both such that the lower density on $A$ is not too small and such that the regression function on~$A$ is bounded away from the critical threshold $\tau$.   
\begin{defn}[Approximable class]\label{defn:approximableMeasureClass} Given $\mathcal{A} \subseteq \borel(\R^d)$, $\approximableDensityExponent, \approximableMarginExponent > 0$, $\tau \in (0,1)$ and $\approximableSetsConstant \geq 1$, let $\classOfWellApproximableSets$ denote the class of all distributions $\probDistribution$ on $\R^d \times [0,1]$ with marginal~$\mu$ on $\R^d$ and a regression function $\regressionFunction:\R^d \rightarrow [0,1]$ such that
\begin{align*}
\sup\big\{ \mu(A):A \in \mathcal{A}\cap \powerSet\bigl(\omegaSuperLevelSet{\xi}\cap \etaSuperLevelSet{\tau+\Delta}\bigr)  \big\} \geq M_\tau - \approximableSetsConstant \cdot (\xi^{\approximableDensityExponent}+\Delta^{\approximableMarginExponent}),
\end{align*}
for all $\xi, \Delta > 0$.
\end{defn}
We now provide several examples of distributions belonging to appropriate approximable classes.  The proofs of the claims in these examples are given in Section~\ref{Sec:Examples}.

\begin{example}
\label{ex:1}
Let $\Prob(Y=0) = \Prob(Y=1) = 1/2$, and $X|Y=r \sim N\bigl((-1)^{r-1}\nu,1\bigr)$ for $r \in \{0,1\}$ and some $\nu > 0$.  Fix some $\tau \in (0,1)$, and let $\mathcal{A}_{\mathrm{int}}$ denote the set of all closed intervals in $\mathbb{R}$.  Then the distribution $P$ of $(X,Y)$ belongs to $\classOfWellApproximableSetsWithIntervals$ with $\kappa = \gamma = 1$, for a suitably large choice of $\approximableSetsConstant$, depending only on $\nu$ and $\tau$. 
\end{example}

\begin{example}
\label{ex:2}
Let $\mu$ denote the uniform distribution on $[0,1]^d$ and fix $\tau \in (0,1)$.  Suppose that $\regressionFunction:\R^d \rightarrow [0,1]$ is coordinate-wise increasing, that $\mathcal{S}_\tau := \{x \in [0,1]^d:\regressionFunction(x) = \tau\} \neq \emptyset$, and that there exist $\delta, \epsilon \in (0,1]$ and $\gamma > 0$ such that $\regressionFunction(x) - \tau \geq \epsilon \cdot \distSup(x,\mathcal{S}_\tau)^{1/\gamma}$, for every $x \in \etaSuperLevelSet{\tau} \cap [0,1]^d$ with $\distSup(x,\mathcal{S}_\tau) \leq \delta$.  If $P$ denotes a distribution on $\R^d \times [0,1]$ with marginal $\mu$ on $\R^d$ and regression function $\regressionFunction$, then $P \in \classOfWellApproximableSetsWithHyperCubes$ for arbitrarily large $\kappa > 0$, provided that $\approximableSetsConstant \geq 2d/(\epsilon^{\approximableMarginExponent}\delta)$. 
\end{example}

\begin{example}
\label{ex:3}
Consider the family of distributions $\{\mu_\kappa : \kappa \in (0,\infty)\}$ on $\R^d$ with densities of the form $x \mapsto g_\kappa(\supNorm{x})$, where $g_\kappa:[0,\infty) \rightarrow [0,\infty)$ is given by
\begin{align*}
  g_{\kappa}(y) := \begin{cases} (\kappa/2^d) \cdot \bigl\{1 + (1-\kappa)y^d\bigr\}^{-1/(1-\kappa)} &\text{ if } \kappa \in (0,1) \\
(1/2^d) \cdot e^{-y^d} &\text{ if } \kappa=1\\
(\kappa/2^d) \cdot \bigl\{1 - (\kappa-1)y^d\bigr\}^{1/(\kappa-1)}\one_{\{y \leq 1/(\kappa-1)^{1/d}\}} &\text{ if }\kappa \in (1,\infty).
\end{cases}
\end{align*}
Now, for $\gamma > 0$ and $\tau \in (0,1)$, define the regression function $\eta_\gamma:\R^d \rightarrow [0,1]$ by
\[
\eta_{\gamma}(x) := 0 \vee \bigl\{\tau + \holderConstant \cdot \mathrm{sgn}(x_1)|x_1|^{1/\gamma}\bigr\} \wedge 1,
\] 
Writing $P_{\kappa,\gamma}$ for the distribution on $\R^d \times \{0,1\}$ with marginal $\mu_\kappa$ on $\R^d$ and regression function~$\eta_\gamma$, we have that $P_{\approximableDensityExponent,\approximableMarginExponent} \in \classOfWellApproximableSetsWithHyperCubes$ for $\approximableSetsConstant \equiv \approximableSetsConstant(d,\kappa,\gamma,\holderConstant) > 0$ sufficiently large.
\end{example}
Example~\ref{ex:1} is designed to be a simple setting of our problem, where we can take $\gamma = \kappa = 1$.  Example~\ref{ex:2} illustrates the way that the growth of $\regressionFunction$ as we move away from $\eta^{-1}(\tau)$ affects the parameter $\gamma$ of our class, while Example~\ref{ex:3} shows the effect of the tail behaviour of the marginal density on $\mathbb{R}^d$ on the parameter $\kappa$.  Proposition~\ref{Prop:GeneralExample} in the online supplement provides general conditions under which our joint distribution belongs to $\mathcal{P}_{\mathrm{App}}(\mathcal{A},\kappa,\gamma,\tau,C_{\mathrm{App}})$ with $\gamma=1$.  In essence, the $\gamma=1$ setting occurs when the gradient of the regression function never vanishes on the boundary $\eta^{-1}(\tau)$ of $\mathcal{X}_\tau(\eta)$.

We can now state the main theorem of this section, which reveals the minimax optimal rate of convergence for the regret over $\classOfHolderDistributionsSuperLevelSet \cap \classOfWellApproximableSets$ for a data-dependent selection set in $\hat{\mathcal{A}}_n\bigl(\alpha,\classOfHolderDistributionsSuperLevelSet\bigr)$.
\begin{theorem}
\label{thm:minimaxRate} 
Take $\holderExponent \in (0,1]$, $\holderConstant \geq 1$, $\approximableDensityExponent, \approximableMarginExponent > 0$, $\tau \in (0,1)$ and $\approximableSetsConstant \geq 1$.  

\medskip

\noindent\emph{\textbf{(i) Upper bound:}} Let $\mathcal{A} \subseteq \borel(\R^d)$ satisfy $\vcDim(\mathcal{A}) < \infty$ and $\emptyset \in \mathcal{A}$.  Then there exists $C\geq 1$, depending only on $d$, $\approximableDensityExponent$, $\approximableMarginExponent$, $\tau$, $\approximableSetsConstant$ and $\vcDim(\mathcal{A})$, such that for all $n \in \N$ and $\alpha \in (0,1/2]$, we have
\begin{equation}
    \label{Eq:MinimaxUpperBoundRegret}
\inf_{\hat{A}} \, \sup_{\probDistribution} \,  R_\tau(\hat{A}) \leq {C} \cdot \min\biggl\{\biggl({\frac{\holderConstant^{d/\holderExponent}\cdot \log_+( n/ \alpha)}{n}}\biggr)^{\frac{\holderExponent \approximableDensityExponent \approximableMarginExponent}{\approximableDensityExponent(2\holderExponent+d)+\holderExponent\approximableMarginExponent}}+\frac{1}{n^{1/2}},1\biggr\},
\end{equation}
where the infimum in~\eqref{Eq:MinimaxUpperBoundRegret} is taken over $\hat{\mathcal{A}}_n\bigl(\tau,\alpha,\classOfHolderDistributionsSuperLevelSet\bigr)$ and the supremum is taken over $\classOfHolderDistributionsSuperLevelSet \cap \classOfWellApproximableSets$.

\medskip

\noindent \emph{\textbf{(ii) Lower bound:}} Now suppose that $\holderExponent \approximableMarginExponent(\approximableDensityExponent-1) < d\approximableDensityExponent$, $\epsilon_0 \in (0,1/2)$, $\tau \in (\epsilon_0,1-\epsilon_0)$ and $\alpha \in (0,1/2-\epsilon_0]$.  Then there exists $c > 0$, depending only on $d$, $\holderExponent$, $\approximableDensityExponent$, $\approximableMarginExponent$, $\approximableSetsConstant$ and $\epsilon_0$, such that for any $\mathcal{A} \subseteq \borel(\R^d)$ satisfying $\mathcal{A}_{\mathrm{hpr}}  \subseteq \mathcal{A} \subseteq \mathcal{A}_{\mathrm{conv}}$ and any $n \in \N$, we have 
\begin{equation}
    \label{Eq:MinimaxLowerBound}
\inf_{\hat{A}} \, \sup_{\probDistribution} \,  R_\tau(\hat{A}) \geq {c} \cdot \min\biggl\{ \biggl({\frac{\holderConstant^{d/\holderExponent}\cdot \log_+\bigl( n/ (\holderConstant^{d/\holderExponent} \alpha)\bigr)}{n}}\biggr)^{\frac{\holderExponent \approximableDensityExponent \approximableMarginExponent}{\approximableDensityExponent(2\holderExponent+d)+\holderExponent\approximableMarginExponent}}+\frac{1}{n^{1/2}},1\biggr\},
\end{equation}
where, again, the infimum in~\eqref{Eq:MinimaxLowerBound} is taken over $\hat{\mathcal{A}}_n\bigl(\tau,\alpha,\classOfHolderDistributionsSuperLevelSet\bigr)$ and the supremum is taken over $\classOfHolderDistributionsSuperLevelSet \cap \classOfWellApproximableSets$.
\end{theorem}
Since $\vcDim(\mathcal{A}_{\mathrm{hpr}} ) < \infty$ \citep[e.g.][Exercise~5 in Section~6.8]{shalev2014understanding}, the choice $\mathcal{A} = \mathcal{A}_{\mathrm{hpr}} $ provides a natural example satisfying both the lower and upper bounds in Theorem~\ref{thm:minimaxRate}. 

In order to introduce the algorithm that achieves the upper bound, we define the \emph{empirical marginal distribution} $\empiricalMarginalDistribution$ and \emph{empirical regression function} $\empiricalRegressionFunction$, for $B \subseteq \R^d$, by
\begin{align}
\empiricalMarginalDistribution(B)&:=\frac{1}{n}\sum_{i=1}^n\one_{\{X_i \in B\}} \label{eq:empiricalMarginalDef}\\ 
\empiricalRegressionFunction(B)&:= \frac{1}{n\cdot \empiricalMarginalDistribution(B)}\sum_{i=1}^n Y_i\cdot\one_{\{X_i \in B\}} \label{eq:empiricalRegFuncDef}
\end{align}
whenever $\empiricalMarginalDistribution(B) > 0$, and $\empiricalRegressionFunction(B) := 1/2$ otherwise.  The main idea is to associate, to each $B \subseteq \R^d$, a $p$-value for a test of the hypothesis that the regression function is uniformly above the level $\tau$ on $B$.  More precisely, for $B \subseteq \R^d$, we define 
\begin{align}\label{eq:pValueDef}
\pValueN(B) \equiv \pValueLong(B) := \exp\Bigl\{ -n \cdot \empiricalMarginalDistribution(B)\cdot \kl\bigl( \empiricalRegressionFunction(B),\tau+\holderConstant\cdot \diamSup(B)^{\holderExponent}\bigr) \Bigr\},
\end{align}
whenever $\empiricalRegressionFunction(B)>\tau+\holderConstant\cdot \diamSup(B)^{\holderExponent}$, and $\pValueN(B):=1$ otherwise.  Lemma~\ref{lemma:pValue} below confirms that $\pValueN(B)$ is indeed a $p$-value (even conditionally on $\sampleX \equiv (X_i)_{i \in [n]}$).
\begin{lemma}
\label{lemma:pValue} 
Fix $\holderExponent \in (0,1]$, $\holderConstant \in [1,\infty)$ and $\probDistribution \in \classOfHolderDistributionsSuperLevelSet$ with a regression function  $\regressionFunction \in \mathcal{F}_{\mathrm{H\ddot ol}}\bigl(\holderExponent,\holderConstant,\etaSuperLevelSet{\tau}\bigr)$. Then given $B \in \borel(\R^d)$ with $B\not \subseteq \etaSuperLevelSet{\tau}$, and any $\alpha \in (0,1)$, we have
\begin{align*}
{\Prob}_P\big( \pValueN(B) \leq \alpha \mid \sampleX \big) \leq \alpha.
\end{align*}
\end{lemma}

We now exploit these $p$-values to specify a data-dependent selection set $\hat{A}$ that controls the Type I error over $\classOfHolderDistributionsSuperLevelSet$.  First, define a set of hyper-cubes 
\begin{align*}
\setOfHypercubes:=\biggl\{ 2^{-q} \prod_{j=1}^d [{2a_j-1},{2a_j+3}) : (a_1,\ldots,a_d)\in \Z^d,~q\in \N\biggr\}. 
\end{align*}
Now, given $n \in \N$ and $\bm{x}_{1:n}=(x_i)_{i \in [n]} \in (\R^d)^n$, we define 
\begin{align*}
\setOfHypercubes(\bm{x}_{1:n}):=\bigl\{ B \in \setOfHypercubes : \{x_1,\ldots,x_n\}\cap B \neq \emptyset \text{ and }\diamSup(B)\geq 1/n\bigr\},
\end{align*}
so that $|\setOfHypercubes(\bm{x}_{1:n})| \leq 2^dn(2+\log_2 n)$.  The overall algorithm, which applies Holm's procedure \citep{holm1979simple} to the $p$-values in~\eqref{eq:pValueDef} and is denoted by $\hat{A}_{\mathrm{OSS}} \in \hat{\mathcal{A}}_n$, is given in Algorithm~\ref{subsetSelectionAlgo}.  Note that in this algorithm, there is no loss of generality in assuming that $\mathcal{A}$ is ordered, by the well-ordering theorem, which is equivalent to the axiom of choice.  Of course, in most practical settings, there would be a natural ordering on $\mathcal{A}$ induced by an injective map from $\mathcal{A}$ to a Euclidean space with lexicographic ordering; for example, if $\mathcal{A}$ is the set of hyper-rectangles in $\mathbb{R}^d$, then this map could take a given hyper-rectangle $A = \prod_{j=1}^d [a_j,b_j]$ to $(a_1,b_1,\ldots,a_d,b_d) \in \mathbb{R}^{2d}$.  We remark also that in Algorithm~\ref{subsetSelectionAlgo}, it may be the case that there exist $B_1,B_2 \in \setOfHypercubes(\sampleX)$ with $B_1 \subseteq B_2$ and $B_2 \in \{B_{(1)},\ldots,B_{(\ell_\alpha)}\}$, while $B_1 \notin \{B_{(1)},\ldots,B_{(\ell_\alpha)}\}$.  This causes no difficulties for our theory.

\LinesNumbered
\begin{algorithm}[H]
\SetAlgoLined
\textbf{Input: } Data $\sample = \bigl((X_1,Y_1),\ldots,(X_n,Y_n)\bigr) \in (\R^d \times [0,1])^n$, an ordered set $\mathcal{A}$ of subsets of $\R^d$ with $\emptyset \in \mathcal{A}$, $(\tau,\alpha) \in (0,1)^2$, H\"older parameters $(\holderExponent,\holderConstant) \in (0,1] \times [1,\infty)$\;
Compute $\pValueN(B)$ for each $B \in \setOfHypercubes(\sampleX)$ using \eqref{eq:pValueDef} and let $\numberOfConsideredHyperCubes:=|\setOfHypercubes(\sampleX)|$;\\
Enumerate $\setOfHypercubes(\sampleX)$ as $(B_{(\ell)})_{\ell\in [\numberOfConsideredHyperCubes]}$, in such a way that $\hat{p}_n(B_{(\ell)})\leq \hat{p}_n(B_{(\ell')})$ for $\ell \leq \ell'$;\\
\uIf{$\numberOfConsideredHyperCubes \cdot \hat{p}_n(B_{(1)})\leq \alpha$}{

    Compute~$\ell_{\alpha}:=\max\bigl\{\ell \in [\numberOfConsideredHyperCubes]: (\numberOfConsideredHyperCubes+1-\ell')\cdot \hat{p}_n(B_{(\ell')}) \leq \alpha \ \forall \ell' \leq \ell\bigr\}$\;
    Choose $\hat{A}_{\mathrm{OSS}}(\sample) := \sargmax\big\{ \empiricalMarginalDistribution(A):A\in \mathcal{A}\cap \powerSet\bigl(\bigcup_{\ell \in [\ell_{\alpha}]}B_{(\ell)}\bigr) \big\} $\; 
  }
  \Else{
    Set $\hat{A}_{\mathrm{OSS}}(\sample)=\emptyset$\;
  }
\KwResult{The selected set $\hat{A}_{\mathrm{OSS}}(\sample)$.}

\caption{\label{subsetSelectionAlgo} The data-dependent selection set $\hat{A}_{\mathrm{OSS}}$}
\end{algorithm}

Proposition~\ref{thm:typeIControl} below provides part of the proof of the upper bound in Theorem~\ref{thm:minimaxRate}.
\begin{prop}
\label{thm:typeIControl} 
Let $\alpha \in (0,1)$ and $(\holderExponent,\holderConstant) \in (0,1]\times [1,\infty)$. Then the  data-dependent selection set $\hat{A}_{\mathrm{OSS}}$ controls the Type I error at level $\alpha$ over $\classOfHolderDistributionsSuperLevelSet$; in other words, $\hat{A}_{\mathrm{OSS}} \in \hat{\mathcal{A}}_n\bigl(\tau,\alpha,\classOfHolderDistributionsSuperLevelSet\bigr)$.
\end{prop}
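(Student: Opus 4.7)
The plan is to combine Lemma~\ref{lemma:pValue} with the classical familywise-error control property of Holm's procedure, after first reducing the event that $\hat{A}_{\mathrm{OSS}}(\sample)$ fails to lie in $\etaSuperLevelSet{\tau}$ to the event that the multiple-testing step of Algorithm~\ref{subsetSelectionAlgo} rejects a genuinely null hyper-cube.

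I would begin by observing that a failure of the inclusion $\hat{A}_{\mathrm{OSS}}(\sample) \subseteq \etaSuperLevelSet{\tau}$ forces the rejection of at least one such null hyper-cube. On the algorithmic branch where $\hat{A}_{\mathrm{OSS}}(\sample) \neq \emptyset$, the sargmax constraint gives $\hat{A}_{\mathrm{OSS}}(\sample) \subseteq \bigcup_{\ell \in [\ell_\alpha]} B_{(\ell)}$, so any witness $x \in \hat{A}_{\mathrm{OSS}}(\sample)$ with $\regressionFunction(x) < \tau$ lies in some $B_{(\ell^\star)}$ with $\ell^\star \leq \ell_\alpha$. Writing $\mathcal{H}_0 \equiv \mathcal{H}_0(\sampleX) := \bigl\{B \in \setOfHypercubes(\sampleX) : \inf_{y \in B} \regressionFunction(y) \leq \tau\bigr\}$ and $L_0 := |\mathcal{H}_0|$, such a $B_{(\ell^\star)}$ necessarily lies in $\mathcal{H}_0$, so it suffices to bound, conditionally on $\sampleX$, the probability that any element of $\mathcal{H}_0$ appears amongst $\{B_{(\ell)}: \ell \in [\ell_\alpha]\}$.

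Conditioning on $\sampleX$ renders $\setOfHypercubes(\sampleX)$, $\numberOfConsideredHyperCubes$ and $L_0$ deterministic, and Lemma~\ref{lemma:pValue} ensures that $\Prob_P\bigl(\pValueN(B) \leq t \mid \sampleX\bigr) \leq t$ for all $B \in \mathcal{H}_0$ and all $t \in (0,1)$. Now I would invoke the standard Holm chain of inequalities: let $B^\star$ be the element of $\mathcal{H}_0$ whose $p$-value is smallest, and let $r^\star$ be its rank amongst $(B_{(\ell)})_{\ell \in [\numberOfConsideredHyperCubes]}$. All hyper-cubes of rank strictly less than $r^\star$ must lie outside $\mathcal{H}_0$, so $r^\star - 1 \leq \numberOfConsideredHyperCubes - L_0$ and consequently $\numberOfConsideredHyperCubes + 1 - r^\star \geq L_0$. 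The rejection of $B^\star$ then yields, via the defining criterion of Algorithm~\ref{subsetSelectionAlgo}, the bound $\pValueN(B^\star) \leq \alpha/(\numberOfConsideredHyperCubes + 1 - r^\star) \leq \alpha/L_0$. A union bound across $\mathcal{H}_0$ combined with the conditional $p$-value property then delivers
\[
\Prob_P\bigl(\hat{A}_{\mathrm{OSS}}(\sample) \not\subseteq \etaSuperLevelSet{\tau} \bigm| \sampleX\bigr) \leq L_0 \cdot (\alpha/L_0) = \alpha
\]
on $\{L_0 \geq 1\}$ (with the left-hand side trivially zero when $L_0 = 0$), and averaging over $\sampleX$ concludes the argument.

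The main obstacle I anticipate is purely combinatorial: one must extract the inequality $(\numberOfConsideredHyperCubes + 1 - r^\star)\,\pValueN(B^\star) \leq \alpha$ from the definition of $\ell_\alpha$ specifically at the smallest null rank $r^\star$, exploiting that the hyper-cubes preceding $B^\star$ are all non-null and hence cannot ``waste'' any of the significance budget. Beyond this standard Holm bookkeeping, the proof is essentially a direct transcription of the textbook FWER argument to the data-dependent collection $\setOfHypercubes(\sampleX)$, with Lemma~\ref{lemma:pValue} providing the conditional $p$-value guarantee that makes the union bound legitimate.
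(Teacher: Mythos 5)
Your argument is correct and mirrors the paper's own proof (which is given via Lemma~\ref{lemma:typeIControl}): condition on $\sampleX$, observe that $\hat{A}_{\mathrm{OSS}} \subseteq \bigcup_{\ell \le \ell_\alpha} B_{(\ell)}$, identify the first null hyper-cube $\tilde{\ell}$ in the enumeration (your $B^\star$), use the rank bound $\numberOfConsideredHyperCubes + 1 - \tilde{\ell} \geq L_0$ to force $\hat p_n(B_{(\tilde{\ell})}) \le \alpha/L_0$ upon rejection, and finish with a union bound over $\mathcal{H}_0$ together with Lemma~\ref{lemma:pValue}. The reduction you sketch in your first paragraph, from the Type~I error event to the rejection of a null hyper-cube, is exactly the step by which the paper derives Proposition~\ref{thm:typeIControl} from Lemma~\ref{lemma:typeIControl}.
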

Proposition~\ref{thm:powerBound} complements Proposition~\ref{thm:typeIControl} by bounding the regret $R_{\tau}(\hat{A}_{\mathrm{OSS}})$, and together these results prove the upper bound in Theorem~\ref{thm:minimaxRate}.  In fact, we provide a high-probability bound as well as an expectation bound.
\begin{prop}
\label{thm:powerBound} 
\sloppy Take $\tau, \alpha \in (0,1)$, $\holderExponent \in (0,1]$, $\holderConstant \geq 1$, $\approximableDensityExponent,\approximableMarginExponent > 0$, $\approximableSetsConstant \geq 1$ and $\mathcal{A} \subseteq \borel(\R^d)$ with $\vcDim(\mathcal{A})<\infty$ and $\emptyset \in \mathcal{A}$. There exists $\tilde{C}\geq 1$, depending only on $d$, $\approximableDensityExponent$, $\approximableMarginExponent$, $\tau$, $\approximableSetsConstant$ and $\vcDim(\mathcal{A})$, such that for all $\probDistribution \in \classOfHolderDistributionsSuperLevelSet \cap \classOfWellApproximableSets$, $n \in \N$ and $\delta \in (0,1)$, we have  
\begin{align*}
\mathbb{P}_P\biggl[M_{\tau}-\mu(\hat{A}_{\mathrm{OSS}}) >\tilde{C}\biggl\{ \biggl({\frac{\holderConstant^{d/\holderExponent}}{n} \cdot \log_+\Bigl(\frac{n}{ \alpha\wedge \delta }\Bigr)\biggr)^{\frac{\holderExponent \approximableDensityExponent \approximableMarginExponent}{\approximableDensityExponent(2\holderExponent+d) +\holderExponent\approximableMarginExponent}}}+\biggl(\frac{\log_+(1/\delta)}{n}\biggr)^{1/2}\biggr\}\biggr] \leq \delta. 
\end{align*}
As a consequence, for $\alpha \in (0,1/2]$,
\[
R_\tau(\hat{A}_{\mathrm{OSS}}) \leq C\biggl\{ \biggl({\frac{\holderConstant^{d/\holderExponent}}{n} \cdot \log_+\Bigl(\frac{n}{\alpha}\Bigr)\biggr)^{\frac{\holderExponent \approximableDensityExponent \approximableMarginExponent}{\approximableDensityExponent(2\holderExponent+d) +\holderExponent\approximableMarginExponent}}}+\frac{1}{n^{1/2}}\biggr\},
\]
where $C > 0$ depends only on $\tilde{C}$.
\end{prop}


In the lower bound part of Theorem~\ref{thm:minimaxRate}, we have the condition $\holderExponent \approximableMarginExponent(\approximableDensityExponent-1) < d\approximableDensityExponent$.  A constraint of this form is natural in light of the tension between $\holderExponent$, $\approximableDensityExponent$ and $\approximableMarginExponent$.  In particular, large values of $\approximableDensityExponent$ and $\approximableMarginExponent$ mean that little $\mu$-mass in $\etaSuperLevelSet{\tau}$ is lost by restricting to sets $A \in \mathcal{A}$ for which the lower density of $A$ is not too small, and the regression function on $A$ is uniformly well above~$\tau$; but the smoothness of the regression function constrains the rate of change of $\regressionFunction$, and therefore the extent to which this is possible.  This intuition is formalised in Lemma~\ref{lemma:parameterConstraints}, where we prove that $\holderExponent \approximableMarginExponent(\approximableDensityExponent-1) \leq d\approximableDensityExponent$ provided there exists a distribution in our class for which the pre-image of $\tau$ under $\regressionFunction$ is non-empty, and~$\mu$ is sufficiently well-behaved.  Since the lower bound construction for the proof of Theorem~\ref{thm:minimaxRate}(ii) is common to both the setting of this section and that of the upcoming Section~\ref{Sec:HigherOrder} on higher-order smoothness, we will defer discussion of this construction until Section~\ref{Sec:LowerBound}.

\section{Choice of \texorpdfstring{$\holderConstant$}{lambda} and \texorpdfstring{$\holderExponent$}{beta}}
\label{Sec:Adaptation}

Our original algorithm for computing $\hat{A}_{\mathrm{OSS}}$ takes $\holderConstant$ and $\holderExponent$ as inputs.  In cases where a practitioner is unable to make informed default choices, it is natural to seek to understand the effect of overspecification and underspecification of these parameters, as well as to seek data-driven estimators.  To study the first of these questions, fix $\holderExponent\in (0,1]$ and $\holderConstant \geq 1$, as well as $\holderExponent' \in (0,\holderExponent]$ and $\holderConstant' \geq \holderConstant$.  Since $\classOfHolderDistributionsSuperLevelSetPrime \supseteq \classOfHolderDistributionsSuperLevelSet$, if we apply Algorithm~\ref{subsetSelectionAlgo} with inputs $\holderExponent'$ and $\holderConstant'$, then $\hat{A}_{\mathrm{OSS}} \in \hat{\mathcal{A}}_n\bigl(\tau,\alpha,\classOfHolderDistributionsSuperLevelSetPrime\bigr) \subseteq \hat{\mathcal{A}}_n\bigl(\tau,\alpha,\classOfHolderDistributionsSuperLevelSet\bigr)$; in other words, if we underspecify the smoothness, then we continue to control the Type I error.  On the other hand, we may pay a price in terms of a suboptimal rate of convergence for the regret: in the bounds in Proposition~\ref{thm:powerBound}, we would need to replace $\holderConstant$ and $\holderExponent$ with $\holderConstant'$ and $\holderExponent'$ respectively.  Conversely, if we overspecify the smoothness, then we no longer have guaranteed Type I error control.

In the remainder of this section, we tackle in turn the problems of estimating $\holderConstant$ and $\holderExponent$ from the data.

\subsection{Choice of \texorpdfstring{$\holderConstant$}{lambda}}
\label{SubSec:lambda}

In view of Proposition~\ref{prop:lipschitzNotEnoughForConsistency}, we will need to impose some additional (mild) restrictions on our classes of distributions, as well as a sample size condition, in order to estimate $\holderConstant$ effectively.   First, however, we describe our algorithm to estimate $\holderConstant$ for fixed $\holderExponent$, and demonstrate a sense in which it provides a slightly conservative estimate (Corollary~\ref{Cor:HolderConstantEstimation}).  It is convenient, for $u \in \R \setminus \{0\}$, to define $u/0 := \infty$ if $u>0$ and $u/0 := -\infty$ if $u < 0$.  For each $i$, $k \in [n]$ we let $r_{i,k}\equiv r_{i,k}(\sampleX) := \inf \bigl\{ r \in (0,\infty): |\{X_j\}_{j \in [n]} \cap \closedMetricBallSupNorm{X_i}{r}|\geq k\bigr\}$ denote the $k$th nearest neighbour distance from $X_i$ within $\sampleX$.  Recalling the definition of the empirical regression function $\empiricalRegressionFunction$ from~\eqref{eq:empiricalRegFuncDef}, given $\holderExponent \in (0,1]$, $\delta \in (0,1)$, and $i,j,k,\ell \in [n]$, we define
\begin{align*}
\hat{\phi}_{n,\holderExponent,\delta}(i,j,k,\ell)&:=\frac{\empiricalRegressionFunction\bigl(\closedMetricBallSupNorm{X_i}{r_{i,k}}\bigr) -\empiricalRegressionFunction\bigl(\closedMetricBallSupNorm{X_j}{r_{j,\ell}}\bigr)-\sqrt{2\log(4n^2/\delta)/(k\wedge \ell)}}{ \|X_i-X_j\|_{\infty}^\holderExponent+r_{i,k}^\holderExponent+r_{j,\ell}^\holderExponent }, \\
\hat{\psi}_{n,\holderExponent,\delta}(i,k) &:= \frac{1}{(2r_{i,k})^{\holderExponent}} \cdot \Bigl\{ \empiricalRegressionFunction\bigl(\closedMetricBallSupNorm{X_i}{r_{i,k}}\bigr) - \tau-\sqrt{\log(4n^2/\delta)/(2k)}\Bigr\}.
\end{align*}
We then set
\begin{align*}
\holderConstantEstimator \equiv \holderConstantEstimator(\sample) := 1 \vee\max_{(i,j,k,\ell) \in [n]^4} \min\bigl\{\hat{\phi}_{n,\holderExponent,\delta}(i,j,k,\ell),\hat{\psi}_{n,\holderExponent,\delta}(i,k),\hat{\psi}_{n,\holderExponent,\delta}(j,\ell)\bigr\}.
\end{align*}
To explain the idea behind this construction, suppose that $\probDistribution$ has continuous regression function~$\regressionFunction$ with H\"older constant $\holderConstant$ on $\etaSuperLevelSet{\tau}$, and marginal distribution $\marginalDistribution$ on $\R^d$. Note that $\empiricalRegressionFunction\bigl(\closedMetricBallSupNorm{X_i}{r_{i,k}}\bigr)$ is the $k$-nearest neighbour regression estimate of $\regressionFunction(X_i)$ when the nearest neighbour distances of $X_i$ are distinct.  Thus $\empiricalRegressionFunction\bigl(\closedMetricBallSupNorm{X_i}{r_{i,k}}\bigr) - \sqrt{\log(4n^2/\delta)/(2k)} - \holderConstant \cdot r_{i,k}^\holderExponent$ is a lower confidence bound for $\regressionFunction(X_i)$ and $\empiricalRegressionFunction\bigl(\closedMetricBallSupNorm{X_j}{r_{j,\ell}}\bigr) + \sqrt{\log(4n^2/\delta)/(2\ell)} + \holderConstant \cdot r_{j,\ell}^\holderExponent$ is an upper confidence bound for $\regressionFunction(X_j)$.  It follows that when $X_i,X_j \in \etaSuperLevelSet{\tau}$, we have with high probability that $\hat{\phi}_{n,\holderExponent,\delta}(i,j,k,\ell)$ does not exceed $\holderConstant$ for any $k,\ell \in [n]$.  On the other hand, if $X_i \notin \etaSuperLevelSet{\tau}$,  then with high probability, $\hat{\psi}_{n,\holderExponent,\delta}(i,k) \leq \holderConstant$, and similarly with $j,\ell$ in place of $i,k$.  Thus, with high probability $\holderConstantEstimator \leq \holderConstant$.

Now suppose that there exist well-separated points $x_0$, $x_1 \in \etaSuperLevelSet{\tau}$ such that $\mu$ is well behaved near both $x_0$ and $x_1$, and $\regressionFunction(x_0)-\regressionFunction(x_1) = \tilde{\holderConstant} \cdot \supNorm{x_0-x_1}^\holderExponent$ for some $\tilde{\holderConstant} \leq \holderConstant$, then with high probability, for a sufficiently large sample size, there will also exist data points $X_i$ near $x_0$ and $X_j$ near $x_1$, along with $k$, $\ell \in [n]$ for which $\hat{\phi}_{n,\holderExponent,\delta}(i,j,k,\ell)$ is not too much less than~$\tilde{\holderConstant}$. If, in addition, $\regressionFunction(x_0)$ is not too close to $\tau$ then with high probability, $\hat{\psi}_{n,\holderExponent,\delta}(i,k)$ will also not be much less than $\tilde{\holderConstant}$ for an appropriately chosen $k \in [n]$, and similarly for $x_1$, $X_j$ and $\ell$ in place of $x_0$, $X_i$ and $k$, respectively.  Overall, this ensures that $\holderConstantEstimator$ will be nearly as large as $\tilde{\holderConstant}$ with high probability, for a sufficiently large sample size.

\begin{theorem}\label{thm:lipEst} Let $(\holderExponent,\holderConstant) \in (0,1] \times [1,\infty)$ and $\probDistribution \in \classOfHolderDistributionsSuperLevelSet$.  Then, for $\delta \in (0,1)$,
\begin{align*}
\Prob_P\Biggl( \sup_{x_0,x_1 \in \mathcal{X}_\tau(\eta-\Delta_n)} \biggl\{ \frac{|\eta(x_0)-\eta(x_1)| -\Delta_n(x_0)\vee\Delta_n(x_1)}{\|x_{0}-x_1\|_{\infty}^\holderExponent} \biggr\} \leq \holderConstantEstimator\leq \holderConstant\Biggr) \geq 1-\delta,
\end{align*}
where $\Delta_n \equiv \Delta_{n,\holderExponent,\holderConstant}: \R^d \rightarrow [0,\infty]$ is defined by 
\begin{equation}
\label{Eq:Deltanbeta}
 \Delta_{n}(x) := 192 \cdot \holderConstant^{d/(2\holderExponent+d)}\cdot \biggl(\frac{\log(2n/\delta)}{n\cdot \omega(x)}\biggr)^{\holderExponent/(2\holderExponent+d)},
\end{equation}
with the convention that $\Delta_{n}(x):= \infty$ if $\omega(x) = 0$.
\end{theorem}
An attraction of Theorem~\ref{thm:lipEst} is that it makes no assumptions on $P$ apart from the corresponding regression function satisfying the H\"older condition of Definition~\ref{def:holder}.  We now show that, under further conditions on our class, it is possible to control the lower confidence bound for $\holderConstantEstimator$ to guarantee that with high probability it is within a factor of 2 of the appropriate H\"older constant.  More precisely, for a distribution $P$ on $\R^d \times [0,1]$ having marginal distribution $\mu$ on $\R^d$ and continuous regression function $\eta$, and for $\holderExponent \in (0,1]$, we define the $\holderExponent$-H\"older constant of $P$ to be
\[
\underline{\holderConstant}_{\holderExponent}(P) := \sup \biggl\{\frac{|\eta(x_0)-\eta(x_1)|}{\|x_{0}-x_1\|_{\infty}^\holderExponent} : x_0, x_1 \in \mathrm{supp}(\mu) \cap \etaSuperLevelSet{\tau}, x_0 \neq x_1 \biggr\} \vee 1.
\]
We write $P \in \classOfDistributionsSatisfyingRegularityCondition$ if  $\regressionFunction$ is continuous and satisfies $\eta^{-1}\bigl([\tau,\tau+\varepsilon)\bigr) \subseteq \mathrm{supp}(\mu)$ for some $\varepsilon>0$.  Further, for $\holderConstantUpperBound \in [1,\infty)$ and $\boldsymbol{\epsilon}=(\epsilon_0,\epsilon_1, \epsilon_2) \in (0,1]^3$, we write $P \in \classOfHolderDistributionsSuperLevelSetIdentifiable$ if $\underline{\holderConstant}_{\holderExponent}(P) \leq \holderConstantUpperBound$ and either $\underline{\holderConstant}_{\holderExponent}(P)=1$, or there exist $x_0, x_1 \in \etaSuperLevelSet{\tau+\epsilon_0}$ with $\|x_0-x_1\|_{\infty} \geq \epsilon_1$, as well as $\min\{\omega(x_0),\omega(x_1)\} \geq \epsilon_2$ and 
\begin{align*}
|\eta(x_0)-\eta(x_1)| \geq \frac{3}{4}\cdot  \underline{\holderConstant}_{\holderExponent}(P) \cdot \|x_{0}-x_1\|_{\infty}^\holderExponent.
\end{align*}
The idea here is that if $P \in \classOfHolderDistributionsSuperLevelSetIdentifiable$ and $\underline{\holderConstant}_{\holderExponent}(P) > 1$, then we can find a well-separated pair of points that nearly attains the supremum in the definition of $\underline{\holderConstant}_{\holderExponent}(P)$, as well as belonging comfortably to the $\tau$-super level set of $\regressionFunction$ and having $\mu$ assign  sufficient mass in small neighbourhoods of each of the points.  In Lemma~\ref{Lemma:Inclusion} we show that 
\[
\classOfDistributionsSatisfyingRegularityCondition \cap  \classOfHolderDistributionsSuperLevelSet \subseteq  \bigcup_{\boldsymbol{\epsilon}\in (0,\infty)^3}\classOfHolderDistributionsSuperLevelSetIdentifiable,
\]
so that, under the mild condition that $P \in \classOfDistributionsSatisfyingRegularityCondition$, the additional restriction enforced by $P \in \classOfHolderDistributionsSuperLevelSetIdentifiable$ for small $\epsilon_0, \epsilon_1, \epsilon_2 > 0$ amounts to very little more than asking for $P$ to satisfy the H\"older condition of Definition~\ref{def:holder} for some $\holderConstant \geq 1$.  
\begin{corollary}
\label{Cor:HolderConstantEstimation}
Fix $\holderExponent \in (0,1]$, $\holderConstantUpperBound \in [1,\infty)$, $\boldsymbol{\epsilon}=(\epsilon_0,\epsilon_1, \epsilon_2) \in (0,1]^3$ and take $P \in \classOfDistributionsSatisfyingRegularityCondition\cap  \classOfHolderDistributionsSuperLevelSetIdentifiable$.  Let $n \in \mathbb{N}$ and $\delta \in (0,1)$ be such that 
\begin{equation}
\label{Eq:SampleSizeCondition}
\frac{n}{\log(2n/\delta)} \geq \frac{1}{\epsilon_2} \cdot \max\biggl\{ \Bigl(\frac{192}{\epsilon_0}\Bigr)^{(2\holderExponent+d)/\holderExponent}\holderConstantUpperBound^{d/\holderExponent}, \Bigl(\frac{768}{ \epsilon_1} \Bigr)^{(2\holderExponent+d)/\holderExponent}\biggr\}.
\end{equation}
Then
\[
\Prob_P\biggl(\frac{\underline{\holderConstant}_{\holderExponent}(P)}{2} \leq \holderConstantEstimator \leq \underline{\holderConstant}_{\holderExponent}(P)\biggr) \geq 1 - \delta.
\]
\end{corollary}
Corollary~\ref{Cor:HolderConstantEstimation} reveals that when $P \in \classOfDistributionsSatisfyingRegularityCondition\cap  \classOfHolderDistributionsSuperLevelSetIdentifiable$, the estimator $\holderConstantEstimator$ is reliable in the sense that with high probability, it is within a factor of 2 of the desired $\underline{\holderConstant}_{\holderExponent}(P)$ for a sufficiently large sample size.  However, in order to control Type I error we will in fact use the estimator $2\holderConstantEstimator$, which has the benefit of being slightly conservative, i.e.~it tends to overestimate $\underline{\holderConstant}_{\holderExponent}(P)$, while still being within a factor of 2 of the desired $\underline{\holderConstant}_{\holderExponent}(P)$.  Theorem~\ref{Thm:BetaKnown} summarises our overall guarantees when applying Algorithm~\ref{subsetSelectionAlgo} in this context.  
\begin{theorem}\label{Thm:BetaKnown} Fix $\alpha \in (0,1)$, $d \in \N$, $\holderExponent \in (0,1]$, $\holderConstantUpperBound \in [1,\infty)$ and $\boldsymbol{\epsilon}=(\epsilon_0,\epsilon_1, \epsilon_2) \in (0,1]^3$. Suppose that $n \in \mathbb{N}$ satisfies~\eqref{Eq:SampleSizeCondition} with $\alpha_n:=(\alpha/2)\wedge (1/n)$ in place of $\delta$.  Let $\hat{A}_{\mathrm{OSS}}'$ denote the output of Algorithm~\ref{subsetSelectionAlgo} with inputs $\mathcal{D}$, $\mathcal{A}$, $\tau$,  $\alpha_n$, $\holderExponent$ and $2\holderConstantEstimatorArgs{\holderExponent}{\alpha_n}$.  Then

\noindent\emph{\textbf{(i) Type I error:}} $\hat{A}_{\mathrm{OSS}}' \in \hat{\mathcal{A}}_n\bigl(\tau,\alpha,\classOfDistributionsSatisfyingRegularityCondition\cap  \classOfHolderDistributionsSuperLevelSetIdentifiable\bigr)$.

\noindent\emph{\textbf{(ii) Regret:}} Now suppose further that $\mathcal{A}$ satisfies $\vcDim(\mathcal{A})<\infty$ and $\emptyset \in \mathcal{A}$, that $\alpha \in (0,1/2]$, and fix $\approximableDensityExponent$, $\approximableMarginExponent$ and $\approximableSetsConstant$.  There exists $C \geq 1$, depending only on $d$, $\approximableDensityExponent$, $\approximableMarginExponent$, $\tau$, $\approximableSetsConstant$ and $\vcDim(\mathcal{A})$, such that for any  $P \in \classOfDistributionsSatisfyingRegularityCondition\cap  \classOfHolderDistributionsSuperLevelSetIdentifiable\cap \classOfWellApproximableSets$, we have
\begin{align}\label{eq:regretKnownBetaThm}
 R_\tau(\hat{A}_{\mathrm{OSS}}') \leq C\biggl\{ \biggl({\frac{\underline{\holderConstant}_{\holderExponent}(P) ^{d/\holderExponent}}{n} \cdot \log_+\Bigl(\frac{n}{\alpha}\Bigr)\biggr)^{\frac{\holderExponent \approximableDensityExponent \approximableMarginExponent}{\approximableDensityExponent(2\holderExponent+d) +\holderExponent\approximableMarginExponent}}}+\frac{1}{n^{1/2}}\biggr\}.
\end{align}
\end{theorem}
The main message of Theorem~\ref{Thm:BetaKnown} is that replacing $\holderConstant$ with $2\holderConstantEstimatorArgs{\holderExponent}{\alpha/2}$ in Algorithm~\ref{subsetSelectionAlgo} retains both the Type~I error validity and the regret guarantees when the sample size is sufficiently large and provided that we make the slight restrictions to our classes of distributions mentioned above.  An immediate consequence of this result together with Lemma~\ref{Lemma:Inclusion} is that for any  $P \in \classOfDistributionsSatisfyingRegularityCondition \cap \bigcup_{\holderConstant \in [1,\infty)} \classOfHolderDistributionsSuperLevelSet$, there exists  $N_\alpha(P)\in \N$ such that for all $n \geq N_\alpha(P)$ we have both Type I error control, i.e.~$\Prob_P\bigl( \hat{A}(\sample) \subseteq{} \etaSuperLevelSet{\tau}\bigr)\geq 1-\alpha$, and the regret guarantee~\eqref{eq:regretKnownBetaThm}.  An attraction of this approach to choosing the input $\holderConstant$ is that we avoid sample splitting.

\subsection{Choice of \texorpdfstring{$\holderExponent$}{beta}}
\label{SubSec:beta}

\newcommand{\slowlyIncreasingFunctionHolderExponent}{f}
\newcommand{\holderExponentEstJoint}[1]{\hat{\holderExponent}_{n,#1}}
\newcommand{\holderConstantEstJoint}[1]{\hat{\holderConstant}_{n,\holderExponentEstJoint{#1},#1}}

Since the algorithm described in Section~\ref{SubSec:lambda} takes $\beta \in (0,1]$ as an input, we now describe a data-driven algorithm for estimating this parameter.  In contrast to identifying the H\"older constant, identifying the H\"older exponent requires an analysis of the behaviour of the regression function at multiple scales.  This motivates the introduction of distributional classes for which the corresponding regression functions exhibit `self-similar' behaviour; see Definition~\ref{def:holderSelfSimilar} below.  Related ideas have appeared in the adaptive confidence band literature \citep{picard2000adaptive,GineNicklAOS738,bull2012honest,gur2022smoothness}. First, given a Borel measure $\mu$ on $\R^d$ and $\lowerDensityConstantSelfSimilar \in (0,\infty)$, we let 
\begin{align*}
\regularSetSelfSimilar:= \bigcap_{r \in (0,1)} \bigl\{ x \in \R^d :  \mu\bigl( \closedMetricBallSupNorm{x}{r}\bigr) \geq \lowerDensityConstantSelfSimilar \cdot r^d \bigr\}.
\end{align*}
Thus $\regularSetSelfSimilar$ denotes the set of points $x$ for which the $\mu$-measure of small balls centred at~$x$ can be bounded below by their volumes, up to constants.
\begin{defn}[Self-similar H\"{o}lder class]\label{def:holderSelfSimilar} Given $\holderExponent \in (0,1]$, $\holderConstant \in [1,\infty)$, $\lowerHolderConstantSelfSimilar\in (0,\infty)$, $\lowerDensityConstantSelfSimilar, \minRadiusSelfSimilar \in (0,1]$, we let $\classOfSelfSimilarHolderDistributions$ denote the class of all distributions $\probDistribution$ on $\R^d \times [0,1]$ with regression function $\regressionFunction \in \classOfRestrictedHolderFunctions{\R^d}$ such that for all $r \in (0,r_0]$ there exist $x_0, x_1 \in \regularSetSelfSimilar$ such that $\supNorm{x_0-x_1} \leq r$ and $|\regressionFunction(x_0)-\regressionFunction(x_1)| \geq \lowerHolderConstantSelfSimilar \cdot r^\holderExponent$.
\end{defn}
We view the regression functions $\regressionFunction$ of distributions $P \in \classOfSelfSimilarHolderDistributions$ as self-similar since the fluctuations permitted by the $\holderExponent$-H\"older constraint are exhibited at every sufficiently small scale.  For $\delta \in (0,1)$, we aim to define estimators $(\holderExponentEstJoint{\delta},\holderConstantEstJoint{\delta})$ of $(\holderExponent,\holderConstant)$.  To this end, for $i,j,k,\ell \in [n]$, let
\begin{align*}
\hat{\varepsilon}^\dagger_{n,\holderExponent,\delta}(i,j,k,\ell) &:= -\log\bigl( \|X_i-X_j\|_{\infty}+r_{i,k}+r_{j,\ell}\bigr),\\
\hat{\varphi}^\dagger_{n,\holderExponent,\delta}(i,j,k,\ell)&:=-\log\Bigl(\bigr|\empiricalRegressionFunction\bigl(\closedMetricBallSupNorm{X_i}{r_{i,k}}\bigr) -\empiricalRegressionFunction\bigl(\closedMetricBallSupNorm{X_j}{r_{j,\ell}}\bigr)\bigr|-\sqrt{2\log(4n^2/\delta)/(k\wedge \ell)}\Bigr),
\end{align*}
with the convention that $-\log z := \infty$ for $z \leq 0$. We then define 
\begin{align*}
\hat{\Gamma}_{n,\delta}^\dagger:= \bigl\{ \bigl(\hat{\varepsilon}^\dagger_{n,\holderExponent,\delta}(i,j,k,\ell), \hat{\varphi}^\dagger_{n,\holderExponent,\delta}(i,j,k,\ell) \bigr) ~:~ (i,j,k,\ell) \in [n]^4,~\hat{\varphi}^\dagger_{n,\holderExponent,\delta}(i,j,k,\ell)<\infty\bigr\}.
\end{align*}
Finally, letting $\slowlyIncreasingFunctionHolderExponent:\N \rightarrow [1,\infty)$ denote any increasing function satisfying $f(n) \rightarrow \infty$ as $n \rightarrow \infty$, we define
\begin{align*}
\holderExponentEstJoint{\delta}\equiv\holderExponentEstJoint{\delta}(\sample):= 0 \vee  ~\max_{(u_0,v_0) \in \hat{\Gamma}_{n,\delta}^\dagger~:~u_0 \leq \frac{\log n}{6+2d}}~\min_{(u_1,v_1) \in \hat{\Gamma}_{n,\delta}^\dagger~:~u_1 \geq 2u_0} \frac{v_1-v_0-\log \slowlyIncreasingFunctionHolderExponent(n)}{u_1-u_0},
\end{align*}
with the conventions that $\max \emptyset := -\infty$ and $\min \emptyset:= \infty$. Theorem~\ref{thm:holderExponentEstimationThm} below provides a high-probability guarantee on the performance of~$\holderExponentEstJoint{\delta}$.

\begin{theorem}\label{thm:holderExponentEstimationThm} Fix $\holderExponent \in (0,1]$, $d \in \N$, $\holderConstant \in [1,\infty)$ as well as $\lowerHolderConstantSelfSimilar \in (0,\holderConstant]$, $\lowerDensityConstantSelfSimilar, \minRadiusSelfSimilar \in (0,1]$ and $P \in \classOfSelfSimilarHolderDistributions$. Then for $n \in \N$ such that $\slowlyIncreasingFunctionHolderExponent(n) \geq 14\holderConstant/\lowerHolderConstantSelfSimilar$ and
\begin{align}\label{eq:sampleSizeConditionFromThmholderExponentEstimationThm}
 n \geq \max\biggl\{\frac{1}{r_0^{(7+2d)}},\frac{8(16\holderConstant)^{d/\holderExponent}\log(4n^2/\delta)}{2^{7d/2\holderExponent} \lowerDensityConstantSelfSimilar},\biggl(\frac{2^{10}7^{2\holderExponent+d}(16\holderConstant/\lowerHolderConstantSelfSimilar)^{d/\holderExponent}\log(4n^2/\delta)}{\lowerDensityConstantSelfSimilar\cdot \lowerHolderConstantSelfSimilar^2} \biggr)^{3+d} \biggr\},
\end{align}
we have 
\begin{align*}
\Prob_P\biggl(\holderExponent -\frac{2(7+2d)\log \slowlyIncreasingFunctionHolderExponent(n)}{\log n}\leq \holderExponentEstJoint{\delta} \leq \holderExponent\biggr)\geq 1-\delta.
\end{align*}
\end{theorem}

Theorem~\ref{thm:holderExponentEstimationThm} ensures that $\holderExponentEstJoint{\delta}$ is a uniformly consistent estimator over each of our self-similar classes.  Moreover, analogously to Corollary~\ref{Cor:HolderConstantEstimation}, it tends to slightly underestimate the true H\"older exponent, which is again advantageous for establishing our overall guarantees on the performance of Algorithm~\ref{subsetSelectionAlgo} with this data-driven choice of $\holderExponent$.
\begin{theorem}\label{Thm:BetaUnknown} Fix $\alpha \in (0,1)$, $\holderExponent \in (0,1]$, $d \in \N$, $\holderConstant \in [1,\infty)$, $\lowerHolderConstantSelfSimilar \in (0,\holderConstant]$, $\lowerDensityConstantSelfSimilar, \minRadiusSelfSimilar \in (0,1]$ and $\boldsymbol{\epsilon}=(\epsilon_0,\epsilon_1, \epsilon_2) \in (0,1]^3$. Let $\tilde{\alpha}_n := (\alpha/3) \wedge (1/n)$. Suppose that $n \in \N$ satisfies $ 14\holderConstant/\lowerHolderConstantSelfSimilar \leq \slowlyIncreasingFunctionHolderExponent(n) \leq n^{\frac{\log(9/7)}{2(7+2d)\log(1/\epsilon_2)}}$ and 
\begin{align*}
n \geq \max\biggl\{\frac{1}{r_0^{(7+2d)}},\frac{8(16\holderConstant)^{d/\holderExponent}\log(12n^3/\alpha)}{2^{7d/2\holderExponent} \lowerDensityConstantSelfSimilar},\biggl(\frac{2^{10}7^{2\holderExponent+d}(16\holderConstant/\lowerHolderConstantSelfSimilar)^{d/\holderExponent}\log(12n^3/\alpha)}{\lowerDensityConstantSelfSimilar\cdot \lowerHolderConstantSelfSimilar^2} \biggr)^{3+d},\\
 \frac{\log(6n^2/\alpha)}{\epsilon_2}\cdot \biggl[  \slowlyIncreasingFunctionHolderExponent(n)^{2(7+2d)}\cdot \Bigl( 192 \cdot \max\Bigl\{ \frac{\holderConstant}{\epsilon_0},\frac{12}{\epsilon_1}\Bigr\}\Bigr)^{2+d}\biggr]^{1/\holderExponent} \biggr\}.
\end{align*}
Let $\hat{A}_{\mathrm{OSS}}''$ denote the output of Algorithm~\ref{subsetSelectionAlgo} with input $\tilde{\alpha}_n$ in place of $\alpha$, $\hat{\holderExponent}_{n,\tilde{\alpha}_n}$ in place of $\holderExponent$ and $2\hat{\holderConstant}_{n,\hat{\holderExponent}_{n,\tilde{\alpha}_n},\tilde{\alpha}_n}$ in place of $\holderConstant$. 
Then

\noindent\emph{\textbf{(i) Type I error:}} $\hat{A}_{\mathrm{OSS}}'' \in \hat{\mathcal{A}}_n\bigl(\tau,\alpha, \classOfDistributionsSatisfyingRegularityCondition\cap  \classOfHolderDistributionsSuperLevelSetIdentifiable \cap \classOfSelfSimilarHolderDistributions\bigr)$.

\noindent\emph{\textbf{(ii) Regret:}} Now suppose further that $\mathcal{A}$ satisfies $\vcDim(\mathcal{A})<\infty$ and $\emptyset \in \mathcal{A}$, that $\alpha \in (0,1/2]$, and fix $\approximableDensityExponent$, $\approximableMarginExponent$ and $\approximableSetsConstant$.  There exists $C \geq 1$, depending only on $d$, $\approximableDensityExponent$, $\approximableMarginExponent$, $\tau$, $\approximableSetsConstant$ and $\vcDim(\mathcal{A})$, such that for $P \in  \classOfDistributionsSatisfyingRegularityCondition\cap  \classOfHolderDistributionsSuperLevelSetIdentifiable \cap \classOfSelfSimilarHolderDistributions\cap \classOfWellApproximableSets$, we have
\begin{align}\label{eq:regretUnnownBetaThm}
 R_\tau(\hat{A}_{\mathrm{OSS}}'') \leq C\biggl\{\slowlyIncreasingFunctionHolderExponent(n)^{4\approximableMarginExponent(7+2d)/d} \biggl({\frac{\underline{\holderConstant}_{\holderExponent}(P) ^{d/\holderExponent}}{n} \cdot \log_+\Bigl(\frac{n}{\alpha}\Bigr)\biggr)^{\frac{\holderExponent \approximableDensityExponent \approximableMarginExponent}{\approximableDensityExponent(2\holderExponent+d) +\holderExponent\approximableMarginExponent}}}+\frac{1}{n^{1/2}}\biggr\}. 
\end{align}
\end{theorem}
Theorem~\ref{Thm:BetaUnknown} confirms that when applying Algorithm~\ref{subsetSelectionAlgo} with our data-driven choices of $\holderExponent$ and $\holderConstant$, we maintain large-sample Type I error control over appropriate classes, and only lose a sub-logarithmic factor in $n$ in terms of regret.


\section{Higher-order smoothness}\label{Sec:HigherOrder}

In this section, we explain how the procedure and analysis of Section~\ref{Sec:MainResults} can be modified and extended to cover a general smoothness level $\holderExponent > 0$ for the regression function. Given $\nu=(\nu_1,\ldots,\nu_d)^\top \in \N_0^d$ and $x=(x_1,\ldots,x_d)^\top \in \R^d$, we define $\|\nu\|_1:=\sum_{j=1}^d \nu_j$, $\nu!:=\prod_{j=1}^d \nu_j!$ and $x^\nu:=\prod_{j=1}^d x_j^{\nu_j}$.  For an $\|\nu\|_1$-times differentiable function $g: \R^d \rightarrow \R$, define $\partial^\nu_x (g):= \frac{\partial^{\|\nu\|_1}g}{\partial x_1^{\nu_1}\ldots \partial x_d^{\nu_d}}(x)$.  Given $\holderExponent \in (0,\infty)$ we let $\mathcal{V}(\holderExponent):=\{\nu \in \N_0^d:\|\nu\|_1\leq \lceil \holderExponent \rceil -1\}$, so that $|\mathcal{V}(\holderExponent)| = \binom{\lceil \holderExponent \rceil + d - 1}{d}$, and for a $(\lceil \holderExponent \rceil -1)$-times differentiable function $g:\R^d \rightarrow \R$, let $\taylorSeries_{x}^{\holderExponent}[g]:\R^d \rightarrow \R$ denote the associated Taylor polynomial at $x \in \mathbb{R}^d$, defined by
\begin{align*}
\taylorSeries_{x}^{\holderExponent}[g](x'):= \sum_{\nu \in \mathcal{V}(\holderExponent)}\frac{(x'-x)^\nu}{\nu!}\cdot \partial^\nu_x(g),
\end{align*}
for $x' \in \R^d$.
\begin{defn}[General H\"{o}lder class]\label{def:holderHigherOrder} Given $(\holderExponent,\holderConstant) \in (0,\infty)\times [1,\infty)$, let $\tilde{\measureClass}_{\mathrm{H\ddot{o}l}}(\holderExponent,\holderConstant)$ denote the class of all distributions $\probDistribution$ on $\R^d \times [0,1]$ such that the associated regression function $\regressionFunction:\R^d \rightarrow [0,1]$ is $(\lceil \holderExponent \rceil-1)$-differentiable and satisfies
\begin{align*}
\bigl|\regressionFunction(x') - \taylorSeries_{x}^{\holderExponent}[\regressionFunction](x')\bigr|\leq \holderConstant \cdot \supNorm{x'-x}^{\holderExponent},
\end{align*}
for all $x, x' \in \R^d$. Moreover, we let $ \classOfHolderDistributions:= \bigcap_{\holderExponent' \in (0, \holderExponent]}\tilde{\measureClass}_{\mathrm{H\ddot{o}l}}(\holderExponent',\holderConstant)$.
\end{defn}

Throughout this section, and in contrast to Section~\ref{Sec:MainResults}, we will require that the marginal distribution $\mu$ is absolutely continuous with respect to Lebesgue measure, and write $f_\mu$ for its density. Given a probability measure $\mu$ on $\R^d$ and some $\regularityConstant \in (0,1)$, we define
\begin{align}\label{eq:defRegularSet}
\muRegularSet:=\bigcap_{r \in (0,1)}\biggl\{ x \in \R^d: \mu\bigl(\openMetricBallSupNorm{x}{r}\bigr) \geq \regularityConstant \cdot r^d \cdot \sup_{x' \in \openMetricBallSupNorm{x}{(1+\regularityConstant)r}}f_\mu(x')\biggr\}.
\end{align}
To provide some intuition about $\muRegularSet$, consider first a simple example where $\mu$ denotes the $N(0,\sigma^2)$ distribution.  In that case, the point $x=0$ belongs to $\mathcal{R}_{\upsilon}(\mu)$ if and only if $\upsilon \leq \sqrt{2\pi} \sigma\bigl\{2\Phi(1/\sigma)-1\bigr\}$.  In particular, we must have $\upsilon \leq \sqrt{2\pi} \sigma$, and (since we only consider $\upsilon < 1$), it suffices that $\upsilon \leq 2\sigma$.  More generally, if $S \subseteq \mathrm{supp}(\mu)$ is a $(c_0,r_0)$-regular set \citep[][Equation~(2.1)]{audibert2007fast} and if $\mu$ is absolutely continuous with respect to~$\Lebesgue$ with corresponding density $f_\mu$ satisfying the condition that $K_\mu := \sup_{x \in \mathrm{supp}(\mu)} f_\mu(x)/\inf_{x \in S} f_\mu(x) < \infty$, then $S \subseteq \muRegularSet$ for $\regularityConstant \leq c_0 \cdot V_d \cdot (r_0 \wedge 1)^d\cdot K_{\mu}^{-1}$.  Moreover, we can still have $\muRegularSet = \mathrm{supp}(\mu)$ even when $\mu$ is not compactly supported and there is no uniform positive lower bound for~$f_\mu$ on its support.  For instance, the family of probability measures $\{\mu_\kappa: \kappa \in (0,1)\}$ on $\R^d$ considered in Example~\ref{ex:3} satisfies $\mathcal{R}_{\regularityConstant}(\mu_\kappa) = \R^d$ for $\regularityConstant \leq 2^d \cdot \bigl\{1 + 3^d(1-\kappa)\bigr\}^{-1/(1-\kappa)}$.  Finally, if $\log f_\mu$ if $L$-Lipschitz with respect to the supremum norm, then $\mathcal{R}_{\upsilon}(\mu) = \R^d$ provided that $\upsilon \leq 2^d e^{-3L}$. 

We are now in a position to define an appropriate definition of approximable classes for regression functions with higher-order smoothness.
\begin{defn}[Approximable classes for higher-order smoothness]\label{defn:approximableMeasureClassHO} Given $\mathcal{A} \subseteq \borel(\R^d)$ and $(\approximableDensityExponent,\approximableMarginExponent,\regularityConstant,\approximableSetsConstant)\in (0,\infty)^2\times (0,1)\times [1,\infty)$, we let $\classOfWellApproximableSetsHO$ denote the class of all distributions $\probDistribution$ on $\R^d \times [0,1]$ with marginal~$\mu$ on $\R^d$ and a continuous regression function $\regressionFunction:\R^d \rightarrow [0,1]$ such that
\begin{align}\label{eq:FromDefApproximableMeasureClassHO}
\sup\bigl\{ \mu(A):A \in \mathcal{A}\cap  \powerSet\bigl(\muRegularSet \cap \densitySuperLevelSet{\xi}\cap \etaSuperLevelSet{\tau+\Delta}\bigr)  \bigr\} \geq M_\tau - \approximableSetsConstant \cdot (\xi^{\approximableDensityExponent}+\Delta^{\approximableMarginExponent}),
\end{align}
for all $(\xi,\Delta) \in (0,\infty)^2$.
\end{defn}
Finally, then, we can state the main theorem of this section.
\begin{theorem}
\label{Thm:minimaxRateHOS}
Take $(\tau,\alpha) \in (0,1)^2$, $(\holderExponent,\holderConstant) \in (0,\infty) \times [1,\infty)$ and $(\approximableDensityExponent,\approximableMarginExponent,\regularityConstant,\approximableSetsConstant)\in (0,\infty)^2 \times (0,1) \times [1,\infty)$.  

\medskip

\noindent\emph{\textbf{(i) Upper bound:}} Let $\mathcal{A} \subseteq \borel(\R^d)$ satisfy $\vcDim(\mathcal{A}) < \infty$ and $\emptyset \in \mathcal{A}$.  Then there exists $C\geq 1$, depending only on $d$, $\holderExponent$, $\approximableDensityExponent$, $\approximableMarginExponent$, $\regularityConstant$, $\approximableSetsConstant$ and $\vcDim(\mathcal{A})$, such that for all $n \in \N$ and $\alpha \in (0,1/2]$, we have
\begin{equation}\label{Eq:MinimaxUpperBoundHO}
\inf_{\hat{A}} \, \sup_{\probDistribution} \,  R_\tau(\hat{A}) \leq {C} \cdot \min\biggl\{\biggl({\frac{\holderConstant^{d/\holderExponent}\cdot \log_+(n/ \alpha)}{n}}\biggr)^{\frac{\holderExponent \approximableDensityExponent \approximableMarginExponent}{\approximableDensityExponent(2\holderExponent+d)+\holderExponent\approximableMarginExponent}}+\frac{1}{n^{1/2}},1\biggr\},
\end{equation}
where the infimum in~\eqref{Eq:MinimaxUpperBoundHO} is taken over $\hat{\mathcal{A}}_n\bigl(\tau,\alpha,\classOfHolderDistributions\bigr)$ and the supremum is taken over $\classOfHolderDistributions \cap \classOfWellApproximableSetsHO$.

\medskip

\noindent \emph{\textbf{(ii) Lower bound:}} Now suppose that $\holderExponent \approximableMarginExponent(\approximableDensityExponent-1) < d\approximableDensityExponent$, $\epsilon_0 \in (0,1/2)$, $\tau \in (\epsilon_0,1-\epsilon_0)$, $\alpha \in (0,1/2-\epsilon_0]$ and $\regularityConstant \in \bigl(0,(4d^{1/2})^{-d}\bigr]$.  Then there exists $c > 0$, depending only on $d$, $\holderExponent$, $\approximableDensityExponent$, $\approximableMarginExponent$, $\approximableSetsConstant$ and $\epsilon_0$, such that for any $\mathcal{A} \subseteq \borel(\R^d)$ satisfying $\mathcal{A}_{\mathrm{hpr}}  \subseteq \mathcal{A} \subseteq \mathcal{A}_{\mathrm{conv}}$ and any $n \in \N$, we have 
\begin{equation}
    \label{Eq:MinimaxLowerBoundHO}
\inf_{\hat{A}} \, \sup_{\probDistribution} \,  R_\tau(\hat{A}) \geq {c} \cdot \min\biggl\{ \biggl({\frac{\holderConstant^{d/\holderExponent}\cdot \log_+\{ n/ (\holderConstant^{d/\holderExponent} \alpha)\}}{n}}\biggr)^{\frac{\holderExponent \approximableDensityExponent \approximableMarginExponent}{\approximableDensityExponent(2\holderExponent+d)+\holderExponent\approximableMarginExponent}}+\frac{1}{n^{1/2}},1\biggr\},
\end{equation}
where, again, the infimum in~\eqref{Eq:MinimaxLowerBoundHO} is taken over $\hat{\mathcal{A}}_n\bigl(\tau,\alpha,\classOfHolderDistributions\bigr)$ and the supremum is taken over $\classOfHolderDistributions \cap \classOfWellApproximableSetsHO$.
\end{theorem}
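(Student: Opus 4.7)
My plan is to adapt Algorithm~\ref{subsetSelectionAlgo} by replacing the empirical mean $\empiricalRegressionFunction(B)$ in~\eqref{eq:empiricalRegFuncDef}, which drives the $p$-value~\eqref{eq:pValueDef}, with a local polynomial estimator that can exploit smoothness $\holderExponent > 1$. Concretely, for each $B \in \setOfHypercubes(\sampleX)$ with centre $x_B$, I would compute the least-squares Taylor polynomial
\begin{equation*}
\hat T_B \in \argmin_{T} \sum_{i : X_i \in B} \bigl(Y_i - T(X_i)\bigr)^2,
\end{equation*}
where the minimisation is over Taylor polynomials at $x_B$ of degree $\lceil\holderExponent\rceil - 1$, i.e., over the finite-dimensional space spanned by $\{(\cdot - x_B)^\nu/\nu! : \nu \in \mathcal{V}(\holderExponent)\}$. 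Let $\hat\eta^+(B) := \inf_{x \in B}\hat T_B(x)$ when the corresponding Gram matrix has smallest eigenvalue above a threshold depending on $(\regularityConstant,d,\holderExponent)$, and $\hat\eta^+(B) := 1/2$ otherwise. I would then define
\begin{equation*}
\pValueLongHO(B) := \exp\Bigl\{-n \cdot \empiricalMarginalDistribution(B) \cdot \kl\bigl(\hat\eta^+(B), \tau + \holderConstant \cdot \diamSup(B)^{\holderExponent}\bigr)\Bigr\}
\end{equation*}
when $\hat\eta^+(B) > \tau + \holderConstant \cdot \diamSup(B)^{\holderExponent}$, and $\pValueLongHO(B) := 1$ otherwise, before applying Holm's procedure to $\{\pValueLongHO(B) : B \in \setOfHypercubes(\sampleX)\}$ exactly as in Algorithm~\ref{subsetSelectionAlgo} to produce $\hat A^+_{\mathrm{OSS}}$.

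For Type I control, I would establish an analogue of Lemma~\ref{lemma:pValue}. Definition~\ref{def:holderHigherOrder} gives $\eta(x) \leq \taylorSeries_{x_B}^{\holderExponent}[\eta](x) + \holderConstant\supNorm{x-x_B}^{\holderExponent}$, so that whenever $\inf_{x \in B}\eta(x) \leq \tau$ one has $\inf_{x \in B}\taylorSeries_{x_B}^{\holderExponent}[\eta](x) \leq \tau + \holderConstant\cdot\diamSup(B)^{\holderExponent}$. Conditional on $\sampleX$ and on the Gram matrix being well-conditioned, $\hat T_B(x)$ is a linear combination of the bounded responses $\{Y_i : X_i \in B\}$ with deterministic coefficients, so a Chernoff bound (applied either to a single argmin or, after a standard covering argument, uniformly over $x \in B$) yields $\Prob_P(\pValueLongHO(B) \leq \alpha \mid \sampleX) \leq \alpha$. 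The Holm argument from Proposition~\ref{thm:typeIControl} then carries through verbatim to give $\hat A^+_{\mathrm{OSS}} \in \hat{\mathcal{A}}_n(\tau,\alpha,\classOfHolderDistributions)$.

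For the regret bound, I would mirror Proposition~\ref{thm:powerBound}. Given $P \in \classOfHolderDistributions \cap \classOfWellApproximableSetsHO$ and tuning parameters $(\xi,\Delta)\in(0,\infty)^2$ to be chosen, Definition~\ref{defn:approximableMeasureClassHO} produces $A^* \in \mathcal{A}$ with $\mu(A^*) \geq M_\tau - \approximableSetsConstant(\xi^{\approximableDensityExponent} + \Delta^{\approximableMarginExponent})$ and $A^* \subseteq \muRegularSet \cap \densitySuperLevelSet{\xi} \cap \etaSuperLevelSet{\tau+\Delta}$. On this set, the regularity condition~\eqref{eq:defRegularSet} together with the density lower bound $\xi$ forces both $n\empiricalMarginalDistribution(B) \gtrsim n\xi r^d$ and a lower bound of order $\regularityConstant\xi$ on the smallest eigenvalue of the Gram matrix for each dyadic hypercube $B \subseteq A^*$ of side $r$. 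Choosing $r$ so that the bias $\holderConstant r^{\holderExponent}$ is comparable to $\Delta$ and so that $n\xi r^d \gtrsim \log_+(n/\alpha)$, I can ensure $\hat\eta^+(B) \geq \tau + 2\holderConstant r^{\holderExponent}$ and hence $\pValueLongHO(B) \leq \alpha/(\numberOfConsideredHyperCubes+1)$ simultaneously for every relevant $B$, with high probability. Combining with the standard VC uniform deviation bound $\sup_{A\in\mathcal{A}}|\empiricalMarginalDistribution(A) - \mu(A)| \lesssim n^{-1/2}$ and optimising over $(\xi,\Delta)$ subject to $\Delta \asymp r^{\holderExponent}$ and $n\xi r^d \asymp \log_+(n/\alpha)$ then yields~\eqref{Eq:MinimaxUpperBoundHO}.

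The lower bound in part~(ii) is deferred to Section~\ref{Sec:LowerBound}: the three constructions underlying Theorem~\ref{thm:minimaxRate}(ii) can be reused, with piecewise-Lipschitz perturbations replaced by smooth bump functions of uniformly bounded $\holderExponent$-th order H\"older norm, and with the marginal density chosen to be bounded and uniformly positive on the supports of the perturbations so as to verify the $\muRegularSet$ and density-superlevel conditions in~\eqref{eq:FromDefApproximableMeasureClassHO}. The main technical obstacle in the upper bound is handling the local polynomial estimator uniformly across $\setOfHypercubes(\sampleX)$: one must argue that every hypercube intersecting $A^*$ has a well-conditioned Gram matrix of Taylor features, so that the conditional Chernoff bound underlying $\pValueLongHO$ applies simultaneously on a high-probability event. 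This requires a VC-type concentration argument for empirical Gram matrices of low-degree polynomial features, in the spirit of standard local polynomial regression theory, and is the step where the regularity assumption $\muRegularSet$ is genuinely used beyond the usual role of controlling $\empiricalMarginalDistribution(B)$ from below.
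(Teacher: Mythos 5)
The central gap is in your choice of bandwidth. You fit the local polynomial using only the observations inside $B$, so that the effective bandwidth equals the hypercube radius $r$. Trace the bias--variance--slack trade-off: when the null $\inf_{x\in B}\eta(x)\leq\tau$ holds, the H\"older inclusion $\classOfHolderDistributions\subseteq\tilde{\measureClass}_{\mathrm{H\ddot{o}l}}(\holderExponent\wedge 1,\holderConstant)$ only gives $\eta(x_B)\leq\tau+\holderConstant r^{\holderExponent\wedge 1}$ (not $\tau + \holderConstant r^{\holderExponent}$), so the slack you must overcome is of order $r^{\holderExponent\wedge 1}$; meanwhile the stochastic error from $\lesssim n\xi r^d$ observations is of order $\sqrt{\log(1/\alpha)/(n\xi r^d)}$.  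Balancing these two terms over $r$ yields $\Delta\asymp\bigl(\log(1/\alpha)/(n\xi)\bigr)^{(\holderExponent\wedge 1)/(2(\holderExponent\wedge 1)+d)}$, and optimising over $\xi$ then gives a regret exponent of $\frac{(\holderExponent\wedge 1)\approximableDensityExponent\approximableMarginExponent}{\approximableDensityExponent(2(\holderExponent\wedge 1)+d)+(\holderExponent\wedge 1)\approximableMarginExponent}$.  For $\holderExponent>1$ this is strictly worse than the target exponent in~\eqref{Eq:MinimaxUpperBoundHO}; you recover only the Theorem~\ref{thm:minimaxRate} rate.  The point you miss is that the estimator for $\eta(x_B)$ should use data from a \emph{larger} ball of radius $h=(2r)^{1\wedge 1/\holderExponent}$ --- for $\holderExponent>1$ this is $h=(2r)^{1/\holderExponent}\gg 2r$.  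Then the bias $\holderConstant h^{\holderExponent}=\holderConstant(2r)^{\holderExponent\wedge 1}$ is still of the order of the slack, but the variance $\sqrt{\log(1/\alpha)/(n\xi h^d)}=\sqrt{\log(1/\alpha)/(n\xi (2r)^{d/\holderExponent})}$ is much smaller, and balancing now gives $\Delta\asymp\bigl(\log(1/\alpha)/(n\xi)\bigr)^{\holderExponent/(2\holderExponent+d)}$.  This decoupling of the hypercube side-length $r$ from the estimation bandwidth $h$ is essential to the higher-order rate, and it is absent from your construction.

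There is a secondary problem in your $p$-value. The exponential-$\kl$ bound $\exp\{-n\hat{\mu}_n(B)\kl(\cdot,\cdot)\}$ is valid for $\hat{\eta}_n(B)$ because it is an equal-weight average of conditionally independent $[0,1]$-valued responses, so Lemma~\ref{consequenceOfGarivier2011kl} applies.  Your $\hat{\eta}^+(B)=\inf_{x\in B}\hat{T}_B(x)$ is a pointwise infimum of linear functionals $\sum_i u_i(x) Y_i$ with unequal and possibly negative weights; the $\kl$-Chernoff argument does not carry over.  One needs a sub-Gaussian concentration bound keyed to $\sum_i u_i^2$, and hence a Gaussian-tail-shaped $p$-value (as well as an explicit, quantitative lower bound on $\lambda_{\min}(Q^\holderExponent_{x,h})$, which is where the regularity set $\muRegularSet$ enters).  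Finally, the infimum over $x\in B$ is unnecessary once the bias is controlled as above --- evaluating the local polynomial at the centre and comparing against $\tau+\holderConstant(1+\text{conditioning factor})r^{\holderExponent\wedge 1}$ suffices and avoids the covering argument over $B$ entirely.
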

In order to prove the upper bound in Theorem~\ref{Thm:minimaxRateHOS}, we will introduce a modified algorithm.  The key alteration is a different choice of $p$-values that now makes use of data points outside (as well as within) our hyper-cube of interest to test whether or not the regression function is uniformly above $\tau$ on the hyper-cube. Given $\holderExponent \in (0,\infty)$, $x,x' \in \R^d$, $h \in (0,1]$ and $P \in \classOfHolderDistributions$ with regression function $\regressionFunction$, we let 
\begin{align*}
\Phi_{x,h}^\holderExponent(x'):= \biggl(\Bigl(\frac{x'-x}{h}\Bigr)^\nu\biggr)_{\nu \in \mathcal{V}(\holderExponent)} \in \R^{\mathcal{V}(\holderExponent)}\ \ \text{and} \ \ 
w_{x,h}^\holderExponent:=\biggl(
\frac{h^{\|\nu\|_1}}{\nu!}\cdot \partial^\nu_x(\regressionFunction)
\biggr)_{\nu \in \mathcal{V}(\holderExponent)}\in \R^{\mathcal{V}(\holderExponent)},
\end{align*}
so that $\taylorSeries_{x}^{\holderExponent}[g](x') = \langle w_{x,h}^\holderExponent, \Phi_{x,h}^\holderExponent(x')\rangle$ for all $x,x' \in \R^d$ and $h \in (0,1]$, where $\langle \cdot, \cdot \rangle$ denotes the Euclidean inner product.  Moreover, if we let $e_0:=\bigl(\one_{\{\nu = (0,\ldots,0)^\top\}}\bigr)_{\nu \in \mathcal{V}(\holderExponent)} \in \R^{\mathcal{V}(\holderExponent)}$, then $\eta(x)=\langle e_0,w_{x,h}^\holderExponent\rangle$. A natural estimator of $w_{x,h}^\holderExponent$ is the local polynomial estimator obtained by taking $\mathcal{N}_{x,h}:=\{i \in [n]~:~X_i \in \closedMetricBallSupNorm{x}{h}\}$ and letting
\begin{align*}
\hat{w}_{x,h}^\holderExponent \in \argmin_{w \in \R^{\mathcal{V}(\holderExponent)}} \sum_{i \in \mathcal{N}_{x,h}} \bigl( Y_i-\big\langle w, \Phi_{x,h}^\holderExponent(X_i)\big\rangle\bigr)^2.
\end{align*}
In fact, it will be convenient to choose a particular element of this $\argmin$: if we define
\begin{align*}
V_{x,h}^{\holderExponent}&:=\sum_{i \in \mathcal{N}_{x,h}}  Y_i  \cdot \Phi_{x,h}^\holderExponent(X_i)\in \R^{\mathcal{V}(\holderExponent)}\\
Q_{x,h}^{\holderExponent}&:=\sum_{i \in \mathcal{N}_{x,h}} \Phi_{x,h}^\holderExponent(X_i) \Phi_{x,h}^\holderExponent(X_i)^\top\in \R^{\mathcal{V}(\holderExponent) \times \mathcal{V}(\holderExponent)},
\end{align*}
then we will take $\hat{w}_{x,h}^\holderExponent :=\bigl(Q_{x,h}^{\holderExponent}\bigr)^+ V_{x,h}^{\holderExponent}$.  Thus, $\hat{\regressionFunction}(x) := 0 \vee \bigl(1\wedge\langle e_0, \hat{w}_{x,h}^\holderExponent \rangle\bigr)$ is an estimator of $\regressionFunction(x)$. Next we associate a $p$-value to closed hyper-cubes $B\subseteq \R^d$ with $\diamSup(B) \leq 1$ as follows.  Let $x \in \R^d$ and $r \in [0,1/2]$ denote the centre and $\ell_\infty$-radius of $B$, so that $B=\closedMetricBallSupNorm{x}{r}$. Let $h :=(2r)^{1 \wedge \frac{1}{\holderExponent}} \in [0,1]$, and define 
\begin{align}
\pValueNHO&(B) \phantom{:}\equiv \pValueLongHO(B) \label{eq:pValueDefHigherOrderSmoothness}\\
&:= \exp\biggl\{-\frac{2}{e_0^\top \bigl(Q_{x,h}^{\holderExponent}\bigr)^{-1} e_0}\biggl( \hat{\regressionFunction}(x) - \tau -  \holderConstant \Bigl(1+2\sqrt{e_0^\top \bigl(Q_{x,h}^{\holderExponent}\bigr)^{-1} e_0 \cdot |\mathcal{N}_{x,h}|}\Bigr)  r^{\holderExponent \wedge 1}\biggr)^2\biggr\},\nonumber
\end{align}
whenever $Q_{x,h}^{\holderExponent}$ is invertible and $\hat{\regressionFunction}(x) \geq \tau + \holderConstant \Bigl(1+2\sqrt{e_0^\top (Q_{x,h}^{\holderExponent})^{-1} e_0 \cdot |\mathcal{N}_{x,h}|}\Bigr)  r^{\holderExponent \wedge 1}$, and $\pValueNHO(B):=1$ otherwise. Lemma \ref{lemma:pValueHigherOrderSmoothness} in Section~\ref{Sec:minimaxRateHOSProof} shows that these are indeed $p$-values. 

We will also make use of an alternative set of hyper-cubes 
\begin{align*}
\setOfHypercubesHO:=\biggl\{ 2^{-q} \prod_{j=1}^d [a_j,a_j+1] : (a_1,\ldots,a_d)\in \Z^d,~q\in \N\biggr\}.
\end{align*}
Now, given $n \in \N$, $\bm{x}_{1:n}=(x_i)_{i \in [n]} \in (\R^d)^n$, we define 
\begin{align}\label{eq:updatedHyperCubesDataDependentHO}
\setOfHypercubesHO(\bm{x}_{1:n}):=\bigl\{ B \in \setOfHypercubesHO : \{x_1,\ldots,x_n\}\cap B \neq \emptyset \text{ and }\diamSup(B)\geq 1/n\bigr\},
\end{align}
so that $|\setOfHypercubesHO(\bm{x}_{1:n})| \leq 2^dn\log_2 n$, and $|\setOfHypercubesHO(\sampleX)| \leq n\log_2 n$ with probability $1$ when $\mu$ is absolutely continuous with respect to Lebesgue measure. We denote our modified procedure for general smoothness, obtained by applying Algorithm~\ref{subsetSelectionAlgo} with the $p$-values \eqref{eq:pValueDefHigherOrderSmoothness} and the hyper-cubes given by~\eqref{eq:updatedHyperCubesDataDependentHO}, as $\hat{A}_{\mathrm{OSS}}^+$.

Propositions~\ref{thm:typeIControlHigherOrderSmoothness} and~\ref{thm:powerBoundHO} are the analogues of Propositions~\ref{thm:typeIControl} and~\ref{thm:powerBound} for $\hat{A}_{\mathrm{OSS}}^+$ and, in combination, prove the upper bound in Theorem~\ref{Thm:minimaxRateHOS}.

\begin{prop}\label{thm:typeIControlHigherOrderSmoothness} Let $\tau \in (0,1)$, $\alpha \in (0,1)$ and $(\holderExponent,\holderConstant) \in (0,\infty)\times [1,\infty)$. Then $\hat{A}_{\mathrm{OSS}}^+ \in \hat{\mathcal{A}}_n\bigl(\tau,\alpha,\classOfHolderDistributions\bigr)$.
\end{prop}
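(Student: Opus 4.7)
The plan is to mirror the argument behind Proposition~\ref{thm:typeIControl} for the original Algorithm~\ref{subsetSelectionAlgo}: condition on the covariates $\sampleX$, invoke the conditional $p$-value property of the modified statistics $\pValueNHO(B)$, and then apply the family-wise error control of Holm's step-down procedure to the finite collection $\setOfHypercubesHO(\sampleX)$. Since $\hat{A}_{\mathrm{OSS}}^+(\sample)$ is by construction a subset of the union $\bigcup_{\ell \in [\ell_{\alpha}]} B_{(\ell)}$ (interpreted as empty when the initial screening $\numberOfConsideredHyperCubes\cdot \pValueNHO(B_{(1)}) \leq \alpha$ fails), it will suffice to show that, with conditional probability at least $1-\alpha$, every hyper-cube entering this union satisfies $\inf_{x \in B}\regressionFunction(x) > \tau$, since then $\bigcup_{\ell \in [\ell_{\alpha}]} B_{(\ell)} \subseteq \etaSuperLevelSet{\tau}$ and hence $\hat{A}_{\mathrm{OSS}}^+(\sample) \subseteq \etaSuperLevelSet{\tau}$.

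More concretely, I would fix $\probDistribution \in \classOfHolderDistributions$ and work conditionally on $\sampleX$, so that $\setOfHypercubesHO(\sampleX)$ is a deterministic finite family of cardinality $\numberOfConsideredHyperCubes$. Call $B \in \setOfHypercubesHO(\sampleX)$ a \emph{null hyper-cube} if $\inf_{x \in B}\regressionFunction(x) \leq \tau$. The $p$-value lemma for the higher-order procedure, Lemma~\ref{lemma:pValueHigherOrderSmoothness} (which the paper promises immediately after~\eqref{eq:pValueDefHigherOrderSmoothness}), asserts that for every null hyper-cube $B$ and every $\alpha' \in (0,1)$,
\begin{align*}
\Prob_P\bigl(\pValueNHO(B) \leq \alpha' \bigm| \sampleX\bigr) \leq \alpha'.
\end{align*}
Holm's step-down procedure applied to any finite family of hypotheses equipped with marginally valid $p$-values controls the family-wise error rate at level $\alpha$ under arbitrary configurations of true and false nulls, and this validity is preserved conditionally whenever the inputs are conditionally valid. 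Writing $\mathcal{E}$ for the event that at least one $B_{(\ell)}$ with $\ell \in [\ell_{\alpha}]$ is a null hyper-cube, a union bound over null hyper-cubes in the standard Holm analysis then yields $\Prob_P(\mathcal{E} \mid \sampleX) \leq \alpha$. On $\mathcal{E}^{\mathrm{c}}$, every selected $B_{(\ell)}$ satisfies $\inf_{x \in B_{(\ell)}}\regressionFunction(x) > \tau$, and integrating over $\sampleX$ produces $\Prob_P\bigl(\hat{A}_{\mathrm{OSS}}^+(\sample) \subseteq \etaSuperLevelSet{\tau}\bigr) \geq 1 - \alpha$, as required.

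The real content of the higher-order extension lies therefore in Lemma~\ref{lemma:pValueHigherOrderSmoothness} rather than in this proposition. The main obstacle there will be to establish a sub-Gaussian-type one-sided deviation bound for $\langle e_0, \hat{w}_{x,h}^{\holderExponent} - w_{x,h}^{\holderExponent}\rangle$ conditional on $\sampleX$, with scale proxy governed by $e_0^\top (Q_{x,h}^{\holderExponent})^{-1} e_0$, and simultaneously to show that the worst-case Taylor-remainder bias $|\regressionFunction(x) - \langle e_0, w_{x,h}^{\holderExponent}\rangle + \mathrm{(model\ misspecification\ through\ the\ local\ LS\ fit)}|$ for a $(\holderExponent,\holderConstant)$-H\"older function is controlled by precisely the additive correction $\holderConstant\bigl(1 + 2\sqrt{e_0^\top (Q_{x,h}^{\holderExponent})^{-1} e_0 \cdot |\mathcal{N}_{x,h}|}\bigr) r^{\holderExponent \wedge 1}$ subtracted inside the exponent of~\eqref{eq:pValueDefHigherOrderSmoothness}. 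Once that conditional tail bound for $\pValueNHO(B)$ is in place, Proposition~\ref{thm:typeIControlHigherOrderSmoothness} reduces to the routine Holm-procedure argument outlined above, exactly as in the $\holderExponent \in (0,1]$ setting.
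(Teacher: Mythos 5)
Your proposal matches the paper's proof: the paper proves Proposition~\ref{thm:typeIControlHigherOrderSmoothness} by observing that it follows from Lemma~\ref{lemma:pValueHigherOrderSmoothness} via exactly the same Holm step-down argument (Lemma~\ref{lemma:typeIControl}) used to deduce Proposition~\ref{thm:typeIControl} from Lemma~\ref{lemma:pValue}, and you correctly identify both the reduction to the conditional $p$-value property and the fact that the genuine technical work resides in Lemma~\ref{lemma:pValueHigherOrderSmoothness} (via the Hoeffding-type deviation bound of Lemma~\ref{lemma:hoeffdingConsequenceHOSmoothness} together with the H\"{o}lder bias control). One small correction to your discussion of that lemma: $\langle e_0, w_{x,h}^{\holderExponent}\rangle$ equals $\regressionFunction(x)$ exactly, so the term you write as $|\regressionFunction(x) - \langle e_0, w_{x,h}^{\holderExponent}\rangle|$ is identically zero; the two bias sources that the additive correction $\holderConstant\bigl(1 + 2\sqrt{e_0^\top (Q_{x,h}^{\holderExponent})^{-1} e_0 \cdot |\mathcal{N}_{x,h}|}\bigr) r^{\holderExponent \wedge 1}$ must absorb are (i) the gap $\regressionFunction(x) - \tau \leq \holderConstant r^{\holderExponent\wedge 1}$ arising from $\inf_{x'\in B}\regressionFunction(x')\leq\tau$, and (ii) the Taylor-remainder bias in the local least-squares estimate, $\bigl|\sum_{i}u_i\{\regressionFunction(X_i)-\taylorSeries_x^{\holderExponent}[\regressionFunction](X_i)\}\bigr|$, bounded via Cauchy--Schwarz.
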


\begin{prop}\label{thm:powerBoundHO}\sloppy Take $\alpha \in (0,1)$, $(\holderExponent,\holderConstant) \in (0,\infty)\times [1,\infty)$, $(\approximableDensityExponent,\approximableMarginExponent,\regularityConstant,\approximableSetsConstant)\in (0,\infty)^2\times (0,1)\times [1,\infty)$ and $\mathcal{A} \subseteq \borel(\R^d)$ with $\vcDim(\mathcal{A})<\infty$ and $\emptyset \in \mathcal{A}$. There exists $\tilde{C}\geq 1$, depending only on $d$, $\holderExponent$,  $\approximableDensityExponent$, $\approximableMarginExponent$, $\regularityConstant$, $\approximableSetsConstant$ and $\vcDim(\mathcal{A})$, such that for all $\probDistribution \in \classOfHolderDistributions \cap \classOfWellApproximableSetsHO$, $n \in \N$ and $\delta \in (0,1)$, we have 
\begin{align*}
\Prob_P\biggl[M_{\tau}-\mu(\hat{A}_{\mathrm{OSS}}^+) > \tilde{C}\biggl\{ \biggl({\frac{\holderConstant^{d/\holderExponent}}{n} \cdot \log_+\Bigl(\frac{n}{\alpha\wedge \delta }\Bigr)\biggr)^{\frac{\holderExponent \approximableDensityExponent \approximableMarginExponent}{\approximableDensityExponent(2\holderExponent+d) +\holderExponent\approximableMarginExponent}}}+\biggl(\frac{\log_+(1/\delta)}{n}\biggr)^{1/2}\biggr\}\biggr] \leq \delta. 
\end{align*}
As a consequence, for $\alpha \in (0,1/2]$,
\[
R_\tau(\hat{A}_{\mathrm{OSS}}^+) \leq C \biggl\{ \biggl({\frac{\holderConstant^{d/\holderExponent}}{n} \cdot \log_+\Bigl(\frac{n}{\alpha}\Bigr)\biggr)^{\frac{\holderExponent \approximableDensityExponent \approximableMarginExponent}{\approximableDensityExponent(2\holderExponent+d) +\holderExponent\approximableMarginExponent}}}+\frac{1}{n^{1/2}}\biggr\},
\]
where $C > 0$ depends only on $\tilde{C}$.
\end{prop}

\section{Lower bound constructions}
\label{Sec:LowerBound}

As mentioned at the end of Section~\ref{Sec:MainResults}, our lower bound constructions are common to both Theorem~\ref{thm:minimaxRate} and Theorem~\ref{Thm:minimaxRateHOS}.  In fact, both lower bounds will follow from Propositions~\ref{lemma:constrainedRiskApplicationfanoApplication} and~\ref{lemma:paramatricLB} below.  As shorthand, given $\mathcal{A}\subseteq \borel(\R^d)$, $\tau \in (0,1)$, $(\holderExponent, \approximableDensityExponent, \approximableMarginExponent) \in (0,\infty)^3$, $\regularityConstant \in (0,1)$ and $(\holderConstant,\approximableSetsConstant) \in [1,\infty)^2$, we write
\begin{align*}
&\classOfLBDistributions:= \\
&\hspace{2cm}\classOfHolderDistributions\cap \classOfWellApproximableSets\cap \classOfWellApproximableSetsHO.
\end{align*}
\begin{prop}\label{lemma:constrainedRiskApplicationfanoApplication} 
Take $\epsilon_0 \in (0,1/2)$, $\tau \in (\epsilon_0,1-\epsilon_0)$,  $\holderExponent > 0$, $\holderConstant \geq 1$, $\approximableDensityExponent, \approximableMarginExponent > 0$, $\approximableSetsConstant \geq 1$ with $\holderExponent\approximableMarginExponent (\approximableDensityExponent-1) < d \approximableDensityExponent$, and $\regularityConstant \in \bigl(0,(4d^{1/2})^{-d}\bigr]$.  Suppose that $\mathcal{A} \subseteq \borel(\R^d)$ satisfies $\mathcal{A}_{\mathrm{hpr}}  \subseteq \mathcal{A} \subseteq \mathcal{A}_{\mathrm{conv}}$.

\medskip

\noindent \textbf{\emph{(i)}} There exists $c_0 > 0$, depending only on $d, \holderExponent,  \approximableMarginExponent,\approximableDensityExponent,  \approximableSetsConstant$ and $\epsilon_0$, such that, for every $\alpha \in (0,1/8]$, $n \in \N$ and $\hat{A}\in \hat{\mathcal{A}}_n\bigl(\tau,\alpha,\classOfLBDistributions\bigr)$, we can find $\probDistribution \in \classOfLBDistributions$ with regression function $\regressionFunction:\R^d \rightarrow [\tau-\epsilon_0/2,\tau+\epsilon_0/2]$ and marginal distribution $\mu$ on $\R^d$, satisfying 
\begin{align*}
\E_{\probDistribution} \bigl[\bigl\{ M_\tau(P,\mathcal{A})-\mu(\hat{A})\bigr\} \cdot \one_{\{\hat{A} \subseteq{}  \etaSuperLevelSet{\tau} \}}\bigr] \geq c_0 \cdot \biggl( \frac{{\holderConstant^{d/\holderExponent}}\log\bigl(1/(4\alpha)\bigr)}{n}\wedge 1\biggr)^{\frac{\holderExponent  \approximableDensityExponent \approximableMarginExponent}{\approximableDensityExponent(2\holderExponent+d)+\holderExponent\approximableMarginExponent}}.
\end{align*}

\medskip 

\noindent \textbf{\emph{(ii)}} There exists $c_1 > 0$, depending only on $d, \holderExponent, \approximableDensityExponent, \approximableMarginExponent,  \approximableSetsConstant$ and $\epsilon_0$, such that, given $\alpha \in \bigl(0,\frac{1}{2} - \epsilon_0\bigr]$, $n \in \N$ and $\hat{A}\in \hat{\mathcal{A}}_n\bigl(\tau,\alpha,\classOfLBDistributions\bigr)$, we can find $\probDistribution \in \classOfLBDistributions$ with regression function $\regressionFunction:\R^d \rightarrow [\tau-\epsilon_0/2,\tau+\epsilon_0/2]$ and marginal distribution $\mu$ on $\R^d$, satisfying 
\begin{align*}
\E_{\probDistribution} \bigl[\bigl\{ M_\tau\bigl(P,\mathcal{A}\bigr)-\mu(\hat{A})\bigr\} \cdot \one_{\{\hat{A} \subseteq{}  \etaSuperLevelSet{\tau} \}}\bigr] \geq c_1 \cdot \biggl( \frac{{\holderConstant^{d/\holderExponent}}\log_+( {n/\holderConstant^{d/\holderExponent}})}{n}\wedge 1\biggr)^{\frac{\holderExponent  \approximableDensityExponent \approximableMarginExponent}{\approximableDensityExponent(2\holderExponent+d)+\holderExponent\approximableMarginExponent}}.
\end{align*}
\end{prop}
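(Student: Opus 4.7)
\emph{Proof proposal.} My plan, for each of the two regimes, is to construct a small family of distributions inside $\classOfLBDistributions$ that are information-theoretically hard to distinguish from a sample of size $n$, and then to convert this indistinguishability into a regret lower bound via the Type~I error constraint. Throughout, I fix a scale $h=h_n \in (0,1)$ to be tuned, a smooth bump profile $\psi:\R^d \to [0,1]$ supported on $[-1,1]^d$ with $\psi(0)=1$, and a common marginal $\mu$ modelled on Example~\ref{ex:3}, with a radial density whose lower-density behaviour is calibrated by $\kappa$ and chosen so that $\muRegularSet=\R^d$ (admissible under $\regularityConstant \leq (4d^{1/2})^{-d}$). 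The regression function is set to $\tau - h^\beta/2$ away from a selected collection of hyper-cubes $B_k=\closedMetricBallSupNorm{x_k}{h}$, and raised to roughly $\tau + h^\beta/2$ on each $B_k$ by pasting a rescaled bump of amplitude $\holderConstant h^\beta$. This amplitude places the perturbed regression functions in $\classOfHolderDistributions$, while a slightly shrunken concentric sub-cube of each $B_k$ will serve as the witness set for the approximability conditions with $\Delta\asymp h^\beta$ and $\xi$ chosen so that $\xi^\kappa$ matches the dominant approximability term.

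For part~(i) I run a two-point Le~Cam argument: place two well-separated cubes $B_0, B_1$ and let $P_0, P_1$ share the marginal $\mu$ but carry the bump on $B_0$ and $B_1$ respectively. Any $\hat A \in \hat{\mathcal{A}}_n\bigl(\tau,\alpha,\classOfLBDistributions\bigr)$ with small expected regret under $P_j$ must cover most of $B_j$ with high $P_j$-probability, while the Type~I constraint forbids covering any part of $B_{1-j}$ under $P_{1-j}$. Applying the Brown--Low form of the constrained risk inequality, which converts this ``test with Type~I at level $\alpha$'' structure into a $\log\bigl(1/(4\alpha)\bigr)$ penalty on the product KL, I obtain $\kl(P_0^{\otimes n},P_1^{\otimes n}) \gtrsim \log\bigl(1/(4\alpha)\bigr)$. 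Combining this with the bump-level estimate $\kl(P_0,P_1) \lesssim h^d \cdot h^{2\beta}$ and the regret saved by correctly localising the signal cube, and saturating the approximability trade-off $M_\tau - \mu(A) \leq \approximableSetsConstant(\xi^\kappa+\Delta^\gamma)$ at $\Delta\asymp h^\beta$, fixes the optimal $h$ and yields the claimed rate $\bigl(\log(1/(4\alpha))/n\bigr)^{\beta\kappa\gamma/(\kappa(2\beta+d)+\beta\gamma)}$.

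For part~(ii) I upgrade to a Fano-style many-hypothesis family: tile a subregion of $\support(\mu)$ by $m \asymp (1/h)^d$ pairwise disjoint cubes $B_1,\ldots,B_m$, and for each $k \in [m]$ let $P_k$ carry the single bump on $B_k$. Under the Type~I constraint at level $\alpha \leq 1/2-\epsilon_0$, any admissible $\hat A$ with regret $o\bigl(\mu(B_k)\bigr)$ under $P_k$ must localise the signal cube with probability bounded away from $\alpha$, so Fano's inequality applied to the induced $m$-ary testing problem forces $n \bar D \gtrsim \log m$, where $\bar D \lesssim h^{d+2\beta}$ bounds the per-sample KL against the uniform mixture of $\{P_k\}$. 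Balancing $n h^{d+2\beta} \asymp \log m \asymp \log n$ together with the same approximability dressing as in part~(i) delivers the regret lower bound of order $(\log_+ n/n)^{\beta\kappa\gamma/(\kappa(2\beta+d)+\beta\gamma)}$.

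The principal obstacle is the simultaneous verification that every $P_k$ lies in $\classOfHolderDistributions \cap \classOfWellApproximableSets \cap \classOfWellApproximableSetsHO$. H\"older regularity is immediate from the smoothness of $\psi$ and the amplitude $\holderConstant h^\beta$, and $\muRegularSet=\R^d$ is built into the radial marginal. The delicate step is the approximability inequality, which requires producing, for every $(\xi,\Delta)\in(0,\infty)^2$, an $A\in\mathcal{A}$ inside $\muRegularSet \cap \densitySuperLevelSet{\xi} \cap \etaSuperLevelSet{\tau+\Delta}$ (and, separately, inside $\omegaSuperLevelSet{\xi} \cap \etaSuperLevelSet{\tau+\Delta}$) with $\mu(A)$ within $\approximableSetsConstant(\xi^\kappa+\Delta^\gamma)$ of $M_\tau$. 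Taking $A$ to be a concentric sub-cube of the relevant $B_k$ of side length $h - C(\Delta/\holderConstant)^{1/\beta}$, one can control the lost mass by $\Delta^\gamma$ via the margin structure implanted in the bump, while the lower-density condition is built into the radial marginal. The hypothesis $\beta\gamma(\kappa-1)<d\kappa$, familiar from Lemma~\ref{lemma:parameterConstraints}, is precisely what guarantees that this sub-cube is non-empty for the scaling of $h$ dictated by the information-theoretic balance. Once these class-membership verifications are in place, the translation from the information-theoretic lower bound to the stated expected regret inequality involving the factor $\one_{\{\hat A \subseteq \etaSuperLevelSet{\tau}\}}$ is standard bookkeeping.
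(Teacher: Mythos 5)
Your high-level plan — two-point Le~Cam with Brown--Low for part~(i), a Fano-style multi-hypothesis argument for part~(ii), class-membership verification as a separate step — matches the skeleton of the paper's proof. However, the construction you propose does not produce the stated rate over the full parameter range, and the reason is structural, not a matter of missing bookkeeping.

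Your construction couples everything to a single bump scale $h$: the bump cubes have radius $h$, the amplitude is $h^\beta$, the margin is $\Delta\asymp h^\beta$, and the signal-carrying region where $P_0$ and $P_1$ differ has measure $\asymp h^d$. With that coupling, the two-point balance $n h^{d+2\beta}\asymp\log(1/\alpha)$ and regret $\asymp h^d$ yield $\bigl(\log(1/\alpha)/n\bigr)^{d/(d+2\beta)}$, which equals the target exponent $\beta\kappa\gamma/\bigl(\kappa(2\beta+d)+\beta\gamma\bigr)$ only on a codimension-one surface in $(\beta,\kappa,\gamma)$. The paper introduces four independent parameters for precisely this reason: $\theta$ (amplitude), $s$ (perturbation radius), $r$ (cube radius), $w$ (density level), with $r\gg s$ and $\theta$ decoupled from $r^\beta$. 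Two structural devices make that decoupling legitimate. First, the marginal $\mu_{L,r,w}$ is supported on disjoint cube/shell pairs with a ``moat'' of zero $\mu$-mass between $K_r^0(\ell)$ and $K_r^1(\ell)$, so the H\"older transition of the regression function from $\tau+\theta$ to $\tau-\theta$ lives entirely outside $\support(\mu)$; there is no Lebesgue-supported density constraining $\theta$ and $r$ to satisfy $\theta\lesssim r^\beta$. Your radial marginal is supported everywhere, so the transition layer must sit inside $\support(\mu)$ and has width $\ge(\theta/\holderConstant)^{1/\beta}$, which forces $\theta\asymp h^\beta$ and eliminates one degree of freedom. Second, the regression functions $\eta^\ell$ and $\eta^{\ell'}$ in the paper differ only on tiny balls of radius $s$ around the cube centres, not on the full cubes: $\eta^\ell$ equals $\tau+\theta$ on all of $K_r^0(\ell')$ except for a dip of radius $s$ at $z_{\ell'}$. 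This makes the per-sample $\chi^2$ of order $(w/L)s^d\theta^2$, while the lost regret (a convex $A\subseteq\etaSuperLevelSet{\tau}$ intersecting $K_r^0(\ell')$ must avoid $z_{\ell'}$ and the surrounding shell $K_r^1(\ell')$, hence loses half the cube) is of order $(w/L)r^d$, with $s\ll r$. In your construction, the two competing distributions differ on the entire cubes $B_0\cup B_1$, so information and regret scale together and there is no room.

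The approximability verification is also problematic. You propose checking the approximable-class condition by taking a concentric sub-cube of $B_k$ of side $h-C(\Delta/\holderConstant)^{1/\beta}$. With $\eta$ dropping smoothly from $\tau+h^\beta/2$ at the centre to $\tau-h^\beta/2$ at the boundary subject to $\beta$-H\"older regularity, the super-level set $\etaSuperLevelSet{\tau+\Delta}$ shrinks from the cube by a boundary layer whose width cannot be smaller than $(\Delta/\holderConstant)^{1/\beta}$; the lost mass is then of order $w h^{d-1}(\Delta)^{1/\beta}$, which is not $O(\Delta^\gamma)$ as $\Delta\searrow 0$ unless $\gamma\le 1/\beta$. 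The hypothesis $\beta\gamma(\kappa-1)<d\kappa$ permits $\gamma>1/\beta$ (take $\kappa=1$, for instance), so your construction fails to lie in $\classOfWellApproximableSets$ for admissible parameter values. The paper evades this entirely by making $\eta$ exactly constant ($\equiv\tau+\theta$) on all of $K_r^0(\ell)$, so $\etaSuperLevelSet{\tau+\Delta}\cap K_r^0(\ell)$ equals $K_r^0(\ell)$ for $\Delta\le\theta$ and is empty for $\Delta>\theta$: there is no boundary layer, and the approximability inequality reduces to a single algebraic constraint $(w/L)(2r)^d\le\approximableSetsConstant\bigl(\xi_0^\kappa\wedge\theta^\gamma\bigr)$ that the parameters can be tuned to satisfy (Lemma~\ref{lemma:approximableConditionForLB1}). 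Finally, your interpretation of the condition $\beta\gamma(\kappa-1)<d\kappa$ as ``the sub-cube is non-empty'' is not what that hypothesis is for: in the paper it guarantees that the perturbation radius $s\asymp\theta^{1/\beta}$ is small compared to the cube radius $r\asymp\theta^{\gamma(\kappa-1)/(d\kappa)}$ as $\theta\searrow 0$, which is exactly the decoupling your construction lacks.
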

Two remarks are in order.  First, note that for any $\hat{A} \in  \hat{\mathcal{A}}_n$, we have
\begin{align*}
R_\tau(\hat{A}) &= \frac{\E_{\probDistribution} \bigl[\bigl\{ M_\tau\bigl(P,\mathcal{A}\bigr)-\mu(\hat{A})\bigr\} \cdot \one_{\{\hat{A} \subseteq{}  \etaSuperLevelSet{\tau} \}}\bigr]}{\Prob_P\bigl(\hat{A} \subseteq{} \etaSuperLevelSet{\tau}\bigr)} \\
&\geq \E_{\probDistribution} \bigl[\bigl\{ M_\tau\bigl(P,\mathcal{A}\bigr)-\mu(\hat{A})\bigr\} \cdot \one_{\{\hat{A} \subseteq{}  \etaSuperLevelSet{\tau} \}}\bigr],
\end{align*}
so Proposition~\ref{lemma:constrainedRiskApplicationfanoApplication} does indeed yield lower bounds on the worst-case regret.  Second, 
\[
\hat{\mathcal{A}}_n\bigl(\tau,\alpha,\classOfLBDistributions\bigr) \supseteq \hat{\mathcal{A}}_n\bigl(\tau,\alpha,\classOfHolderDistributions\bigr),
\]
so it suffices to provide a lower bound for the regret when $\hat{A}$ belongs to the larger set; note also that $\classOfHolderDistributions \subseteq \mathcal{P}_{\mathrm{H\ddot ol}}(\holderExponent,\holderConstant,\tau)$ for $\holderExponent \in (0,1]$.

To lay the groundwork for the proof of Proposition~\ref{lemma:constrainedRiskApplicationfanoApplication}, let $L \in \N$, $r \in (0,\infty)$, $w \in \bigl(0,(2r)^{-d}\wedge 1\bigr)$, $s \in (0,1\wedge(r/2)]$ and $\theta \in (0,\epsilon_0/2]$.  Our goal is to define a collection of probability distributions $\bigl\{\probDistribution^\ell \equiv \probDistribution_{L,r,w,s,\theta}^{\ell}:\ell \in [L]\bigr\}$ on $\R^d \times [0,1]$ as illustrated in Figure~\ref{Fig:LowerBoundConstruction}; we will show that these distributions belong to $\classOfLBDistributions$ for appropriate choices of $L$, $r$, $w$, $s$ and $\theta$, and are such that any data-dependent selection set $\hat{A}\in \hat{\mathcal{A}}_n\bigl(\tau,\alpha,\classOfLBDistributions\bigr)$ must satisfy the lower bound in Proposition~\ref{lemma:constrainedRiskApplicationfanoApplication}.   To this end, let $r_\sharp(w):=\frac{1}{2}\bigl( \{(4\sqrt{d})^d-2^d\}r^d+w^{-1}\bigr)^{1/d}$ and choose $\{{z}_1,\ldots, {z}_L\} \subseteq \R^d$ such that $\supNorm{{z}_{\ell}-{z}_{\ell'}}>2\bigl(r_\sharp(w)+1\bigr)$ for all distinct $\ell, \ell' \in [L]$.  We introduce sets $K^0_{r}(1), \ldots, K^0_{r}(L) \subseteq \R^d$, $K_{r}^1(1), \ldots, K_{r}^1(L) \subseteq \R^d$ defined by $K^0_{r}(\ell):=\closedMetricBallSupNorm{{z}_{\ell}}{r}$ and $K_{r}^1(\ell):=\closedMetricBallSupNorm{{z}_{\ell}}{r_\sharp(w)}\setminus \openMetricBallSupNorm{{z}_{\ell}}{2{d}^{1/2}r}$ for $\ell \in [L]$.  We also define the probability measure $\mu_{L,r,w}$ on $\R^d$ to be the uniform distribution on $J_{L,r,w}:= \bigcup_{(\ell,j) \in  [L]\times\{0,1\}} K_r^j(\ell)$; since $\Lebesgue\bigl(K_r^0(\ell) \cup K_r^1(\ell)\bigr) =  (2r)^d+(2r_\sharp(w))^d-(4d^{1/2}r)^d=w^{-1}$ for all $\ell \in [L]$, it follows that the density of $\mu_{L,r,w}$ with respect to $\Lebesgue$ takes the constant value $w/L$ on $J_{L,r,w}$.


Now define a function $h:[0,1] \rightarrow [0,1]$ by $h(z) := e^{-z^2/(1-z^2)}$ for $z \in [0,1)$ and $h(1) := 0$, so that $h(0)=1$, $\max_{k \in \N} \max_{z \in \{0,1\}} |h^{(k)}(z)|=0$ and 
\begin{equation}
\label{Eq:Am}
A_m := \max_{k \in [m]} \sup_{z \in [0,1]} |h^{(k)}(z)| \in (0,\infty)
\end{equation}
for each $m \in \N$.  This allows us to define regression functions $\eta_{L,r,w,s,\theta}^{\ell}:\R^d \rightarrow [0,1]$ for $\ell \in [L]$ by
\begin{align}\label{def:etaFuncFirstPartLBExtension}
\eta_{L,r,w,s,\theta}^{\ell}(x)\!:=\! \begin{cases} \tau+\theta  &\!\!\text{if } \|x-{z}_\ell\|_2 \leq d^{1/2}r  \\
\tau - \theta &\!\!\text{if } \|x-{z}_{\ell'}\|_2 \leq s \text{ with }\ell' \in [L]\setminus \{\ell\} \\
\tau + \theta - 2\theta h\bigl(\frac{\|x - {z}_{\ell'}\|_2}{s} \!-\! 1\bigr) &\!\!\text{if } s < \|x-{z}_{\ell'}\|_2 \leq 2s \text{ with }\ell' \in [L]\setminus \{\ell\} \\
\tau + \theta &\!\!\text{if } 2s < \|x-{z}_{\ell'}\|_2 \leq d^{1/2}r \text{ with }\ell' \in [L]\setminus \{\ell\} \\
\tau - \theta + 2\theta h\bigl(\frac{\|x - {z}_{\ell'}\|_2 }{d^{1/2}r}\!-\!1\bigr) &\!\!\text{if } d^{1/2}r < \|x-{z}_{\ell'}\|_2 < 2d^{1/2}r \text{ with }\ell' \in [L] \\
\tau-\theta &\text{otherwise.}
\end{cases}
\end{align}

Thus, $\eta_{L,r,w,s,\theta}^{\ell}$ is infinitely differentiable and uniformly above the level $\tau$ on $K_r^0(\ell)$, but both takes a value below $\tau$ at the centre ${z}_{\ell'}$ of each $K_r^0(\ell')$ with $\ell' \in [L] \setminus \{\ell\}$, and is uniformly below the level $\tau$ on each $K_r^1(\ell')$ with $\ell' \in [L]$.  Finally, then, for $\ell \in [L]$, we can let $\probDistribution_{L,r,w,s,\theta}^{\ell}$ denote the unique Borel probability distribution on $\R^d \times \{0,1\}$ with marginal $\mu_{L,r,w}$ on $\R^d$ and regression function $\eta_{L,r,w,s,\theta}^{\ell}$.  Figure~\ref{Fig:LowerBoundConstruction} provides an illustration of the regression functions used in this construction.

\begin{figure}[htbp!]
 \centering
 \begin{tikzpicture}[scale=0.6]
\fill[fill = red!30,] (-9,-9) rectangle (11,11);
\fill[shading = ring1, draw = red!30] (-4,6) circle (2*1.4cm); 
\draw[draw = red!30] (-4,6) circle (2*1.4cm); 
\fill[shading = ring2, draw = red!30] (6,6) circle (2*1.4cm); 
\draw[draw = red!30] (6,6) circle (2*1.4cm); 
\fill[shading = ring2, draw = red!30] (2,-4) circle (2*1.4cm); 
\draw[draw = red!30] (2,-4) circle (2*1.4cm); 
\draw[draw=black] (-4-4,6-4) rectangle (-4+4,6+4); 
\draw[draw=black] (-4-2*1.4,6-2*1.4) rectangle (-4+2*1.4,6+2*1.4); 
\draw[draw=black] (-4-1,6-1) rectangle (-4+1,6+1);
\filldraw[black] (-4,7) circle (0pt) node[anchor=north] {{\tiny $K_{r}^0(\ell)$}};
\filldraw[black] (-4,6) circle (1pt) node[anchor=north] {$z_{\ell}$};
\filldraw[black] (-4,9.8) circle (0pt) node[anchor=north] {\tiny $K_{r}^1(\ell)$};
\draw[draw=black] (6-4,6-4) rectangle (6+4,6+4);
\draw[draw=black] (6-2*1.4,6-2*1.4) rectangle (6+2*1.4,6+2*1.4); 
\draw[draw=black] (6-1,6-1) rectangle (6+1,6+1);
\filldraw[black] (6,7) circle (0pt) node[anchor=north] {\tiny $K_{r}^0(\ell')$};
\filldraw[black] (6,6) circle (1pt) node[anchor=north] {$z_{\ell'}$};
\filldraw[black] (6,9.8) circle (0pt) node[anchor=north] {\tiny $K_{r}^1(\ell')$};
\draw[draw=black] (2-4,-4-4) rectangle (2+4,-4+4);
\draw[draw=black] (2-2*1.4,-4-2*1.4) rectangle (2+2*1.4,-4+2*1.4); 
\draw[draw=black] (2-1,-4-1) rectangle (2+1,-4+1);
\filldraw[black] (2,-3) circle (0pt) node[anchor=north] {\tiny $K_{r}^0(\ell'')$};
\filldraw[black] (2,-4) circle (1pt) node[anchor=north] {$z_{\ell''}$};
\filldraw[black] (2,-0.2) circle (0pt) node[anchor=north] {\tiny $K_{r}^1(\ell'')$};
\end{tikzpicture}
 \caption{Illustration of the lower bound construction of $P_{L,r,w,s,\theta}^{\ell}$ in the proof of  Proposition~\ref{lemma:constrainedRiskApplicationfanoApplication}.  Blue and red regions correspond to the regression function $\eta_{L,r,w,s,\theta}^{\ell}$ being above and below $\tau$ respectively.  Note the different behaviour in the $\ell$th region $K_r^0(\ell)$ from the others.  The marginal measure $\mu_{L,r,w,s,\theta}^{\ell}$ on~$\R^d$ is uniformly distributed on $\bigcup_{\ell \in [L]} \bigl(K_r^0(\ell) \cup K_r^1(\ell)\bigr)$; the boundaries of these regions are denoted with black lines.}
 \label{Fig:LowerBoundConstruction}
\end{figure}
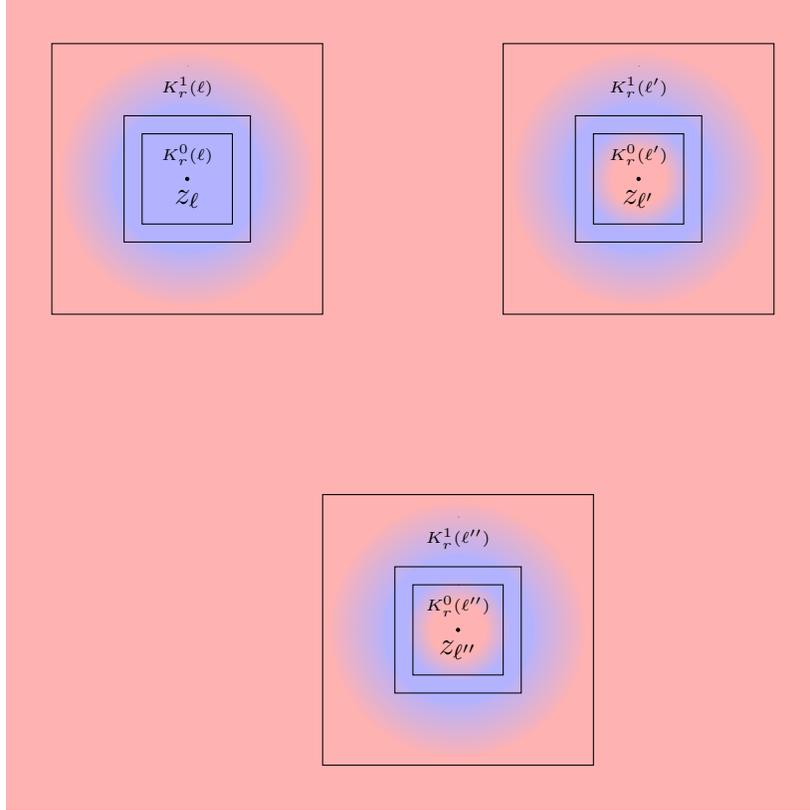 
Lemmas~\ref{lemma:LB1HolderRegFunctions} and~\ref{lemma:approximableConditionForLB1} verify that $\bigl\{\probDistribution_{L,r,w,s,\theta}^{\ell}:\ell \in [L]\bigr\} \subseteq \classOfLBDistributions$ for appropriate choices of $r$, $w$, $s$ and $\theta$.  Moreover, Proposition~\ref{lemma:completingFirstLBConstructions} reveals both that the chi-squared divergences between pairs of distributions in our construction are small, and yet that the distributions are sufficiently different that any set $A \in \mathcal{A} \cap \powerSet\bigl( \mathcal{X}_{\tau}(\eta_{L,r,w,s,\theta}^{\ell})\cap \mathcal{X}_{\tau}(\eta_{L,r,w,s,\theta}^{\ell'})\bigr)$ for distinct $\ell,\ell' \in [L]$ must have much smaller $\mu$-measure than $M_\tau$.  To conclude, we apply a constrained risk inequality due to \citet{brown1996constrained} in the proof of Proposition~\ref{lemma:constrainedRiskApplicationfanoApplication}(i) and a version of Fano's lemma in the proof of Proposition~\ref{lemma:constrainedRiskApplicationfanoApplication}(ii).

Proposition~\ref{lemma:paramatricLB} provides the final (parametric) part of the lower bounds in Theorems~\ref{thm:minimaxRate} and~\ref{Thm:minimaxRateHOS}.
\begin{prop}\label{lemma:paramatricLB} \sloppy  Take $\epsilon_0 \in (0,1/2]$, $\tau \in (\epsilon_0,1-\epsilon_0)$ $(\holderExponent,\holderConstant) \in (0,\infty)\times [1,\infty)$, $(\approximableDensityExponent, \approximableMarginExponent, \regularityConstant, \approximableSetsConstant)\in (0,\infty)^2\times (0,1)\times [1,\infty)$ with $ \holderExponent\approximableMarginExponent(\approximableDensityExponent-1) < d \approximableDensityExponent$ and $\regularityConstant  \in (0,4^{-d}]$. Suppose that $\mathcal{A} \subseteq \borel(\R^d)$ satisfies $\mathcal{A}_{\mathrm{hpr}}  \subseteq \mathcal{A} \subseteq \mathcal{A}_{\mathrm{conv}}$. Then there exists $c_2 > 0$, depending only on $\epsilon_0$, $d$, $\holderExponent$, $\approximableDensityExponent$, $\approximableMarginExponent$, $\holderConstant$ and $\approximableSetsConstant$, such that for any $n \in \N$, $\alpha \in (0,1/2-\epsilon_0]$ and $\hat{A}\in \hat{\mathcal{A}}_n\bigl(\tau,\alpha,\classOfLBDistributions\bigr)$, we can find $\probDistribution \in \classOfLBDistributions$ with regression function $\regressionFunction:\R^d \rightarrow [\tau-\epsilon_0/2,\tau+\epsilon_0/2]$ and marginal distribution $\mu$ on $\R^d$ that satisfies
\begin{align*}
\E_{\probDistribution} \Bigl[\bigl\{ M_\tau(P,\mathcal{A})-\mu(\hat{A})\bigr\} \cdot \one_{\{\hat{A} \subseteq{}  \etaSuperLevelSet{\tau} \}}\Bigr] \geq \frac{c_2}{\sqrt{n}}.
\end{align*}
\end{prop}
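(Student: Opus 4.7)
The plan is a two-point Le Cam argument, in which the two distributions $P_0, P_1 \in \classOfLBDistributions$ share a common regression function $\eta$ and differ only in how their feature marginals distribute mass between two symmetric, well-separated high-mass regions. The parametric $n^{-1/2}$ rate will then emerge from the information-theoretic difficulty of distinguishing the two mass assignments, which, since $\eta$ is shared, is controlled entirely by the chi-squared divergence between the feature marginals.

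\textbf{Construction and membership.} I fix two disjoint closed hyper-cubes $K_0 = \closedMetricBallSupNorm{z_0}{r_0}$, $K_1 = \closedMetricBallSupNorm{z_1}{r_0}$ with $\supNorm{z_0 - z_1} \geq 4r_0$; let $p \in (0, 1/2)$, $V := (2r_0)^d$, and $\epsilon_n := c_2 / \sqrt{n}$ with $c_2 > 0$ to be chosen. For $i \in \{0,1\}$ let $\mu_i$ have Lebesgue density $(p + (-1)^i \epsilon_n)/V$ on $K_0$, $(p - (-1)^i \epsilon_n)/V$ on $K_1$, and a common ($i$-independent) smooth background profile vanishing on a mild enlargement $C_0 \cup C_1 \supseteq K_0 \cup K_1$ with $C_0 \cap C_1 = \emptyset$. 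Build $\eta : \R^d \to [\tau - \epsilon_0/2, \tau + \epsilon_0/2]$ to be $C^{\infty}$, equal to $\tau + \epsilon_0/2$ on $K_0 \cup K_1$, equal to $\tau - \epsilon_0/2$ off $C_0 \cup C_1$, and smoothly interpolated in between using the bump $h$ from~\eqref{Eq:Am}; a suitable scaling ensures $P_i \in \classOfHolderDistributions$. Then $\etaSuperLevelSet{\tau}$ has two disjoint convex components, one inside each $C_j$. Verifying $P_i \in \classOfWellApproximableSets \cap \classOfWellApproximableSetsHO$ reduces, when $\Delta \leq \epsilon_0/2$ and $\xi$ lies below a construction-dependent threshold, to observing that $K_0 \in \mathcal{A}$ attains $\mu_i(K_0) = M_\tau(P_i,\mathcal{A}) = p + \epsilon_n$ and is contained in $\muRegularSet \cap \omegaSuperLevelSet{\xi} \cap \densitySuperLevelSet{\xi}$ (the hypothesis $\regularityConstant \leq 4^{-d}$ provides exactly the slack needed for $K_0 \subseteq \muRegularSet$); other regimes of $\xi, \Delta$ are absorbed into a sufficiently large $\approximableSetsConstant$.

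\textbf{Pointwise bound and Le Cam.} Because $\mathcal{A} \subseteq \mathcal{A}_{\mathrm{conv}}$ and the two components of $\etaSuperLevelSet{\tau}$ are convex and separated, any $\hat{A} \in \mathcal{A}$ with $\hat{A} \subseteq \etaSuperLevelSet{\tau}$ is contained in $C_0$, in $C_1$, or is empty; since $\mu_i$ vanishes on $C_j \setminus K_j$, $\mu_i(\hat{A}) \leq \mu_i(K_j)$ in each case. Writing $g_i(\omega) := \{M_\tau(P_i,\mathcal{A}) - \mu_i(\hat{A}(\omega))\} \one_{\{\hat{A}(\omega) \subseteq \etaSuperLevelSet{\tau}\}}$, the identity $f_1/f_0 = (p - \epsilon_n)/(p + \epsilon_n)$ on $K_0$ (and its reciprocal on $K_1$) gives, by a short case check, the pointwise inequality $g_0(\omega) + g_1(\omega) \geq 2\epsilon_n \one_{\{\hat{A}(\omega) \subseteq \etaSuperLevelSet{\tau}\}}$. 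Letting $p_0, p_1$ denote densities of $P_0^{\otimes n}, P_1^{\otimes n}$ with respect to a common dominating measure, integrating against $\min(p_0,p_1)$ and invoking Type I control yields
\begin{align*}
\E_{P_0}[g_0] + \E_{P_1}[g_1] \geq 2\epsilon_n \bigl\{ (1 - \mathrm{TV}(P_0^{\otimes n}, P_1^{\otimes n})) - \alpha \bigr\}.
\end{align*}
Since $P_0, P_1$ share $\eta$, $\chi^2(P_0, P_1) = \chi^2(\mu_0, \mu_1) \leq C \epsilon_n^2 / p$ by direct calculation on the cubes, and tensorisation gives $\chi^2(P_0^{\otimes n}, P_1^{\otimes n}) \leq e^{C c_2^2 / p} - 1$. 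Choosing $c_2$ small enough (depending only on $\epsilon_0, d, p$) forces $\mathrm{TV}(P_0^{\otimes n}, P_1^{\otimes n}) \leq 1/2 - 3\epsilon_0/4$; combined with $\alpha \leq 1/2 - \epsilon_0$, this gives $\max_i \E_{P_i}[g_i] \geq (7\epsilon_0/4) \epsilon_n = c_2' / \sqrt{n}$, so the worse of $P_0, P_1$ delivers the claimed bound.

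\textbf{Main obstacles.} The trickiest part is verifying the higher-order approximability from Definition~\ref{defn:approximableMeasureClassHO}, which forces a careful geometric check that $K_0 \subseteq \muRegularSet$---the assumed bound $\regularityConstant \leq 4^{-d}$ is exactly what is needed. Secondary care is needed in designing the smooth transition of $\eta$ near the boundaries of $K_0, K_1$ so that $\etaSuperLevelSet{\tau}$ retains two disjoint convex components while $\eta$ inherits the prescribed H\"older constant at every smoothness level in $(0, \holderExponent]$.
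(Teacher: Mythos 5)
Your high-level strategy matches the paper's: a two-point Le Cam construction with a shared regression function $\eta$ and a pair of marginals that differ only in how they allocate a perturbation of size $\zeta \asymp n^{-1/2}$ to two symmetric regions of $\etaSuperLevelSet{\tau}$; the convexity of $\mathcal{A}$ then forces any valid $\hat{A}$ to lie in only one region, and the $\chi^2$ bound between product measures gives the parametric rate. The paper realises this with a single column $A_{-1},A_0,A_1$ ordered along the first coordinate, with $\eta$ depending only on $x_1$, whereas you use two separated bumps plus a ``background''; your bookkeeping via the pointwise inequality $g_0+g_1 \geq 2\epsilon_n \one_{\{\hat A \subseteq \etaSuperLevelSet{\tau}\}}$ integrated against $\min(p_0,p_1)$ is a minor variant of the paper's direct probability decomposition. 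So the approach is essentially the same.

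However, there is a genuine gap in your verification that the constructed distributions lie in $\classOfLBDistributions$. You write that the remaining regimes of $(\xi,\Delta)$ ``are absorbed into a sufficiently large $\approximableSetsConstant$'' — but $\approximableSetsConstant$ (like $\regularityConstant$, $\approximableDensityExponent$, $\approximableMarginExponent$, $\epsilon_0$) is a \emph{given} parameter of the proposition; you must exhibit distributions in $\classOfWellApproximableSets \cap \classOfWellApproximableSetsHO$ for that fixed $\approximableSetsConstant$, and smaller $\approximableSetsConstant$ makes the condition \emph{harder} to satisfy. This is precisely where the paper works hard: it chooses $t$, $s$, $\theta$ as explicit functions of the given parameters and checks that $\tfrac{3s^d}{2\{(2t)^d+2s^d\}} \leq \approximableSetsConstant \cdot \bigl[\{(8t)^d + 2(4s)^d\}^{-\approximableDensityExponent}\wedge\theta^{\approximableMarginExponent}\bigr]$. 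The feasibility of that system is exactly where the standing hypothesis $\holderExponent\approximableMarginExponent(\approximableDensityExponent-1) < d\approximableDensityExponent$ is used (it makes the exponent in the definition of $t$ positive and finite). Your sketch never invokes that hypothesis, which is a signal that the membership check is incomplete: in your construction, the analogous balancing pits the $\Delta$-regime constraint (forcing $p$ small when $\epsilon_0$ is small and $\approximableMarginExponent$ is large), the $\xi$-regime constraint (forcing a relationship between $p$, $r_0$ and $\approximableDensityExponent$), and the $\muRegularSet$ constraint (bounding $r_0$ from below in terms of $\regularityConstant$) against each other, and demonstrating compatibility for arbitrary admissible parameters requires the same inequality $\holderExponent\approximableMarginExponent(\approximableDensityExponent-1)<d\approximableDensityExponent$. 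Without carrying out that parameter balancing, the claim $\{P_0,P_1\}\subseteq \classOfLBDistributions$ is unjustified, and the rest of the Le Cam argument has nothing to apply to.
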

The construction for the proof of Proposition~\ref{lemma:paramatricLB} is somewhat different from those in the proof of Proposition~\ref{lemma:constrainedRiskApplicationfanoApplication} and is illustrated in Figure~\ref{Fig:lowerBoundConstruction2}: it hinges on the difficulty of estimating $\mu(A)$ for $A \in \mathcal{A}$.  To formalise this idea, given $t \in [1,\infty)$, $\theta \in (0,\epsilon_0/2]$, $s \in (0,1]$ and $\zeta \in \bigl[0,\frac{s^d}{2\{(2t)^d+2s^d\}} \bigr]$, we first define a pair of distributions $\{P_{t,\theta,s,\zeta}^{\ell}\}_{\ell \in \{-1,1\}}$ on $\R^d \times [0,1]$. Define $\eta\equiv \eta_{t,\theta,s}:\R^d \rightarrow [0,1]$ by
\begin{align*}
\regressionFunction(x) \equiv \eta_{t,\theta,s}(x_1,\ldots,x_d) :=\begin{cases}\tau+{\theta} &\text{ for }x_1 \leq -t-s\\
\tau+\theta\bigl\{1-2h\bigl(\frac{-x_1-t}{s}\bigr)\bigr\} &\text{ for }-t-s< x_1 \leq -t\\
\tau-{\theta} &\text{ for }-t<x_1 \leq t \\
\tau+\theta\bigl\{1-2h\bigl(\frac{x_1-t}{s}\bigr)\bigr\} &\text{ for }t<x_1 \leq t+s\\
\tau+\theta &\text{ for }x_1 \geq t+s.
\end{cases}
\end{align*}
Define $A_0:=[-t,t]^d$, $A_{-1}:=[-t-2s,-t-s]\times [-\frac{s}{2},\frac{s}{2}]^{d-1}$ and $A_{1}:=[t+s,t+2s]\times [-\frac{s}{2},\frac{s}{2}]^{d-1}$. For $\ell \in \{-1,1\}$, let $\mu^\ell_\zeta \equiv \mu_{t,s,\zeta}^{\ell}$ be the Lebesgue-absolutely continuous measure supported on $A_{-1}\cup A_0 \cup A_1\subseteq \R^d$ with piecewise constant density $f_{\mu_{\zeta}^{\ell}}:\R^d \rightarrow [0,\infty)$ given by
\begin{align*}
f_{\mu_{\zeta}^{\ell}}(x):=\begin{cases} \frac{1}{(2t)^{d}+2s^d}+\frac{\zeta \cdot j \cdot \ell}{s^d}&\text{ for }x \in A_j \text{ with } j \in \{-1,0,1\},\\
0&\text{ for }x \notin A_{-1}\cup A_0 \cup A_1.
\end{cases}
\end{align*}
Now for $\ell \in \{-1,1\}$, let $P_\zeta^\ell \equiv P_{t,\theta,s,\zeta}^{\ell}$ denote the unique distribution on $\R^d \times \{0,1\}$ with marginal~$\mu_{\zeta}^{\ell}$ on~$\R^d$ and regression function $\regressionFunction$.  Figure~\ref{Fig:lowerBoundConstruction2} illustrates this construction.
\begin{figure}[htbp!]
    \centering
\begin{tikzpicture}[scale = 1.4]
\fill[fill = red!30] (-4.5,-2.5) rectangle (4.5,2.5);
\fill[fill = red!30] (-2,-2.5) rectangle (2,2.5);
\fill[fill = blue!30] (2+1,-2.5) rectangle (4.5,2.5);
\fill[fill = blue!30] (-4.5,-2.5) rectangle (-2-1,2.5);
\shade[left color = blue!30, right color = red!30] (-2-1,-2.5) rectangle (-2,2.5);
\shade[left color = red!30, right color = blue!30] (2,-2.5) rectangle (2+1,2.5);
\draw[draw = blue!30] (-4.5,-2.5) rectangle (-2-1,2.5);
\draw[draw = blue!30] (2+1,-2.5) rectangle (4.5,2.5);
\draw[draw = black] (-2,-2) rectangle (2,2);
\draw[pattern=mydots] (-2,-2) rectangle (2,2);
\filldraw[black] (0,1/4) circle (0pt) node[anchor=north] {$A_0$};
\draw[draw = black] (-2-2,-1/2) rectangle (-2-1,1/2);
\draw[pattern=dots] (-2-2,-1/2) rectangle (-2-1,1/2);
\filldraw[black] (-7/2,1/4) circle (0pt) node[anchor=north] {$A_{-1}$};
\draw[draw = black] (2+1,-1/2) rectangle (2+2,1/2);
\draw[pattern=mynewdots, pattern color=black] (2+1,-1/2) rectangle (2+2,1/2);
\filldraw[black] (7/2,1/4) circle (0pt) node[anchor=north] {$A_{1}$};
\end{tikzpicture}
\caption{Illustration of the lower bound construction of $P_\zeta^1 \equiv P_{t,\theta,s,\zeta}^1$ in the proof of Proposition~\ref{lemma:paramatricLB}.  Blue and red regions correspond to the regression function $\eta_{t,\theta,s}$ being above and below $\tau$ respectively.  The density of dots is greatest on $A_1$ and smallest on $A_{-1}$, reflecting the greater marginal density of $\mu_{\zeta}^1$ on $A_1$; for $P_\zeta^{-1}$, the density of dots would be reversed.}
    \label{Fig:lowerBoundConstruction2}
\end{figure}
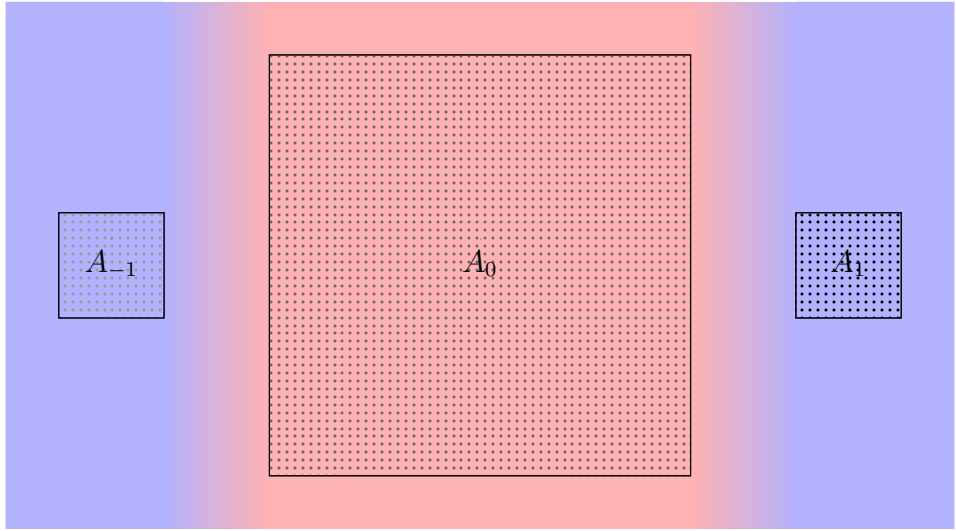

\sloppy In the proof of Proposition~\ref{lemma:paramatricLB}, we will show that $\bigl\{P_{t,\theta,s,\zeta}^{\ell}: \ell \in \{-1,1\}\bigr\} \subseteq \classOfLBDistributions$ for suitable $t$, $\theta$, $s$ and $\zeta$.  Moreover, $P_{t,\theta,s,\zeta}^{-1}$ and $P_{t,\theta,s,\zeta}^1$ are close in chi-squared divergence, but nevertheless we cannot have both $\mu_\zeta^{-1}(A)$ and $\mu_\zeta^{1}(A)$ close to $M_\tau(P_\zeta^{-1},\mathcal{A}) = M_\tau(P_\zeta^1,\mathcal{A})$ for $A \in \mathcal{A} \cap \powerSet\bigl(\etaSuperLevelSet{\tau}\bigr)$. Hence, any data-dependent selection set~$\hat{A}$ that satisfies our Type I error guarantee must incur large regret for at least one of these distributions.

\section{Application to study of heterogeneous treatment effects}\label{sec:hteApplication}

The aim of this section is to show how our previous results may be applied to the two-arm setting with a treatment and control, where we are interested in regions of substantial treatment effect.  To this end, let~$\tilde{P}$ denote the distribution of a random triple $(X,T,\tilde{Y})$ taking values in $\R^d \times \{0,1\} \times [0,1]$, where $X$ represents covariates, $T$ is a treatment indicator and $\tilde{Y}$ denotes the corresponding response.  Assume that $X \sim \mu$, and that the function $\pi:\R^d \rightarrow [0,1]$ given by $\pi(x) := \mathbb{P}(T=1|X=x)$ is known.  For $\ell \in \{0,1\}$, let $\tilde{\eta}^\ell(x) := \E(\tilde{Y}|X=x,T=\ell)$.  The \emph{heterogeneous treatment effect} is the function $\varphi:\R^d \rightarrow [-1,1]$ defined by $\varphi(x):=\tilde{\eta}^1(x)-\tilde{\eta}^0(x)$ for $x \in \R^d$. Given $t \in [-1,1]$ and a class of sets $\mathcal{A} \subseteq \borel(\R^d)$, our primary interest is in identifying subsets $A \in \mathcal{A}$ that are contained in $\mathcal{X}_t(\varphi):= \{x \in \R^d:\varphi(x) \geq t\}$ based on data $\tilde{\sample} := \bigl((X_1,T_1,\tilde{Y}_1),\ldots, (X_n,T_n,\tilde{Y}_n)\bigr) \sim \tilde{P}^{\otimes n}$.  

Given a family ${\mathcal{P}}$ of distributions on $\R^d \times \{0,1\} \times [0,1]$ and a significance level $\alpha \in (0,1)$, we let $\hat{\mathcal{A}}_n^{\mathrm{HTE}}(t,\alpha,\mathcal{P})$ denote the set of functions $\hat{A}:(\R^d \times \{0,1\}\times [0,1])^n \rightarrow \mathcal{A}$ such that $(x,\tilde{D}) \mapsto \mathbbm{1}_{\hat{A}(\tilde{D})}(x)$ is a Borel measurable function on $\R^d \times (\R^d \times \{0,1\}\times [0,1])^n$ and we have the Type I error guarantee that
\begin{equation*}
\inf_{\tilde{P} \in {\mathcal{P}}} \Prob_{\tilde{P}}\bigl( \hat{A}(\tilde{\sample}) \subseteq{} \mathcal{X}_t(\varphi)\bigr) \geq 1-\alpha.
\end{equation*}
Similarly to our formulation in Section~\ref{Sec:MainResults}, we seek $\hat{A} \in \hat{\mathcal{A}}_n^{\mathrm{HTE}}(t,\alpha,{\mathcal{P}})$ with low regret
\[
R_{t}^\varphi(\hat{A})  :=\sup\bigl\{ \mu(A): A \in \mathcal{A} \cap \mathrm{Pow}\bigl(\mathcal{X}_t(\varphi)\bigr)\bigr\}-\E_{\tilde{P}}\big\{\mu\bigl(\hat{A}(\tilde{\sample})\bigr)\bigm|\hat{A}(\tilde{\sample}) \subseteq{} \mathcal{X}_t(\varphi)\big\}.
\]
Given $(\holderExponent,\holderConstant) \in (0,\infty)\times[1,\infty)$ and a Borel measurable function $\pi:\R^d \rightarrow [0,1]$, we let $\classOfHolderDistributionsHTE$ denote the class of distributions $\tilde{P}$ on $\R^d \times \{0,1\} \times [0,1]$ such that $\varphi$ is $(\holderExponent,\holderConstant)$-H\"{o}lder (see Definition~\ref{def:holderHigherOrder}), and such that $\pi(x) = \mathbb{P}_{\tilde{P}}(T=1|X=x)$ for all $x \in \R^d$. Similarly, given $(\approximableDensityExponent,\approximableMarginExponent,\regularityConstant,\approximableSetsConstant)\in (0,\infty)^2\times (0,1)\times [1,\infty)$, we let $\classOfWellApproximableSetsHTE$ denote the class of all distributions $\tilde{\probDistribution}$ such that $\mu$ is absolutely continuous, with Lebesgue density $f_\mu$, and such that~\eqref{eq:FromDefApproximableMeasureClassHO} holds with $\varphi$ in place of $\regressionFunction$, with $t$ in place of $\tau$, and with $\sup\bigl\{ \mu(A): A \in \mathcal{A} \cap \mathrm{Pow}\bigl(\mathcal{X}_t(\varphi)\bigr)\bigr\}$ in place of $M_\tau$.

The following result on the minimax rate of regret in this heterogeneous treatment effect context is an almost immediate corollary of Theorem~\ref{Thm:minimaxRateHOS}.
\begin{corollary}
\label{cor:HTE}
Take $\zeta_0 \in (0,1/2)$,  $t \in \bigl[-(1-{\zeta_0}),1-{\zeta_0}\bigr]$, $(\holderExponent,\holderConstant) \in (0,\infty) \times [1,\infty)$,  $(\approximableDensityExponent,\approximableMarginExponent,\regularityConstant,\approximableSetsConstant)\in (0,\infty)^2 \times (0,1) \times [1,\infty)$ with $\holderExponent \approximableMarginExponent(\approximableDensityExponent-1) < d\approximableDensityExponent$ and $\regularityConstant \in \bigl(0,(4d^{1/2})^{-d}\bigr]$, and let $\pi:\R^d \rightarrow [\zeta_0,1-\zeta_0]$ be Borel measurable. Let $\mathcal{A} \subseteq \borel(\R^d)$ satisfy $\mathcal{A}_{\mathrm{hpr}}  \subseteq \mathcal{A} \subseteq \mathcal{A}_{\mathrm{conv}}$, $\vcDim(\mathcal{A}) < \infty$ and $\emptyset \in \mathcal{A}$. Given $n \in \N$ and $\alpha \in (0,1/2-\zeta_0]$, we have
\begin{equation}\label{Eq:MinimaxHTE}
\inf_{\hat{A}} \, \sup_{\tilde{P}} \,  R_t^\varphi(\hat{A}) \asymp \min\biggl\{\biggl(\frac{\log_+(n/\alpha)}{n}\biggr)^{\frac{\holderExponent \approximableDensityExponent \approximableMarginExponent}{\approximableDensityExponent(2\holderExponent+d)+\holderExponent\approximableMarginExponent}}+\frac{1}{n^{1/2}},1\biggr\},
\end{equation}
where the infimum in~\eqref{Eq:MinimaxHTE} is taken over $\hat{\mathcal{A}}_n^{\mathrm{HTE}}\bigl(t,\alpha,\classOfHolderDistributionsHTE\bigr)$, the supremum is taken over $\classOfHolderDistributionsHTE \cap \classOfWellApproximableSetsHTE$.  In~\eqref{Eq:MinimaxHTE}, $\asymp$ indicates that the ratio of the left- and right-hand sides is bounded above and below by positive quantities depending only on $d$, $\holderExponent$, $\holderConstant$, $\approximableDensityExponent$, $\approximableMarginExponent$, $\regularityConstant$,  $\approximableSetsConstant$, $\zeta_0$ and $\vcDim(\mathcal{A})$.
\end{corollary}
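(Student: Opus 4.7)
The strategy is to establish Corollary~\ref{cor:HTE} by reducing to the regression setting of Section~\ref{Sec:HigherOrder} in both directions, using different reductions for the upper and lower bounds.

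For the upper bound, form the inverse-propensity weighted pseudo-outcomes
\[
Y_i := \frac{\zeta_0}{2}\biggl(\frac{T_i \tilde{Y}_i}{\pi(X_i)} - \frac{(1-T_i)\tilde{Y}_i}{1-\pi(X_i)}\biggr) + \frac{1}{2}.
\]
The assumptions $\pi(x) \in [\zeta_0, 1-\zeta_0]$ and $\tilde{Y}_i \in [0,1]$ give $Y_i \in [0,1]$, while a direct conditioning argument yields $\E(Y_i \mid X_i) = \zeta_0 \varphi(X_i)/2 + 1/2 =: \eta^*(X_i)$. Consequently, $\{\varphi \geq t\} = \{\eta^* \geq \tau^*\}$ for $\tau^* := \zeta_0 t/2 + 1/2 \in (0,1)$. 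If $\varphi$ is $(\holderExponent,\holderConstant)$-H\"older, then $\eta^*$ is $(\holderExponent, \zeta_0 \holderConstant/2)$-H\"older, and the approximability condition~\eqref{eq:FromDefApproximableMeasureClassHO} transfers from $(\varphi, t)$ to $(\eta^*, \tau^*)$ with the same exponents $\approximableDensityExponent,\approximableMarginExponent$ and regularity parameter $\regularityConstant$. Thus, under any $\tilde{P} \in \classOfHolderDistributionsHTE \cap \classOfWellApproximableSetsHTE$, the induced distribution of $(X_i,Y_i)$ lies in the regression classes $\classOfHolderDistributions \cap \classOfWellApproximableSetsHO$ (after absorbing $\zeta_0$-dependent factors into $\holderConstant$ and $\approximableSetsConstant$). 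Applying $\hat{A}_{\mathrm{OSS}}^+$ to $((X_i, Y_i))_{i \in [n]}$ at threshold $\tau^*$ yields a data-dependent selection set in $\hat{\mathcal{A}}_n^{\mathrm{HTE}}\bigl(t, \alpha, \classOfHolderDistributionsHTE\bigr)$ whose regret is bounded by the upper bound of Theorem~\ref{Thm:minimaxRateHOS}(i).

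For the lower bound, invert the correspondence. Given any regression distribution $P$ from the constructions of Propositions~\ref{lemma:constrainedRiskApplicationfanoApplication} and~\ref{lemma:paramatricLB}, with marginal $\mu$ and regression function $\eta: \R^d \rightarrow [\tau - \epsilon_0/2, \tau + \epsilon_0/2]$, build an HTE distribution $\tilde{P}$ on $\R^d \times \{0,1\} \times [0,1]$ by letting the marginal of $X$ equal $\mu$, drawing $T \mid X \sim \mathrm{Bern}\bigl(\pi(X)\bigr)$, and setting
\[
\tilde{\eta}^1(x) := \tfrac{1 + \varphi(x)}{2}, \qquad \tilde{\eta}^0(x) := \tfrac{1 - \varphi(x)}{2}, \qquad \varphi(x) := \eta(x) - (\tau - t),
\]
together with $\tilde{Y} \mid (X = x, T = \ell) \sim \mathrm{Bern}\bigl(\tilde{\eta}^\ell(x)\bigr)$. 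Taking $\epsilon_0 \in (0, \zeta_0]$, the hypothesis $t \in [-(1-\zeta_0), 1-\zeta_0]$ guarantees $\varphi(x) \in [-1,1]$ and $\tilde{\eta}^0, \tilde{\eta}^1 \in [0,1]$. By construction $\tilde{\eta}^1 - \tilde{\eta}^0 = \varphi$, the super-level set $\{\varphi \geq t\}$ equals $\{\eta \geq \tau\}$, and the H\"older smoothness and approximability properties of $\eta$ with respect to $\tau$ transfer verbatim to $\varphi$ with respect to $t$; hence $\tilde{P} \in \classOfHolderDistributionsHTE \cap \classOfWellApproximableSetsHTE$.

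The main obstacle is to verify that the divergences between HTE distributions in this construction remain comparable to those between the underlying regression distributions, so that the constrained risk inequality of Brown and Low and the Fano-type arguments used in Section~\ref{Sec:LowerBound} carry over. For two HTE distributions $\tilde{P}_0, \tilde{P}_1$ sharing the same $\mu$ and $\pi$, decomposing the likelihood ratio according to $(X, T)$ and using that the chi-squared divergence between two Bernoulli distributions with means bounded away from $\{0, 1\}$ is bounded by a constant (depending only on $\zeta_0$) times the squared difference of means, one obtains $\chi^2(\tilde{P}_1, \tilde{P}_0) \leq C(\zeta_0) \cdot \chi^2(P_1, P_0)$, where $P_j$ is the corresponding regression distribution. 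Consequently, the conclusions of Propositions~\ref{lemma:constrainedRiskApplicationfanoApplication} and~\ref{lemma:paramatricLB} transfer to the present setting with constants multiplied by factors depending only on $\zeta_0$, yielding the matching lower bound and completing the proof.
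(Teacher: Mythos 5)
Your upper bound is essentially the paper's argument: you form inverse-propensity-weighted pseudo-outcomes $Y_i \in [0,1]$ with $\E(Y\mid X)$ an affine transform of $\varphi$, check that the H\"older and approximability conditions transfer (absorbing the scaling $\Delta \mapsto 2\Delta/\zeta_0$ into $\holderConstant$ and $\approximableSetsConstant$), and invoke Theorem~\ref{Thm:minimaxRateHOS}(i). The only cosmetic difference is that you scale by $\zeta_0$ where the paper scales by $\rho_{\min} := \min\bigl\{\inf_x \pi(x), 1-\sup_x \pi(x)\bigr\} \ge \zeta_0$; both give the correct rate since the constants are allowed to depend on $\zeta_0$.

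Your lower bound is correct in substance but takes a genuinely different and more laborious route than the paper. The paper's reduction is information-theoretic and essentially free: it sets $\tilde{Y} := TY + (1-T)(1-Y)$ with $T\mid X \sim \mathrm{Bern}(\pi(X))$ independent of $Y$, so that $\varphi = 2\eta - 1$ and $\mathcal{X}_t(\varphi) = \mathcal{X}_{(1+t)/2}(\eta)$. Since $Y$ can be recovered exactly from $(T,\tilde{Y})$ and $T$ is (given known $\pi$) pure independent noise, any HTE selection set induces a (randomized) regression selection set with identical Type~I error and identical regret, and the lower bound of Theorem~\ref{Thm:minimaxRateHOS}(ii) transfers verbatim with $\tau := (1+t)/2$. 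Your version instead builds fresh HTE distributions via $\varphi := \eta - (\tau - t)$ and $\tilde{Y}\mid(X,T)\sim\mathrm{Bern}(\tilde{\eta}^T(X))$, and argues the $\chi^2$-divergences are comparable up to a $\zeta_0$-dependent constant. That is true (the denominators $\tilde{\eta}^\ell(1-\tilde{\eta}^\ell)$ are bounded away from zero because $\varphi$ ranges over $[t-\epsilon_0/2,\, t+\epsilon_0/2]$ with $\epsilon_0 \le \zeta_0$), but it commits you to re-running the proofs of Propositions~\ref{lemma:constrainedRiskApplicationfanoApplication} and~\ref{lemma:paramatricLB}: the parameters $\theta, s, w, r, L$ and $\zeta$ in Proposition~\ref{lemma:completingFirstLBConstructions} are calibrated so that $\chi^2(P_\ell, P_{\ell'}) = \log(1+\zeta)/n$ exactly, so a constant-factor perturbation of the divergence forces a rescaling that propagates through the whole construction. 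This works and only changes constants, but it is real work that you assert rather than carry out. The paper's deterministic coupling sidesteps all of it; your route buys nothing extra except perhaps slightly easier bookkeeping on the H\"older constant (the paper's $\varphi = 2\eta - 1$ doubles $\holderConstant$, whereas your $\varphi = \eta - (\tau-t)$ preserves it).
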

To establish the upper bound in Corollary~\ref{cor:HTE}, we reduce the problem to the setting of Section \ref{Sec:HigherOrder} by letting $\rho_{\min} := \min\{\inf_{x \in \R^d} \pi(x),1 - \sup_{x \in \R^d} \pi(x)\}$ and introducing proxy labels
\begin{align*}
Y := \frac{1}{2}\biggl\{1 + \frac{\rho_{\min}}{\pi(X)(1-\pi(X))}\cdot\bigl(T-\pi(X)\bigr)\tilde{Y}\biggr\},
\end{align*}
so that $Y$ takes values in $[0,1]$ and satisfies both $\regressionFunction(x) :=\E(Y|X=x) = \frac{1}{2}\bigl(1+\rho_{\min} \cdot \varphi(x)\bigr)$ and $\mathcal{X}_t(\varphi)=\etaSuperLevelSet{\tau}$ with $\tau:=\frac{1}{2}(1+ \rho_{\min} \cdot t)$. The upper bound then follows from Theorem \ref{Thm:minimaxRateHOS}(i).

To deduce the lower bound, we convert distributions $P$ of random pairs $(X,Y)$ into distributions $\tilde{P}$ of random triples $(X,T,\tilde{Y})$ for which $\Prob_{\tilde{P}}(T=1|X=x,Y=y)=\pi(x)$ and $\tilde{Y}:=T \cdot Y+(1-T)\cdot (1-Y)$, so that the corresponding heterogeneous treatment effect satisfies $\varphi(x)=2\regressionFunction(x)-1$. We may therefore deduce the lower bound in Corollary~\ref{cor:HTE} from Theorem~\ref{Thm:minimaxRateHOS}(ii), applied with $\tau :=(1+t)/2$.

\vspace{5mm}

\textbf{Acknowledgements:} We thank the anonymous reviewers for constructive feedback that helped to improve the paper.  


\bibliographystyle{imsart-nameyear}
\bibliography{mybib}

\newpage

\title{Supplementary material for `Optimal subgroup selection'}
\runtitle{Optimal subgroup selection}

\begin{aug}
\author[A]{\fnms{Henry} W. J. \snm{Reeve}\ead[label=e1]{henry.reeve@bristol.ac.uk}},
\author[B]{\fnms{Timothy} I. \snm{Cannings}\thanksref{t1}\ead[label=e2]{timothy.cannings@ed.ac.uk}} \\
\thankstext{t1}{Research supported by Engineering and Physical Sciences Research Council (EPSRC) New Investigator Award EP/V002694/1.}
\and
\author[C]{\fnms{Richard} J. \snm{Samworth}\thanksref{t2}\ead[label=e3]{r.samworth@statslab.cam.ac.uk}}
\thankstext{t2}{Research supported by Engineering and Physical Sciences Research Council (EPSRC) Programme grant EP/N031938/1, EPSRC Fellowship EP/P031447/1 and European Research Council Advanced Grant 101019498.}

\runauthor{H. W. J. Reeve, T. I. Cannings and R. J. Samworth}

 \address[A]{School of Mathematics, University of Bristol\\\href{mailto:henry.reeve@bristol.ac.uk}{henry.reeve@bristol.ac.uk} 
}

  \address[B]{School of Mathematics and Maxwell Institute for Mathematical Sciences, The University of Edinburgh\\\href{mailto:timothy.cannings@ed.ac.uk}{timothy.cannings@ed.ac.uk}
}
        
  \address[C]{Statistical Laboratory, University of Cambridge\\
  \href{mailto:r.samworth@statslab.cam.ac.uk}{r.samworth@statslab.cam.ac.uk}
}

\end{aug}

\begin{abstract}
This is the supplementary material for \citet{reeve2021optimal}.
\end{abstract}


\setcounter{section}{0}
\setcounter{equation}{0}
\setcounter{theorem}{0}
\setcounter{example}{0}
\def\theequation{S\arabic{equation}}
\def\thesection{S\arabic{section}}
\def\thetheorem{S\arabic{theorem}}
\def\thefigure{S\arabic{figure}}
\def\thedefinition{S\arabic{definition}}
\def\theexample{S\arabic{example}}
\def\theremark{S\arabic{remark}}

\section{Proof of the hardness result}

\begin{proof}[Proof of Proposition~\ref{prop:lipschitzNotEnoughForConsistency}]
Fix $\epsilon \in (0,1)$, and let $(P_{x})_{x \in \R^d}$ be a disintegration of $\probDistribution$ into conditional probability measures on $[0,1]$; see Section~\ref{Sec:Disintegration} and Lemma~\ref{lemma:existenceOfDisintegrationFromDudley}. Since $\mathcal{A}$ has finite VC dimension, it follows from the Vapnik--Chervonenkis concentration inequality (Lemma~\ref{lemma:vapnikChervonenkisConcentration}) that there exists a finite set $\mathbb{T} \subseteq \R^d$ for which
\begin{align}\label{eq:GCThmConsequenceInPfNegProp}
\sup_{A \in \mathcal{A}}\bigg|\frac{1}{|\mathbb{T}|}\sum_{t \in \mathbb{T}} \one_{\{t \in A\}} - \mu(A)\bigg| \leq \epsilon.
\end{align}
Since $\mathbb{T}$ is finite and $\mu$ has no atoms, we may choose a radius $r>0$ sufficiently small that $\mu\bigl(\bigcup_{t \in \mathbb{T}}B_r(t)\bigr) \leq \epsilon$.  Now define a function $\rho:\R^d \rightarrow [0,1]$ by $\rho(x):=1 \wedge \bigwedge_{t \in \mathbb{T}}\{(2/r)\cdot \|x-t\|_{\infty}\}$, noting that $\rho$ is Lipschitz.  Further, define a family of probability distributions $(Q_{x})_{x \in \R^d}$ on $[0,1]$ by
\[
 \int_{[0,1]} h(y) \, dQ_x(y) = \int_{[0,1]} h\bigl(\rho(x)\cdot y\bigr) \, dP_x(y),
\]
for all Borel functions $h: [0,1] \rightarrow [0,1]$, and define a probability distribution $Q$ on $\R^d \times [0,1]$ by $Q(A\times B) = \int_A Q_x(B)\,d\mu(x)$. It follows that $(Q_{x})_{x \in \R^d}$ is a disintegration of $Q$ into conditional probability measures on $[0,1]$. In addition, taking a random pair $(X^Q,Y^Q) \sim Q$ we see by~\eqref{eq:conditionalExpectationInTermsOfDisintegration} that for $\mu$-almost every $x \in \R^d$,
\begin{align*}
\eta_Q(x)= \E\bigl(Y^Q \mid X^Q=x\bigr) = \int_{[0,1]} y \, dQ_x(y) = \rho(x)\cdot \int_{[0,1]}  y \, dP_x(y)  = \rho(x) \cdot \eta_P(x).
\end{align*}
Hence, we may extend the definition of $\eta_Q$ to $\R^d$ in such a way that $\eta_{Q}(\cdot) = \rho(\cdot)\eta_{P}(\cdot)$, which is a product of Lipschitz functions, so is itself Lipschitz; thus, $Q \in \mathcal{P}_{\mathrm{Lip}}(\mu)$. Note also that for every $t \in \mathbb{T}$, we have $\eta_{Q}(t) = \rho(t)\eta_{P}(t) = 0<\tau$, so $\mathbb{T} \cap \mathcal{X}_\tau({\eta_{Q}})=\emptyset$. Moreover, $\eta_{Q}(x) \leq \eta_{P}(x)$ for all $x \in \R^d$, so $\mathcal{X}_{\tau}{(\eta_{Q})}\subseteq \mathcal{X}_\tau(\eta_{P})$. Hence, since $\hat{A}$ controls the Type I error at the level $\alpha$ over $\mathcal{P}_{\mathrm{Lip}}(\mu)$, we have
\begin{align}
\label{Eq:TypeIQ}
\Prob_Q\bigl\{ \hat{A} \subseteq{} (\R^d \setminus \mathbb{T}) \cap\mathcal{X}_{\tau}{(\eta_{P})}\bigr\} \geq \Prob_Q\bigl\{ \hat{A} \subseteq{} (\R^d \setminus \mathbb{T}) \cap\mathcal{X}_{\tau}{(\eta_{Q})}\bigr\} &= \Prob_Q\bigl\{ \hat{A} \subseteq{} \mathcal{X}_{\tau}{(\eta_{Q})}\bigr\} \nonumber \\
&\geq 1-\alpha.    
\end{align}
Now $Q_x=P_x$ for $x \notin \bigcup_{t \in \mathbb{T}}B_r(t)$.  Hence
\[
\hellingerDistance^2(P,Q) \leq 2\mathrm{TV}(P,Q) \leq 2\mu\biggl(\bigcup_{t \in \mathbb{T}} B_r(t)\biggr) \leq 2\epsilon,
\]
so
\begin{align}
\label{Eq:TVBound}
\totalVariationDistance^2(P^{\otimes n},Q^{\otimes n}) \leq
\hellingerDistance^2(P^{\otimes n},Q^{\otimes n}) =2\biggl\{1-\prod_{i=1}^{n}\biggl(1-\frac{\hellingerDistance^2(P,Q)}{2}\biggr)\biggr\} 
\leq 2\bigl(1-(1-\epsilon)^n\bigr).
\end{align}
Note that by \eqref{eq:GCThmConsequenceInPfNegProp} if $A \in \mathcal{A}$ satisfies $A \cap \mathbb{T}=\emptyset$, then $\mu(A)\leq \epsilon$. Hence, by~\eqref{Eq:TypeIQ} and~\eqref{Eq:TVBound}, we have 
\begin{align*}
\Prob_P\bigl(\bigl\{\mu(\hat{A}) \leq \epsilon\bigr\} \cap \bigl\{\hat{A} \subseteq{} \etaSuperLevelSet{\tau}\bigr\}\bigr) & \geq 
\Prob_P\bigl(\hat{A} \subseteq{} \bigl(\R^d \setminus \mathbb{T}\bigr) \cap\mathcal{X}_{\tau}{(\eta_{P})}\bigr) \\ & \geq 1-\alpha-\sqrt{2\bigl(1-(1-\epsilon)^n\bigr)}.
\end{align*}
Letting $\epsilon \searrow 0$ gives~\eqref{Eq:FirstConc}.
Thus,
\begin{align*}
R_\tau(\hat{A})=M_\tau-\E_P\bigl({\mu}(\hat{A}) \mid \hat{A} \subseteq{} \etaSuperLevelSet{\tau}\bigr) \geq (1-\alpha) \cdot M_\tau.
\end{align*}
Finally, note that for any $\xi>0$, we may take $A_\xi \in \mathcal{A}$ with $\mu(A_\xi)>M_\tau-\xi$ and $A_\xi \subseteq{} \etaSuperLevelSet{\tau}$. Hence, we may define $\bar{A} \in \bar{\mathcal{A}}$ that takes the value $A_\xi$ with probability~$\alpha$ and~$\emptyset$ otherwise; it has regret $R_\tau(\hat{A})< (1-\alpha) \cdot M_\tau + \alpha \cdot \xi$. Letting $\xi \searrow 0$ yields the final equality in \eqref{eq:conclusionPowerWithoutHolderKnoweldge}.
\end{proof}

\section{Proof of the upper bound in Theorem \ref{thm:minimaxRate}}
\label{Sec:ProofUpperBound}

Recall that Theorem~\ref{thm:minimaxRate}(i) will follow from Lemma~\ref{lemma:pValue}, together with Propositions~\ref{thm:typeIControl} and~\ref{thm:powerBound}.
\begin{proof}[Proof of Lemma~\ref{lemma:pValue}] We begin by showing that $\sup_{x \in B}\eta(x) \leq t:=\tau+\holderConstant\cdot \diamSup(B)^{\holderExponent}$. Indeed, suppose for a contradiction that there exists some $x_0 \in B$ with $\eta(x_0)>t$. Since $B\not \subseteq \etaSuperLevelSet{\tau}$ there also exists $x_1 \in B$ with $\regressionFunction(x_1) \leq \tau$. Since $\regressionFunction$ is continuous there exists $x_2$ on the line segment between $x_0$ and $x_1$ with $\regressionFunction(x_2)=\tau$. Thus, since $x_0$, $x_2 \in \etaSuperLevelSet{\tau}$ we have,
\begin{align*}
\regressionFunction(x_0)& \leq \regressionFunction(x_2)+|\regressionFunction(x_0)-\regressionFunction(x_2)|\leq \tau+ \holderConstant \cdot \supNorm{x_0-x_2}^{\holderExponent}\\
& \leq \tau+ \holderConstant \cdot \supNorm{x_0-x_1}^{\holderExponent} \leq \tau+\holderConstant \cdot \diamSup(B)^{\holderExponent}=t<\regressionFunction(x_0),
\end{align*}
a contradiction which proves the claim $\sup_{x \in B}\eta(x) \leq t$. Now let $m := n \cdot \empiricalMarginalDistribution(B) = \sum_{i \in [n]}\one_{\{X_i \in B\}}$.  If $m = 0$, then $\hat{p}_n(B) = 1$, so we may assume without loss of generality that $m \geq 1$.  Let $(i_j)_{j \in [m]}$ denote a strictly increasing sequence such that $X_{i_j} \in B$ for all $j \in [m]$. For each $j \in [m]$ let $Z_j := Y_{i_j}$ so that 
\begin{align*}
{\E}\big( Z_j \mid\sampleX \big)&={\E}\big( Y_{i_j} \mid\sampleX \big)= \regressionFunction(X_{i_j}) \leq  t. 
\end{align*}
Moreover, $(Z_j)_{j \in [m]}$ are conditionally independent given $\sampleX$.  Writing $\bar{Z} := m^{-1}\sum_{j \in [m]} Z_j$, we have by construction of $\pValueN(B)$ that
\begin{align*}
{\Prob}\big( \pValueN(B) \leq \alpha \mid \sampleX \bigr) &={\Prob}\Bigl[ \exp\bigl\{ -n \cdot \empiricalMarginalDistribution(B)\cdot \kl\bigl( \empiricalRegressionFunction(B),t\bigr) \bigr\} \leq \alpha \text{ and } \empiricalRegressionFunction(B)>t \Bigm| \sampleX \Bigr]\\
&={\Prob}\bigl\{ \kl\bigl( \bar{Z},t\bigr) \geq m^{-1}\cdot{\log(1/\alpha)} \text{ and } \bar{Z} > t \bigm| \sampleX \bigr\} \leq \alpha,
\end{align*}
where the final inequality follows from a Chernoff bound, stated for convenience as Lemma~\ref{consequenceOfGarivier2011kl}.
\end{proof}
The proof of Proposition~\ref{thm:typeIControl} will rely on the following lemma.
\begin{lemma}\label{lemma:typeIControl} Fix $\holderExponent \in (0,1]$, $\holderConstant \geq 1$ and $\probDistribution \in \classOfHolderDistributionsSuperLevelSet$ with $\regressionFunction \in \mathcal{F}_{\mathrm{H\ddot{o}l}}\bigl(\holderExponent,\holderConstant,\etaSuperLevelSet{\tau}\bigr)$.  Then, with $(B_{(\ell)})_{\ell\in [\numberOfConsideredHyperCubes]}$, $\ell_{\alpha}$ and $m$ as in Algorithm~\ref{subsetSelectionAlgo} (and setting $\ell_\alpha := 0$ when $\numberOfConsideredHyperCubes \cdot \hat{p}_n(B_{(1)}) > \alpha$), we have
\begin{align*}
{\Prob}\biggl( \bigcup_{\ell \in [\ell_{\alpha}]}B_{(\ell)} \not \subseteq \etaSuperLevelSet{\tau} \biggm|\sampleX \biggr) \leq \alpha.
\end{align*}
\end{lemma}
\begin{proof} Let  
$\mathcal{N}(\sampleX):=\{ B \in \setOfHypercubes(\sampleX): B \not\subseteq \etaSuperLevelSet{\tau}\}$ and $K:=|\mathcal{N}(\sampleX)|$.  Note that  
\[
\biggl\{\bigcup_{\ell \in [\ell_{\alpha}]}B_{(\ell)} \not \subseteq \etaSuperLevelSet{\tau}\biggr\} \bigcap \{K = 0\} \subseteq \biggl\{\bigcup_{\ell \in [\numberOfConsideredHyperCubes]}B_{(\ell)} \not \subseteq \etaSuperLevelSet{\tau}\biggr\} \bigcap \{K = 0\} = \emptyset.
\]
On the other hand, when $K \geq 1$, we may write 
\[
\tilde{\ell}:=\min\bigl\{\ell \in [\numberOfConsideredHyperCubes]~:~B_{(\ell)} \in \mathcal{N}(\sampleX)\bigr\},
\]
so that when $\numberOfConsideredHyperCubes \cdot \hat{p}_n(B_{(1)})\leq \alpha$, we have $\bigcup_{\ell \in [\ell_{\alpha}]}B_{(\ell)} \not \subseteq \etaSuperLevelSet{\tau}$ if and only if $\tilde{\ell}\leq \ell_{\alpha}$.  When $K \geq 1$, we have by the minimality of $\tilde{\ell}$ that $\tilde{\ell}\leq \numberOfConsideredHyperCubes+1-K$, so
\begin{align*}
{\Prob}\biggl(\bigcup_{\ell \in [\ell_{\alpha}]}B_{(\ell)} \not \subseteq \etaSuperLevelSet{\tau}&\biggm|\sampleX \biggr) \\
&= {\Prob}\biggl(\biggl\{\bigcup_{\ell \in [\ell_{\alpha}]}B_{(\ell)} \not \subseteq \etaSuperLevelSet{\tau}\biggr\} \bigcap \{\numberOfConsideredHyperCubes \cdot \hat{p}_n(B_{(1)})\leq \alpha\}\biggm|\sampleX \biggr) \\
&= {\Prob}\big( \{\tilde{\ell}\leq \ell_{\alpha}\} \cap \{\numberOfConsideredHyperCubes \cdot \hat{p}_n(B_{(1)})\leq \alpha\} \bigm|\sampleX \big) \\
&\leq{\Prob}\big( \bigl\{(\numberOfConsideredHyperCubes+1-\tilde{\ell})\cdot \hat{p}_n(B_{(\tilde{\ell})}) \leq \alpha\bigr\} \cap \{\numberOfConsideredHyperCubes \cdot \hat{p}_n(B_{(1)})\leq \alpha\}\bigm|\sampleX \big)\\
&\leq {\Prob}\biggl(K\cdot \min_{B \in \mathcal{N}(\sampleX)} \hat{p}_n(B) \leq \alpha \biggm|\sampleX \biggr)\\
&\leq \sum_{B \in \mathcal{N}(\sampleX)}{\Prob}\biggl(\hat{p}_n(B) \leq \frac{\alpha}{K} \biggm|\sampleX \biggr)\leq \alpha,
\end{align*}
where we applied Lemma~\ref{lemma:pValue} for the final inequality. 
\end{proof}
\begin{proof}[Proof of Proposition~\ref{thm:typeIControl}] By construction in Algorithm~\ref{subsetSelectionAlgo}, we have $\hat{A}_{\mathrm{OSS}}(\sample) \subseteq \bigcup_{\ell \in [\ell_{\alpha}]}B_{(\ell)}$. Hence the result follows from Lemma~\ref{lemma:typeIControl}.
\end{proof}
We now turn to the proof of Proposition~\ref{thm:powerBound}.  A key component of this result is the following proposition, which states that if a set $A \in \mathcal{A}$ may be covered with a finite collection of hyper-cubes $\{B_1,\ldots,B_L\}\subseteq \setOfHypercubes$, each with sufficiently large diameter and $\mu$-measure, in such a way that $\regressionFunction$ is well above the level $\tau$ on each $B_\ell$, then $\hat{A}_{\mathrm{OSS}}$ will return a set of $\mu$-measure comparable with $\mu(A)$.
\begin{prop}\label{prop:generalPowerBound} Take $\alpha \in (0,1)$, $n \in \N$, $\delta \in (0,1)$, $(\holderExponent,\holderConstant) \in (0,1]\times (0,\infty)$, $\probDistribution \in \classOfHolderDistributions$ and $\mathcal{A} \subseteq \borel(\R^d)$ with $\vcDim(\mathcal{A})<\infty$ and $\emptyset \in \mathcal{A}$.  Given $L \in \N$, suppose that there exist hyper-cubes $\{B_1,\ldots,B_L\} \subseteq \setOfHypercubes$ such that $\min_{q \in [L]} \mu(B_q)\geq 8\log(4L/\delta)/n$, $\min_{q \in [L]}\diamSup(B_q) \geq 1/n$ and 
\begin{align}\label{eq:conditionForGeneralPowerBound}
\min_{q \in [L]} \biggl\{\sup_{x \in B_q}\regressionFunction(x)-2\holderConstant\cdot \diamSup(B_q)^{\holderExponent}-\sqrt{\frac{2\log\bigl(2^{2+d}L \cdot n (2+\log_2 n)/(\alpha \cdot \delta)\bigr)}{n\cdot \mu(B_q)}}\biggr\}  \geq \tau.
\end{align}
Let $S^{\dagger}:=\bigcup_{q \in [L]}B_q$, and taking the universal constant $C_{\mathrm{VC}} > 0$ from Lemma~\ref{lemma:vapnikChervonenkisConcentration}, let
\begin{align*}
J_{n,\delta}(S^{\dagger}):=\sup\big\{ \mu(A): A \in \mathcal{A}\cap \powerSet( S^{\dagger})\big\}-2C_{\mathrm{VC}}~ \sqrt{\frac{\vcDim(\mathcal{A})}{n}}-\sqrt{\frac{2\log(2/\delta)}{n}} .
\end{align*}
Then 
\[
\Prob\bigl\{\mu\bigl(\hat{A}_{\mathrm{OSS}}(\sample)\bigr) < J_{n,\delta}(S^{\dagger}) \bigr\} \leq \delta.
\]
\end{prop}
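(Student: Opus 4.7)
\medskip

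\noindent\textbf{Proof plan for Proposition~\ref{prop:generalPowerBound}.}
The argument splits into two parts: (a) show that on a high-probability event, every target hyper-cube $B_q$ is selected by Holm's procedure, so that $S^{\dagger}\subseteq \bigcup_{\ell\in[\ell_\alpha]}B_{(\ell)}$; (b) use a Vapnik--Chervonenkis uniform convergence bound to transfer the empirical maximisation inside $\bigcup_{\ell\in[\ell_\alpha]}B_{(\ell)}$ to a population statement about $\mu(\hat{A}_{\mathrm{OSS}})$.

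For part (a), the main task is to show that, with probability at least $1-\delta/2$, one has $\hat{p}_n(B_q)\leq \alpha/\numberOfConsideredHyperCubes$ for every $q\in[L]$. First, since $\mu(B_q)\geq 8\log(4L/\delta)/n$, a multiplicative Chernoff bound together with a union bound over $q\in[L]$ yields $\empiricalMarginalDistribution(B_q)\geq \mu(B_q)/2$ for all $q$ with probability at least $1-\delta/4$. Conditioning on $\sampleX$ and applying Hoeffding's inequality to the independent $[0,1]$-valued summands $(Y_i)_{i:X_i\in B_q}$, with a union bound over $q\in[L]$, we obtain with probability at least $1-\delta/4$ that
\begin{equation*}
\empiricalRegressionFunction(B_q)\ \geq\ \inf_{x\in B_q}\eta(x)\ -\ \sqrt{\frac{\log(4L/\delta)}{2n\empiricalMarginalDistribution(B_q)}}\qquad\text{for all }q\in[L].
\end{equation*}
Combining the two bounds, invoking the Hölder property $\inf_{x\in B_q}\eta(x)\geq \sup_{x\in B_q}\eta(x)-\holderConstant\diamSup(B_q)^\holderExponent$, and using the hypothesis~\eqref{eq:conditionForGeneralPowerBound} together with the elementary inequality $\sqrt{2(a+b)}\geq \sqrt{a}+\sqrt{b}$ with $a=\log(4L/\delta)$ and $b=\log\bigl(2^d n(2+\log_2 n)/\alpha\bigr)$, one obtains
\begin{equation*}
\empiricalRegressionFunction(B_q)-\bigl(\tau+\holderConstant\diamSup(B_q)^\holderExponent\bigr)\ \geq\ \sqrt{\frac{\log\bigl(2^{d}n(2+\log_2 n)/\alpha\bigr)}{n\mu(B_q)}}.
\end{equation*}
Pinsker's inequality $\kl(a,b)\geq 2(a-b)^2$ for $a>b$ then gives $n\empiricalMarginalDistribution(B_q)\kl(\empiricalRegressionFunction(B_q),\tau+\holderConstant\diamSup(B_q)^\holderExponent)\geq \log(\numberOfConsideredHyperCubes/\alpha)$, since $\numberOfConsideredHyperCubes\leq 2^d n(2+\log_2 n)$, yielding $\hat{p}_n(B_q)\leq \alpha/\numberOfConsideredHyperCubes$ as desired.

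To finish part (a), note that $B_q\in\setOfHypercubes(\sampleX)$ for all $q\in[L]$ on our event (they have positive empirical mass and $\diamSup(B_q)\geq 1/n$). Since at least $L$ of the $\numberOfConsideredHyperCubes$ ordered $p$-values are bounded by $\alpha/\numberOfConsideredHyperCubes$, in particular $\hat{p}_n(B_{(L)})\leq \alpha/\numberOfConsideredHyperCubes$, so $(\numberOfConsideredHyperCubes+1-L)\hat{p}_n(B_{(L)})\leq \alpha$ and hence $L\leq \ell_\alpha$; this places the family $\{B_1,\ldots,B_L\}$ (as a set) inside $\{B_{(1)},\ldots,B_{(\ell_\alpha)}\}$, so $S^{\dagger}\subseteq \bigcup_{\ell\in[\ell_\alpha]}B_{(\ell)}$.

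For part (b), Lemma~\ref{lemma:vapnikChervonenkisConcentration} gives, with probability at least $1-\delta/2$, that $\sup_{A\in\mathcal{A}}|\empiricalMarginalDistribution(A)-\mu(A)|\leq C_{\mathrm{VC}}\sqrt{\vcDim(\mathcal{A})/n}+\sqrt{\log(2/\delta)/(2n)}$. Pick $A^{*}\in\mathcal{A}\cap \powerSet(S^{\dagger})$ with $\mu(A^{*})$ arbitrarily close to the supremum defining $J_{n,\delta}(S^{\dagger})$. On the intersection of all our events, which has probability at least $1-\delta$, the inclusion $S^{\dagger}\subseteq \bigcup_{\ell\in[\ell_\alpha]}B_{(\ell)}$ makes $A^{*}$ a feasible candidate for the empirical maximisation in Algorithm~\ref{subsetSelectionAlgo}, so $\empiricalMarginalDistribution(\hat{A}_{\mathrm{OSS}})\geq \empiricalMarginalDistribution(A^{*})$; two applications of the VC bound then yield $\mu(\hat{A}_{\mathrm{OSS}})\geq \mu(A^{*})-2C_{\mathrm{VC}}\sqrt{\vcDim(\mathcal{A})/n}-\sqrt{2\log(2/\delta)/n}$, and letting $A^{*}$ approach the supremum finishes the proof.

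The main obstacle is the careful allocation of the $\log$ budget in the hypothesis~\eqref{eq:conditionForGeneralPowerBound}: it must simultaneously fund the Hoeffding deviation of $\empiricalRegressionFunction(B_q)$ (which requires $\log(4L/\delta)$) and the threshold needed to defeat Holm's correction factor of~$\numberOfConsideredHyperCubes\lesssim 2^d n(2+\log_2 n)$, and the factor of $2$ under the square root in the hypothesis is exactly what allows the $\sqrt{2(a+b)}\geq\sqrt{a}+\sqrt{b}$ split to absorb both costs cleanly.
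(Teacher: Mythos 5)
Your proof is essentially the paper's: the same decomposition into a multiplicative-Chernoff event (probability $\geq 1-\delta/4$) controlling $\empiricalMarginalDistribution(B_q)$ from below, a Hoeffding event ($\geq 1-\delta/4$) controlling $\empiricalRegressionFunction(B_q)$ from below, a combination via the H\"older property and the $\sqrt{2(a+b)}\geq\sqrt{a}+\sqrt{b}$ trick, Pinsker's inequality to bound the $p$-values, and finally the VC uniform-convergence event ($\geq 1-\delta/2$) with the $\zeta$-approximation to the supremum. The paper simply factors the $p$-value bound through Lemmas~\ref{lemma:multiplicativeChernoffOnBoxesLemma}, \ref{lemma:hoeffdingOnBoxesLemma} and~\ref{lemma:selectedBoxes}, with $\xi = \alpha/\bigl(2^d n(2+\log_2 n)\bigr)$ in Lemma~\ref{lemma:selectedBoxes}, which is exactly your choice of $b$.

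One step is phrased as a non-sequitur, though its conclusion is correct: you write that ``$L\leq \ell_\alpha$; this places the family $\{B_1,\ldots,B_L\}$ inside $\{B_{(1)},\ldots,B_{(\ell_\alpha)}\}$''. The inequality $L\leq\ell_\alpha$ alone does not give this inclusion, because a $B_q$ might have rank strictly larger than $L$ if other cubes in $\setOfHypercubes(\sampleX)$ have equal or smaller $p$-values. The clean argument is the one the paper uses: for each individual $q\in[L]$, write $B_q=B_{(\ell(q))}$ and observe that $(\numberOfConsideredHyperCubes+1-\ell(q))\,\hat p_n(B_{(\ell(q))})\leq \numberOfConsideredHyperCubes\,\hat p_n(B_q)\leq\alpha$, so $\ell(q)$ lies in the set defining $\ell_\alpha$ and hence $\ell(q)\leq\ell_\alpha$. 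This gives the inclusion directly. (You should also note explicitly, as you do implicitly, that $\hat p_n(B_{(1)})\leq\alpha/\numberOfConsideredHyperCubes$ places the algorithm in the non-trivial branch, so $\ell_\alpha$ is defined.) With that repair, the argument is complete.
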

Proposition \ref{prop:generalPowerBound} will be proved through a series of lemmas below. 
\begin{lemma}\label{lemma:multiplicativeChernoffOnBoxesLemma} Let $P$ be a distribution on $\R^d \times [0,1]$ having marginal $\mu$ on $\R^d$, and let $\delta \in (0,1)$, $n \in \N$ and $L \in \N$.  Suppose further that $\{B_1,\ldots,B_L\} \subseteq \setOfHypercubes$ with $\min_{q \in [L]} \mu(B_q)\geq 8\log(4L/\delta)/n$, and define the event
\[
\mathcal{E}_{1,\delta} := \biggl\{\min_{q \in [L]}\biggl(\empiricalMarginalDistribution(B_q) - \frac{\mu(B_q)}{2}\biggr) > 0\biggr\}.
\]  Then ${\Prob}(\mathcal{E}_{1,\delta}^{\mathrm{c}}) \leq \delta/4$.
\end{lemma}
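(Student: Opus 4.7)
The plan is a direct two-step argument: apply a multiplicative Chernoff bound to each hyper-cube separately, then union-bound over the $L$ cubes. The assumption $\mu(B_q) \geq 8\log(4L/\delta)/n$ is exactly tuned to make this work with a slack of $\delta/4$.

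For each fixed $q \in [L]$, I would observe that $n\empiricalMarginalDistribution(B_q) = \sum_{i=1}^n \mathbbm{1}_{\{X_i \in B_q\}}$ is a binomial random variable with parameters $n$ and $p_q := \mu(B_q)$, since $X_1,\ldots,X_n$ are i.i.d.\ with marginal $\mu$. The standard multiplicative Chernoff lower-tail bound with deviation parameter $1/2$ then yields
\[
\Prob\bigl(\empiricalMarginalDistribution(B_q) \leq \mu(B_q)/2\bigr) \leq \exp\bigl(-n\mu(B_q)/8\bigr).
\]
Substituting the hypothesis $\mu(B_q) \geq 8\log(4L/\delta)/n$ gives
\[
\Prob\bigl(\empiricalMarginalDistribution(B_q) \leq \mu(B_q)/2\bigr) \leq \exp\bigl(-\log(4L/\delta)\bigr) = \frac{\delta}{4L}.
\]

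A union bound over $q \in [L]$ then produces
\[
\Prob(\mathcal{E}_{1,\delta}^{\mathrm{c}}) = \Prob\biggl(\bigcup_{q \in [L]} \bigl\{\empiricalMarginalDistribution(B_q) \leq \mu(B_q)/2\bigr\}\biggr) \leq \sum_{q=1}^{L} \frac{\delta}{4L} = \frac{\delta}{4},
\]
which is the claimed bound. There is no real obstacle here; the only point requiring minor care is to invoke the correct (lower-tail, multiplicative) form of Chernoff so that the constant $8$ in the hypothesis matches the constant appearing in the exponent.
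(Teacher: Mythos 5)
Your proof is correct and follows exactly the same argument as the paper: apply the multiplicative Chernoff lower-tail bound (Lemma~\ref{lemma:multChernoff} with $\theta = 1/2$) to $\sum_{i=1}^n \one_{\{X_i \in B_q\}}$ for each $q$, then union-bound over $q \in [L]$. No differences to note.
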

\begin{proof}
By the multiplicative Chernoff bound (Lemma~\ref{lemma:multChernoff}), for each $q \in [L]$,
\begin{align*}
{\Prob}\biggl(  \subSampleEmpiricalMarginalDistribution(B_q) \leq \frac{\mu(B_q)}{2}   \biggr)  = {\Prob}\biggl( \sum_{i=1}^m \one_{\{X_i \in B_q\}} \leq \frac{m}{2} \cdot \mu(B_q)  \biggr) \leq \exp\biggl(-\frac{m}{8}\cdot \mu(B_q)\biggr)\leq \frac{\delta}{4L}.
\end{align*}
The result therefore follows by a union bound.
\end{proof}

\begin{lemma}\label{lemma:hoeffdingOnBoxesLemma} Let $P$ be a distribution on $\R^d \times [0,1]$ having regression function $\regressionFunction:\R^d \rightarrow [0,1]$, and let $\delta \in (0,1)$, $n \in \mathbb{N}$ and $L \in \mathbb{N}$.  Suppose that $\{B_1,\ldots,B_L\} \subseteq \setOfHypercubes$ and define the event
\[
\mathcal{E}_{2,\delta} := \biggl\{\max_{q \in [L]}\biggl(\inf_{x \in B_q}\regressionFunction(x) -\empiricalRegressionFunction(B_q)- \sqrt{\frac{\log(4L/\delta)}{2n \cdot \empiricalMarginalDistribution(B_q)}}\biggr) < 0\biggr\},
\]
where the empirical distribution $\hat{\mu}_n$ and empirical regression function $\hat{\eta}_n$ are defined in~\eqref{eq:empiricalMarginalDef} and~\eqref{eq:empiricalRegFuncDef} respectively.  Then ${\Prob}(\mathcal{E}_{2,\delta}^{\mathrm{c}}) \leq \delta/4$.
\end{lemma}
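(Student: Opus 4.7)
\textbf{Proof plan for Lemma~\ref{lemma:hoeffdingOnBoxesLemma}.} The strategy is a conditional Hoeffding bound followed by a union bound over the finite collection of hyper-cubes. The key observation is that, conditionally on $\sampleX$, the responses $Y_i$ with $X_i \in B_q$ are independent with conditional means $\regressionFunction(X_i) \geq \inf_{x \in B_q}\regressionFunction(x)$, so $\empiricalRegressionFunction(B_q)$ is a bounded independent average with mean at least $\inf_{x \in B_q}\regressionFunction(x)$ when $\empiricalMarginalDistribution(B_q) > 0$.

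First I would handle the degenerate case: if $\empiricalMarginalDistribution(B_q)=0$ for some $q \in [L]$, then by the convention in~\eqref{eq:empiricalRegFuncDef} we have $\empiricalRegressionFunction(B_q) = 1/2$, while the deviation term $\sqrt{\log(4L/\delta)/(2n\empiricalMarginalDistribution(B_q))}$ is $+\infty$; hence the corresponding inequality inside $\mathcal{E}_{2,\delta}$ is satisfied trivially on this event. So it suffices to control the inequality on $\{\empiricalMarginalDistribution(B_q) > 0\}$ for each $q \in [L]$.

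Next, fix $q \in [L]$ and work on the event $\{m_q \geq 1\}$, where $m_q := n\empiricalMarginalDistribution(B_q)$. Conditionally on $\sampleX$, the random variables $\{Y_i : X_i \in B_q\}$ are independent, take values in $[0,1]$, and have conditional means $\regressionFunction(X_i)$ bounded below by $\inf_{x \in B_q}\regressionFunction(x)$. Hoeffding's inequality, applied conditionally on $\sampleX$, then gives
\begin{align*}
\Prob_P\biggl( \empiricalRegressionFunction(B_q) < \frac{1}{m_q}\sum_{i:X_i\in B_q}\regressionFunction(X_i) - \sqrt{\frac{\log(4L/\delta)}{2m_q}} \biggm| \sampleX \biggr) \leq \frac{\delta}{4L},
\end{align*}
and the lower-bound on the conditional mean together with the identity $m_q = n\empiricalMarginalDistribution(B_q)$ allows us to replace the sample mean of $\regressionFunction(X_i)$ by $\inf_{x \in B_q}\regressionFunction(x)$ and the denominator by $2n\empiricalMarginalDistribution(B_q)$, yielding exactly the deviation in the definition of $\mathcal{E}_{2,\delta}$. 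A union bound over $q \in [L]$ gives $\Prob_P(\mathcal{E}_{2,\delta}^{\mathrm{c}} \mid \sampleX) \leq \delta/4$ almost surely, and integrating over $\sampleX$ yields the unconditional bound. There is no serious obstacle here: the only subtle point is making sure the conditional argument handles the random denominator $\empiricalMarginalDistribution(B_q)$ cleanly, which is resolved precisely because, conditionally on $\sampleX$, both $m_q$ and $\inf_{x \in B_q}\regressionFunction(x)$ are deterministic quantities to which Hoeffding applies at the fixed sample size $m_q$.
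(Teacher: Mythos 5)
Your proposal is correct and follows essentially the same route as the paper's proof: apply Hoeffding's inequality conditionally on $\sampleX$ to the $Y_i$ with $X_i \in B_q$ (noting their conditional means are at least $\inf_{x\in B_q}\regressionFunction(x)$), take a union bound over $q\in[L]$, and integrate out $\sampleX$. You spell out the degenerate case $\empiricalMarginalDistribution(B_q)=0$ and the monotonicity step of replacing the empirical mean of $\regressionFunction(X_i)$ by its infimum, both of which the paper leaves implicit via its $\esssup$ formulation, but the argument is the same.
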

\begin{proof} 
By Hoeffding's inequality (Lemma~\ref{consequenceOfGarivier2011kl}), for every $q \in [L]$, we have
\begin{align*}
 \esssup~{\Prob}\biggl( \empiricalRegressionFunction(B_q) \leq \inf_{x \in B_q}\regressionFunction(x) - \sqrt{\frac{\log(4L/\delta)}{2n \cdot \empiricalMarginalDistribution(B_q)}} \biggm|\sampleX \biggr)\leq \frac{\delta}{4L}.
\end{align*}
The result now follows by the law of total expectation, combined with a union bound.
\end{proof}

\begin{lemma}\label{lemma:selectedBoxes} Let $(\holderExponent,\holderConstant) \in (0,1]\times [1,\infty)$, $\probDistribution \in \classOfHolderDistributions$, $\delta \in (0,1)$, $n \in \N$, $L \in \N$ and $\xi \in (0,1)$.  Suppose that for some $\{B_1,\ldots,B_L\} \subseteq \setOfHypercubes$ we have $\min_{q \in [L]} \mu(B_q)\geq 8\log(8L/\delta)/n$ and 
\begin{align}\label{eq:assumptionForLemmaSelectedBoxes} 
\min_{q \in [L]} \biggl\{\sup_{x \in B_q}\regressionFunction(x)-2\holderConstant\cdot \diamSup(B_q)^{\holderExponent}-\sqrt{\frac{2\log\bigl(4L/(\xi\cdot \delta)\bigr)}{n\cdot \mu(B_q)}}\biggr\} \geq \tau.
\end{align}
Then, recalling the definition of the $p$-values $\hat{p}_n$ from~\eqref{eq:pValueDef}, we have
\[
\Prob\biggl(\max_{q \in [L]}~\pValueN(B_q) \geq \xi  \biggr) \leq \frac{\delta}{2}.
\]
\end{lemma}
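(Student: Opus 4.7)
The plan is to combine the two concentration events provided by Lemmas~\ref{lemma:multiplicativeChernoffOnBoxesLemma} and~\ref{lemma:hoeffdingOnBoxesLemma} with a Pinsker-type bound on Bernoulli KL divergences. Specifically, I would work on the event $\mathcal{E} := \mathcal{E}_{1,\delta} \cap \mathcal{E}_{2,\delta}$, which satisfies $\Prob(\mathcal{E}^{\mathrm{c}}) \leq \delta/2$ by the union bound. On $\mathcal{E}$, for every $q \in [L]$ I simultaneously have $\empiricalMarginalDistribution(B_q) > \mu(B_q)/2$ (in particular $\empiricalMarginalDistribution(B_q) > 0$, so that $\empiricalRegressionFunction(B_q)$ is computed via~\eqref{eq:empiricalRegFuncDef}) and $\empiricalRegressionFunction(B_q) > \inf_{x \in B_q}\regressionFunction(x) - \sqrt{\log(4L/\delta)/(2n\empiricalMarginalDistribution(B_q))}$.

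Next, I would combine the H\"older property $\sup_{x \in B_q}\regressionFunction(x) - \inf_{x \in B_q}\regressionFunction(x) \leq \holderConstant \cdot \diamSup(B_q)^{\holderExponent}$ with the standing assumption~\eqref{eq:assumptionForLemmaSelectedBoxes} to deduce that $\inf_{x \in B_q}\regressionFunction(x) \geq \tau + \holderConstant \cdot \diamSup(B_q)^{\holderExponent} + \sqrt{2\log(4L/(\xi\delta))/(n\mu(B_q))}$. Writing $a := \log(4L/\delta)$ and $b := \log(1/\xi)$, substitution into the Hoeffding bound then yields, on $\mathcal{E}$,
\[
\empiricalRegressionFunction(B_q) - \tau - \holderConstant \cdot \diamSup(B_q)^{\holderExponent} > \sqrt{\frac{2(a+b)}{n\mu(B_q)}} - \sqrt{\frac{a}{2n\empiricalMarginalDistribution(B_q)}}.
\]
Using the strict bound $\empiricalMarginalDistribution(B_q) > \mu(B_q)/2$ together with the AM-GM consequence $\sqrt{a}+\sqrt{b} \leq \sqrt{2(a+b)}$, one checks that this lower bound strictly exceeds $\sqrt{b/(2n\empiricalMarginalDistribution(B_q))}$, and in particular is positive.

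To conclude, I would apply the standard inequality $\kl(p,q) \geq 2(p-q)^2$ for Bernoulli KL with $p > q$, taking $p = \empiricalRegressionFunction(B_q)$ and $q = \tau + \holderConstant\diamSup(B_q)^{\holderExponent}$. The previous step then gives $n\empiricalMarginalDistribution(B_q)\cdot\kl\bigl(\empiricalRegressionFunction(B_q),\tau+\holderConstant\diamSup(B_q)^{\holderExponent}\bigr) > \log(1/\xi)$, so by the definition~\eqref{eq:pValueDef} we have $\pValueN(B_q) < \xi$ on $\mathcal{E}$ for every $q \in [L]$. Hence $\bigl\{\max_{q \in [L]}\pValueN(B_q) \geq \xi\bigr\} \subseteq \mathcal{E}^{\mathrm{c}}$, and the bound $\Prob(\mathcal{E}^{\mathrm{c}}) \leq \delta/2$ finishes the proof.

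The delicate step is the algebraic matching in the displayed inequality above: one must verify that $\sqrt{2(a+b)/(n\mu(B_q))} - \sqrt{a/(2n\empiricalMarginalDistribution(B_q))} \geq \sqrt{b/(2n\empiricalMarginalDistribution(B_q))}$. This balance succeeds precisely because of the factor of $2$ inside the square root in the hypothesis~\eqref{eq:assumptionForLemmaSelectedBoxes} together with the factor of $2$ in Pinsker's inequality; without these carefully matched constants, the margin remaining after absorbing the Hoeffding fluctuation would be insufficient to push the $p$-value below~$\xi$.
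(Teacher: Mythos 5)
Your proof is correct and takes essentially the same route as the paper's: both work on $\mathcal{E}_{1,\delta}\cap\mathcal{E}_{2,\delta}$, combine the H\"older property with the hypothesis~\eqref{eq:assumptionForLemmaSelectedBoxes}, use $\sqrt{a}+\sqrt{b}\le\sqrt{2(a+b)}$ to absorb the Hoeffding fluctuation, and then conclude via Pinsker's inequality applied to the Bernoulli KL in~\eqref{eq:pValueDef}. The only cosmetic difference is bookkeeping: the paper replaces $2n\empiricalMarginalDistribution(B_q)$ by $n\mu(B_q)$ immediately after the Hoeffding bound and carries $\mu(B_q)$ through to the final $p$-value step, whereas you carry $\empiricalMarginalDistribution(B_q)$ in the denominator throughout and invoke $\empiricalMarginalDistribution(B_q)>\mu(B_q)/2$ once in the middle; the resulting inequalities are the same.
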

\begin{proof} By Lemmas~\ref{lemma:multiplicativeChernoffOnBoxesLemma} and~\ref{lemma:hoeffdingOnBoxesLemma}, we have $\Prob(\mathcal{E}_{1,\delta}^{\mathrm{c}} \cup \mathcal{E}_{2,\delta}^{\mathrm{c}}) \leq \delta/2$.  On $\mathcal{E}_{1,\delta} \cap \mathcal{E}_{2,\delta}$, we have for each $q \in [L]$ that 
\begin{align*}
\empiricalRegressionFunction(B_q)&>  \inf_{x \in B_q}\regressionFunction(x) -  \sqrt{\frac{\log(4L/\delta)}{n\cdot \mu(B_q)}}\\
&\geq \sup_{x \in B_q}\regressionFunction(x)-\holderConstant\cdot \diamSup(B_q)^{\holderExponent} -  \sqrt{\frac{\log(4L/\delta)}{n\cdot \mu(B_q)}}\\ 
&\geq \tau+\holderConstant\cdot \diamSup(B_q)^{\holderExponent}+\sqrt{\frac{\log(1/\xi)}{n\cdot \mu(B_q)}},
\end{align*}
where we used the fact that $\probDistribution \in \classOfHolderDistributions$, \eqref{eq:assumptionForLemmaSelectedBoxes} and the fact that $\sqrt{2(a+b)} \geq \sqrt{a} + \sqrt{b}$ for all $a,b \geq 0$. Thus, on $\mathcal{E}_{1,\delta} \cap \mathcal{E}_{2,\delta}$, we have for every $q \in [L]$ that  
\begin{align*}
\pValueN(B_q) \leq  \exp\biggl( -\frac{n\cdot \mu(B_q)}{2}\cdot \kl\bigl\{ \empiricalRegressionFunction(B_q),\tau+\holderConstant\cdot \diamSup(B_q)^{\holderExponent}\bigr\} \biggr)<\xi,
\end{align*}
as required, where the final bound uses Pinsker's inequality.
\end{proof}

We can now complete the proof of Proposition \ref{prop:generalPowerBound} before returning to complete the proof of Proposition~\ref{thm:powerBound}.

\begin{proof}[Proof of Proposition \ref{prop:generalPowerBound}] We begin by defining events
\begin{align*}
\mathcal{E}_{\mathrm{PV}}&:=\biggl\{\max_{q \in [L]} \pValueN(B_q) < \frac{\alpha}{2^d n(2+\log_2 n)}\biggr\}, \\
\mathcal{E}_{\mathrm{VC}}&:=\biggl\{\sup_{A\in \mathcal{A}} \big| \hat{\mu}_n(A)-\mu(A) \big| \leq C_{\mathrm{VC}}\sqrt{\frac{\vcDim(\mathcal{A})}{n}}+\sqrt{\frac{\log(2/\delta)}{2n}} \biggr\}.
\end{align*}
By Lemma~\ref{lemma:selectedBoxes}, with $\xi = \alpha/\bigl(2^d n(2+\log_2 n)\bigr) \in (0,1)$, and Lemma~\ref{lemma:vapnikChervonenkisConcentration}, we have $\Prob(\mathcal{E}_{\mathrm{PV}}^{\mathrm{c}} \cup \mathcal{E}_{\mathrm{VC}}^{\mathrm{c}} ) \leq \delta$. On $\mathcal{E}_{\mathrm{PV}}$ we have each $\pValueN(B_q)<1$, which implies $\empiricalMarginalDistribution(B_q)>0$, and  since $\diamSup(B_1) \geq 1/n$ we deduce that $B_q \in \mathcal{H}(\sampleX)$.

Now $\numberOfConsideredHyperCubes=|\setOfHypercubes(\sampleX)| \leq 2^d n(2+\log_2 n)$, so on the event $\mathcal{E}_{\mathrm{PV}}$, for each $q \in [L]$, we  have $\numberOfConsideredHyperCubes \cdot \pValueN(B_q) \leq \alpha$, and hence $B_q = B_{(\ell(q))}$ for some $\ell(q) \leq \ell_{\alpha}$. Thus, on the event $\mathcal{E}_{\mathrm{PV}}$ we have $S^{\dagger} \subseteq \bigcup_{\ell \in [\ell_{\alpha}]}B_{(\ell)}$. Now take $\zeta >0$ and choose $A_{\zeta}^* \in \mathcal{A}\cap \powerSet(S^{\dagger})$ with $\mu(A^*_{\zeta})> \sup\big\{ \mu(A): A \in \mathcal{A}\cap \powerSet( S^{\dagger})\big\} - \zeta$. It follows that $A_{\zeta}^* \in \mathcal{A}\cap \powerSet\bigl(\bigcup_{\ell \in [\ell_{\alpha}]}B_{(\ell)}\bigr)$ and hence on the event $\mathcal{E}_{\mathrm{PV}}\cap \mathcal{E}_{\mathrm{VC}}$ that
\begin{align*}
\mu(\hat{A}_{\mathrm{OSS}}) & \geq \hat{\mu}_n(\hat{A}_{\mathrm{OSS}}) - C_{\mathrm{VC}}\sqrt{\frac{\vcDim(\mathcal{A})}{n}}-\sqrt{\frac{\log(2/\delta)}{2n}} \\
& \geq \hat{\mu}_n(A^*_{\zeta}) - C_{\mathrm{VC}}\sqrt{\frac{\vcDim(\mathcal{A})}{n}}-\sqrt{\frac{\log(2/\delta)}{2n}} \\
& \geq {\mu}(A^*_{\zeta}) - 2C_{\mathrm{VC}}\sqrt{\frac{\vcDim(\mathcal{A})}{n}}-\sqrt{\frac{2\log(2/\delta)}{n}} \geq J_{n,\delta}(S^{\dagger})-\zeta.
\end{align*}
Letting $\zeta \searrow 0$, we conclude that $\mu(\hat{A}_{\mathrm{OSS}}) \geq J_{n,\delta}(S^{\dagger})$, on the event $\mathcal{E}_{\mathrm{PV}}\cap \mathcal{E}_{\mathrm{VC}}$, as required.
\end{proof}
\begin{proof}[Proof of Proposition~\ref{thm:powerBound}] We define $\rho:=\approximableDensityExponent(2\holderExponent+d)+\holderExponent\approximableMarginExponent$, 
\begin{align*}
\thetaNDelta:= \frac{8 \holderConstant^{d/\holderExponent}}{n} \log_+\biggl(\frac{4 \cdot 3^d\cdot n}{\alpha \wedge \delta}\biggr),
\end{align*}
$r_* :=\holderConstant^{-1/\holderExponent} \thetaNDelta^{\approximableDensityExponent/\rho}$, $\xi:= \thetaNDelta^{\holderExponent\approximableMarginExponent/\rho}$ and $\Delta:= 2^4 \thetaNDelta^{\holderExponent\approximableDensityExponent/\rho}$. We initially assume that $\Delta \leq 1$, so that $1/n \leq \holderConstant^{-d/\holderExponent} \cdot \thetaNDelta \leq r_* \leq 2^{-4}$.  Now choose a maximal subset $\{x_1,\ldots, x_L\} \subseteq \omegaSuperLevelSet{\xi}\cap \etaSuperLevelSet{\tau+\Delta}$ with the property that $\|x_q-x_{q'}\|_\infty >r_*$ for distinct $q,q' \in [L]$. Then $\omegaSuperLevelSet{\xi}\cap \etaSuperLevelSet{\tau+\Delta} \subseteq \bigcup_{q \in [L]} \closedMetricBallSupNorm{x_q}{r_*}$ and $\closedMetricBallSupNorm{x_{q}}{r_*/3}\cap \closedMetricBallSupNorm{x_{q'}}{r_*/3} = \emptyset$ for distinct $q, q' \in [L]$.  Now, since $\xi \leq 1$,
\begin{align*}
L \leq \sum_{q=1}^L  \frac{\mu \bigl(\closedMetricBallSupNorm{x_q}{r_*/3}\bigr)}{\xi \cdot (r_*/3)^d} &= \frac{1}{\xi \cdot (r_*/3)^d} \cdot \mu \biggl( \bigcup_{q=1}^L  \closedMetricBallSupNorm{x_q}{r_*/3}\biggr) \\
&\leq \frac{(3 \holderConstant^{1/\holderExponent})^d}{ \thetaNDelta^{(\holderExponent \approximableMarginExponent +d\approximableDensityExponent)/\rho}} \leq \frac{3^d\holderConstant^{d/\holderExponent}}{\theta} \leq 3^d n.
\end{align*}
For each $q \in [L]$ we can find $B_q \in \setOfHypercubes$ such that $\closedMetricBallSupNorm{x_q}{r_*}\subseteq B_q$ and such that $r_* \leq \diamSup(B_q) \leq 2^{-\lceil \log_2(\frac{1}{2r_*}) \rceil + 1} \leq 4r_*$,  which is possible since $r_* \leq 1/4$. We then have that for every $q \in [L]$, 
\[
\mu(B_q) \geq \mu\bigl(\closedMetricBallSupNorm{x_q}{r_*}\bigr) \geq \xi \cdot r_*^d \geq \frac{\thetaNDelta}{\holderConstant^{d/\holderExponent}} \geq \frac{8}{n}\log(4L/\delta).
\]
Hence
\begin{align*}
\min_{q \in [L]}&\biggl\{\sup_{x \in B_q}\regressionFunction(x)-2\holderConstant\cdot \diamSup(B_q)^{\holderExponent}-\sqrt{\frac{2\log\bigl(2^{2+d}L \cdot n (2+\log_2 n)/(\alpha \cdot \delta)\bigr)}{n\cdot \mu(B_q)}}\biggr\}\\
&\geq \min_{q \in [L]}\biggl\{\regressionFunction(x_q) -2^{1+2\holderExponent}\cdot\holderConstant \cdot r_*^{\holderExponent}-\sqrt{\frac{2\log\bigl(2^{3+d}3^d n^2 \log_2(n)/(\alpha \cdot \delta)\bigr)}{n\cdot \xi \cdot r_*^d}}\biggr\}\\
&\geq \tau+\Delta-2^3 \cdot \holderConstant \cdot r_*^{\holderExponent}-\sqrt{\frac{\thetaNDelta }{\xi \cdot r_*^d\cdot \holderConstant^{d/\holderExponent}}} \geq \tau,
\end{align*}
so \eqref{eq:conditionForGeneralPowerBound} holds. Thus, taking $S^{\dagger}:= \bigcup_{q \in [L]}B_q \supseteq \omegaSuperLevelSet{\xi}\cap \etaSuperLevelSet{\tau+\Delta}$, when $\Delta \leq 1$ we may apply Proposition~\ref{prop:generalPowerBound} to see that with probability at least $1-\delta$, we have
\begin{align}
\mu&(\hat{A}_{\mathrm{OSS}}) 
\geq\sup\big\{ \mu(A): A \in \mathcal{A}\cap \powerSet( S^{\dagger})\big\}-2C_{\mathrm{VC}} \sqrt{\frac{\vcDim(\mathcal{A})}{n}}-\sqrt{\frac{2\log(2/\delta)}{n}} \nonumber \\
&\geq \sup\big\{ \mu(A): A \in \mathcal{A}\cap \powerSet\bigl( \omegaSuperLevelSet{\xi}\cap \etaSuperLevelSet{\tau+\Delta}\bigr)\big\}-2C_{\mathrm{VC}} \sqrt{\frac{\vcDim(\mathcal{A})}{n}}-\sqrt{\frac{2\log(2/\delta)}{n}} \nonumber \\
&\geq  M_\tau - \approximableSetsConstant \cdot (\xi^{\approximableDensityExponent}+\Delta^{\approximableMarginExponent})-2C_{\mathrm{VC}} \sqrt{\frac{\vcDim(\mathcal{A})}{n}}-\sqrt{\frac{2\log(2/\delta)}{n}} \label{Eq:BigDisplay} \\
&=  M_\tau - \approximableSetsConstant \cdot (1+2^{5\approximableMarginExponent}) \cdot \thetaNDelta^{\holderExponent \approximableDensityExponent \approximableMarginExponent/\rho}-2C_{\mathrm{VC}} \sqrt{\frac{\vcDim(\mathcal{A})}{n}}-\sqrt{\frac{2\log(2/\delta)}{n}} \nonumber \\
& \geq M_\tau-C  \biggl\{ \biggl(\frac{\holderConstant^{d/\holderExponent}\cdot \log_+\bigl(n/(\alpha \wedge \delta)\bigr)}{n}\biggr)^{\frac{\holderExponent \approximableDensityExponent \approximableMarginExponent}{\approximableDensityExponent(2\holderExponent+d)+\holderExponent\approximableMarginExponent}}+\biggl(\frac{\log_+(1/\delta)}{n}\biggr)^{1/2}\biggr\},\label{Eq:BigDisplay2}
\end{align}
where $C\geq 1$ depends only on $d$, $\approximableDensityExponent$, $\approximableMarginExponent$, $\approximableSetsConstant$ and $\vcDim(\mathcal{A})$.  Finally, if $\Delta> 1$, then~\eqref{Eq:BigDisplay} holds because $\mu(\hat{A}_{\mathrm{OSS}}) \geq 0$, $M_\tau \leq 1$ and $\approximableSetsConstant \geq 1$, so~\eqref{Eq:BigDisplay2} holds too.  This completes the proof of the first claim of the proposition.

For the second claim, observe by Proposition~\ref{thm:typeIControl} that for $\alpha \in (0,1/2]$,
\begin{align*}
R_\tau(\hat{A}_{\mathrm{OSS}}) \leq \frac{M_\tau - \E\mu(\hat{A}_{\mathrm{OSS}})}{\Prob\bigl(\hat{A}_{\mathrm{OSS}} \subseteq{} \etaSuperLevelSet{\tau}\bigr)} &\leq \frac{M_\tau - \E\mu(\hat{A}_{\mathrm{OSS}})}{1-\alpha} \\
&\leq \tilde{C}\biggl\{ \biggl(\frac{\holderConstant^{d/\holderExponent}\cdot\log_+(n/\alpha)}{n}\biggr)^{\frac{\holderExponent \approximableDensityExponent \approximableMarginExponent}{\approximableDensityExponent(2\holderExponent+d)+\holderExponent\approximableMarginExponent}}+\frac{1}{n^{1/2}}\biggr\},
\end{align*}
where the final bound follows by integrating the tail bound in the first part of the proposition.
\end{proof}


\section{Proofs of claims in Examples~\ref{ex:1},~\ref{ex:2} and~\ref{ex:3} and a related result}
\label{Sec:Examples}

\textbf{Example~\ref{ex:1}:}
The marginal density of $X$ is convex on $(-\infty,-\nu-2]$ and on $[\nu+2,\infty)$, so writing $\phi$ for the standard normal density, we have $\omega(x) = \phi(x-\nu) + \phi(x+\nu)$ for $|x| \geq \nu+2$.  Hence there exists $\xi_0 \in \bigl(0,1/\sqrt{2\pi}\bigr]$, depending only on $\nu$, such that for $\xi \in (0,\xi_0]$ we have $\omegaSuperLevelSet{\xi} = [-x_\xi,x_\xi]$, where $x_\xi \in \bigl[\nu + \sqrt{2\log\bigl(\frac{1}{(2\pi)^{1/2}\xi}\bigr)},\nu + \sqrt{2\log\bigl(\frac{2^{1/2}}{\pi^{1/2}\xi}\bigr)}\bigr]$ satisfies $\phi(x_\xi-\nu) + \phi(x_\xi+\nu) = \xi$.  In fact, when $\nu \geq 2$, we may take $\xi_0 = 2\phi(\nu)$.  Moreover, $\eta(x) = \frac{\phi(x-\nu)}{\phi(x-\nu) + \phi(x+\nu)} = \frac{1}{1+e^{-2x\nu}}$, so $\etaSuperLevelSet{\tau+\Delta} = [x_{\nu,\tau,\Delta},\infty)$, where $x_{\nu,\tau,\Delta} := \frac{1}{2\nu}\log\bigl(\frac{\tau+\Delta}{1 - (\tau+\Delta)}\bigr)$.  By reducing $\xi_0 > 0$, depending only on $\nu$ and $\tau$, if necessary, we may assume that $-x_\xi \leq x_{\nu,\tau,\Delta} \leq x_\xi$ for $\xi \in (0,\xi_0]$ and $\Delta \in \bigl(0,(1-\tau)/2\bigr]$.  Writing $\Phi$ for the standard normal distribution function, we deduce that for $\xi \in (0,\xi_0]$,
\begin{align*}
\sup\bigl\{ \mu(A):A \in \mathcal{A}_{\mathrm{int}} &\cap \powerSet\bigl(\omegaSuperLevelSet{\xi}\cap \etaSuperLevelSet{\tau+\Delta}\bigr)  \bigr\} = \mu\bigl([x_{\nu,\tau,\Delta},x_\xi]\bigr) \\
&= \frac{1}{2}\Phi(x_\xi - \nu) - \frac{1}{2}\Phi(x_{\nu,\tau,\Delta} - \nu) + \frac{1}{2}\Phi(x_\xi + \nu) - \frac{1}{2}\Phi(x_{\nu,\tau,\Delta} + \nu).
\end{align*}
Using the Mills ratio and the mean value inequality, it follows that for $\xi \in (0,\xi_0]$,
\begin{align*}
M_\tau &- \sup\bigl\{\mu(A):A \in \mathcal{A}_{\mathrm{int}} \cap \powerSet\bigl(\omegaSuperLevelSet{\xi}\cap \etaSuperLevelSet{\tau+\Delta}\bigr)\bigr\} \\
&= 1 - \frac{1}{2}\Phi(x_\xi - \nu) - \frac{1}{2}\Phi(x_\xi + \nu) - \frac{1}{2}\Phi(x_{\nu,\tau,0} - \nu) - \frac{1}{2}\Phi(x_{\nu,\tau,0} + \nu) \\
&\hspace{7cm}+ \frac{1}{2}\Phi(x_{\nu,\tau,\Delta} - \nu) + \frac{1}{2}\Phi(x_{\nu,\tau,\Delta} + \nu) \\
&\leq \frac{\phi(x_\xi-\nu)}{2(x_{\xi} - \nu)} + \frac{\phi(x_\xi+\nu)}{2(x_{\xi} +\nu)} + \frac{\Delta}{\sqrt{2\pi}\nu\tau(1-\tau)} \\
&\leq \frac{\xi}{2\sqrt{2\log\bigl(\frac{1}{(2\pi)^{1/2}\xi_0}\bigr)}} + \frac{\Delta}{\sqrt{2\pi}\nu\tau(1-\tau)}.
\end{align*}
On the other hand, when $\xi > \xi_0$, we have
\[
M_\tau - \sup\bigl\{\mu(A):A \in \mathcal{A}_{\mathrm{int}} \cap \powerSet\bigl(\omegaSuperLevelSet{\xi}\cap \etaSuperLevelSet{\tau+\Delta}\bigr)\bigr\} \leq 1 \leq \frac{\xi}{\xi_0}.
\]
We conclude that $P \in \classOfWellApproximableSetsWithIntervals$ with $\approximableDensityExponent = \approximableMarginExponent = 1$ when we take 
\[
\approximableSetsConstant^{-1} = \min\biggl\{2\sqrt{2\log\biggl(\frac{1}{(2\pi)^{1/2}\xi_0}\biggr)},\sqrt{2\pi}\nu\tau(1-\tau),\xi_0\biggr\}.
\]

\bigskip

\noindent \textbf{Example~\ref{ex:2}:}
Given $\epsilon_0 > 0$, choose $A_0 \in \mathcal{A}_{\mathrm{hpr}}\cap \powerSet\bigl( \etaSuperLevelSet{\tau} \cap [0,1]^d\bigr)$ such that $\mu(A_0) \geq M_\tau - \epsilon_0$.  Let $\partial A_0$ and $r = (r_1,\ldots,r_d) \in [0,1]^d$ denote the boundary and vector of side-lengths of $A_0$ respectively.  Observe that for $\Delta \leq \epsilon \cdot \delta^{1/\gamma}$,
\begin{align*}
\etaSuperLevelSet{\tau+\Delta} &\supseteq \bigl\{x \in \etaSuperLevelSet{\tau} \cap [0,1]^d:\distSup(x,\mathcal{S}_\tau) \geq (\Delta/\epsilon)^\gamma\bigr\} \\
&\supseteq \bigl\{x \in A_0: \distSup(x,\partial A_0) \geq (\Delta/\epsilon)^\gamma\bigr\}.
\end{align*}
Moreover, $\omegaSuperLevelSet{\xi} = [0,1]^d$ for $\xi \leq 1$.  For $s > 0$, let $A_0(s) := \bigl\{x \in A_0: \distSup(x,\partial A_0) \geq s\bigr\}$.  Note that for $s \leq \min_j r_j/2$,
\[
\mu(A_0) - \mu(A_0(s)) \leq \prod_{j=1}^d r_j - \prod_{j=1}^d (r_j-2s) \leq 1 - (1-2s)^d \leq 2ds.
\]
On the other hand, if $s > \min_{j \in [d]} r_j/2$, then 
\[
\mu(A_0) - \mu(A_0(s)) \leq \prod_{j=1}^d r_j \leq \min_{j \in [d]} r_j < 2s.
\]
Then, for $\xi \in [0,1]$ and any $\Delta \in (0,\epsilon \cdot \delta^{1/\gamma}]$,
\begin{align*}
M_\tau - \sup\bigl\{\mu(A):A \in \mathcal{A}_{\mathrm{hpr}}  \cap \powerSet\bigl(\omegaSuperLevelSet{\xi}\cap \etaSuperLevelSet{\tau+\Delta}\bigr)\bigr\} &\leq M_\tau - \mu\bigl(A_{0,(\Delta/\epsilon)^\gamma}\bigr) \\
&\leq M_\tau - \mu(A_0) + 2d\Bigl(\frac{\Delta}{\epsilon}\Bigr)^\gamma \\
&\leq \epsilon_0 + 2d\Bigl(\frac{\Delta}{\epsilon}\Bigr)^\gamma.
\end{align*}
On the other hand, if $\xi > 1$ or $\Delta > \epsilon \cdot \delta^{1/\gamma}$, then for any $\approximableDensityExponent \in (0,\infty)$ and $\approximableSetsConstant \geq 1/(\epsilon^{\approximableMarginExponent}\delta)$, we have
\[
M_\tau - \sup\bigl\{\mu(A):A \in \mathcal{A}_{\mathrm{hpr}}\cap \powerSet\bigl(\omegaSuperLevelSet{\xi}\cap \etaSuperLevelSet{\tau+\Delta}\bigr)\bigr\} \leq 1 \leq \approximableSetsConstant \cdot (\xi^{\approximableDensityExponent}+\Delta^{\approximableMarginExponent}).
\]
Since $\epsilon_0 > 0$ was arbitrary, the conclusion follows.

\bigskip

\noindent \textbf{Example~\ref{ex:3}:} 
Writing $\omega_\kappa := \omega_{\mu_\kappa,d}$ for the lower-density of $\mu_\kappa$, we have for $x \in \R^d$ that
\begin{align*}
\omega_{\kappa}(x) &\geq \sup_{t \in [\supNorm{x},\infty)} g_{\kappa}(t)\biggl\{ \Lebesgue\bigl(\closedMetricBallSupNorm{x}{1 \wedge t} \cap \closedMetricBallSupNorm{0}{t}\bigr) \wedge \inf_{r \in (0,1 \wedge t)} \frac{\Lebesgue\bigl(\closedMetricBallSupNorm{x}{r} \cap \closedMetricBallSupNorm{0}{t}\bigr)}{r^d}\biggr\} \\
&\geq \sup_{t \in [\supNorm{x},\infty)} (1 \wedge t^d) g_{\kappa}(t) \\
&\geq \begin{cases} g_\kappa(\supNorm{x}) \wedge g_{\kappa}(1) &\text{ if } \kappa \in (0,2) \\
\Bigl(\frac{1}{2^d(\kappa-1)} \cdot g_\kappa(\supNorm{x})\Bigr) \wedge g_{\kappa}\Bigl(\frac{1}{2(\kappa-1)^{1/d}}\Bigr) &\text{ if } \kappa \in [2,\infty).
\end{cases}
\end{align*}
Now, writing $a_{d,\kappa} := \frac{1}{2(\kappa-1)^{1/d}}\cdot \one_{\{\kappa \geq 2\}}+\one_{\{\kappa<2\}}$ and $\xi_{d,\kappa} := g_{\kappa}(a_{d,\kappa})$, we have for $\xi \leq \xi_{d,\kappa}$ that $\mathcal{X}_\xi(\omega_\kappa) \supseteq \bigl\{x \in \R^d: \supNorm{x} \leq R_{\xi,d,\kappa}\bigr\}$, where
\[
R_{\xi,d,\kappa} := \begin{cases} \Bigl(\frac{(\kappa/(2^d \xi))^{1-\kappa} - 1}{1-\kappa}\Bigr)^{1/d} &\text{ if } \kappa \in (0,1)\\
\log^{1/d}\Bigl(\frac{1}{2^d\xi}\Bigr) &\text{ if } \kappa=1\\ 
\Bigl(\frac{1 - \{2^d\xi/(\kappa a_{d,\kappa}^d)\}^{\kappa-1}}{\kappa-1}\Bigr)^{1/d} &\text{ if } \kappa \in (1,\infty). 
\end{cases}
\]
We now calculate that
\begin{equation}
\label{eq:mtau3rdExample}
M_\tau \equiv M_\tau(P_{\kappa,\gamma},\mathcal{A}_{\mathrm{hpr}}) = \mu_\kappa\bigl([0,\infty) \times \R^{d-1}\bigr) = 1/2.
\end{equation}
Observe that $\mathcal{X}_{\tau+\Delta}(\eta_{\gamma}) = \{x = (x_1,\ldots,x_d)^\top \in \R^d: x_1 \geq (\Delta/\holderConstant)^\gamma\}$ for $\Delta \in (0,1 - \tau]$.  Hence, for $\Delta \in (0,1 - \tau]$ and $\xi \leq \xi_{d,\kappa}$, we have
\begin{align}
\label{Eq:suplowerbound}
    \sup\big\{ \mu_{\kappa}(A):A &\in \mathcal{A}_{\mathrm{hpr}}  \cap \powerSet\bigl(\mathcal{X}_\xi(\omega_\kappa) \cap \mathcal{X}_{\tau+\Delta}(\eta_\gamma)\bigr)  \big\} \nonumber \\
    &\geq \sup\big\{ \mu_{\kappa}(A):A \in \mathcal{A}_{\mathrm{hpr}}  \cap \powerSet\bigl(\closedMetricBallSupNorm{0}{R_{\xi,d,\kappa}} \cap \bigl([(\Delta/\holderConstant)^\approximableMarginExponent,\infty) \times \R^{d-1}\bigr)\bigr)  \big\} \nonumber \\
    &= \mu_{\kappa}\bigl(\closedMetricBallSupNorm{0}{R_{\xi,d,\kappa}} \cap \bigl([(\Delta/\holderConstant)^\approximableMarginExponent,\infty) \times \R^{d-1}\bigr)\bigr) \nonumber \\
      &\geq \frac{1}{2}\mu_{\kappa}\bigl( \closedMetricBallSupNorm{0}{R_{\xi,d,\kappa}}\bigr)-\mu_{\kappa}\bigl([0,(\Delta/\holderConstant)^\approximableMarginExponent]\times \R^{d-1}\bigr).
    \end{align}
For $x = (x_1,\ldots,x_d)^\top \in \R^d$, let $x_{-1} := (x_2,\ldots,x_d)^\top \in \R^{d-1}$.  Then
\begin{align}
\label{Eq:gamma2}
\mu_{\kappa}\bigl([0,(\Delta/\holderConstant)^\approximableMarginExponent]\times \R^{d-1}\bigr) &= \int_{[0,(\Delta/\holderConstant)^\approximableMarginExponent]\times \R^{d-1}} g_{\kappa}(\supNorm{x}) \, dx \nonumber \\
&\leq \int_{[0,(\Delta/\holderConstant)^\approximableMarginExponent]\times \R^{d-1}} g_{\kappa}(\supNorm{x_{-1}}) \, dx 
= b_{d,\kappa} \cdot \biggl(\frac{\Delta}{\holderConstant}\biggr)^\gamma,
\end{align}
where
\begin{align*}
b_{d,\kappa}:=\int_{\R^{d-1}} g_{\kappa}(\supNorm{x_{-1}}) \, dx_{-1} &= (d-1) \cdot 2^{d-1}\int_0^\infty y^{d-2} g_\kappa(y) \, dy \\
&= \begin{cases}
\frac{(1-\kappa)^{1/d}\Gamma(2-1/d)\Gamma(\frac{1 + (d-1)\kappa}{d(1-\kappa)})}{2\Gamma(\kappa/(1-\kappa))} &\text{ if } \kappa \in (0,1) \\
\Gamma(2-1/d)/2 &\text{ if } \kappa = 1 \\
\frac{(\kappa-1)^{1/d}\Gamma(2-1/d)\Gamma(2 + \frac{1}{\kappa-1})}{2\Gamma(2-\frac{1}{d} + \frac{1}{1-\kappa})}&\text{ if } \kappa \in (1,\infty).
\end{cases}
\end{align*}
Moreover, for $\xi \leq \xi_{d,\kappa} \leq \kappa/2^d$,
\begin{equation}
\label{Eq:kappa}
1 - \mu_{\kappa}\bigl( \closedMetricBallSupNorm{0}{R_{\xi,d,\kappa}}\bigr) = d \cdot 2^d \int_{R_{\xi,d,\kappa}}^\infty y^{d-1}g_{\kappa}(y) \, dy = \biggl(\frac{2^d \xi}{a_{d,\kappa}^d\kappa}\biggr)^\kappa.
\end{equation}
But for $\Delta > 1 - \tau$ or $\xi > \xi_{d,\kappa}$, we have 
\[
\sup\big\{ \mu_{\kappa}(A):A \in \mathcal{A}_{\mathrm{hpr}}  \cap \powerSet\bigl(\mathcal{X}_\xi(\omega_\kappa) \cap \mathcal{X}_{\tau+\Delta}(\eta_\gamma)\bigr)  \big\} \geq 0 \geq \frac{1}{2} -\biggl(\frac{\Delta}{1-\tau}\biggr)^{\approximableMarginExponent} - \biggl(\frac{\xi}{\xi_{d,\kappa}}\biggr)^{\kappa}.
\]
We deduce from~\eqref{eq:mtau3rdExample},~\eqref{Eq:suplowerbound},~\eqref{Eq:gamma2} and~\eqref{Eq:kappa} that $P_{\approximableDensityExponent,\approximableMarginExponent} \in \classOfWellApproximableSetsWithHyperCubes$, with 
\[
\approximableSetsConstant = \biggl(\frac{b_{d,\kappa}^{1/\gamma}}{\holderConstant} \vee \frac{1}{1-\tau}\biggr)^\gamma \vee \biggl(\frac{2^d}{a_{d,\kappa}^d\kappa}\vee \frac{1}{\xi_{d,\kappa}} \biggr)^\kappa.
\]

Given a closed set $S \subseteq \R^d$, we define the projection $\Pi_S:\R^d \rightarrow S$ by
\[
\Pi_S(x) := \sargmin_{z \in S} \|x - z\|_2,
\]
where $\sargmin$ denotes the smallest element of the $\argmin$ in the lexicographic ordering.  
\begin{prop}
\label{Prop:GeneralExample}
Let $P$ be a distribution on $\mathbb{R}^d \times [0,1]$ with marginal $\mu$ on $\R^d$ and continuous regression function $\eta$.  Suppose that $\mathcal{A} \subseteq \mathcal{B}(\R^d)$ has the properties that
\begin{itemize}
    \item there exists a bounded set $A_0 \in \mathcal{A} \cap \mathrm{Pow}\bigl(\etaSuperLevelSet{\tau}\bigr)$ such that $\mu(A_0) = M_\tau$;
    \item writing $\partial A_0$ for the topological boundary of $A_0$, there exist $s_0 > 0$ and $C_{\mathrm{App}}' > 0$ such that for every $s \in (0,s_0]$, we can find $A_0(s) \in \mathcal{A}$ with $A_0(s) \subseteq \{x \in A_0: \distEuclidean(x,\partial A_0) > s\}$, satisfying
    \[
    \mu(A_0) - \mu\bigl(A_0(s)\bigr) \leq C_{\mathrm{App}}' \cdot s.
    \]
\end{itemize}
Suppose that there exist $c_*, \kappa > 0$ such that
\[
\lowerDensity(x) \geq c_* \cdot s^{1/\kappa}
\]
for all $x \in A_0(s)$ and $s \in (0,s_0)$.  Assume further that $\mathcal{S}_{\tau} := \{x \in \R^d:\eta(x) = \tau\}$ is non-empty, and that there exists $\delta_0 > 0$ such that $\eta$ is differentiable on $\mathcal{S}_{\tau,\delta_0} := \mathcal{S}_\tau + \delta_0 \openMetricBallEuclideanNorm{0}{1}$.  Let $A_{0,\delta_0} := A_0 + \delta_0\openMetricBallEuclideanNorm{0}{1}$ and assume that $\epsilon_0 := \inf_{x \in  A_{0,\delta_0} \cap \mathcal{S}_{\tau,\delta_0}} \|\nabla \eta(x)\|_2 > 0$.  Then $\Delta_* := \inf_{x \in A_0 \setminus \mathcal{S}_{\tau,\delta_0}} \eta(x) - \tau > 0$, and $P \in \mathcal{P}_{\mathrm{App}}(\mathcal{A},\tau,\kappa,1,C_{\mathrm{App}})$ for 
\[
C_{\mathrm{App}} \geq \max\biggl\{\frac{C_{\mathrm{App}}'}{\max(c_*^\kappa,\epsilon_0)},\frac{1}{s_0c_*^\kappa},\frac{1}{\min(\epsilon_0\delta_0,\Delta_*)}\biggr\}.
\]
\end{prop}
\begin{proof}
Since $\eta$ is continuous on the intersection of the closure of $A_0$ with the complement of $\mathcal{S}_{\tau,\delta_0}$, and since this intersection is compact but does not contain any point in $\mathcal{S}_\tau$, we have that $\Delta_* > 0$.  By \citet[][Proposition~2]{cannings2020local}, we have
\begin{equation}
\label{Eq:Representation}
\mathcal{S}_{\tau,\delta_0} = \biggl\{x_0 + \frac{t\nabla \eta(x_0)}{\|\nabla \eta(x_0)\|_2}: x_0 \in \mathcal{S}_\tau, |t| < \delta_0\biggr\}.
\end{equation}
Moreover, from the proof of that result, we see that for any $x \in \mathcal{S}_{\tau,\delta_1}$, we can take $x_0 = \Pi_{\mathcal{S}_\tau}(x)$ in the representation~\eqref{Eq:Representation}.
Now suppose that $x \in A_0 \cap \mathcal{S}_{\tau,\delta_0}$, so that $x = x_0 + t\nabla \eta(x_0)/\|\nabla \eta(x_0)\|_2$ with $x_0 = \Pi_{\mathcal{S}_\tau}(x) \in A_{0,\delta_0} \cap \mathcal{S}_\tau$ and $|t| = \distEuclidean(x,\mathcal{S}_\tau) < \delta_0$.  Since the line segment joining $x_0$ and $x$ is contained in $A_{0,\delta_0} \cap \mathcal{S}_{\tau,\delta_0}$, we have  
\begin{align}
\label{Eq:etaminustaubound}
|\eta(x) - \tau| = \biggl|\eta\biggl(x_0 + \frac{t\nabla \eta(x_0)}{\|\nabla \eta(x_0)\|_2}\biggr) - \eta(x_0)\biggr| \geq |t|\epsilon_0.
\end{align}
Now observe that $\mathrm{int}(A_0) \cap \mathcal{S}_\tau = \emptyset$ because if $x_0 \in \mathrm{int}(A_0) \cap \mathcal{S}_\tau$, then for sufficiently small $t > 0$, the point $x_0 - t\nabla \eta(x_0)/\|\nabla \eta(x_0)\|_2$ would belong to $A_0$ and $\etaSuperLevelSet{\tau}^{\mathrm{c}}$, a contradiction.  Hence, for $x \in A_0$ we have $\distEuclidean(x,\mathcal{S}_\tau) \geq \distEuclidean(x,\partial A_0)$, so for $\Delta < \min(\epsilon_0\delta_0,\Delta_*)$,
\begin{align*}
\{x \in A_0:\eta(x) \in [\tau,\tau+ \Delta)\} \subseteq A_0 \cap \mathcal{S}_{\tau,\Delta/\epsilon_0} \subseteq \bigl\{x \in A_0: \distEuclidean(x,\partial A_0) \leq \Delta/\epsilon_0\bigr\}. 
\end{align*}
Then, for any $\xi \in \bigl(0,c_*s_0^{1/\kappa}\bigr)$ and $\Delta \in \bigl(0,\min(\epsilon_0\delta_0,\Delta_*)\bigr)$,
\begin{align*}
M_\tau - \sup\bigl\{\mu(A):A \in \mathcal{A} \cap \powerSet\bigl(\omegaSuperLevelSet{\xi} &\cap \etaSuperLevelSet{\tau+\Delta}\bigr)\bigr\} \leq \mu(A_0) - \mu\biggl(A_0\Bigl(\frac{\xi^\kappa}{c_*^\kappa} \wedge \frac{\Delta}{\epsilon_0}\Bigr)\biggr) \\
&\leq C_{\mathrm{App}}' \cdot \biggl(\frac{\xi^\kappa}{c_*^\kappa} \wedge \frac{\Delta}{\epsilon_0} \biggr) \leq  \approximableSetsConstant \cdot (\xi^{\kappa} + \Delta).
\end{align*}
On the other hand, if $\xi \geq c_*s_0^{1/\kappa}$ or $\Delta \geq \min(\epsilon_0\delta_0,\Delta_*)$, then provided we take $\approximableSetsConstant \geq \max\bigl\{1/(s_0c_*^\kappa),1/\min(\epsilon_0\delta_0,\Delta_*)\bigr\}$, we have
\begin{align*}
M_\tau - \sup\bigl\{\mu(A):A \in \mathcal{A} \cap \powerSet\bigl(\omegaSuperLevelSet{\xi}\cap \etaSuperLevelSet{\tau+\Delta}\bigr)\bigr\} \leq 1 &\leq \frac{1}{s_0} \cdot \Bigl(\frac{\xi}{c_*}\Bigr)^\kappa + \frac{\Delta}{\min(\epsilon_0\delta_0,\Delta_*)} \\
&\leq \approximableSetsConstant \cdot (\xi^\kappa + \Delta).
\end{align*}
The result follows.
\end{proof}

\section{Proofs from Section~\ref{Sec:Adaptation}}

\subsection{Proofs from Section~\ref{SubSec:lambda}}\label{subsec:proofOfLambdaEstResults}

\newcommand{\holderExponentMax}{\holderExponent_\ast}

In order to prove Theorem~\ref{thm:lipEst}, we first establish Proposition~\ref{prop:lipEstUniformOverHolderExponent} below, which will be useful in the sequel. Recall the definition of $\Delta_{n,\holderExponent,\holderConstant}$ from~\eqref{Eq:Deltanbeta}.

\begin{prop}\label{prop:lipEstUniformOverHolderExponent} Given $\delta \in (0,1)$ and $x_0$, $x_1 \in \R^d$, we let 
\begin{align*}
\mathcal{E}^{\mathrm{H\ddot{o}l}}_{n,\delta}(x_0,x_1):=\bigcap\biggl\{\biggl( \frac{|\eta(x_0)-\eta(x_1)|- \Delta_{n,\holderExponent,\holderConstant}(x_0)\vee \Delta_{n,\holderExponent,\holderConstant}(x_1)}{\|x_{0}-x_1\|_{\infty}^\holderExponent} \biggr) \leq \holderConstantEstimator\leq \holderConstant \biggr\},
\end{align*}
where the intersection is over all  $(\holderExponent,\holderConstant) \in (0,1] \times [1,\infty)$  for which $\probDistribution \in \mathcal{P}_{\mathrm{H\ddot{o}l}}(\holderExponent,\holderConstant,\tau)$ and $\min_{x \in \{x_0,x_1\}}\{\eta(x)-\Delta_{n,\holderExponent,\holderConstant}(x)\} \geq \tau$.  Here we adopt the convention that an intersection over an empty set is the entire probability space.  Then $\Prob_P\bigl(\mathcal{E}^{\mathrm{H\ddot{o}l}}_{n,\delta}(x_0,x_1)\bigr) \geq 1-\delta$.
\end{prop}
The proof of Proposition~\ref{prop:lipEstUniformOverHolderExponent} will make use of the event
\begin{align*}
\mathcal{E}_{\eta,\delta} :=\bigcap_{(i,k) \in [n]^2}\biggl\{ \biggl| \empiricalRegressionFunction\bigl(\closedMetricBallSupNorm{X_i}{r_{i,k}}\bigr)-\frac{\sum_{t\in [n]} \regressionFunction(X_t)\cdot\one_{{\{X_t \in \closedMetricBallSupNorm{X_i}{r_{i,k}}\}}}}{\sum_{t \in [n]} \one_{{\{X_t \in \closedMetricBallSupNorm{X_i}{r_{i,k}}\}}}} \biggr|\leq \sqrt{\frac{\log(4n^2/\delta)}{2k}} \biggr\}.
\end{align*}
\begin{proof}[Proof of Proposition~\ref{prop:lipEstUniformOverHolderExponent}] Fix $n \in \mathbb{N}$, $\delta \in (0,1)$, and $x_0,x_1 \in \R^d$.  We will assume without loss of generality that $\regressionFunction(x_0) \geq \regressionFunction(x_1)$ and $\min\{\omega(x_0),\omega(x_1)\} > 0$ since otherwise the index set in the intersection in the definition of $\mathcal{E}^{\mathrm{H\ddot{o}l}}_{n,\delta}(x_0,x_1)$ is empty.  By Hoeffding's lemma 
and a union bound, we have $\Prob\bigl(\mathcal{E}_{\eta,\delta}^{\mathrm{c}}|\sampleX\bigr) \leq \delta/2$. For $\ell \in [n]$ and $x \in \{x_0,x_1\}$ we let $s_\ell(x):= \bigl(n  \omega(x)/(2\ell)\bigr)^{-1/d}$ and define the event
\begin{align*}
\mathcal{E}_{\mu,\delta}(x_0,x_1) &:=  \bigcap_{x \in \{x_0,x_1\}} \bigcap_{\ell = \lceil 4\log(4n/\delta)\rceil}^{\lfloor n \omega(x)/2 \rfloor} \bigl\{  |\{X_i\}_{i \in [n]}\cap \closedMetricBallSupNorm{x}{s_\ell(x)}|\geq \ell \bigr\}.
\end{align*}
Observe that for $x \in \{x_0,x_1\}$ and $\ell \in \{\lceil 4\log(4n/\delta)\rceil,\ldots,\lfloor n \omega(x)/2 \rfloor\}$ we have $s_\ell(x) \in (0,1]$ and 
\begin{align}\label{eq:knLBHolderConstEst}
\mu\bigl(\closedMetricBallSupNorm{x}{s_{\ell}(x)}\bigr) \geq  \omega(x)\cdot s_\ell(x)^d = \frac{2\ell}{n}.
\end{align}
Hence, by the multiplicative Chernoff bound (Lemma~\ref{lemma:multChernoff}) we have $\Prob\bigl(\mathcal{E}_{\mu,\delta}(x_0,x_1)^{\mathrm{c}}\bigr) \leq \delta/2$. Consequently, to complete the proof it suffices to show that $ \mathcal{E}_{\eta,\delta} \cap \mathcal{E}_{\mu,\delta}(x_0,x_1)\subseteq \mathcal{E}^{\mathrm{H\ddot{o}l}}_{n,\delta}(x_0,x_1)$.  To this end, fix $(\holderExponent,\holderConstant) \in (0,1] \times [1,\infty)$  for which $\probDistribution \in \mathcal{P}_{\mathrm{H\ddot{o}l}}(\holderExponent,\holderConstant,\tau)$ and $\eta(x)-\Delta_{n}(x) \geq \tau$ for $x \in \{x_0,x_1\}$.  On the event $\mathcal{E}_{\eta,\delta}$, we have for any $(i,k) \in [n]^2$ with $\hat{\psi}_{n,\holderExponent,\delta}(i,k)> \holderConstant$ that
\begin{align*}
\frac{\sum_{t\in [n]} \regressionFunction(X_t)\cdot\one_{{\{X_t \in \closedMetricBallSupNorm{X_i}{r_{i,k}}\}}}}{\sum_{t \in [n]} \one_{{\{X_t \in \closedMetricBallSupNorm{X_i}{r_{i,k}}\}}}} \geq \empiricalRegressionFunction\bigl(\closedMetricBallSupNorm{X_i}{r_{i,k}}\bigr)-\sqrt{\frac{\log(4n^2/\delta)}{2k}} > \tau+ \holderConstant \cdot (2 r_{i,k})^\holderExponent.
\end{align*}
Hence, $\closedMetricBallSupNorm{X_i}{r_{i,k}} \subseteq \etaSuperLevelSet{\tau}$, since $\regressionFunction \in \classOfRestrictedHolderFunctions{\etaSuperLevelSet{\tau}}$. Consequently, on the event $\mathcal{E}_{\eta,\delta}$, given any $(i,j,k,\ell) \in [n]^4$ with $\min\bigl\{\hat{\psi}_{n,\holderExponent,\delta}(i,k),\hat{\psi}_{n,\holderExponent,\delta}(j,\ell) \bigr\} > \holderConstant$, we have
\begin{align*}
\empiricalRegressionFunction\bigl(\closedMetricBallSupNorm{X_i}{r_{i,k}}\bigr)&-\empiricalRegressionFunction\bigl(\closedMetricBallSupNorm{X_j} {r_{j,\ell}}\bigr)-\sqrt{\frac{2\log(4n^2/\delta)}{k\wedge \ell}}\\
& \leq \frac{\sum_{t \in [n]} \regressionFunction(X_t)\cdot\one_{\{X_t \in \closedMetricBallSupNorm{X_i}{r_{i,k}}\}}}{\sum_{t \in [n]} \one_{\{X_t \in \closedMetricBallSupNorm{X_i}{r_{i,k}}\}}}-\frac{\sum_{t\in [n]} \regressionFunction(X_t)\cdot\one_{\{X_t \in \closedMetricBallSupNorm{X_j}{r_{j,\ell}}\}}}{\sum_{t \in [n]} \one_{\{X_t \in \closedMetricBallSupNorm{X_j}{r_{j,\ell}}\}}}\\
& \leq \regressionFunction(X_i) +\holderConstant \cdot r_{i,k}^\holderExponent - \regressionFunction(X_j) +\holderConstant \cdot r_{j,\ell}^\holderExponent \leq \holderConstant \cdot \bigl(\|X_i-X_j\|_{\infty}^\holderExponent+r_{i,k}^\holderExponent+r_{j,\ell}^\holderExponent\bigr),
\end{align*}
and hence $\hat{\phi}_{n,\holderExponent,\delta}(i,j,k,\ell)\leq \holderConstant$. Thus, on the event $\mathcal{E}_{\eta,\delta}$, we have $ \holderConstantEstimator\leq \holderConstant$.  

For the lower bound, assume without loss of generality that $\Delta_{n}(x_0) \vee \Delta_{n}(x_1) \leq 1$ and let $i_0 := \sargmin_{i \in [n]} \|X_{i}-x_0\|_\infty$ and $i_1 := \sargmin_{i \in [n]} \|X_{i}-x_1\|_\infty$. For $x \in \{x_0,x_1\}$, let $\rho_n(x):= \bigl\{ \Delta_{n}(x) /(24\holderConstant)\bigr\}^{1/\holderExponent}$ and $k_{n}(x):= \lfloor \frac{n}{2}\cdot \omega(x) \cdot \rho_{n}(x)^d \rfloor$.  Since for each  $x \in \{x_0,x_1\}$, the fact that $\Delta_{n}(x) \leq 1$ ensures that $\frac{n}{2} \cdot \omega(x) \cdot \rho_{n}(x)^d \geq 4\log(4n/\delta)+2$, we have $k_{n}(x) \in \{ \lceil 4\log(4n/\delta) \rceil,\ldots, n\}$. Hence, on the event $\mathcal{E}_{\mu,\delta}(x_0,x_1)$ we have $|\{X_i\}_{i \in [n]}\cap \closedMetricBallSupNorm{x}{\rho_{n}(x)}|\geq k_{n}(x) \bigr\}$ for $x \in \{x_0,x_1\}$.

Now fix $j \in \{0,1\}$, so that $\|X_{i_j}-x_j\|_\infty \leq \rho_{n}(x_j)$ and $r_{i_j,k_{n}(x_j)}\leq 2\cdot \rho_{n}(x_j)$.  Then $\regressionFunction(x_j) \geq \tau+\Delta_{n}(x_j) = \tau + 24\holderConstant \rho_{n}(x_j)^\holderExponent$ and $\regressionFunction \in \classOfRestrictedHolderFunctions{\etaSuperLevelSet{\tau}}$, so $\closedMetricBallSupNorm{X_{i_j}}{r_{i_j,k_{n}(x_j)}} \subseteq \closedMetricBallSupNorm{x_j}{3\cdot \rho_{n}(x_j)} \subseteq \etaSuperLevelSet{\tau}$.  Since $\Delta_{n}(x_j)  \leq 1$, we also have $\sqrt{2\log(4n^2/\delta)/k_{n}(x_j)} \leq \Delta_{n}(x_j)/4$.  Hence, on the event $ \mathcal{E}_{\eta,\delta} \cap \mathcal{E}_{\mu,\delta}(x_0,x_1)$, we have
\begin{align*}
\empiricalRegressionFunction\bigl(\closedMetricBallSupNorm{X_{i_j}}{r_{i_j,k_{n}(x_j)}}\bigr) &- \tau-\sqrt{\frac{\log(4n^2/\delta)}{2k_{n}(x_j)}}\\& \geq \frac{\sum_{t \in [n]} \regressionFunction(X_t)\cdot\one_{\{X_t \in \closedMetricBallSupNorm{X_{i_j}}{r_{i_j,k_{n}(x_j)}}\}}}{\sum_{t \in [n]} \one_{\{X_t \in \closedMetricBallSupNorm{X_{i_j}}{r_{i_j},k_{n}(x_j)}}\}} - \tau -\sqrt{\frac{2\log(4n^2/\delta)}{k_{n}(x_j)}}\\
& \geq \regressionFunction(x_j)-\holderConstant \cdot \{3 \cdot \rho_{n}(x_j)\}^\holderExponent- \tau -\sqrt{\frac{2\log(4n^2/\delta)}{k_{n}(x_j)}}\\
& \geq \Delta_{n}(x_j)-\frac{3^\holderExponent}{24}\cdot \Delta_{n}(x_j)-\frac{1}{4}\cdot\Delta_{n}(x_j)\geq \frac{5}{8}\cdot \Delta_{n}(x_j) \\
&= 15 \holderConstant \cdot \rho_{n}(x_j)^\holderExponent >  \holderConstant \cdot  (2r_{i_j,k_{n}(x_j)})^\holderExponent.
\end{align*}
Thus, on the event  $ \mathcal{E}_{\eta,\delta} \cap \mathcal{E}_{\mu,\delta}(x_0,x_1)$, we have $\hat{\psi}_{n,\holderExponent,\delta}\bigl(i_j,k_{n}(x_j)\bigr)>\holderConstant$.  Hence, whenever $\Delta_{n}(x_0) \vee \Delta_{n}(x_1) \leq 1$, we have on the event $\mathcal{E}_{\eta,\delta} \cap \mathcal{E}_{\mu,\delta}(x_0,x_1)$ that
\begin{align*}
&\hat{\phi}_{n,\holderExponent,\delta}\bigl(i_0,i_1,k_{n}(x_0),k_{n}(x_1)\bigr) \\
&\hspace{0.5cm}=\frac{\empiricalRegressionFunction\bigl(\closedMetricBallSupNorm{X_{i_0}}{r_{i_0,k_{n}(x_0)}}\bigr)-\empiricalRegressionFunction\bigl(\closedMetricBallSupNorm{X_{i_1}}{r_{i_1,k_{n}(x_1)}}\bigr)-\sqrt{2\log(4n^2/\delta)/\{k_{n}(x_0) \wedge k_{n}(x_1)\}}}{\|X_{i_0}-X_{i_1}\|_{\infty}^\holderExponent+r_{i_0,k_{n}(x_0)}^\holderExponent+r_{i_1,k_{n}(x_1)}^\holderExponent}\\
 &\hspace{0.5cm} \geq \frac{\eta(x_0)-\eta(x_1)-2\holderConstant \bigl\{3\bigl(\rho_{n}(x_0) \vee \rho_{n}(x_1)\bigr)\bigr\}^\holderExponent-2\sqrt{2\log(4n^2/\delta)/\{k_{n}(x_0) \wedge k_1(x_0)\}}}{\|x_{0}-x_1\|_{\infty}^\holderExponent+3\cdot \bigl\{2\cdot \bigl(\rho_{n}(x_0) \vee \rho_{n}(x_1)\bigr)\bigr\}^\holderExponent}\\
 &\hspace{0.5cm} \geq \frac{\eta(x_0)-\eta(x_1)-\frac{3}{4}\bigl(\Delta_{n}(x_0) \vee \Delta_{n}(x_1)\bigr)}{\|x_{0}-x_1\|_{\infty}^\holderExponent+\frac{1}{4\holderConstant}\bigl(\Delta_{n}(x_0) \vee \Delta_{n}(x_1)\bigr)}\\
 &\hspace{0.5cm} \geq \frac{\eta(x_0)-\eta(x_1)-\frac{3}{4}\bigl(\Delta_{n}(x_0) \vee \Delta_{n}(x_1)\bigr)}{\|x_{0}-x_1\|_{\infty}^\holderExponent} - \frac{\Delta_{n}(x_0) \vee \Delta_{n}(x_1)}{4\holderConstant }\cdot \frac{\eta(x_0)-\eta(x_1)}{\|x_{0}-x_1\|_{\infty}^{2\holderExponent
}}\\ 
&\hspace{0.5cm} \geq \frac{\eta(x_0)-\eta(x_1)-\Delta_{n}(x_0) \vee \Delta_{n}(x_1)}{\|x_{0}-x_1\|_{\infty}^\holderExponent}.
\end{align*}
Thus,  whenever $\Delta_{n}(x_0) \vee \Delta_{n}(x_1) \leq 1$, we have on the event $ \mathcal{E}_{\eta,\delta} \cap \mathcal{E}_{\mu,\delta}(x_0,x_1)$ that
\begin{align*}
\holderConstantEstimator &\geq \min\bigl\{\hat{\psi}_{n,\holderExponent,\delta}\bigl(i_0,k_{n}(x_0)\bigr),\hat{\psi}_{n,\holderExponent,\delta}\bigl(i_1,k_{n}(x_1)\bigr),\hat{\phi}_{n,\holderExponent,\delta}\bigl(i_0,i_1,k_{n}(x_0),k_{n}(x_1)\bigr)\bigr\}\\
&\geq \min\biggl\{ \frac{\eta(x_0)-\eta(x_1)-\Delta_{n}(x_0) \vee \Delta_{n}(x_1)}{\|x_{0}-x_1\|_{\infty}^\holderExponent}, \holderConstant \biggr\}\\
&\geq  \min\biggl\{ \frac{\eta(x_0)-\eta(x_1)-\Delta_{n}(x_0) \vee \Delta_{n}(x_1)}{\|x_{0}-x_1\|_{\infty}^\holderExponent},\frac{\eta(x_0)-\eta(x_1)}{\|x_{0}-x_1\|_{\infty}^\holderExponent} \biggr\}\\
& =\frac{\eta(x_0)-\eta(x_1)-\Delta_{n}(x_0) \vee \Delta_{n}(x_1)}{\|x_{0}-x_1\|_{\infty}^\holderExponent},
\end{align*}
as required.
\end{proof}

\begin{proof}[Proof of Theorem~\ref{thm:lipEst}] Theorem~\ref{thm:lipEst} is an immediate consequence of Proposition~\ref{prop:lipEstUniformOverHolderExponent} combined with the continuity of probability from below.
\end{proof}

The following four lemmas are used in the proof of Corollary~\ref{Cor:HolderConstantEstimation}.
\begin{lemma}\label{lemma:omegaAEPositive} Let $\mu$ be a Borel probability distribution on $\R^d$ with lower density $\omega$.  Then $\mu\bigl(\{x \in \R^d:\omega(x) = 0\}\bigr) = 0$. Hence $\mathrm{supp}(\mu)=\mathrm{cl}\bigl(\{x \in \R^d:\omega(x) > 0\}\bigr)$.
\end{lemma}
\begin{proof} Let $Z:=\{x \in \R^d:\omega(x) = 0\}$ and fix $\epsilon\in (0,1)$. By the monotone convergence theorem we may choose $R>0$ sufficiently large that $\mu\bigl(Z \setminus \closedMetricBallSupNorm{0}{R}\bigr) < \epsilon/2$ and let $\xi:=\epsilon/\bigl( 2 \cdot \{5(R+1)\}^d\bigr)$.  By \citet[Lemma~S4]{reeve2021adaptive}, 
\begin{align*}
\mu\bigl(Z \cap \closedMetricBallSupNorm{0}{R}\bigr) \leq 
\mu\bigl(\bigl\{x \in \closedMetricBallSupNorm{0}{R}: \omega(x) < \xi\bigr\}\bigr) \leq  \{5(R+1)\}^d \cdot \xi = \frac{\epsilon}{2}.
\end{align*}
Hence $\mu(Z)=0$, so $\mu\bigl(\{x \in \R^d:\omega(x) > 0\}\bigr)=1$, and consequently $\mathrm{supp}(\mu)\subseteq \mathrm{cl}\bigl(\{x \in \R^d:\omega(x) > 0\}\bigr)$. Moreover, if $z \in \mathrm{cl}\bigl(\{x \in \R^d:\omega(x) > 0\}\bigr)$ then every neighbourhood of $z$ has positive $\mu$-measure, so $z \in \mathrm{supp}(\mu)$. The second conclusion therefore follows.
\end{proof}
Recall the definitions of $\underline{\holderConstant}_{\holderExponent}(P)$ and $\classOfDistributionsSatisfyingRegularityCondition$ from Section~\ref{SubSec:lambda}.
\begin{lemma}\label{lemma:LipschitzConstantReformulation} Given any distribution $P \in \classOfDistributionsSatisfyingRegularityCondition$, we have $\underline{\holderConstant}_{\holderExponent}(P)={\holderConstant_{\holderExponent,\flat}}(P)$ where 
\[
{\holderConstant_{\holderExponent,\flat}}(P): = \sup \biggl\{\frac{|\eta(x_0)-\eta(x_1)|}{\|x_{0}-x_1\|_{\infty}^\holderExponent} : x_0 \neq x_1, \regressionFunction(x_0)\wedge \regressionFunction(x_1)>\tau, \omega(x_0)\wedge \omega(x_1)>0 \biggr\} \vee 1.
\]
\end{lemma}
\begin{proof} First note that if $\regressionFunction(x)>\tau$ then $x \in \etaSuperLevelSet{\tau}$ and if $\omega(x)>0$ then $x \in \mathrm{supp}(\mu)$, so ${\holderConstant_{\holderExponent,\flat}}(P) \leq \underline{\holderConstant}_{\holderExponent}(P)$.

Since $P \in \classOfDistributionsSatisfyingRegularityCondition$, it has a continuous regression function $\regressionFunction$ such that $\eta^{-1}([\tau,\tau+\varepsilon)) \subseteq \mathrm{supp}(\mu)$ for some $\varepsilon>0$. Now take distinct points $x_0, x_1 \in \mathrm{supp}(\mu) \cap \etaSuperLevelSet{\tau}$, and suppose initially that $\regressionFunction(x_0) \wedge \regressionFunction(x_1) > \tau$.  Given $\epsilon'>0$, it follows from Lemma \ref{lemma:omegaAEPositive} that we may choose $x_0', x_1' \in \R^d$ with $\regressionFunction(x_0')\wedge \regressionFunction(x_1')>\tau$ and $\omega(x_0')\wedge \omega(x_1')>0$ such that 
\begin{align*}
\max\bigl\{\|x_0-x_0'\|_\infty,\|x_1-x_1'\|_\infty,|\regressionFunction(x_0)-\regressionFunction(x_0')|,|\regressionFunction(x_1)-\regressionFunction(x_1')|\bigr\} \leq \epsilon'.
\end{align*}
Consequently, we have 
\begin{align}
\label{Eq:etaHolderbound}
|\regressionFunction(x_0)-\regressionFunction(x_1)| \leq |\regressionFunction(x_0')-\regressionFunction(x_1')| +2\epsilon' &\leq {\holderConstant_{\holderExponent,\flat}}(P) \cdot \|x_{0}'-x_1'\|_{\infty}^\holderExponent +2\epsilon' \nonumber \\
&\leq {\holderConstant_{\holderExponent,\flat}}(P) \cdot \bigl( \|x_{0}-x_1\|_{\infty}+2\epsilon'\bigr)^\holderExponent+2\epsilon'.
\end{align}
Since $\epsilon' > 0$ was arbitrary, we deduce that $|\regressionFunction(x_0)-\regressionFunction(x_1)| \leq {\holderConstant_{\holderExponent,\flat}}(P) \cdot \|x_{0}-x_1\|_{\infty}^\holderExponent$ for all distinct $x_0, x_1 \in \mathrm{supp}(\mu) \cap  \eta^{-1}\bigl((\tau,1]\bigr)$.  Now consider $x_0,x_1 \in \etaSuperLevelSet{\tau} \cap \mathrm{supp}(\mu)$ with $\eta(x_0)=\tau$ and $\eta(x_1) >\tau$.  Given $\epsilon' \in \bigl(0,\min\{\varepsilon ,\regressionFunction(x_1)-\tau\}\bigr)$, by the intermediate value theorem we may choose $x_2$ on the line segment between $x_0$ and $x_1$ with $\eta(x_2) = \tau+\epsilon'$. It then follows that $x_2 \in \eta^{-1}\bigl([\tau,\tau+\varepsilon)\bigr) \subseteq \mathrm{supp}(\mu)$. By the consequence of~\eqref{Eq:etaHolderbound}, we deduce that $|\regressionFunction(x_2)-\regressionFunction(x_1)| \leq {\holderConstant_{\holderExponent,\flat}}(P) \cdot \|x_2-x_1\|_{\infty}^\holderExponent$, and hence
\begin{align*}
|\regressionFunction(x_0)-\regressionFunction(x_1)| & \leq |\regressionFunction(x_0)-\regressionFunction(x_2)|+|\regressionFunction(x_2)-\regressionFunction(x_1)|\\
& \leq \epsilon'+ {\holderConstant_{\holderExponent,\flat}}(P) \cdot \|x_{2}-x_1\|_{\infty}^\holderExponent \leq \epsilon'+ {\holderConstant_{\holderExponent,\flat}}(P) \cdot \|x_0-x_1\|_{\infty}^\holderExponent.
\end{align*}
Again, since  $\epsilon' \in \bigl(0,\min\{\varepsilon ,\regressionFunction(x_1)-\tau\}\bigr)$ can be taken arbitrarily small, we deduce that $|\regressionFunction(x_0)-\regressionFunction(x_1)| \leq {\holderConstant_{\holderExponent,\flat}}(P) \cdot \|x_{0}-x_1\|_{\infty}^\holderExponent$ for all $x_0, x_1 \in \etaSuperLevelSet{\tau}\cap \mathrm{supp}(\mu)$ with $\eta(x_1)>\tau$. Finally, when $\eta(x_0)=\eta(x_1)=\tau$, the inequality $|\regressionFunction(x_0)-\regressionFunction(x_1)| \leq {\holderConstant_{\holderExponent,\flat}}(P) \cdot \|x_{0}-x_1\|_{\infty}^\holderExponent$ is immediate. Thus, we have $|\regressionFunction(x_0)-\regressionFunction(x_1)| \leq {\holderConstant_{\holderExponent,\flat}}(P) \cdot \|x_{0}-x_1\|_{\infty}^\holderExponent$ for all $x_0,x_1 \in \etaSuperLevelSet{\tau}\cap \mathrm{supp}(\mu)$, so $\underline{\holderConstant}_{\holderExponent}(P) \leq {\holderConstant_{\holderExponent,\flat}}(P)$, as required.
\end{proof}
Our next lemma adapts ideas from \cite{mcshane1934extension}.
\begin{lemma}\label{lemma:belongsToCorrespondingLipschitzClass} Given $P \in \classOfDistributionsSatisfyingRegularityCondition$ with $\underline{\holderConstant}_{\holderExponent}(P)<\infty$, we have $P \in \measureClass_{\mathrm{H\ddot{o}l}}\bigl(\holderExponent,\underline{\holderConstant}_{\holderExponent}(P),\tau\bigr)$.
\end{lemma}
\begin{proof}  
Since $P \in \classOfDistributionsSatisfyingRegularityCondition$, it has a continuous regression function $\eta_0$ such that $\eta_0^{-1}\bigl([\tau,\tau+\varepsilon)\bigr) \subseteq \mathrm{supp}(\mu)$ for some $\varepsilon>0$. We construct a regression function $\eta_1:\R^d \rightarrow [0,1]$ by
\begin{align*}
\eta_1(x):=\begin{cases}1 \wedge \inf\bigl\{\eta_0(z) +\underline{\holderConstant}_{\holderExponent}(P)\cdot \supNorm{z-x}^\holderExponent : z \in \mathrm{supp}(\mu) \cap \mathcal{X}_\tau(\eta_0) \bigr\} &\text{ if }x \in \mathcal{X}_\tau(\eta_0)\\
\eta_0(x) &\text{ otherwise.}
\end{cases}
\end{align*}
First note that $\mathcal{X}_\tau(\eta_0) = \mathcal{X}_\tau(\eta_1)$.  We claim that $|\eta_1(x)-\eta_1(x')| \leq \underline{\holderConstant}_{\holderExponent}(P)\cdot \supNorm{x-x'}^\holderExponent$ for all $x$, $x' \in \mathcal{X}_\tau(\eta_0)$.  Indeed, if $\mathrm{supp}(\mu) \cap \mathcal{X}_\tau(\eta_0) = \emptyset$, then $\eta_1(x) = \eta_1(x')$, and if $z \in \mathrm{supp}(\mu) \cap \mathcal{X}_\tau(\eta_0)$, then 
\begin{align*}
\eta_0(z) +\underline{\holderConstant}_{\holderExponent}(P)\cdot \supNorm{z-x}^\holderExponent \leq \eta_0(z) +\underline{\holderConstant}_{\holderExponent}(P)\cdot \supNorm{z-x'}^\holderExponent + \underline{\holderConstant}_{\holderExponent}(P)\cdot \supNorm{x-x'}^\holderExponent,
\end{align*}
and taking an infimum over $z \in \mathrm{supp}(\mu) \cap \mathcal{X}_\tau(\eta_0) $ yields $\eta_1(x)-\eta_1(x') \leq \underline{\holderConstant}_{\holderExponent}(P)\cdot \supNorm{x-x'}^\holderExponent$.  By interchanging the roles of $x$ and $x'$, the claim follows.  Our second claim is that $\eta_1(x)=\eta_0(x)$ for all $x \in \mathrm{supp}(\mu) \cap \mathcal{X}_\tau(\eta_0)$.  To see this, first observe that $\eta_1(x) \leq \eta_0(x)$ for all $x \in \mathrm{supp}(\mu) \cap \mathcal{X}_\tau(\eta_0)$ by definition of $\eta_1$.  On the other hand, by definition of $\underline{\holderConstant}_{\holderExponent}(P)$, we have for any $z \in \mathrm{supp}(\mu) \cap \mathcal{X}_\tau(\eta_0)$ that
\[
\eta_0(x) \leq \eta_0(z) + \underline{\holderConstant}_{\holderExponent}(P) \cdot \supNorm{z-x}^\holderExponent,
\]
so taking an infimum over $z \in \mathrm{supp}(\mu) \cap \mathcal{X}_\tau(\eta_0)$ yields $\eta_0(x) \leq \eta_1(x)$.  In particular, it now follows that $\eta_0(x) = \eta_1(x)$ for all $x \in \mathrm{supp}(\mu)$.  To show that  $\regressionFunction_1 \in \mathcal{F}_{\mathrm{H\ddot{o}l}}\bigl(\holderExponent, \underline{\holderConstant}_{\holderExponent}(P),\mathcal{X}_\tau(\eta_1)\bigr)$, we must also verify that $\eta_1$ is continuous.  To this end, let $x \in \R^d$. If $\eta_0(x)>\tau$, then since $\eta_0$ is continuous, we have for all sufficiently small $\epsilon'>0$ and $z \in \closedMetricBallSupNorm{x}{\epsilon'}$ that $\eta_0(z)>\tau$.  Hence $|\eta_1(x)-\eta_1(z)| \leq \underline{\holderConstant}_{\holderExponent}(P)\cdot \supNorm{x-z}^\holderExponent$ by our first claim, so $\eta_1$ is continuous on $\regressionFunction_0^{-1}\bigl((\tau,1]\bigr)$. On the other hand, if $\eta_0(x) <\tau+\varepsilon$ then, again, since $\eta_0$ is continuous, for all sufficiently small $\epsilon'>0$ and $z \in \closedMetricBallSupNorm{x}{\epsilon'}$ we also have have $\eta_0(z)<\tau+\varepsilon$, so $x,z \in  \mathrm{supp}(\mu) \cap \mathcal{X}_\tau(\eta_0) $.  We deduce from our second claim that $\eta_1(x)=\eta_0(x)$ and $\eta_1(z)=\eta_0(z)$, so again, $\eta_1$ is continuous at~$x$.  We conclude that  $\regressionFunction_1 \in \mathcal{F}_{\mathrm{H\ddot{o}l}}\bigl(\holderExponent, \underline{\holderConstant}_{\holderExponent}(P),\mathcal{X}_\tau(\eta_1)\bigr)$. Moreover, since $\eta_0$ is a regression function for $P$, and $\eta_0(x)=\eta_1(x)$ for all $x \in \mathrm{supp}(\mu)$, we have that $\eta_1$ is also a regression function for $P$, and hence $P \in \measureClass_{\mathrm{H\ddot{o}l}}\bigl(\holderExponent, \underline{\holderConstant}_{\holderExponent}(P),\tau\bigr)$.
\end{proof}

Recall the definition of the classes $\classOfHolderDistributionsSuperLevelSetIdentifiable$ from Section~\ref{SubSec:lambda}.
\begin{lemma}
\label{Lemma:Inclusion}
Fix $\holderExponent \in (0,1]$ and $\holderConstant \geq 1$.  Then
\[
\classOfDistributionsSatisfyingRegularityCondition \cap  \classOfHolderDistributionsSuperLevelSet \subseteq  \bigcup_{\underline{\epsilon}\in (0,\infty)^3}\classOfHolderDistributionsSuperLevelSetIdentifiable.
\]
\end{lemma}
\begin{proof}
Let $P \in \classOfDistributionsSatisfyingRegularityCondition \cap  \classOfHolderDistributionsSuperLevelSet $.  Then $\underline{\holderConstant}_{\holderExponent}(P) \leq \holderConstant<\infty$. Moreover, since $P \in \classOfDistributionsSatisfyingRegularityCondition$, we have by Lemma \ref{lemma:LipschitzConstantReformulation} that $P \in \classOfHolderDistributionsSuperLevelSetIdentifiable$ for sufficiently small $\underline{\epsilon}=(\epsilon_0,\epsilon_1, \epsilon_2) \in (0,1]^3$.
\end{proof}
Finally, we are in a position to prove Corollary~\ref{Cor:HolderConstantEstimation}.
\begin{proof}[Proof of Corollary~\ref{Cor:HolderConstantEstimation}] First, since $P \in \classOfHolderDistributionsSuperLevelSetIdentifiable$ we may choose $x_0, x_1 \in \etaSuperLevelSet{\tau+\epsilon_0}$ with $\|x_0-x_1\|_{\infty} \geq \epsilon_1$, as well as $\min\{\omega(x_0),\omega(x_1)\} \geq \epsilon_2$ and 
\begin{align*}
|\eta(x_0)-\eta(x_1)| \geq \frac{3}{4}\cdot  \underline{\holderConstant}_{\holderExponent}(P) \cdot \|x_{0}-x_1\|_{\infty}^\holderExponent \cdot\one_{\{\underline{\holderConstant}_{\holderExponent}(P)>1\}}.
\end{align*}
Writing $\holderConstant=\underline{\holderConstant}_{\holderExponent}(P)$, we have by Lemma \ref{lemma:belongsToCorrespondingLipschitzClass} that $P \in \classOfHolderDistributionsSuperLevelSet$.  Define
\begin{align*}
\Delta &:=192 \cdot \holderConstant^{d/(2\holderExponent+d)}\cdot \biggl(\frac{\log(2n/\delta)}{n\{\omega(x_0)\wedge \omega(x_1)\}}\biggr)^{\holderExponent/(2\holderExponent+d)} \\
&\phantom{:}\leq 192 \cdot \holderConstant^{d/(2\holderExponent+d)}\cdot \biggl(\frac{\log(2n/\delta)}{n\cdot \epsilon_2}\biggr)^{\holderExponent/(2\holderExponent+d)}.
\end{align*}
Observe that when $n \in \mathbb{N}$ and $\delta \in (0,1)$ satisfy~\eqref{Eq:SampleSizeCondition}, we have $\Delta \leq \min\{ \epsilon_0, \holderConstant \epsilon_1^\holderExponent/4\}$. By Theorem \ref{thm:lipEst} we have $\Prob_P(\mathcal{E})\geq 1-\delta$, where 
\begin{align*}
\mathcal{E}&:=\biggl\{ \frac{|\eta(x_0)-\eta(x_1)|- \Delta}{\|x_{0}-x_1\|_{\infty}^\holderExponent}  \leq \holderConstantEstimator\leq \holderConstant \biggr\}.
\end{align*}
Note that on the event $\mathcal{E}$, we have $2\holderConstantEstimator \leq 2\lambda$, and if $\lambda = 1$, then $\lambda \leq 2\holderConstantEstimator$ is immediate. On the other hand, if $\lambda>1$ then on the event $\mathcal{E}$, we have
\begin{align*}
\frac{|\eta(x_0)-\eta(x_1)|- \Delta}{\|x_{0}-x_1\|_{\infty}^\holderExponent} & \geq \frac{3}{4} \cdot \holderConstant - \frac{\holderConstant \cdot \epsilon_1^\holderExponent}{4 \cdot \|x_{0}-x_1\|_{\infty}^\holderExponent} \geq \frac{\holderConstant}{2},
\end{align*}
as required.
\end{proof}
\begin{proof}[Proof of Theorem~\ref{Thm:BetaKnown}] First, for $\holderConstant' \in [1,\infty)$, let $S_{\holderConstant'}$ denote the union $\bigcup_{\ell \in [\ell_{\alpha_n}]}B_{(\ell)}$ appearing in line 6 of Algorithm \ref{subsetSelectionAlgo} when it is applied with $\alpha_n=(\alpha/2) \wedge (1/n)$ in place of $\alpha$ and $\holderConstant'$ in place of~$\holderConstant$, and let $\hat{A}_{\lambda'}$ denote the corresponding output set in $\mathcal{A}$.  Observe that if $\holderConstant_0' \leq \holderConstant_1'$ then  $S_{\holderConstant'_1} \subseteq S_{\holderConstant'_0}$, since the $p$-values in \eqref{eq:pValueDef} satisfy $\hat{p}_{n,\holderExponent,\holderConstant_0'}(\cdot)\leq \hat{p}_{n,\holderExponent,\holderConstant_1'}(\cdot)$. Consequently, $\empiricalMarginalDistribution(\hat{A}_{\lambda'_1})\leq \empiricalMarginalDistribution(\hat{A}_{\lambda'_0})$ by line 6 in Algorithm~\ref{subsetSelectionAlgo}.

\textbf{(i)} \sloppy Let $\probDistribution \in\classOfDistributionsSatisfyingRegularityCondition\cap  \classOfHolderDistributionsSuperLevelSetIdentifiable$. By Lemma~\ref{lemma:belongsToCorrespondingLipschitzClass} we have $P \in \measureClass_{\mathrm{H\ddot{o}l}}\bigl(\holderExponent,\underline{\holderConstant}_{\holderExponent}(P),\tau\bigr) $.  Hence, by Lemma \ref{lemma:typeIControl} and Corollary~\ref{Cor:HolderConstantEstimation} we have
\begin{align}
\label{Eq:TypeIerrorbound}
\Prob_P\bigl( \hat{A}_{\mathrm{OSS}}'(\sample)  \nsubseteq \etaSuperLevelSet{\tau}\bigr) &\leq\Prob_P\bigl( S_{2\holderConstantEstimatorArgs{\holderExponent}{\alpha_n}} \nsubseteq \etaSuperLevelSet{\tau}\bigr) \nonumber \\
& \leq \Prob_P\bigl( S_{\underline{\lambda}_{\holderExponent}(P)} \nsubseteq \etaSuperLevelSet{\tau}\bigr)+\Prob_P\bigl(S_{2\holderConstantEstimatorArgs{\holderExponent}{\alpha_n}} \nsubseteq S_{\underline{\lambda}_{\holderExponent}(P)} \bigr) \nonumber \\
& \leq \Prob_P\bigl( S_{\underline{\lambda}_{\holderExponent}(P)} \nsubseteq \etaSuperLevelSet{\tau}\bigr)+\Prob_P\bigl(2\holderConstantEstimatorArgs{\holderExponent}{\alpha_n} < \underline{\lambda}_{\holderExponent}(P) \bigr) \leq 2\alpha_n \leq \alpha.
\end{align}

\textbf{(ii)} Now suppose that $P \in \classOfDistributionsSatisfyingRegularityCondition\cap  \classOfHolderDistributionsSuperLevelSetIdentifiable\cap \classOfWellApproximableSets$. On the event $\{  2\holderConstantEstimatorArgs{\holderExponent}{\alpha_n} \leq 2\underline{\lambda}_{\holderExponent}(P)\}$, we have $\empiricalMarginalDistribution(\hat{A}_{\mathrm{OSS}}')=\empiricalMarginalDistribution(\hat{A}_{2\holderConstantEstimatorArgs{\holderExponent}{\alpha_n}}) \geq \empiricalMarginalDistribution(\hat{A}_{2\underline{\lambda}_{\holderExponent}(P)})$. Hence, by Lemma \ref{lemma:vapnikChervonenkisConcentration}, we have
\begin{align}
\label{Eq:EmpiricalRiskBound}
\E_P\bigl\{ \mu(\hat{A}_{2\underline{\lambda}_{\holderExponent}(P)})-\mu(\hat{A}_{\mathrm{OSS}}')\bigr\} &= \E_P\bigl\{ \mu(\hat{A}_{2\underline{\lambda}_{\holderExponent}(P)}) - \empiricalMarginalDistribution(\hat{A}_{2\underline{\lambda}_{\holderExponent}(P)}) \nonumber \\
&\hspace{1cm}+ \empiricalMarginalDistribution(\hat{A}_{2\underline{\lambda}_{\holderExponent}(P)}) - \empiricalMarginalDistribution(\hat{A}_{\mathrm{OSS}}') + \empiricalMarginalDistribution(\hat{A}_{\mathrm{OSS}}')
-\mu(\hat{A}_{\mathrm{OSS}}')\bigr\} \nonumber \\
&\leq 2C_{\mathrm{VC}}~ \sqrt{\frac{\vcDim(\mathcal{A})}{n}} =: \epsilon^{\mathrm{VC}}_n.
\end{align}
By Lemmas~\ref{lemma:belongsToCorrespondingLipschitzClass} and~\ref{lemma:typeIControl} we have $
\Prob_P\bigl( \hat{A}_{2\underline{\lambda}_{\holderExponent}(P)}  \nsubseteq \etaSuperLevelSet{\tau}\bigr) \leq \Prob_P\bigl( S_{2\underline{\lambda}_{\holderExponent}(P)} \nsubseteq \etaSuperLevelSet{\tau}\bigr) \leq \alpha_n$.  It follows by~\eqref{Eq:TypeIerrorbound} and~\eqref{Eq:EmpiricalRiskBound} that
\begin{align*}
 \E_P&\bigl\{\bigl(M_\tau - \mu(\hat{A}_{\mathrm{OSS}}')\bigr) \cdot \one_{\{\hat{A}_{\mathrm{OSS}}' \subseteq \etaSuperLevelSet{\tau}\}}\bigr\} \leq \E_P\big\{M_\tau - \mu\bigl(\hat{A}_{\mathrm{OSS}}'\bigr)\big\}+\Prob_P\bigl( \hat{A}_{\mathrm{OSS}}'(\sample)  \nsubseteq \etaSuperLevelSet{\tau}\bigr)\\
& \leq \E_P\big\{M_\tau - \mu\bigl(\hat{A}_{2\underline{\lambda}_{\holderExponent}(P)}\bigr)\big\}+\epsilon^{\mathrm{VC}}_n+2\alpha_n\\
& \leq \E_P\bigl\{\bigl(M_\tau - \mu(\hat{A}_{2\underline{\lambda}_{\holderExponent}(P)})\bigr) \cdot \one_{\{\hat{A}_{2\underline{\lambda}_{\holderExponent}(P)} \subseteq \etaSuperLevelSet{\tau}\}}\bigr\} +  
\Prob_P\bigl( \hat{A}_{2\underline{\lambda}_{\holderExponent}(P)}  \nsubseteq \etaSuperLevelSet{\tau}\bigr)+\epsilon^{\mathrm{VC}}_n+2\alpha_n\\
& \leq \E_P\bigl(M_\tau - \mu(\hat{A}_{2\underline{\lambda}_{\holderExponent}(P)})\bigm|\hat{A}_{2\underline{\lambda}_{\holderExponent}(P)} \subseteq \etaSuperLevelSet{\tau}\bigr) +3\alpha_n+\epsilon^{\mathrm{VC}}_n\\
&= R_\tau(\hat{A}_{2\underline{\lambda}_{\holderExponent}(P)})+3\alpha_n+\epsilon^{\mathrm{VC}}_n
\\
&\leq C' \biggl\{\biggl({\frac{\bigl(2\underline{\lambda}_{\holderExponent}(P)\bigr)^{d/\holderExponent}}{n} \cdot \log_+\Bigl(\frac{n}{(\alpha/2) \wedge (1/n)}\Bigr)\biggr)^{\frac{\holderExponent \approximableDensityExponent \approximableMarginExponent}{\approximableDensityExponent(2\holderExponent+d) +\holderExponent\approximableMarginExponent}}}+\frac{1}{n^{1/2}} \biggr\}+3\alpha_n+\epsilon^{\mathrm{VC}}_n,
\end{align*}
where we have used Proposition~\ref{thm:powerBound} with $C'$ in place of $C$ for the final inequality. The regret bound on $\hat{A}'_{\mathrm{OSS}}$ follows by once again using $\Prob_P\bigl( \hat{A}_{\mathrm{OSS}}'(\sample)  \subseteq \etaSuperLevelSet{\tau}\bigr) \geq 1-\alpha_n\geq 1/2$.
\end{proof}

\subsection{Proofs from Section~\ref{SubSec:beta}}

Recall the definition of the event $\mathcal{E}_{\eta,\delta}$ from Section \ref{subsec:proofOfLambdaEstResults}.

\begin{lemma}\label{lemma:uniformLowerBoundRatio}  Let $\probDistribution$ be a distribution with regression function $\regressionFunction \in \classOfRestrictedHolderFunctions{\R^d}$. On the event $\mathcal{E}_{\eta,\delta}$, we have $v \geq \holderExponent \cdot u - \log \holderConstant$  for all $(u,v) \in \hat{\Gamma}_{n,\delta}^\dagger$.
\end{lemma}
\begin{proof} 
If $(u,v) \in \hat{\Gamma}_{n,\delta}^\dagger$, then we can find $(i,j,k,\ell) \in [n]^4$ with $\hat{\varphi}^\dagger_{n,\holderExponent,\delta}(i,j,k,\ell)<\infty$ such that $u = \hat{\varepsilon}^\dagger_{n,\holderExponent,\delta}(i,j,k,\ell)$ and $v = \hat{\varphi}^\dagger_{n,\holderExponent,\delta}(i,j,k,\ell)$.  Thus, on the event $\mathcal{E}_{\eta,\delta}$, we have
\begin{align*}
v &\geq -\log\Biggl(\Biggl|\frac{\sum_{t=1}^n \regressionFunction(X_t)\cdot\one_{{\{X_t \in \closedMetricBallSupNorm{X_i}{r_{i,k}}\}}}}{\sum_{t=1}^n \one_{{\{X_t \in \closedMetricBallSupNorm{X_i}{r_{i,k}}\}}}} - \frac{\sum_{t=1}^n \regressionFunction(X_t)\cdot\one_{{\{X_t \in \closedMetricBallSupNorm{X_j}{r_{j,\ell}}\}}}}{\sum_{t=1}^n \one_{{\{X_t \in \closedMetricBallSupNorm{X_j}{r_{j,\ell}}\}}}}\Biggr|\Biggr) \\
&\geq -\log \bigl( \holderConstant ( \|X_i-X_j\|_{\infty}+r_{i,k}+r_{j,\ell})^\holderExponent \bigr) =\holderExponent \cdot u-\log \holderConstant,
\end{align*}
where we have applied $\regressionFunction \in \classOfRestrictedHolderFunctions{\R^d}$ in the final inequality.
\end{proof}
Now for $x \in \R^d$ and $s>0$ we define the event $\mathcal{E}_{\mu}(x,s):=\bigl\{ \empiricalMarginalDistribution\bigl(\closedMetricBallSupNorm{x}{s}\bigr) \geq \frac{1}{2} \cdot \mu\bigl(\closedMetricBallSupNorm{x}{s}\bigr) \bigr\}$. 

\begin{lemma}\label{lemma:nearAttainmentOfBoundRatioGeneral} Let $\probDistribution$ be a distribution with regression function $\regressionFunction \in \classOfRestrictedHolderFunctions{\R^d}$. Suppose that $x_0$, $x_1 \in \regularSetSelfSimilar$ and $s \in (0,1]$ satisfy
\begin{align*}
| \regressionFunction(x_0)-\regressionFunction(x_1)| \geq 4\sqrt{\frac{\log(4n^2/\delta)}{\lowerDensityConstantSelfSimilar n s^d}}+6\holderConstant s^\holderExponent.
\end{align*}
Then, on the event $\mathcal{E}_{\eta,\delta} \cap \mathcal{E}_\mu(x_0,s) \cap \mathcal{E}_\mu(x_1,s)$, there exists $(u,v) \in \hat{\Gamma}_{n,\delta}^\dagger$ with 
\begin{align*}
u & \geq - \log \bigl( \supNorm{x_0-x_1}+6s\bigr)\\
v &\leq - \log\biggl( | \regressionFunction(x_0)-\regressionFunction(x_1)| - 4 \sqrt{\frac{\log(4n^2/\delta)}{\lowerDensityConstantSelfSimilar n s^d}} - 6\holderConstant s^\holderExponent\biggr).
\end{align*}
\end{lemma}
\begin{proof} Suppose that the event $\mathcal{E}_{\eta,\delta} \cap \mathcal{E}_\mu(x_0,s) \cap \mathcal{E}_\mu(x_1,s)$ holds.  Then in particular,
\begin{align}\label{eq:consequenceOfRegularSetSelfSimilarCondition}
\min\bigl\{\empiricalMarginalDistribution\bigl(\closedMetricBallSupNorm{x_0}{s}\bigr),\empiricalMarginalDistribution\bigl(\closedMetricBallSupNorm{x_1}{s}\bigr)\bigr\} & \geq \frac{1}{2} \cdot \lowerDensityConstantSelfSimilar \cdot s^d>0.
\end{align}
As such, we may choose $X_i \in \closedMetricBallSupNorm{x_0}{s}$ and $X_j \in \closedMetricBallSupNorm{x_1}{s}$. Moreover, letting $k=\ell= \lceil \frac{n}{2} \cdot \lowerDensityConstantSelfSimilar \cdot s^d\rceil$, it follows from \eqref{eq:consequenceOfRegularSetSelfSimilarCondition} that $\max(r_{i,k},r_{j,\ell}) \leq 2s$. Now take $u = \hat{\varepsilon}^\dagger_{n,\holderExponent,\delta}(i,j,k,\ell)$ and $v = \hat{\varphi}^\dagger_{n,\holderExponent,\delta}(i,j,k,\ell)$ so that $(u,v) \in \hat{\Gamma}_{n,\delta}^\dagger$. The lower bound on $u$ follows. The upper bound on $v$ then follows from our event combined with the facts that $\closedMetricBallSupNorm{X_i}{r_{i,k}} \subseteq \closedMetricBallSupNorm{x_0}{3s}$ and $\closedMetricBallSupNorm{X_j}{r_{j,k}} \subseteq \closedMetricBallSupNorm{x_1}{3s}$.
\end{proof}

\begin{lemma}\label{lemma:nearAttainmentOfBoundRatioSelfSimilarTwoPoint} Let $P \in \classOfSelfSimilarHolderDistributions$ with $\lowerHolderConstantSelfSimilar \leq \holderConstant$ and $0 < r \leq r' \leq \minRadiusSelfSimilar \leq 1$. Let $\mathcal{E}_{\delta}^\dagger(r,r')$ denote the event that there exist $(u,v),(u',v') \in \hat{\Gamma}_{n,\delta}^\dagger$ with $u \geq -\log(7r)$, $u' \geq -\log(7r')$, $v \leq -\log\bigl((1/2)\lowerHolderConstantSelfSimilar r^\holderExponent\bigr)$ and $v' \leq -\log\bigl((1/2)\lowerHolderConstantSelfSimilar (r')^\holderExponent\bigr)$.  Then  provided that
\begin{align}\label{eq:sampleSizeConditionFromLemmaNearAttainmentOfBoundRatioSelfSimilarTwoPoint}
\frac{n}{\log(4n^2/\delta)} \geq  \frac{\bigl(16\holderConstant/\lowerHolderConstantSelfSimilar\bigr)^{d/\holderExponent}}{\lowerDensityConstantSelfSimilar} \cdot \max\biggl\{\frac{ 2^{10} }{ \lowerHolderConstantSelfSimilar^2 \cdot r^{2\holderExponent+d}}, \frac{ 8 }{  r^{d}} \biggr\},
\end{align}
we have $\Prob\bigl(\mathcal{E}_{\eta,\delta}\cap \mathcal{E}_{\delta}^\dagger(r,r')\bigr) \geq 1-\delta$.
\end{lemma}
\begin{proof} Since $P \in \classOfSelfSimilarHolderDistributions$, there exist $x_0, x_1, x_0',x_1' \in \regularSetSelfSimilar$ with $\supNorm{x_0-x_1} \leq r$, $\supNorm{x_0'-x_1'} \leq r'$, $|\regressionFunction(x_0)-\regressionFunction(x_1)| \geq \lowerHolderConstantSelfSimilar \cdot r^\holderExponent$ and $|\regressionFunction(x_0')-\regressionFunction(x_1')| \geq \lowerHolderConstantSelfSimilar \cdot r'^\holderExponent$. Now let $s:=\{\lowerHolderConstantSelfSimilar/(16\holderConstant)\}^{1/\holderExponent} \cdot r \in (0,1]$ and introduce the event
\begin{align*}
\tilde{\mathcal{E}}_{\delta}(r,r'):= 
\mathcal{E}_{\eta,\delta}\cap \bigcap_{z \in \{x_0,x_1,x_0',x_1'\}}\mathcal{E}_\mu(z,s).
\end{align*}
When~\eqref{eq:sampleSizeConditionFromLemmaNearAttainmentOfBoundRatioSelfSimilarTwoPoint} holds, we have
\begin{align*}
4\sqrt{\frac{\log(4n^2/\delta)}{\lowerDensityConstantSelfSimilar n s^d}}+6\holderConstant s^\holderExponent & = 4\sqrt{\frac{(16 \holderConstant/\lowerHolderConstantSelfSimilar)^{d/\holderExponent} \log(4n^2/\delta)}{\lowerDensityConstantSelfSimilar n  r^d}}+\frac{3}{8}\cdot \lowerHolderConstantSelfSimilar r^\holderExponent \leq \frac{1}{2}\cdot \lowerHolderConstantSelfSimilar r^\holderExponent. 
\end{align*}
Hence, by Lemma \ref{lemma:nearAttainmentOfBoundRatioGeneral}, on the event $\tilde{\mathcal{E}}_{\delta}(r,r')$ and when ~\eqref{eq:sampleSizeConditionFromLemmaNearAttainmentOfBoundRatioSelfSimilarTwoPoint} holds, there exist pairs $(u,v), (u',v') \in \hat{\Gamma}_{n,\delta}^\dagger$ with $u \geq -\log(7r)$, $u' \geq -\log(7r')$, $v \leq -\log\bigl((1/2)\lowerHolderConstantSelfSimilar r^\holderExponent\bigr)$ and $v' \leq -\log\bigl((1/2)\lowerHolderConstantSelfSimilar (r')^\holderExponent\bigr)$.  Thus, when~\eqref{eq:sampleSizeConditionFromLemmaNearAttainmentOfBoundRatioSelfSimilarTwoPoint} holds, we have
\[
\Prob\bigl(\mathcal{E}_{\eta,\delta}\cap \mathcal{E}_{\delta}^\dagger(r,r')\bigr) \geq \Prob\bigl( \tilde{\mathcal{E}}_{\delta}(r,r')\bigr) \geq 1-\delta
\]
by Hoeffding's inequality (Lemma~\ref{consequenceOfGarivier2011kl}) and the multiplicative Chernoff bound (Lemma~\ref{lemma:multChernoff}).
\end{proof}

We are now in a position to prove Theorem \ref{thm:holderExponentEstimationThm}.

\begin{proof}[Proof of Theorem \ref{thm:holderExponentEstimationThm}] First, for $n \in \mathbb{N}$ satisfying~\eqref{eq:sampleSizeConditionFromThmholderExponentEstimationThm}, let
\begin{align*}
r_n:=\biggl\{ \frac{2^{10}\bigl(16\holderConstant/\lowerHolderConstantSelfSimilar\bigr)^{d/\holderExponent}}{\lowerDensityConstantSelfSimilar\cdot \lowerHolderConstantSelfSimilar^2} \cdot \frac{\log(4n^2/\delta)}{n}\biggr\}^{1/(2\holderExponent+d)}.
\end{align*}
and $r_n':= n^{-1/(7+2d)}$. The sample size condition \eqref{eq:sampleSizeConditionFromThmholderExponentEstimationThm} ensures that \eqref{eq:sampleSizeConditionFromLemmaNearAttainmentOfBoundRatioSelfSimilarTwoPoint} holds and that $r_n \leq r_n' \leq r_0$, so we may apply Lemma~\ref{lemma:nearAttainmentOfBoundRatioSelfSimilarTwoPoint} to obtain $\Prob\bigl(\mathcal{E}_{\eta,\delta}\cap \mathcal{E}_{\delta}^\dagger(r_n,r_n')\bigr) \geq 1-\delta$.

Next, we show that on the event $\mathcal{E}_{\eta,\delta}\cap \mathcal{E}_{\delta}^\dagger(r_n,r_n')$ we have $\holderExponentEstJoint{\delta}\leq \holderExponent$. Indeed, suppose that $\mathcal{E}_{\eta,\delta}\cap \mathcal{E}_{\delta}^\dagger(r_n,r_n')\cap\{\holderExponentEstJoint{\delta}>0\}$ holds and let $(u_0,v_0) \in \hat{\Gamma}_{n,\delta}^\dagger$ be such that $u_0 \leq \frac{\log n}{6+2d}$. By Lemma \ref{lemma:uniformLowerBoundRatio} we have $v_0 \geq \holderExponent \cdot u_0 - \log \holderConstant$.  Since $\mathcal{E}_{\delta}^\dagger(r_n,r_n')$ holds, there exists $(u_1,v_1) \in \hat{\Gamma}_{n,\delta}^\dagger$ with $u_1 \geq -\log(7r_n)$ and $v_1 \leq -\log\bigl((1/2)\lowerHolderConstantSelfSimilar r_n^\holderExponent\bigr) \leq \holderExponent u_1+ \log(14/\lowerHolderConstantSelfSimilar)$. Moreover, by~\eqref{eq:sampleSizeConditionFromThmholderExponentEstimationThm} we have $u_1 \geq \frac{\log n}{3+d} \geq 2u_0$. Hence,
\begin{align*}
\holderExponentEstJoint{\delta} \leq \frac{v_1-v_0-\log \slowlyIncreasingFunctionHolderExponent(n)}{u_1-u_0} \leq \holderExponent+\frac{ \log(14\holderConstant/\lowerHolderConstantSelfSimilar)-\log \slowlyIncreasingFunctionHolderExponent(n)}{u_1-u_0} \leq \holderExponent,
\end{align*}
where we have again applied~\eqref{eq:sampleSizeConditionFromThmholderExponentEstimationThm} to ensure that $\slowlyIncreasingFunctionHolderExponent(n) \geq 14\holderConstant/\lowerHolderConstantSelfSimilar$. 

Finally, we show that on the event $\mathcal{E}_{\eta,\delta}\cap \mathcal{E}_{\delta}^\dagger(r_n,r_n')$ we have $\holderExponentEstJoint{\delta}\geq \holderExponent- \frac{2(7+2d)\log \slowlyIncreasingFunctionHolderExponent(n)}{\log n}$. Indeed, on $\mathcal{E}_{\delta}^\dagger(r_n,r_n') \cap \{\holderExponentEstJoint{\delta} <\holderExponent \}$, there exists $(u_0,v_0) \in \hat{\Gamma}_{n,\delta}^\dagger$ with $u_0 \geq -\log(7r_n')$ and $v_0 \leq -\log\bigl((1/2)\lowerHolderConstantSelfSimilar (r'_n)^\holderExponent\bigr) \leq \holderExponent \cdot u_0+ \log\bigl(14/\lowerHolderConstantSelfSimilar\bigr)$.  Now fix any $(u_1,v_1) \in \hat{\Gamma}_{n,\delta}^\dagger$ satisfying $u_1 \geq 2u_0$. By 
Lemma~\ref{lemma:uniformLowerBoundRatio} we have $v_1 \geq \holderExponent \cdot u_1 - \log \holderConstant$ and consequently,
\begin{align*}
\holderExponentEstJoint{\delta} \geq \frac{v_1-v_0- \log\slowlyIncreasingFunctionHolderExponent(n)}{u_1-u_0} \geq \holderExponent-\frac{ \log(14\holderConstant/\lowerHolderConstantSelfSimilar)+\log \slowlyIncreasingFunctionHolderExponent(n)}{u_1-u_0} \geq \holderExponent- \frac{2(7+2d)\log\slowlyIncreasingFunctionHolderExponent(n)}{\log n},
\end{align*}
as required.
\end{proof}

The following lemma is analogous to Corollary \ref{Cor:HolderConstantEstimation} but with an estimated H\"{o}lder exponent.


\begin{lemma}
\label{lemma:HolderConstantEstimationWithEstimatedHolderExponent}
 Fix $\holderExponent \in (0,1]$, $d \in \N$, $\holderConstant \in [1,\infty)$, $\lowerHolderConstantSelfSimilar \in (0,\holderConstant]$, $\lowerDensityConstantSelfSimilar, \minRadiusSelfSimilar \in (0,1]$ and $\boldsymbol{\epsilon}=(\epsilon_0,\epsilon_1, \epsilon_2) \in (0,1]^3$ and take $P \in \classOfDistributionsSatisfyingRegularityCondition\cap  \classOfHolderDistributionsSuperLevelSetIdentifiable \cap \classOfSelfSimilarHolderDistributions$.  Let $n \in \N$ and $\delta \in (0,1)$ be such that $ 14\holderConstant/\lowerHolderConstantSelfSimilar \leq \slowlyIncreasingFunctionHolderExponent(n) \leq n^{\frac{\log(9/7)}{2(7+2d)\log(1/\epsilon_2)}}$ the sample size condition
\eqref{eq:sampleSizeConditionFromThmholderExponentEstimationThm} holds and
\begin{equation}
\label{Eq:SampleSizeConditionHolderConstantWithHolderExponent}
\frac{n}{\log(2n/\delta)} \geq \frac{1}{\epsilon_2}\cdot \biggl\{  \slowlyIncreasingFunctionHolderExponent(n)^{2(7+2d)}\cdot \Bigl( 192 \cdot \max\Bigl\{ \frac{\holderConstant}{\epsilon_0},\frac{12}{\epsilon_1}\Bigr\}\Bigr)^{2+d}\biggr\}^{1/\holderExponent}.
\end{equation}

Then
\[
\Prob_P\biggl(\biggl\{  \holderExponent -\frac{2(7+2d)\log \slowlyIncreasingFunctionHolderExponent(n)}{\log n}\leq \holderExponentEst{\delta} \leq \holderExponent \biggr\} \cap \biggl\{ \frac{\underline{\holderConstant}_{\holderExponent}(P)}{2} \leq \holderConstantEstJoint{\delta}\leq \underline{\holderConstant}_{\holderExponent}(P)\biggr\}\biggr) \geq 1 - 2\delta.
\]
\end{lemma}
\begin{proof}[Proof of Lemma~\ref{lemma:HolderConstantEstimationWithEstimatedHolderExponent}] For brevity we write $\underline{\holderConstant}_{\holderExponent}=\underline{\holderConstant}_{\holderExponent}(P)$.
Since $P \in \classOfHolderDistributionsSuperLevelSetIdentifiable$, we may choose $x_0, x_1 \in \etaSuperLevelSet{\tau+\epsilon_0}$ with $\|x_0-x_1\|_{\infty} \geq \epsilon_1$, as well as $\min\{\omega(x_0),\omega(x_1)\} \geq \epsilon_2$ and 
\begin{align*}
|\eta(x_0)-\eta(x_1)| \geq \frac{3}{4}\cdot  \underline{\holderConstant}_{\holderExponent} \cdot \|x_{0}-x_1\|_{\infty}^\holderExponent \cdot\one_{\{\underline{\holderConstant}_{\holderExponent}>1\}}.
\end{align*}
By Lemma~\ref{lemma:belongsToCorrespondingLipschitzClass}, we have $P \in \mathcal{P}_{\mathrm{H\ddot{o}l}}(\holderExponent,\underline{\holderConstant}_\holderExponent,\tau)$. Hence, on the event $\{\holderExponentEstJoint{\delta}\leq \holderExponent\}$ we have $P \in \mathcal{P}_{\mathrm{H\ddot{o}l}}(\holderExponentEstJoint{\delta},\underline{\holderConstant}_\holderExponent,\tau)$.  Letting $\holderExponent_{n}^{\flat}:= \max\{ \holderExponent -2(7+2d)\log \slowlyIncreasingFunctionHolderExponent(n)/\log n,0\}$, it follows from Theorem \ref{thm:holderExponentEstimationThm}, combined with the sample size condition~\eqref{eq:sampleSizeConditionFromThmholderExponentEstimationThm} and the lower bound on $f(n)$, that  $\Prob\bigl( \holderExponent_{n}^{\flat} \leq \holderExponentEst{\delta} \leq \holderExponent \bigr) \geq 1-\delta$. In addition, define $\hat{\Delta}_n:=  \Delta_{n,\holderExponentEstJoint{\delta},\underline{\holderConstant}_\holderExponent}(x_0)\vee \Delta_{n,\holderExponentEstJoint{\delta},\underline{\holderConstant}_\holderExponent}(x_1)$, so that provided $\holderExponentEst{\delta} \geq \holderExponent_{n}^{\flat}$ we have
\begin{align*}
\hat{\Delta}_n &=192 \cdot \underline{\holderConstant}_\holderExponent^{d/(2\holderExponentEstJoint{\delta}+d)}\cdot \biggl(\frac{\log(2n/\delta)}{n\{\omega(x_0)\wedge \omega(x_1)\}}\biggr)^{\holderExponentEstJoint{\delta}/(2\holderExponentEstJoint{\delta}+d)}\\
& \leq 192 \cdot \underline{\holderConstant}_\holderExponent \cdot \biggl(\frac{\log(2n/\delta)}{ n \cdot \epsilon_2}\biggr)^{\holderExponent_{n}^{\flat}/(2+d)} \leq  \min\Bigl\{ \epsilon_0, \frac{\underline{\holderConstant}_{\holderExponent} \epsilon_1}{12} \Bigr\},
\end{align*}
where we have applied the sample size condition~\eqref{Eq:SampleSizeConditionHolderConstantWithHolderExponent} in both inequalities. In particular, we have $\min_{x \in \{x_0,x_1\}}\{\eta(x)-\Delta_{n,\holderExponent,\holderConstant}(x)\} \geq \tau$ on the event $\bigl\{  \holderExponent_{n}^{\flat} \leq \holderExponentEst{\delta} \bigr\}$ and under our sample size conditions. Consequently, if we let $\tilde{\mathcal{E}}_{\holderExponent,\underline{\holderConstant}_\holderExponent,\delta}$ denote the event
\begin{align*}
\tilde{\mathcal{E}}_{\holderExponent,\underline{\holderConstant}_\holderExponent,\delta}&:=\bigl\{  \holderExponent_{n}^{\flat} \leq \holderExponentEst{\delta} \leq \holderExponent \bigr\}\cap\biggl\{ \frac{|\eta(x_0)-\eta(x_1)|-\hat{\Delta}_n}{\|x_{0}-x_1\|_{\infty}^{\holderExponentEst{\delta}}} \leq \holderConstantEstJoint{\delta}\leq \underline{\holderConstant}_\holderExponent\biggr\},
\end{align*}
then it follows from Proposition \ref{prop:lipEstUniformOverHolderExponent} that $\Prob\bigl( \tilde{\mathcal{E}}_{\holderExponent,\underline{\holderConstant}_\holderExponent,\delta}\bigr) \geq 1-2\delta$. Thus, to complete the proof it suffices to show that on the event $\tilde{\mathcal{E}}_{\holderExponent,\underline{\holderConstant}_\holderExponent,\delta}$ we have $\underline{\lambda}_{\holderExponent} \leq 2\holderConstantEstJoint{\delta}$. If $\underline{\lambda}_{\holderExponent} = 1$, then the required bound is immediate. On the other hand, if $\underline{\lambda}_{\holderExponent} >1$, then on the event $\tilde{\mathcal{E}}_{\holderExponent,\underline{\holderConstant}_\holderExponent,\delta}$, we have 
\begin{align*}
\holderConstantEstJoint{\delta} \geq \frac{|\eta(x_0)-\eta(x_1)|- \hat{\Delta}_n}{\|x_0 - x_1\|_{\infty}^{\holderExponentEstJoint{\delta}}} & \geq \frac{3}{4} \cdot \underline{\holderConstant}_\holderExponent \cdot \epsilon_1^{\holderExponent-\holderExponent_n^\flat}- \frac{1}{12} \cdot \underline{\holderConstant}_{\holderExponent} \cdot  \epsilon_1^{1-\holderExponent} \geq \frac{\underline{\holderConstant}_\holderExponent}{2},
\end{align*}
since $\epsilon_1^{\holderExponent-\holderExponent_n^\flat} \geq 7 /9$ by the upper bound on $\slowlyIncreasingFunctionHolderExponent(n)$.  The result follows.
\end{proof}
We conclude this section by applying Lemma~\ref{lemma:HolderConstantEstimationWithEstimatedHolderExponent} to prove Theorem~\ref{Thm:BetaUnknown}.
\begin{proof}[Proof of Theorem~\ref{Thm:BetaUnknown}] We proceed similarly to the proof of Theorem \ref{Thm:BetaKnown}. For $\holderExponent' \in (0,1]$, $\holderConstant' \in [1,\infty)$ we let $S_{\holderExponent',\holderConstant'}^\circ$ denote the union $\bigcup_{\ell \in [\ell_{\tilde{\alpha}_n}]}B_{(\ell)}$ appearing in line 6 of Algorithm \ref{subsetSelectionAlgo} when it is applied with $\tilde{\alpha}_n$ in place of $\alpha$, $\holderExponent'$ in place of~$\holderExponent$ and $\holderConstant'$ in place of $\holderConstant$. Furthermore, we let $\hat{A}_{\holderExponent',\holderConstant'}^\circ$ denote the corresponding output set in $\mathcal{A}$.  If $\holderExponent_0' \leq \holderExponent_1'$ and $\holderConstant_0' \geq \holderConstant_1'$, then the $p$-values in~\eqref{eq:pValueDef} satisfy $\hat{p}_{n,\holderExponent_1',\holderConstant_1'}(\cdot)\leq \hat{p}_{n,\holderExponent_0',\holderConstant_0'}(\cdot)$, and so  $S_{\holderExponent'_0,\holderConstant'_0}^\circ \subseteq S_{\holderExponent'_1,\holderConstant'_1}^\circ$. Consequently, $\empiricalMarginalDistribution(\hat{A}_{\holderExponent'_0,\holderConstant'_0}^\circ)\leq \empiricalMarginalDistribution(\hat{A}_{\holderExponent'_1,\holderConstant'_1}^\circ)$ by line 6 in Algorithm~\ref{subsetSelectionAlgo}.

\textbf{(i)} Suppose that $P \in \classOfDistributionsSatisfyingRegularityCondition\cap  \classOfHolderDistributionsSuperLevelSetIdentifiable \cap \classOfSelfSimilarHolderDistributions$.  Then by Lemmas~\ref{lemma:typeIControl},~\ref{lemma:belongsToCorrespondingLipschitzClass} and ~\ref{lemma:HolderConstantEstimationWithEstimatedHolderExponent}, we have
\begin{align*}
\Prob_P\bigl( \hat{A}_{\mathrm{OSS}}''(\sample)  \nsubseteq \etaSuperLevelSet{\tau}\bigr) &\leq\Prob_P\bigl( S_{\holderExponentEstJoint{\tilde{\alpha}_n},2\holderConstantEstJoint{\tilde{\alpha}_n}}^\circ \nsubseteq \etaSuperLevelSet{\tau}\bigr) \nonumber \\
& \leq \Prob_P\bigl( S_{\holderExponent,\underline{\holderConstant}_{\holderExponent}}^\circ \nsubseteq \etaSuperLevelSet{\tau}\bigr)+\Prob_P\bigl(S^\circ_{\holderExponentEstJoint{\tilde{\alpha}_n},2\holderConstantEstJoint{\tilde{\alpha}_n}} \nsubseteq S_{\holderExponent,\underline{\holderConstant}_{\holderExponent}}^\circ \bigr) \nonumber \\
& \leq  \Prob_P\bigl( S_{\holderExponent,\underline{\holderConstant}_{\holderExponent}}^\circ \nsubseteq \etaSuperLevelSet{\tau}\bigr)+\Prob_P\bigl( \holderExponentEstJoint{\tilde{\alpha}_n} > \holderExponent\bigr)+\Prob_P\bigl( 2\holderConstantEstJoint{\tilde{\alpha}_n} < \underline{\holderConstant}_{\holderExponent}\bigr) \\&\leq 3\tilde{\alpha}_n \leq \alpha,
\end{align*}
as required.

\textbf{(ii)} Now suppose further that $P \in  \classOfDistributionsSatisfyingRegularityCondition\cap  \classOfHolderDistributionsSuperLevelSetIdentifiable \cap \classOfSelfSimilarHolderDistributions\cap \classOfWellApproximableSets$ so that by Theorem~\ref{thm:holderExponentEstimationThm} we have $\Prob_P\bigl(\{\holderExponentEstJoint{\tilde{\alpha}_n}<\holderExponent^\flat_n\} \cup \{\holderConstantEstJoint{\tilde{\alpha}_n} > \underline{\holderConstant}_{\holderExponent}\}\bigr) \leq 2\tilde{\alpha}_n$, where $\holderExponent^\flat_n$ is defined as in the proof of Lemma \ref{lemma:HolderConstantEstimationWithEstimatedHolderExponent}. Moreover, on the event $\{\holderExponentEstJoint{\tilde{\alpha}_n}\geq \holderExponent^\flat_n\} \cap \{\holderConstantEstJoint{\tilde{\alpha}_n} \leq \underline{\holderConstant}_{\holderExponent}\}$, we have 
\[
\empiricalMarginalDistribution(\hat{A}_{\mathrm{OSS}}'')= \empiricalMarginalDistribution(\hat{A}_{\holderExponentEstJoint{\tilde{\alpha}_n},2\holderConstantEstJoint{\tilde{\alpha}_n}}^\circ) \geq \empiricalMarginalDistribution(\hat{A}_{\holderExponent_n^{\flat},2\underline{\holderConstant}_{\holderExponent}}^\circ).
\]
Hence by Lemma~\ref{lemma:vapnikChervonenkisConcentration},
\begin{align*}
\E_P\bigl\{ \mu(\hat{A}_{\holderExponent_n^{\flat},2\underline{\holderConstant}_{\holderExponent}}^\circ)-\mu(\hat{A}_{\mathrm{OSS}}'')\bigr\} \leq 2C_{\mathrm{VC}}~ \sqrt{\frac{\vcDim(\mathcal{A})}{n}} =: \epsilon^{\mathrm{VC}}_n.
\end{align*}
By Lemma \ref{lemma:typeIControl}, we have $\Prob_P\bigl( \hat{A}_{\holderExponent_n^{\flat},2\underline{\holderConstant}_{\holderExponent}}^\circ \nsubseteq \etaSuperLevelSet{\tau}\bigr) \leq \Prob_P\bigl(S_{\holderExponent_n^{\flat},2\underline{\holderConstant}_{\holderExponent}}^\circ \nsubseteq \etaSuperLevelSet{\tau}\bigr) \leq \tilde{\alpha}_n$.  Observe that we may assume without loss of generality that $(2\underline{\holderConstant}_{\holderExponent})^{d/\holderExponent}\leq n$, because otherwise the regret bound is vacuous.  Thus, combining with the derivation in \textbf{(i)}, we have
\begin{align*}
 \E_P&\bigl\{\bigl(M_\tau - \mu(\hat{A}_{\mathrm{OSS}}'')\bigr) \cdot \one_{\{\hat{A}_{\mathrm{OSS}}'' \subseteq \etaSuperLevelSet{\tau}\}}\bigr\} \leq \E_P\big\{M_\tau - \mu\bigl(\hat{A}_{\mathrm{OSS}}''\bigr)\big\}+\Prob_P\bigl( \hat{A}_{\mathrm{OSS}}''(\sample)  \nsubseteq \etaSuperLevelSet{\tau}\bigr)\\
& \leq \E_P\big\{M_\tau - \mu\bigl(\hat{A}_{\holderExponent_n^{\flat},2\underline{\holderConstant}_{\holderExponent}}^\circ\bigr)\big\}+\epsilon^{\mathrm{VC}}_n+3\tilde{\alpha}_n\\
& \leq \E_P\bigl\{\bigl(M_\tau - \mu(\hat{A}_{\holderExponent_n^{\flat},2\underline{\holderConstant}_{\holderExponent}}^\circ)\bigr) \cdot \one_{\{\hat{A}_{\holderExponent_n^{\flat},2\underline{\holderConstant}_{\holderExponent}}^\circ \subseteq \etaSuperLevelSet{\tau}\}}\bigr\} +  
\Prob_P\bigl( \hat{A}_{\holderExponent_n^{\flat},2\underline{\holderConstant}_{\holderExponent}}^\circ  \nsubseteq \etaSuperLevelSet{\tau}\bigr)+\epsilon^{\mathrm{VC}}_n+3\tilde{\alpha}_n\\
& \leq \E_P\bigl(M_\tau - \mu(\hat{A}_{\holderExponent_n^{\flat},2\underline{\holderConstant}_{\holderExponent}}^\circ)\bigm|\hat{A}_{\holderExponent_n^{\flat},2\underline{\holderConstant}_{\holderExponent}}^\circ \subseteq \etaSuperLevelSet{\tau}\bigr) + 4\tilde{\alpha}_n+\epsilon^{\mathrm{VC}}_n\\
&= R_\tau(\hat{A}_{\holderExponent_n^{\flat},2\underline{\holderConstant}_{\holderExponent}}^\circ)+4\tilde{\alpha}_n+\epsilon^{\mathrm{VC}}_n
\\
&\leq C' \biggl\{\biggl({\frac{(2\underline{\holderConstant}_{\holderExponent})^{d/\holderExponent_n^{\flat}}}{n} \cdot \log_+\Bigl(\frac{n}{\tilde{\alpha}_n}\Bigr)\biggr)^{\frac{\holderExponent_n^{\flat} \approximableDensityExponent \approximableMarginExponent}{\approximableDensityExponent(2\holderExponent_n^{\flat}+d) +\holderExponent_n^{\flat}\approximableMarginExponent}}}+\frac{1}{n^{1/2}} \biggr\}+4\tilde{\alpha}_n+\epsilon^{\mathrm{VC}}_n\\
&\leq C'  \biggl\{n^{\frac{2(\holderExponent-\holderExponent^\flat_n) \approximableDensityExponent \approximableMarginExponent}{\approximableDensityExponent(2\holderExponent_n^{\flat}+d) +\holderExponent_n^{\flat}\approximableMarginExponent}}  \biggl( {\frac{(2\underline{\holderConstant}_{\holderExponent})^{d/\holderExponent} }{n}  \log_+\Bigl(\frac{3n^2}{\alpha}\Bigr)\biggr)^{\frac{\holderExponent \approximableDensityExponent \approximableMarginExponent}{\approximableDensityExponent(2\holderExponent+d) +\holderExponent\approximableMarginExponent}}} \! \! +\frac{1}{n^{1/2}} \biggr\}+4\tilde{\alpha}_n+\epsilon^{\mathrm{VC}}_n\\
&\leq C' \biggl\{ \slowlyIncreasingFunctionHolderExponent(n)^{\frac{4\approximableMarginExponent(7+2d)}{d}}  \biggl( {\frac{(2\underline{\holderConstant}_{\holderExponent})^{d/\holderExponent} }{n}  \log_+\Bigl(\frac{3n^2}{\alpha}\Bigr)\biggr)^{\frac{\holderExponent \approximableDensityExponent \approximableMarginExponent}{\approximableDensityExponent(2\holderExponent+d) +\holderExponent\approximableMarginExponent}}}+\frac{1}{n^{1/2}} \biggr\}+4\tilde{\alpha}_n+\epsilon^{\mathrm{VC}}_n,
\end{align*}
where in the third to last inequality we applied Proposition~\ref{thm:powerBound} with $C'$ in place of $C$ by noting that $P \in \mathcal{P}_{\mathrm{H\ddot{o}l}}(\holderExponent,\underline{\holderConstant}_{\holderExponent},\tau) \subseteq \mathcal{P}_{\mathrm{H\ddot{o}l}}(\holderExponent_n^{\flat},2\underline{\holderConstant}_{\holderExponent},\tau)$.  The regret bound on $\hat{A}''_{\mathrm{OSS}}$ follows by using \textbf{(i)}  once again. 
\end{proof}


\section{Proof of the upper bound in Theorem~\ref{Thm:minimaxRateHOS}}
\label{Sec:minimaxRateHOSProof}

Recall that the upper bound in Theorem~\ref{Thm:minimaxRateHOS} will follow from Proposition~\ref{thm:powerBoundHO}.  First we state the following consequence of Hoeffding's inequality.
\begin{lemma}\label{lemma:hoeffdingConsequenceHOSmoothness} Fix $(\holderExponent,\holderConstant) \in (0,\infty)\times [1,\infty)$ and let $\probDistribution \in \classOfHolderDistributions$.  Suppose that $\sample= \bigl((X_i,Y_i)\bigr)_{i \in [n]}\sim P^{\otimes n}$ and let $\sampleX = (X_i)_{i \in [n]}$. Given $x \in \R^d$, $h \in [0,1]$, and $\alpha \in (0,1)$ define 
\begin{align*}
\hat{\Delta}_{x,h}(\alpha):=\begin{cases} \sqrt{e_0^\top \bigl(Q_{x,h}^{\holderExponent}\bigr)^{-1} e_0 }  \cdot \biggl(  \holderConstant \cdot h^\holderExponent |\mathcal{N}_{x,h}|^{1/2} + \sqrt{\frac{\log(1/\alpha)}{2}} \biggr) &\text{ if } Q_{x,h}^{\holderExponent} \text{ is invertible, }\\
1 &\text{ otherwise.}
\end{cases}
\end{align*}
Then 
\begin{align*}
\max\bigl\{\Prob\bigl( \hat{\regressionFunction}(x)-\regressionFunction(x) \geq \hat{\Delta}_{x,h}(\alpha) \bigm| \sampleX\bigr),\Prob\bigl( \regressionFunction(x)-\hat{\regressionFunction}(x) \geq \hat{\Delta}_{x,h}(\alpha) \bigm| \sampleX\bigr)\bigr\} \leq \alpha.
\end{align*}
\end{lemma}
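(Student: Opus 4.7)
The plan is to condition throughout on $\sampleX$, treat the case when $Q_{x,h}^{\holderExponent}$ is singular as trivial (the conclusion reduces to $\Prob(\cdot) \leq \alpha$ with $\hat{\Delta}_{x,h}(\alpha)=1 \geq |\hat{\regressionFunction}(x)-\regressionFunction(x)|$, since both quantities lie in $[0,1]$), and focus on the case in which $Q_{x,h}^{\holderExponent}$ is invertible. In that case $\hat{w}_{x,h}^{\holderExponent}=(Q_{x,h}^{\holderExponent})^{-1}V_{x,h}^{\holderExponent}$, and since the truncation in $\hat{\regressionFunction}(x):=0\vee(1\wedge\langle e_0,\hat{w}_{x,h}^{\holderExponent}\rangle)$ never increases the distance to $\regressionFunction(x)\in[0,1]$, it suffices to control $|\langle e_0,\hat{w}_{x,h}^{\holderExponent}\rangle-\regressionFunction(x)|$.

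First I will decompose the estimation error into a deterministic bias term and a stochastic noise term. Using the identity $\taylorSeries_{x}^{\holderExponent}[\regressionFunction](x')=\langle w_{x,h}^{\holderExponent},\Phi_{x,h}^{\holderExponent}(x')\rangle$ and $\regressionFunction(x)=\langle e_0,w_{x,h}^{\holderExponent}\rangle$, write $\regressionFunction(X_i)=\langle w_{x,h}^{\holderExponent},\Phi_{x,h}^{\holderExponent}(X_i)\rangle+R_i$ for $i\in\mathcal{N}_{x,h}$. The $(\holderExponent,\holderConstant)$-H\"{o}lder condition (Definition~\ref{def:holderHigherOrder}) and the fact that $\supNorm{X_i-x}\leq h$ give $|R_i|\leq \holderConstant h^{\holderExponent}$. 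Substituting $Y_i=\regressionFunction(X_i)+\xi_i$ (with $\xi_i:=Y_i-\regressionFunction(X_i)$ conditionally mean-zero and bounded in $[-1,1]$) yields
\begin{align*}
\langle e_0,\hat{w}_{x,h}^{\holderExponent}\rangle-\regressionFunction(x)
&= e_0^\top (Q_{x,h}^{\holderExponent})^{-1}\!\!\!\sum_{i\in\mathcal{N}_{x,h}}\!\! R_i\,\Phi_{x,h}^{\holderExponent}(X_i)
+e_0^\top (Q_{x,h}^{\holderExponent})^{-1}\!\!\!\sum_{i\in\mathcal{N}_{x,h}}\!\! \xi_i\,\Phi_{x,h}^{\holderExponent}(X_i)\\
&=: B + W.
\end{align*}

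Let $a_i:=e_0^\top(Q_{x,h}^{\holderExponent})^{-1}\Phi_{x,h}^{\holderExponent}(X_i)$ for $i\in\mathcal{N}_{x,h}$. The key observation is the \emph{weight identity}
\[
\sum_{i\in\mathcal{N}_{x,h}} a_i^2 = e_0^\top (Q_{x,h}^{\holderExponent})^{-1}\,Q_{x,h}^{\holderExponent}\,(Q_{x,h}^{\holderExponent})^{-1} e_0 = e_0^\top(Q_{x,h}^{\holderExponent})^{-1}e_0,
\]
which follows directly from $Q_{x,h}^{\holderExponent}=\sum_i\Phi_{x,h}^{\holderExponent}(X_i)\Phi_{x,h}^{\holderExponent}(X_i)^\top$. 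Then the bias is handled by Cauchy--Schwarz,
\[
|B| = \Bigl|\sum_i R_i a_i\Bigr| \leq \Bigl(\sum_i R_i^2\Bigr)^{1/2}\Bigl(\sum_i a_i^2\Bigr)^{1/2}\leq \holderConstant h^{\holderExponent}\,|\mathcal{N}_{x,h}|^{1/2}\sqrt{e_0^\top(Q_{x,h}^{\holderExponent})^{-1}e_0},
\]
which matches the first term in $\hat{\Delta}_{x,h}(\alpha)$.

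For the stochastic term $W=\sum_i \xi_i a_i$, the conditional independence of $(\xi_i)_{i\in\mathcal{N}_{x,h}}$ given $\sampleX$, together with $\E(\xi_i\mid\sampleX)=0$ and $\xi_i a_i\in[-|a_i|,|a_i|]$ (using $Y_i\in[0,1]$ and $\regressionFunction(X_i)\in[0,1]$, so that each summand has range at most $2|a_i|$ with zero conditional mean on an interval of length $|a_i|$ after centering), let me apply Hoeffding's inequality to the $a_i$-weighted sum; this yields
\[
{\Prob}\bigl(W\geq t\bigm|\sampleX\bigr)\leq \exp\!\Bigl(-\tfrac{2t^2}{\sum_i a_i^2}\Bigr)=\exp\!\Bigl(-\tfrac{2t^2}{e_0^\top(Q_{x,h}^{\holderExponent})^{-1}e_0}\Bigr),
\]
and the analogous lower tail bound. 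Setting $t=\sqrt{e_0^\top(Q_{x,h}^{\holderExponent})^{-1}e_0\cdot\log(1/\alpha)/2}$ produces the second term of $\hat{\Delta}_{x,h}(\alpha)$. Combining the bias and variance bounds through the triangle inequality, and appealing to the truncation remark above, completes both one-sided statements of the lemma. The only slightly delicate step is getting the range of each $\xi_i a_i$ right so that Hoeffding's constant $2$ in the exponent matches $\hat{\Delta}_{x,h}(\alpha)$ exactly; this is the kind of bookkeeping that I expect to be the main (but very mild) obstacle.
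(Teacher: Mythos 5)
Your argument is correct and matches the paper's proof essentially step for step: the same bias--noise decomposition of the least-squares estimator, the same identity $\sum_i a_i^2 = e_0^\top\bigl(Q_{x,h}^{\holderExponent}\bigr)^{-1}e_0$, the same Cauchy--Schwarz bound for the Taylor-remainder bias, and Hoeffding's inequality applied to the weighted noise term. The only cosmetic wrinkle is your parenthetical on the range of $\xi_i a_i$: since $\xi_i \in [-\regressionFunction(X_i), 1-\regressionFunction(X_i)]$ is a length-one interval, each $\xi_i a_i$ lies in a length-$|a_i|$ interval, and it is that (not $2|a_i|$) which gives the constant $2$ in Hoeffding's exponent.
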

\begin{proof} Fix a realisation of $\sampleX = (X_i)_{i \in [n]}$. It suffices to restrict our attention to the case where $Q_{x,h}^{\holderExponent}$ is invertible.  Writing $u_i:= \big\langle e_0, \bigl(Q_{x,h}^{\holderExponent}\bigr)^{-1} \Phi_{x,h}^\holderExponent(X_i)\big\rangle$ for $i \in [n]$, we have
\begin{align*}
\sum_{i \in \mathcal{N}_{x,h}} u_i \cdot \langle w_{x,h}^\holderExponent, \Phi_{x,h}^\holderExponent(X_i)\rangle &=   e_0^\top \bigl(Q_{x,h}^{\holderExponent}\bigr)^{-1}\sum_{i \in \mathcal{N}_{x,h}} \Phi_{x,h}^\holderExponent(X_i) ( w_{x,h}^\holderExponent)^\top \Phi_{x,h}^\holderExponent(X_i) &= e_0^\top w_{x,h}^\holderExponent \\
&= \regressionFunction(x).
\end{align*}
Hence,
\begin{align}\label{eq:etaEstHigherorderDecomp}
\langle e_0, \hat{w}_{x,h}^\holderExponent \rangle &=e_0^\top\bigl(Q_{x,h}^{\holderExponent}\bigr)^{-1} V_{x,h}^{\holderExponent} = \sum_{i \in \mathcal{N}_{x,h}}  Y_i  \cdot e_0^\top \bigl(Q_{x,h}^{\holderExponent}\bigr)^{-1} \Phi_{x,h}^\holderExponent(X_i) \nonumber \\ 
& = \sum_{i \in \mathcal{N}_{x,h}} u_i \cdot \Bigl( \bigl\{ Y_i-\regressionFunction(X_i)\bigr\}  +  \bigl\{\regressionFunction(X_i) - \taylorSeries_{x}^{\holderExponent}[\regressionFunction](X_i) \bigr\}   +\big\langle w_{x,h}^\holderExponent, \Phi_{x,h}^\holderExponent(X_i)\big\rangle \Bigr) \nonumber \\
& = \sum_{i \in \mathcal{N}_{x,h}} u_i \cdot \Bigl( \bigl\{ Y_i-\regressionFunction(X_i)\bigr\}  +  \bigl\{\regressionFunction(X_i) - \taylorSeries_{x}^{\holderExponent}[\regressionFunction](X_i) \bigr\}   \Bigr)+\regressionFunction(x).
\end{align}
Note also that 
\begin{align}\label{eq:sumOfUisSquared}
\sum_{i \in \mathcal{N}_{x,h}} u_i^2 = \sum_{i \in \mathcal{N}_{x,h}} e_0^\top \bigl(Q_{x,h}^{\holderExponent}\bigr)^{-1} \Phi_{x,h}^\holderExponent(X_i)  \Phi_{x,h}^\holderExponent(X_i)^\top \bigl(Q_{x,h}^{\holderExponent}\bigr)^{-1} e_0 = e_0^\top \bigl(Q_{x,h}^{\holderExponent}\bigr)^{-1} e_0.
\end{align}
In addition, since $\probDistribution \in \classOfHolderDistributions$, for each $i \in  \mathcal{N}_{x,h}$, we have
\begin{equation}
\label{Eq:HolderApproximation}
|\regressionFunction(X_i) - \taylorSeries_{x}^{\holderExponent}[\regressionFunction](X_i)| \leq \holderConstant \cdot \supNorm{X_i-x}^\holderExponent \leq \holderConstant \cdot h^\holderExponent,
\end{equation}
and so by~\eqref{Eq:HolderApproximation}, the Cauchy--Schwarz inequality and~\eqref{eq:sumOfUisSquared}, we have
\begin{align}
\biggl| \sum_{i \in \mathcal{N}_{x,h}} u_i \cdot \bigl\{\regressionFunction(X_i) - \taylorSeries_{x}^{\holderExponent}[\regressionFunction](X_i) \bigr\} \biggr|   & \leq  \holderConstant \cdot h^\holderExponent \cdot \sum_{i \in \mathcal{N}_{x,h}} |u_i| \nonumber \\
&\leq  \holderConstant \cdot h^\holderExponent \cdot \sqrt{|\mathcal{N}_{x,h}| \cdot e_0^\top \bigl(Q_{x,h}^{\holderExponent}\bigr)^{-1} e_0 }. \label{Eq:etaPapprox}
\end{align}
We conclude, by the definition of $\hat{\regressionFunction}(x)$, \eqref{eq:etaEstHigherorderDecomp}, \eqref{eq:sumOfUisSquared}, \eqref{Eq:etaPapprox} and Hoeffding's inequality, that
\begin{align*}
\Prob\biggl( \hat{\regressionFunction}(x)- \eta(x) \geq \hat{\Delta}_{x,h}(\alpha) \biggr) 
&= \Prob\Biggl( \sum_{i \in \mathcal{N}_{x,h}} u_i \cdot \bigl\{ Y_i-\regressionFunction(X_i)\bigr\}  \geq \sqrt{\frac{\log(1/\alpha)}{2}\sum_{i \in \mathcal{N}_{x,h}}u_i^2} \Biggr)\leq \alpha.
\end{align*}
The other inequality follows similarly.
\end{proof}
\begin{lemma}
\label{lemma:pValueHigherOrderSmoothness} 
Fix $(\holderExponent,\holderConstant) \in (0,\infty)\times [1,\infty)$ and let $\probDistribution \in \classOfHolderDistributions$. Suppose that $\sample= \bigl((X_i,Y_i)\bigr)_{i \in [n]}\sim P^{\otimes n}$ and let $\sampleX = (X_i)_{i \in [n]}$.  Then for any closed hyper-cube $B \subseteq \R^d$ with $\diamSup(B) \leq 1$ and $\inf_{x' \in B} \regressionFunction(x') \leq \tau$, and any $\alpha \in (0,1)$, we have
\begin{align*}
{\Prob}\big( \pValueNHO(B) \leq \alpha \mid \sampleX \big) \leq \alpha.
\end{align*}
\end{lemma}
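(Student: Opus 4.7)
The plan is to mirror the argument for Lemma~\ref{lemma:pValue}, but to substitute the Hoeffding-type concentration for the local polynomial estimator supplied by Lemma~\ref{lemma:hoeffdingConsequenceHOSmoothness} in place of the relative Chernoff bound. The guiding identity is that $\pValueNHO(B) \leq \alpha$ translates, after taking logarithms, into precisely the kind of one-sided deviation statement for $\hat{\regressionFunction}(x) - \regressionFunction(x)$ that Lemma~\ref{lemma:hoeffdingConsequenceHOSmoothness} controls.

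First I would dispose of trivial cases. If $Q_{x,h}^{\holderExponent}$ fails to be invertible, or if the inequality on $\hat{\regressionFunction}(x)$ appearing in the definition of $\pValueNHO(B)$ fails, then $\pValueNHO(B) = 1 > \alpha$ and there is nothing to show (assume $\alpha < 1$). Otherwise write $x$ and $r \in [0,1/2]$ for the centre and $\ell_\infty$-radius of $B$, put $h = (2r)^{1 \wedge 1/\holderExponent}$ and set $v^2 := e_0^\top (Q_{x,h}^{\holderExponent})^{-1} e_0$ for brevity.

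Next, I extract a standard H\"older bound on $\eta$ from the higher-order hypothesis. Since $\probDistribution \in \classOfHolderDistributions = \bigcap_{\holderExponent' \in (0,\holderExponent]} \tilde{\measureClass}_{\mathrm{H\ddot{o}l}}(\holderExponent',\holderConstant)$, I may take $\holderExponent' = \holderExponent \wedge 1 \leq 1$, in which case $\mathcal{V}(\holderExponent') = \{(0,\ldots,0)^\top\}$ and the Taylor polynomial reduces to $\taylorSeries_{x}^{\holderExponent'}[\regressionFunction](\cdot) \equiv \regressionFunction(x)$. The H\"older defect inequality then becomes $|\regressionFunction(x') - \regressionFunction(x)| \leq \holderConstant \cdot \supNorm{x' - x}^{\holderExponent \wedge 1}$ for all $x, x' \in \R^d$. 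Combined with the null hypothesis $\inf_{x' \in B} \regressionFunction(x') \leq \tau$ and $\supNorm{x - x'} \leq r$ for $x' \in B$, this yields $\regressionFunction(x) \leq \tau + \holderConstant \cdot r^{\holderExponent \wedge 1}$.

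The main step is to unpack $\pValueNHO(B) \leq \alpha$. Taking logarithms in~\eqref{eq:pValueDefHigherOrderSmoothness} gives
\begin{align*}
\hat{\regressionFunction}(x) \geq \tau + \holderConstant\bigl(1 + 2\sqrt{v^2 \cdot |\mathcal{N}_{x,h}|}\bigr) r^{\holderExponent \wedge 1} + v\sqrt{\log(1/\alpha)/2}.
\end{align*}
Subtracting the bound $\regressionFunction(x) \leq \tau + \holderConstant r^{\holderExponent \wedge 1}$ derived above yields
\begin{align*}
\hat{\regressionFunction}(x) - \regressionFunction(x) \geq 2 \holderConstant r^{\holderExponent \wedge 1} v \sqrt{|\mathcal{N}_{x,h}|} + v\sqrt{\log(1/\alpha)/2}.
\end{align*}
Since $h^{\holderExponent} = (2r)^{\holderExponent \wedge 1} \leq 2 r^{\holderExponent \wedge 1}$ (as $\holderExponent \wedge 1 \leq 1$), the right-hand side is at least
\begin{align*}
v\bigl(\holderConstant h^{\holderExponent} \sqrt{|\mathcal{N}_{x,h}|} + \sqrt{\log(1/\alpha)/2}\bigr) = \hat{\Delta}_{x,h}(\alpha),
\end{align*}
and so $\{\pValueNHO(B) \leq \alpha\} \subseteq \{\hat{\regressionFunction}(x) - \regressionFunction(x) \geq \hat{\Delta}_{x,h}(\alpha)\}$. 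Applying Lemma~\ref{lemma:hoeffdingConsequenceHOSmoothness} conditionally on $\sampleX$ then bounds the latter event by $\alpha$, as required.

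The only genuine subtlety is the bookkeeping check that the factor $2$ in front of $\sqrt{v^2 |\mathcal{N}_{x,h}|}$ inside the definition of $\pValueNHO(B)$ is exactly what is needed to absorb $h^{\holderExponent} = (2r)^{\holderExponent \wedge 1}$; this is precisely what pins down the coefficient appearing in the exponent of the $p$-value. Everything else is a direct transcription of the Lemma~\ref{lemma:pValue} argument, with Hoeffding in place of the relative Chernoff/Garivier--Capp\'e bound.
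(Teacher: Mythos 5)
Your proof is correct and follows the paper's own argument: derive $\regressionFunction(x) \leq \tau + \holderConstant\,r^{\holderExponent\wedge 1}$ from the null hypothesis together with the fact that $P \in \tilde{\measureClass}_{\mathrm{H\ddot{o}l}}(\holderExponent\wedge 1,\holderConstant)$, then apply Lemma~\ref{lemma:hoeffdingConsequenceHOSmoothness}. You have merely spelled out the arithmetic, including the check that $2r^{\holderExponent\wedge 1} \geq h^{\holderExponent}$, which the paper compresses into ``the lemma follows from Lemma~\ref{lemma:hoeffdingConsequenceHOSmoothness}.''
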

\begin{proof}
Recall that $x \in \R^d$ and $r \in [0,1/2]$ denote the centre and $\ell_\infty$-radius of $B$, and that $h =(2r)^{1 \wedge \frac{1}{\holderExponent}} \in [0,1]$.  Again, it restrict our attention to the case where $Q_{x,h}^{\holderExponent}$ is invertible. Since $\inf_{x' \in B} \regressionFunction(x') \leq \tau$ and $\probDistribution \in \classOfHolderDistributions$, we have $\regressionFunction(x) \leq \tau + \holderConstant \cdot r^{\holderExponent \wedge 1}$, and hence the lemma follows from Lemma \ref{lemma:hoeffdingConsequenceHOSmoothness}.
\end{proof}
\begin{proof}[Proof of Proposition~\ref{thm:typeIControlHigherOrderSmoothness}]
This follows from Lemma~\ref{lemma:pValueHigherOrderSmoothness} in the same way as Proposition~\ref{thm:typeIControl} followed from Lemma~\ref{lemma:pValue}.
\end{proof}

\newcommand{\lbSmallSetMeasure}{a}

We now turn to the proof of Proposition~\ref{prop:generalPowerBound}, which will rely on several lemmas.  For $\lbSmallSetMeasure > 0$, let $\mathcal{K}(\lbSmallSetMeasure)$ denote the set of measurable sets $K \subseteq \closedMetricBallSupNorm{0}{1}$ with $\Lebesgue(K) \geq \lbSmallSetMeasure$.
\begin{lemma}\label{lemma:compactnessLB} Given $d \in \N$, $\holderExponent \in (0,\infty)$ and $\lbSmallSetMeasure \in (0,1)$, we have 
\begin{align*}
c_{\min}(d,\holderExponent,\lbSmallSetMeasure):=1 \wedge\inf_{K \in \mathcal{K}(\lbSmallSetMeasure)} \biggl \{\eigenValueMinimal\biggl(\int_K \Phi_{0,1}^\holderExponent(z) \Phi_{0,1}^\holderExponent(z)^\top \, d\Lebesgue(z)\biggr) \biggr\}>0.
\end{align*}
\end{lemma}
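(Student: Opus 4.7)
\medskip

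\noindent\textbf{Proof sketch (plan).} The plan is a compactness argument via weak-$\ast$ convergence of indicator functions, combined with the elementary fact that a non-zero polynomial vanishes only on a Lebesgue-null set. Let $\mathcal{V}(\holderExponent) = \{\nu \in \N_0^d : \|\nu\|_1 \leq \lceil \holderExponent \rceil - 1\}$ and write $M(K):= \int_K \Phi_{0,1}^\holderExponent(z)\Phi_{0,1}^\holderExponent(z)^\top \, d\Lebesgue(z) \in \R^{\mathcal{V}(\holderExponent) \times \mathcal{V}(\holderExponent)}$. For any $w = (w_\nu)_{\nu \in \mathcal{V}(\holderExponent)} \in \R^{\mathcal{V}(\holderExponent)}$, we have $w^\top M(K)w = \int_K p_w(z)^2 \, d\Lebesgue(z)$, where $p_w(z):=\sum_{\nu \in \mathcal{V}(\holderExponent)}w_\nu z^\nu$. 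Since each non-zero polynomial on $\R^d$ vanishes only on a set of Lebesgue measure zero, and any $K \in \mathcal{K}(\lbSmallSetMeasure)$ has $\Lebesgue(K)\geq \lbSmallSetMeasure >0$, it follows that $w^\top M(K) w = 0$ forces $w=0$. In particular $\lambda_{\min}(M(K))>0$ for every $K \in \mathcal{K}(\lbSmallSetMeasure)$; the point is to show that this positive lower bound does not collapse as $K$ varies.

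Next, I would argue by contradiction: suppose the infimum in the definition of $c_{\min}(d,\holderExponent,\lbSmallSetMeasure)$ equals zero. Then there exist sequences $(K_m)_{m \in \N} \subseteq \mathcal{K}(\lbSmallSetMeasure)$ and unit vectors $(w_m)_{m \in \N}\subseteq \R^{\mathcal{V}(\holderExponent)}$ with $\twoNorm{w_m}=1$ such that $w_m^\top M(K_m) w_m \to 0$. By compactness of the unit sphere in $\R^{\mathcal{V}(\holderExponent)}$, a subsequence of $(w_m)$ converges to some unit vector $w_\infty$. By the Banach--Alaoglu theorem applied in $L^\infty(\closedMetricBallSupNorm{0}{1})$, the indicator sequence $\one_{K_m}$ admits a further subsequence (not relabelled) converging weakly-$\ast$ in $L^\infty$ to some measurable $\rho:\closedMetricBallSupNorm{0}{1}\rightarrow [0,1]$. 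Testing the weak-$\ast$ convergence against the constant function $1\in L^1(\closedMetricBallSupNorm{0}{1})$ gives $\int \rho \, d\Lebesgue = \lim_m \Lebesgue(K_m) \geq \lbSmallSetMeasure$.

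Now I would pass to the limit in $w_m^\top M(K_m)w_m = \int \one_{K_m}(z) \cdot p_{w_m}(z)^2 \, d\Lebesgue(z)$. Since $z \mapsto p_w(z)^2$ is continuous in $w$ uniformly over $z \in \closedMetricBallSupNorm{0}{1}$, we have $\supNorm{p_{w_m}^2 - p_{w_\infty}^2}\to 0$ on $\closedMetricBallSupNorm{0}{1}$, so $\int \one_{K_m}(p_{w_m}^2 - p_{w_\infty}^2) \, d\Lebesgue \to 0$. Moreover $p_{w_\infty}^2 \in L^1(\closedMetricBallSupNorm{0}{1})$, so weak-$\ast$ convergence yields $\int \one_{K_m} p_{w_\infty}^2 \, d\Lebesgue \to \int \rho \cdot p_{w_\infty}^2 \, d\Lebesgue$. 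Combining gives $\int \rho(z) \cdot p_{w_\infty}(z)^2 \, d\Lebesgue(z)=0$, and since both factors are non-negative, $\rho \cdot p_{w_\infty}^2 = 0$ almost everywhere.

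Finally, the contradiction: because $w_\infty \neq 0$, the polynomial $p_{w_\infty}$ is non-zero, so its zero set $Z \subseteq \R^d$ satisfies $\Lebesgue(Z)=0$; hence $\rho = 0$ almost everywhere on $\closedMetricBallSupNorm{0}{1}\setminus Z$, and therefore $\int \rho \, d\Lebesgue = 0$, contradicting $\int \rho \, d\Lebesgue \geq \lbSmallSetMeasure>0$. The main (only nontrivial) obstacle is the compactness step: one needs a topology in which $\{\one_K : K \in \mathcal{K}(\lbSmallSetMeasure)\}$ is sequentially pre-compact and which interacts well enough with the continuous integrand $p_{w_\infty}^2$; weak-$\ast$ convergence in $L^\infty$ (or equivalently vague convergence of the measures $\one_K \, d\Lebesgue$) suits both purposes and is the natural tool here.
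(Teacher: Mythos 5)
Your proof is correct, and it takes a genuinely different route from the paper. Both arguments proceed by contradiction and extract a convergent subsequence of unit vectors $w_m\to w_\infty$, and both ultimately rest on the fact that the zero set of the non-zero polynomial $p_{w_\infty}$ is Lebesgue-null. The divergence is in how the sets $K_m$ are handled. The paper leaves the $K_m$ alone and instead thickens the (closed) zero set of $p_{w_\infty}$ to a neighbourhood $\mathcal{Z}_{w_\infty}^{\epsilon_a}$ of measure at most $\lbSmallSetMeasure/2$; since $\Lebesgue(K_m)\geq\lbSmallSetMeasure$, each $K_m$ retains measure $\geq\lbSmallSetMeasure/2$ outside this neighbourhood, where $|p_{w_\infty}|$ (and hence, by uniform convergence, $|p_{w_m}|$ for $m$ large) is bounded below by an explicit $\delta_\lbSmallSetMeasure/2>0$, yielding the quantitative contradiction $\int_{K_m}p_{w_m}^2\geq\lbSmallSetMeasure\delta_\lbSmallSetMeasure^2/8$. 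You instead compactify the sets themselves, invoking sequential Banach--Alaoglu in $L^\infty(\closedMetricBallSupNorm{0}{1})$ (using separability of $L^1$) to extract a weak-$\ast$ limit density $\rho\in[0,1]$ with $\int\rho\geq\lbSmallSetMeasure$, pass to the limit in $\int\one_{K_m}p_{w_m}^2$ to deduce $\int\rho\,p_{w_\infty}^2=0$, and then conclude from the null zero set that $\rho=0$ a.e., contradicting $\int\rho\geq\lbSmallSetMeasure$. Your approach is conceptually cleaner (one does not need to tune an $\epsilon$-thickening against the measure budget) but imports the weak-$\ast$ compactness machinery; the paper's is more elementary and self-contained. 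Both are fully rigorous. One cosmetic point: you write $\int\rho\,d\Lebesgue=\lim_m\Lebesgue(K_m)$, and while this limit does exist automatically along the weak-$\ast$ convergent subsequence (since $1\in L^1$), it would be slightly cleaner to phrase it that way rather than as though the limit were given a priori.
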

\begin{proof} Suppose, for a contradiction, that $c_{\min}(d,\holderExponent,\lbSmallSetMeasure)=0$.  Then we can find a sequence $(K^{(\ell)})_{\ell \in \N}$ in $\mathcal{K}(\lbSmallSetMeasure)$, along with a sequence $(w^{(\ell)})_{\ell \in \N}$ with $w^{(\ell)} \in \R^{\mathcal{V}(\holderExponent)}$, $\|w^{(\ell)}\|_2=1$ and  
\begin{align}\label{eq:toCondradict}
 \lim_{\ell \rightarrow \infty}\int_{K^{(\ell)}} \big\langle w^{(\ell)}, \Phi_{0,1}^\holderExponent(z)\big\rangle^2 \, d\Lebesgue(z) \nonumber &=  \lim_{\ell \rightarrow \infty}\bigl(w^{(\ell)}\bigr)^\top \biggl(\int_{K^{(\ell)}}  \Phi_{0,1}^\holderExponent(z)\Phi_{0,1}^\holderExponent(z)^\top \, d\Lebesgue(z)\biggr)w^{(\ell)} \\
 &=0.
\end{align}
By moving to a subsequence if necessary,  we may assume that  $\lim_{\ell \rightarrow \infty} w^{(\ell)}=w^*$ for some $w^* \in \R^{\mathcal{V}(\holderExponent)}$ with $\|w^*\|_2=1$. Now since $z \mapsto \langle w^*, \Phi_{0,1}^\holderExponent(z)\rangle$ is a non-zero polynomial, the zero-set $\mathcal{Z}_{w^*}:=\bigl\{ z \in \closedMetricBallSupNorm{0}{1}:\langle w^*, \Phi_{0,1}^\holderExponent(z)\rangle=0\bigr\}$ satisfies $\Lebesgue(\mathcal{Z}_{w^*})=0$ \citep[e.g.][Lemma~1]{okamoto1973distinctness}. In addition, by the continuity of $z\mapsto \langle w^*, \Phi_{0,1}^\holderExponent(z)\rangle$, the set $\mathcal{Z}_{w^*}$ is closed. By countable additivity of the finite measure $\Lebesgue|_{\closedMetricBallSupNorm{0}{1}}$, there exists $\epsilon_{\lbSmallSetMeasure}>0$ such that $\Lebesgue\bigl(\mathcal{Z}_{w^*}^ {\epsilon_\lbSmallSetMeasure}\bigr)\leq \lbSmallSetMeasure/2$ where $\mathcal{Z}_{w^*}^{\epsilon_{\lbSmallSetMeasure}}:= \bigcup_{z \in \mathcal{Z}_{w^*}^{\epsilon_{\lbSmallSetMeasure}}} \openMetricBallSupNorm{z}{\epsilon} = \mathcal{Z}_{w^*} + \openMetricBallSupNorm{z}{\epsilon_{\lbSmallSetMeasure}}$. By continuity again, $\delta_\lbSmallSetMeasure:= \inf_{z \in \closedMetricBallSupNorm{0}{1}\setminus \mathcal{Z}_{w^*}^ {\epsilon_\lbSmallSetMeasure}} \bigl|\big\langle w^*, \Phi_{0,1}^\holderExponent(z)\big\rangle\bigr| >0$. Now choose $\ell_0 \in \mathbb{N}$ sufficiently large that 
\[
\sup_{\ell\geq \ell_0} \bigl\|w^{(\ell)}-w^*\bigr\|_2 \leq \frac{\delta_\lbSmallSetMeasure}{2\sqrt{|\mathcal{V}(\holderExponent)|}},
\]
so that, by Cauchy--Schwarz,
\begin{align*}
\bigl|\langle w^{(\ell)}, \Phi_{0,1}^\holderExponent(z)\rangle\bigr| \geq \bigl|\langle w^*, \Phi_{0,1}^\holderExponent(z)\rangle\bigr| - \bigl|\langle w^{(\ell)} - w^*, \Phi_{0,1}^\holderExponent(z)\rangle\bigr| &\geq \delta_\lbSmallSetMeasure - \frac{\delta_\lbSmallSetMeasure}{2\sqrt{|\mathcal{V}(\holderExponent)|}} \cdot \bigl\|\Phi_{0,1}^\holderExponent(z)\bigr\|_2 \\
&\geq \frac{\delta_\lbSmallSetMeasure}{2}
\end{align*}
for all $\ell \geq \ell_0$ and $z \in \closedMetricBallSupNorm{0}{1}\setminus \mathcal{Z}_{w^*}^ {\epsilon_\lbSmallSetMeasure}$. Hence, for all $\ell \geq \ell_0$,
\begin{align*}
\int_{K^{(\ell)}} \big\langle w^{(\ell)}, \Phi_{0,1}^\holderExponent(z)\big\rangle^2 \, d\Lebesgue(z) \geq \int_{K^{(\ell)}\setminus \mathcal{Z}_{w^*}^ {\epsilon_\lbSmallSetMeasure}} \big\langle w^{(\ell)}, \Phi_{0,1}^\holderExponent(z)\big\rangle^2 \, d\Lebesgue(z) \geq \frac{\lbSmallSetMeasure \cdot \delta_\lbSmallSetMeasure^2}{8}>0,
\end{align*}
which contradicts \eqref{eq:toCondradict}, and completes the proof of the lemma.
\end{proof}

\begin{lemma}\label{lemma:leastSingularValuePopulation} Suppose that $\regularityConstant \in (0,1)$, $\xi \in (0,\infty)$, $\holderExponent \in (0,\infty)$, $x \in \R^d$ and $r \in (0,1/2]$ satisfy $\closedMetricBallSupNorm{x}{r}\cap \muRegularSet \cap \densitySuperLevelSet{\xi} \neq \emptyset$. Given any $h \in [2r, 1]$, we have $\mu\bigl(\closedMetricBallSupNorm{x}{h}\bigr) \geq \regularityConstant \cdot \xi \cdot (h/2)^d$. In addition, if either $\holderExponent \in (0,1]$ or $3r \leq \regularityConstant h$, then 
\begin{align}\label{eq:singularValueLBLemmaLeastSingularValuePopulation}
\eigenValueMinimal\biggl(\int_{\closedMetricBallSupNorm{x}{h}} \Phi_{x,h}^\holderExponent(z) \Phi_{x,h}^\holderExponent(z)^\top \, d\mu(z)\biggr)\geq  2^{-(3d+1)} \cdot \regularityConstant 
\cdot c_{\min}^0 \cdot \mu\bigl(\closedMetricBallSupNorm{x}{h}\bigr),
\end{align}
where $c_{\min}^0 \equiv c_{\min}(d,\holderExponent,2^{-(d+1)}{\regularityConstant}) \in (0,1]$ is taken from Lemma~\ref{lemma:compactnessLB}.
\end{lemma}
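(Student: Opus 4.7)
The plan is to handle the two inequalities separately, with the second broken into a trivial low-smoothness case and the substantive high-smoothness case. For the first inequality $\mu\bigl(\closedMetricBallSupNorm{x}{h}\bigr) \geq \regularityConstant \xi (h/2)^d$, I would take $y \in \closedMetricBallSupNorm{x}{r}\cap \muRegularSet \cap \densitySuperLevelSet{\xi}$ and note that, because $\|y-x\|_\infty \leq r \leq h/2$, the triangle inequality gives $\closedMetricBallSupNorm{y}{h/2} \subseteq \closedMetricBallSupNorm{x}{h}$. Applying the defining inequality of $\muRegularSet$ at $y$ with the admissible scale $r' = h/2 \in (0,1)$ then yields $\mu\bigl(\openMetricBallSupNorm{y}{h/2}\bigr) \geq \regularityConstant (h/2)^d f_\mu(y) \geq \regularityConstant \xi (h/2)^d$, since $y$ itself belongs to $\openMetricBallSupNorm{y}{(1+\regularityConstant)h/2}$.

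For the eigenvalue bound~\eqref{eq:singularValueLBLemmaLeastSingularValuePopulation}, the case $\holderExponent \in (0,1]$ is immediate: $\mathcal{V}(\holderExponent) = \{(0,\dots,0)^\top\}$, so $\Phi_{x,h}^\holderExponent \equiv 1$ and the matrix collapses to the scalar $\mu\bigl(\closedMetricBallSupNorm{x}{h}\bigr)$; the bound follows from $2^{-(3d+1)}\regularityConstant c_{\min}^0 \leq 1$. In the substantive case ($\holderExponent > 1$ with $3r \leq \regularityConstant h$), I would change variables via $u=(z-x)/h$, using $\Phi_{x,h}^\holderExponent(x+hu) = \Phi_{0,1}^\holderExponent(u)$, to write, for any unit $w \in \R^{\mathcal{V}(\holderExponent)}$,
\begin{equation*}
w^\top M w \;=\; h^d \int_{\closedMetricBallSupNorm{0}{1}} \langle w, \Phi_{0,1}^\holderExponent(u)\rangle^2 f_\mu(x+hu)\,du.
\end{equation*}
The strategy is to exhibit a subset $K \subseteq \closedMetricBallSupNorm{0}{1}$ of Lebesgue measure at least $2^{-(d+1)}\regularityConstant$ on which the transformed density is uniformly bounded below, and then to invoke Lemma~\ref{lemma:compactnessLB}. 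Applying $\muRegularSet$-regularity at $y$ with $r' = h/2$ delivers the density ceiling $\sup_{\openMetricBallSupNorm{y}{h/2}} f_\mu \leq 2^d \mu(\openMetricBallSupNorm{y}{h/2})/(\regularityConstant h^d)$; a one-sided Markov argument on $\openMetricBallSupNorm{y}{h/2}$ with threshold $c := \mu(\openMetricBallSupNorm{y}{h/2})/(2h^d)$ produces a Lebesgue subset $K' \subseteq \openMetricBallSupNorm{y}{h/2}$ with $\Lebesgue(K') \geq \regularityConstant h^d/2^{d+1}$ on which $f_\mu \geq c$. Its pre-image $K$ under the rescaling sits inside $\closedMetricBallSupNorm{0}{1}$ and has $\Lebesgue(K) \geq 2^{-(d+1)}\regularityConstant$, so Lemma~\ref{lemma:compactnessLB} gives $\lambda_{\min}\bigl(\int_K \Phi_{0,1}^\holderExponent (\Phi_{0,1}^{\holderExponent})^\top\,d\Lebesgue\bigr) \geq c_{\min}^0$, whence $\lambda_{\min}(M) \geq h^d c \cdot c_{\min}^0 = \tfrac{1}{2}\mu\bigl(\openMetricBallSupNorm{y}{h/2}\bigr) c_{\min}^0$.

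The main obstacle is converting this intermediate bound, which is phrased in terms of $\mu(\openMetricBallSupNorm{y}{h/2})$, into the stated form involving $\mu(\closedMetricBallSupNorm{x}{h})$: the task reduces to showing that $\mu\bigl(\openMetricBallSupNorm{y}{h/2}\bigr) \geq 2^{-3d}\regularityConstant \mu\bigl(\closedMetricBallSupNorm{x}{h}\bigr)$. This is precisely where the hypothesis $3r \leq \regularityConstant h$ is used: it guarantees the strict inclusion $\closedMetricBallSupNorm{x}{h} \subseteq \openMetricBallSupNorm{y}{(1+\regularityConstant)h}$, and hence allows $\muRegularSet$-regularity at $y$ with $r' = h$ to furnish a density ceiling on all of $\closedMetricBallSupNorm{x}{h}$. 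The comparison is then obtained by splitting $\closedMetricBallSupNorm{x}{h}$ along the boundary of $\openMetricBallSupNorm{y}{(1+\regularityConstant)h/2}$ and combining the tight density bound on the inner region with the coarser one on the outer annulus, using the sharp Lebesgue-volume estimates $\Lebesgue(\openMetricBallSupNorm{y}{h}\setminus \closedMetricBallSupNorm{x}{h}) \leq d(2h)^{d-1}r \leq d\cdot 2^{d-1}\regularityConstant h^d/3$ that follow from $r \leq \regularityConstant h/3$; the boundary case $h=1$ is then handled by a straightforward limiting argument in $r' \uparrow 1$.
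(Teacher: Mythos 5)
Your treatment of the first inequality and the $\holderExponent \in (0,1]$ case matches the paper, and the framework for the $\holderExponent>1$ case — a Markov-type truncation producing a Lebesgue-large high-density subset of $\closedMetricBallSupNorm{0}{1}$ followed by an appeal to Lemma~\ref{lemma:compactnessLB} — is the right idea. The gap is in where you run the truncation: you do it over $\openMetricBallSupNorm{y}{h/2}$, which correctly yields $K' \subseteq \openMetricBallSupNorm{y}{h/2} \subseteq \closedMetricBallSupNorm{x}{h}$ with $\Lebesgue(K') \geq 2^{-(d+1)}\regularityConstant h^d$ and $f_\mu \geq \mu(\openMetricBallSupNorm{y}{h/2})/(2h^d)$ on $K'$, and hence the intermediate bound $\lambda_{\min}(M) \geq \tfrac12\mu(\openMetricBallSupNorm{y}{h/2})\,c_{\min}^0$. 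But the closing step you need, $\mu(\openMetricBallSupNorm{y}{h/2}) \geq 2^{-3d}\regularityConstant\,\mu(\closedMetricBallSupNorm{x}{h})$, is not a consequence of the tools you deploy. Regularity of $\mu$ at $y$ at scale $h/2$ controls the density only on $\openMetricBallSupNorm{y}{(1+\regularityConstant)h/2}$, which for small $\regularityConstant$ is a strict — and volume-wise substantial — proper subset of $\closedMetricBallSupNorm{x}{h}$; regularity at scale $h$ does give a ceiling on all of $\closedMetricBallSupNorm{x}{h}$, but in the form $\sup_{\closedMetricBallSupNorm{x}{h}} f_\mu \leq \mu(\openMetricBallSupNorm{y}{h})/(\regularityConstant h^d)$, which involves $\mu(\openMetricBallSupNorm{y}{h})$ rather than $\mu(\openMetricBallSupNorm{y}{h/2})$, and there is no absolute constant comparing these two quantities. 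Your sketched split of $\closedMetricBallSupNorm{x}{h}$ along $\openMetricBallSupNorm{y}{(1+\regularityConstant)h/2}$ does not help: the outer region $\closedMetricBallSupNorm{x}{h}\setminus\openMetricBallSupNorm{y}{(1+\regularityConstant)h/2}$ has Lebesgue volume bounded below by an absolute constant times $h^d$ when $\regularityConstant$ is small (it is not an $O(\regularityConstant)$-thin shell), and your quoted estimate $\Lebesgue(\openMetricBallSupNorm{y}{h}\setminus\closedMetricBallSupNorm{x}{h}) \leq d(2h)^{d-1}r$ bounds the wrong set difference.

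The fix that closes the argument without this unavailable comparison is to run the truncation directly on $\closedMetricBallSupNorm{x}{h}$ against the ceiling $M_{x,h} := \sup_{\closedMetricBallSupNorm{x}{h}} f_\mu$. Applying the regularity condition at $y$ at scale $h-r$ (which lies in $(0,1)$ with no boundary issue) and using $3r \leq \regularityConstant h$ to guarantee $\openMetricBallSupNorm{y}{(1+\regularityConstant)(h-r)} \supseteq \closedMetricBallSupNorm{x}{h}$, one obtains the two-sided sandwich
\[
\regularityConstant\,(h/2)^d\,M_{x,h} \;\leq\; \mu\bigl(\closedMetricBallSupNorm{x}{h}\bigr) \;\leq\; (2h)^d\,M_{x,h}.
\]
The lower bound drives a Markov truncation on $\closedMetricBallSupNorm{x}{h}$ at level $2^{-(2d+1)}\regularityConstant M_{x,h}$, giving a set $J \subseteq \closedMetricBallSupNorm{x}{h}$ with $\Lebesgue(J) \geq 2^{-(d+1)}\regularityConstant h^d$ on which $f_\mu \geq 2^{-(2d+1)}\regularityConstant M_{x,h}$; applying Lemma~\ref{lemma:compactnessLB} to $K := (J-x)/h$ yields $\lambda_{\min}(M) \geq 2^{-(2d+1)}\regularityConstant M_{x,h}\,h^d\,c_{\min}^0$, and the upper bound of the sandwich converts $M_{x,h}\,h^d$ into $2^{-d}\mu(\closedMetricBallSupNorm{x}{h})$, producing the stated $2^{-(3d+1)}$ constant with no comparison between ball measures at different scales.
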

\begin{proof} First choose $x_0 \in \closedMetricBallSupNorm{x}{r}\cap \muRegularSet \cap \densitySuperLevelSet{\xi}$, noting that $\closedMetricBallSupNorm{x_0}{h/2}\subseteq \closedMetricBallSupNorm{x_0}{h-r}\subseteq  \closedMetricBallSupNorm{x}{h}$. Hence, since $x_0 \in \muRegularSet \cap \densitySuperLevelSet{\xi}$, we deduce
\begin{align*}
\mu\bigl(\closedMetricBallSupNorm{x}{h}\bigr) \geq \mu\bigl(\closedMetricBallSupNorm{x_0}{h-r}\bigr) \geq \mu\bigl(\closedMetricBallSupNorm{x_0}{h/2}\bigr) \geq \regularityConstant \cdot \biggl(\frac{h}{2}\biggr)^d \cdot \xi.
\end{align*}
For $\holderExponent \in (0,1]$, we have $\Phi_{x,h}^\holderExponent(\cdot) \equiv 1$, so \eqref{eq:singularValueLBLemmaLeastSingularValuePopulation} is immediate. Suppose now that $3r \leq \regularityConstant h$, so that $\openMetricBallSupNorm{x_0}{(1+\regularityConstant)(h-r)} \supseteq \closedMetricBallSupNorm{x_0}{h+r} \supseteq \closedMetricBallSupNorm{x}{h}$.  Thus, since $x_0 \in \muRegularSet$, we infer that with $M_{x,h}:=\sup_{x' \in \closedMetricBallSupNorm{x}{h}}f_\mu(x')$,
\begin{align}\label{eq:boundingBelowByMaxDensityIneq}
\mu\bigl(\closedMetricBallSupNorm{x}{h}\bigr) \geq \mu\bigl(\openMetricBallSupNorm{x_0}{h-r}\bigr) \geq \regularityConstant  (h-r)^d \cdot \sup_{x' \in \openMetricBallSupNorm{x_0}{(1+\regularityConstant)(h-r)}}f_\mu(x')\geq  \regularityConstant \cdot  \biggl(\frac{h}{2}\biggr)^d  \cdot M_{x,h}.
\end{align}
Moreover, if we take $J_{x,h}:= \bigl\{ x' \in \closedMetricBallSupNorm{x}{h}: f_\mu(x') \geq 2^{-(2d+1)} \cdot \regularityConstant \cdot M_{x,h}\bigr\}$, then
\begin{align*}
 \mu\bigl(\closedMetricBallSupNorm{x}{h}\bigr) & \leq \Lebesgue(J_{x,h}) \cdot M_{x,h}+\Lebesgue\bigl(\closedMetricBallSupNorm{x}{h} \setminus J_{x,h}\bigr) \cdot 2^{-(2d+1)} \cdot \regularityConstant \cdot M_{x,h} \\ &\leq \Lebesgue(J_{x,h}) \cdot M_{x,h} +\frac{\regularityConstant}{2} \cdot  \biggl(\frac{h}{2}\biggr)^d  \cdot M_{x,h}, 
\end{align*}
so by \eqref{eq:boundingBelowByMaxDensityIneq} we have $\Lebesgue(J_{x,h}) \geq 2^{-(d+1)} \cdot \regularityConstant \cdot  h^d $. Taking $K_{x,h}:=h^{-1}\cdot (J_{x,h}-x)\subseteq \closedMetricBallSupNorm{0}{1}$, we have $\Lebesgue(K_{x,h}) \geq 2^{-(d+1)}\cdot \regularityConstant$. Given any $w \in  \R^{\mathcal{V}(\holderExponent)}$ with $\|w\|_2=1$, it follows from Lemma \ref{lemma:compactnessLB} that
\begin{align*}
\int_{\closedMetricBallSupNorm{x}{h}} \big\langle w, \Phi_{x,h}^\holderExponent(z)\big\rangle^2 \, d\mu(z) &\geq  2^{-(2d+1)} \cdot \regularityConstant \cdot M_{x,h}  \cdot \int_{J_{x,h}} \big\langle w, \Phi_{x,h}^\holderExponent(z)\big\rangle^2 \, d\Lebesgue(z) \\
&\geq  2^{-(2d+1)} \cdot \regularityConstant \cdot M_{x,h}  \cdot h^{d} \cdot \int_{K_{x,h}} \big\langle w, \Phi_{0,1}^\holderExponent(z')\big\rangle^2 \, d\Lebesgue(z')\\
&\geq  2^{-(3d+1)} \cdot \regularityConstant   \cdot c_{\min}^0 \cdot \mu\bigl(\closedMetricBallSupNorm{x}{h}\bigr).
\end{align*}
The result follows.
\end{proof}

\begin{lemma}\label{lemma:chernoffApplications} Suppose that $\regularityConstant \in (0,1)$, $\xi \in (0,\infty)$, $\holderExponent \in (0,\infty)$, $x \in \R^d$, $r \in (0,1/2]$ and $h \in [2r,1]$ satisfy $\closedMetricBallSupNorm{x}{r}\cap \muRegularSet \cap \densitySuperLevelSet{\xi} \neq \emptyset$, and choose $\delta \in (0,1)$. Suppose also that
\begin{align}\label{eq:hLBFromLemmaChernoffApplications}
h\geq \biggl\{ \frac{2^{4(d+1)} \cdot |\mathcal{V}(\holderExponent)|}{c_{\min}^0 \cdot \regularityConstant^2 \cdot \xi \cdot n}\cdot  \log\biggl(\frac{2|\mathcal{V}(\holderExponent)|}{\delta}\biggr) \biggr\}^{1/d},
\end{align}
and that either $\holderExponent \in (0,1]$ or $3r \leq \regularityConstant h$.  Then
\begin{align*}
\Prob\Bigl(\bigl\{ |\mathcal{N}_{x,h}| \leq 2n \cdot \mu\bigl(\closedMetricBallSupNorm{x}{h}\bigr)\bigr\} \cap \bigl\{ \eigenValueMinimal\bigl(Q_{x,h}^{\holderExponent} \bigr)\geq 2^{-(3d+2)} \cdot n\cdot c_{\min}^0 \cdot \regularityConstant \cdot & \mu\bigl(\closedMetricBallSupNorm{x}{h}\bigr) \bigr\} \Bigr) \\
&\geq 1-\delta.
\end{align*}
\end{lemma}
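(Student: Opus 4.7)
The two events are handled by two separate Chernoff-type concentration arguments, each controlled by the lower bound~\eqref{eq:hLBFromLemmaChernoffApplications} on $h$. I will bound the failure probability of each event by $\delta/2$ and then union bound.

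\textbf{Cardinality bound.} Write $|\mathcal{N}_{x,h}| = \sum_{i=1}^n \one_{\{X_i \in \closedMetricBallSupNorm{x}{h}\}}$, a sum of i.i.d.\ Bernoulli variables with success probability $p := \mu(\closedMetricBallSupNorm{x}{h})$. By the multiplicative Chernoff bound (Lemma~\ref{lemma:multChernoff}),
\[
\Prob\bigl(|\mathcal{N}_{x,h}| > 2np\bigr) \leq e^{-np/3}.
\]
By Lemma~\ref{lemma:leastSingularValuePopulation}, $p \geq \regularityConstant \xi (h/2)^d$, and the hypothesis~\eqref{eq:hLBFromLemmaChernoffApplications} guarantees $h^d n \regularityConstant \xi \geq 2^{4(d+1)} c_{\min}^{0,-1} |\mathcal{V}(\holderExponent)|\log(2|\mathcal{V}(\holderExponent)|/\delta)$, which in turn yields $np/3 \geq \log(2/\delta)$, so this failure probability is at most $\delta/2$.

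\textbf{Minimum eigenvalue bound.} This is the main part of the argument. Let $Z_i := \Phi_{x,h}^\holderExponent(X_i) \Phi_{x,h}^\holderExponent(X_i)^\top \cdot \one_{\{X_i \in \closedMetricBallSupNorm{x}{h}\}}$, so that $Q_{x,h}^{\holderExponent} = \sum_{i=1}^n Z_i$ and $\E Q_{x,h}^{\holderExponent} = n \int_{\closedMetricBallSupNorm{x}{h}} \Phi_{x,h}^\holderExponent(z)\Phi_{x,h}^\holderExponent(z)^\top \, d\mu(z)$. For $X_i \in \closedMetricBallSupNorm{x}{h}$, each entry of $\Phi_{x,h}^\holderExponent(X_i)$ has absolute value at most $1$, so $Z_i$ is PSD with $\lambda_{\max}(Z_i) \leq \|\Phi_{x,h}^\holderExponent(X_i)\|_2^2 \leq |\mathcal{V}(\holderExponent)|$ almost surely. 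Lemma~\ref{lemma:leastSingularValuePopulation} gives
\[
\lambda_{\min}\bigl(\E Q_{x,h}^{\holderExponent}\bigr) \geq n \cdot 2^{-(3d+1)} \regularityConstant c_{\min}^0 p.
\]
I now apply the matrix Chernoff inequality (Tropp): for independent PSD matrices $Z_i$ with $\lambda_{\max}(Z_i) \leq R$, writing $\mu_{\min} := \lambda_{\min}(\E \sum_i Z_i)$,
\[
\Prob\bigl(\lambda_{\min}(\textstyle\sum_i Z_i) \leq \mu_{\min}/2\bigr) \leq |\mathcal{V}(\holderExponent)| \cdot \exp\bigl(-\mu_{\min}/(8R)\bigr).
\]
With $R = |\mathcal{V}(\holderExponent)|$ and the lower bound on $\mu_{\min}$ above, this probability is at most $\delta/2$ provided
\[
n \cdot 2^{-(3d+1)} \regularityConstant c_{\min}^0 p \geq 8|\mathcal{V}(\holderExponent)| \log\bigl(2|\mathcal{V}(\holderExponent)|/\delta\bigr).
\]
Substituting the lower bound $p \geq \regularityConstant \xi (h/2)^d$ and rearranging yields exactly the hypothesis~\eqref{eq:hLBFromLemmaChernoffApplications}, since $2^{-(3d+1)} \cdot 2^{-d} = 2^{-(4d+1)}$ and $8 \cdot 2^{4d+1} = 2^{4d+4} = 2^{4(d+1)}$.

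\textbf{Main obstacle.} The only non-routine step is the application of the matrix Chernoff bound: one must verify that the uniform bound $\lambda_{\max}(Z_i) \leq |\mathcal{V}(\holderExponent)|$ holds (which follows from $\supNorm{X_i-x}/h \leq 1$ on the relevant event) and that the population minimum eigenvalue has been correctly identified via Lemma~\ref{lemma:leastSingularValuePopulation}, whose hypotheses (either $\holderExponent \in (0,1]$ or $3r \leq \regularityConstant h$) are inherited here. A union bound over the two events then gives the stated probability $1 - \delta$.
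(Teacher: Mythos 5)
Your proof is correct and follows essentially the same route as the paper: Lemma~\ref{lemma:leastSingularValuePopulation} lower-bounds $\mu\bigl(\closedMetricBallSupNorm{x}{h}\bigr)$ and the population minimum eigenvalue, the multiplicative Chernoff bound (Lemma~\ref{lemma:multChernoff}) controls $|\mathcal{N}_{x,h}|$, the matrix Chernoff bound (Lemma~\ref{Lemma:Tropp} with $\theta=1/2$) controls $\lambda_{\min}\bigl(Q_{x,h}^{\holderExponent}\bigr)$, and a union bound finishes. The only cosmetic difference is that your stated scalar Chernoff tail $e^{-np/3}$ is looser than the $e^{-3np/8}$ obtained from the paper's Lemma~\ref{lemma:multChernoff} with $\theta=1$, but either constant suffices under hypothesis~\eqref{eq:hLBFromLemmaChernoffApplications}.
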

\begin{proof} By Lemma~\ref{lemma:leastSingularValuePopulation} and~\eqref{eq:hLBFromLemmaChernoffApplications}, we have $\mu\bigl(\closedMetricBallSupNorm{x}{h}\bigr) \geq \regularityConstant \cdot \xi \cdot (h/2)^d \geq (8/3) \log(2/\delta)/n$. Hence, by the multiplicative Chernoff bound (Lemma~\ref{lemma:multChernoff}),  
\begin{align}\label{eq:firstPartUnionLemmaChernoffApp}
\Prob\bigl\{|\mathcal{N}_{x,h}|  > 2n \cdot \mu\bigl(\closedMetricBallSupNorm{x}{h}\bigr)\bigr\} \leq \frac{\delta}{2}.
\end{align}
In addition, if either $\holderExponent \in (0,1]$ or $3r \leq \regularityConstant h$, then by Lemma~\ref{lemma:leastSingularValuePopulation} again,
\begin{align*}
\eigenValueMinimal\biggl(\int_{\closedMetricBallSupNorm{x}{h}} \Phi_{x,h}^\holderExponent(z) \Phi_{x,h}^\holderExponent(z)^\top  \, d\mu(z)\biggr) & \geq 2^{-(3d+1)} \cdot \regularityConstant \cdot c_{\min}^0
\cdot  \mu\bigl(\closedMetricBallSupNorm{x}{h}\bigr) \\
& \geq 2^{-(4d+1)} \cdot \regularityConstant^2 \cdot c_{\min}^0 \cdot \xi \cdot h^d\\
& \geq  \frac{8|\mathcal{V}(\holderExponent)|}{n} \cdot \log\biggl(\frac{2|\mathcal{V}(\holderExponent)|}{\delta}\biggr).
\end{align*}
Note also that $\eigenValueMaximal\bigl(\Phi_{x,h}^\holderExponent(X_1) \Phi_{x,h}^\holderExponent(X_1)^\top \cdot \one_{\{X_1 \in \closedMetricBallSupNorm{x}{h}\}} \bigr)\leq |\mathcal{V}(\holderExponent)|$. Hence, by a matrix multiplicative Chernoff bound (Lemma~\ref{Lemma:Tropp}) applied with $m=n$, $\mathbf{Z}_i = \Phi_{x,h}^\holderExponent(X_i) \Phi_{x,h}^\holderExponent(X_i)^\top \cdot \one_{\{X_i \in \closedMetricBallSupNorm{x}{h}\}}$ and $q = |\mathcal{V}(\holderExponent)|$, we have
\begin{align}\label{eq:secondPartUnionLemmaChernoffApp}
\Prob\bigl\{ \eigenValueMinimal\bigl(Q_{x,h}^{\holderExponent} \bigr) &<  2^{-(3d+2)} \cdot n\cdot c_{\min}^0 \cdot \regularityConstant 
\cdot  \mu\bigl(\closedMetricBallSupNorm{x}{h}\bigr)  \bigr\} \nonumber \\  &\leq  \Prob\biggl\{ \eigenValueMinimal\bigl(Q_{x,h}^{\holderExponent} \bigr) < \frac{n}{2} \cdot \eigenValueMinimal\biggl(\int_{\closedMetricBallSupNorm{x}{h}} \Phi_{x,h}^\holderExponent(z) \Phi_{x,h}^\holderExponent(z)^\top d\mu(z)\biggr)  \biggr\} \leq \frac{\delta}{2}.
\end{align}
The result now follows by combining~\eqref{eq:firstPartUnionLemmaChernoffApp} and~\eqref{eq:secondPartUnionLemmaChernoffApp} with a union bound.
\end{proof}

\begin{lemma}\label{lemma:powerForSingleHOPValue} Suppose that $\alpha \in (0,1)$, $\holderExponent > 0$, $\holderConstant \geq 1$, $\approximableDensityExponent, \approximableMarginExponent > 0$, $\regularityConstant \in (0,1)$, $\approximableSetsConstant \geq 1$, take $P \in \classOfHolderDistributions \cap \classOfWellApproximableSetsHO$ and let ${C}_{\mathrm{pv}}:=2^{2d+5} \cdot \bigl(\regularityConstant \cdot \sqrt{c_{\min}^0}\bigr)^{-1}$. Suppose further that $\xi, \Delta \in (0,\infty)$, $x \in \R^d$ and $r \in (0,1/2]$ satisfy $\closedMetricBallSupNorm{x}{r}\cap \muRegularSet \cap \densitySuperLevelSet{\xi} \cap \etaSuperLevelSet{\tau+\Delta}\neq \emptyset$, where $\mu$ is the marginal distribution of $P$ on $\R^d$, and $\regressionFunction:\R^d \rightarrow [0,1]$ is the regression function. Given any $\delta \in (0,1)$ with 
\begin{align*}
r \geq \biggl\{ \frac{{C}_{\mathrm{pv}}^2  \cdot |\mathcal{V}(\holderExponent)|}{ \xi \cdot n}\cdot  \log\biggl(\frac{4|\mathcal{V}(\holderExponent)|}{\delta}\biggr) \biggr\}^{\frac{\holderExponent}{d(\holderExponent \wedge 1)}}, \ \
\Delta \geq  C_{\mathrm{pv}} \Biggl( \holderConstant r^{\holderExponent\wedge 1}+\sqrt{\frac{ \log\bigl(2/(\alpha \wedge \delta)\bigr)}{ \xi \cdot n \cdot r^{d  (\holderExponent\wedge 1)/\holderExponent}}}\Biggr),
\end{align*}
and either $\holderExponent \in (0,1]$ or $r \leq \{2(\regularityConstant/3)^\holderExponent\}^{\frac{1}{{\holderExponent- 1}}}$, we have $\Prob\bigl\{\pValueNHO\bigl(\closedMetricBallSupNorm{x}{r}\bigr) \leq \alpha\bigr\} \geq 1-\delta$.
\end{lemma}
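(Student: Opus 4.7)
The plan is to combine three ingredients from the preceding lemmas: the eigenvalue and cardinality bounds from Lemma~\ref{lemma:chernoffApplications}, the concentration bound for the local polynomial estimator from Lemma~\ref{lemma:hoeffdingConsequenceHOSmoothness}, and a one-line H\"older argument to transfer the hypothesised lower bound on $\regressionFunction$ from a point in $\closedMetricBallSupNorm{x}{r}$ to its centre. I would first set $\tilde{\alpha} := (\alpha \wedge \delta)/2$, and introduce the events $\mathcal{E}_1$ (the conclusion of Lemma~\ref{lemma:chernoffApplications} applied with $\delta/2$ in place of $\delta$) and $\mathcal{E}_2 := \bigl\{\regressionFunction(x) - \hat{\regressionFunction}(x) \leq \hat{\Delta}_{x,h}(\tilde{\alpha})\bigr\}$, so that $\Prob(\mathcal{E}_2^{\mathrm{c}}) \leq \tilde{\alpha} \leq \delta/2$ by Lemma~\ref{lemma:hoeffdingConsequenceHOSmoothness}. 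To invoke Lemma~\ref{lemma:chernoffApplications} I would observe that $h = (2r)^{1 \wedge 1/\holderExponent}$ gives $h^d = (2r)^{d(\holderExponent \wedge 1)/\holderExponent}$, so the hypothesised lower bound on $r$ translates directly into the required lower bound on $h^d$, using that $C_{\mathrm{pv}}^2 \geq 2^{4(d+1)}/(c_{\min}^0 \regularityConstant^2)$ (valid since $\holderConstant \geq 1$); the auxiliary condition in Lemma~\ref{lemma:chernoffApplications} is, after solving $3r \leq \regularityConstant (2r)^{1/\holderExponent}$, exactly the auxiliary condition of the present lemma. A union bound then yields $\Prob(\mathcal{E}_1 \cap \mathcal{E}_2) \geq 1 - \delta$.

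Next, picking any $y \in \closedMetricBallSupNorm{x}{r} \cap \muRegularSet \cap \densitySuperLevelSet{\xi} \cap \etaSuperLevelSet{\tau+\Delta}$, I would use the inclusion $\classOfHolderDistributions \subseteq \tilde{\measureClass}_{\mathrm{H\ddot{o}l}}(\holderExponent \wedge 1, \holderConstant)$ from Definition~\ref{def:holderHigherOrder} (so that at the order $\lceil \holderExponent \wedge 1 \rceil - 1 = 0$ the Taylor polynomial degenerates to the pointwise value at $y$) to obtain $|\regressionFunction(x) - \regressionFunction(y)| \leq \holderConstant \supNorm{x-y}^{\holderExponent \wedge 1} \leq \holderConstant r^{\holderExponent \wedge 1}$, and hence $\regressionFunction(x) \geq \tau + \Delta - \holderConstant r^{\holderExponent \wedge 1}$.

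On $\mathcal{E}_1 \cap \mathcal{E}_2$, writing $s := e_0^\top (Q_{x,h}^\holderExponent)^{-1} e_0 \leq 1/\lambda_{\min}(Q_{x,h}^\holderExponent)$ and using $h^\holderExponent = (2r)^{\holderExponent \wedge 1} \leq 2 r^{\holderExponent \wedge 1}$, unwinding the definition of $\hat{\Delta}_{x,h}(\tilde{\alpha})$ gives
\begin{align*}
\hat{\regressionFunction}(x) - \tau - \holderConstant\bigl(1 + 2\sqrt{s \cdot |\mathcal{N}_{x,h}|}\bigr) r^{\holderExponent \wedge 1} \geq \Delta - 2\holderConstant r^{\holderExponent \wedge 1} - 4\holderConstant r^{\holderExponent \wedge 1}\sqrt{s \cdot |\mathcal{N}_{x,h}|} - \sqrt{s \log(1/\tilde{\alpha})/2}.
\end{align*}
The eigenvalue bound on $\mathcal{E}_1$, combined with $\mu(\closedMetricBallSupNorm{x}{h}) \geq \regularityConstant \xi (h/2)^d$ from Lemma~\ref{lemma:leastSingularValuePopulation}, yields $s \cdot |\mathcal{N}_{x,h}| \leq 2^{3d+3}/(c_{\min}^0 \regularityConstant)$ (a constant) and $s \leq 2^{4d+2}/(n c_{\min}^0 \regularityConstant^2 \xi h^d)$, which is of order $(n \xi r^{d(\holderExponent \wedge 1)/\holderExponent})^{-1}$. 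Substituting the hypothesised lower bound on $\Delta$, with the factor $C_{\mathrm{pv}} = 2^{2d+5}\holderConstant/(\regularityConstant \sqrt{c_{\min}^0})$ chosen precisely to dominate these numerical constants, I would deduce that the right-hand side above is at least $\sqrt{s \log(1/\alpha)/2}$. This lower bound simultaneously guarantees that the non-trivial branch of~\eqref{eq:pValueDefHigherOrderSmoothness} is triggered and that its exponent is at most $-\log(1/\alpha)$, so $\pValueNHO\bigl(\closedMetricBallSupNorm{x}{r}\bigr) \leq \alpha$ on $\mathcal{E}_1 \cap \mathcal{E}_2$.

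The hard part will be the bookkeeping in the previous step: verifying that the specified constant $C_{\mathrm{pv}}$ is indeed large enough to absorb all three correction terms on the right-hand side after substituting the bounds for $s$ and $s \cdot |\mathcal{N}_{x,h}|$. This is mechanical but requires care, because the same constant must simultaneously control the application of Lemma~\ref{lemma:chernoffApplications} (through the lower bound on $r$) and the cancellation in the lower bound on $\Delta$; a safe route is to split $\Delta$ into two halves, using one half to absorb the $\holderConstant r^{\holderExponent \wedge 1}$-terms via the first summand in the hypothesis on $\Delta$ and the other half to absorb the $\sqrt{s\log(1/\tilde\alpha)/2} + \sqrt{s\log(1/\alpha)/2}$ terms via the second summand.
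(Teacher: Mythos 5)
Your proposal matches the paper's proof: both condition on the intersection of the matrix-Chernoff event from Lemma~\ref{lemma:chernoffApplications} and the Hoeffding event from Lemma~\ref{lemma:hoeffdingConsequenceHOSmoothness}, both use the $(\holderExponent\wedge 1,\holderConstant)$-H\"older property to carry the lower bound $\tau+\Delta$ from a point of the ball to its centre, and both then unwind~\eqref{eq:pValueDefHigherOrderSmoothness} and check that $C_{\mathrm{pv}}$ absorbs the constants arising from the bounds $s\cdot|\mathcal{N}_{x,h}|\lesssim 1$ and $s\lesssim(n\xi h^d)^{-1}$. Your bookkeeping choice of $\tilde\alpha=(\alpha\wedge\delta)/2$ for the Hoeffding step is marginally tidier than the paper's $\delta/2$ followed by $\sqrt{\log(2/\delta)}+\sqrt{\log(2/\alpha)}\leq 2\sqrt{\log(2/(\alpha\wedge\delta))}$, but it is the same argument.
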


\begin{proof} First recall that in the construction of our $p$-values $\pValueNHO(\cdot)$ in \eqref{eq:pValueDefHigherOrderSmoothness} we take $h=(2r)^{1\wedge \frac{1}{\holderExponent}}$. To prove the lemma we define events 
\begin{align*}
\mathcal{E}_{\delta}^\eta &:=\biggl\{ \hat{\regressionFunction}(x) > \regressionFunction(x) -\sqrt{e_0^\top \bigl(Q_{x,h}^{\holderExponent}\bigr)^{+} e_0 }  \cdot \biggl(  \holderConstant \cdot h^\holderExponent \cdot |\mathcal{N}_{x,h}|^{1/2} + \sqrt{\frac{\log(2/\delta)}{2}} \biggr) \biggr\}, \\
\mathcal{E}_{\delta}^\mu &:=\biggl\{  \eigenValueMinimal\bigl(Q_{x,h}^{\holderExponent} \bigr)\geq 2^{-(3d+2)} \cdot  c_{\min}^0 \cdot \regularityConstant^2 \cdot \max\biggl\{  \xi \cdot n \cdot \biggl(\frac{h}{2}\biggr)^d,\frac{|\mathcal{N}_{x,h}|}{2\regularityConstant }\biggr\}\biggr\}.
\end{align*}
Note that since $P \in \classOfHolderDistributions$ and $\closedMetricBallSupNorm{x}{r}\cap \etaSuperLevelSet{\tau+\Delta}\neq \emptyset$, we have $\regressionFunction(x) \geq \tau+\Delta-\holderConstant \cdot r^{\holderExponent \wedge 1}$. Hence, on the event $\mathcal{E}_{\delta}^\eta\cap \mathcal{E}_{\delta}^\mu$ we have
\begin{align*}
 \hat{\regressionFunction}&(x) - \tau -  \holderConstant \Bigl(1+2\sqrt{e_0^\top \bigl(Q_{x,h}^{\holderExponent}\bigr)^{-1} e_0 \cdot |\mathcal{N}_{x,h}|}\Bigr)  r^{\holderExponent \wedge 1} \\
 &> \regressionFunction(x)-\tau - \holderConstant \Bigl(1+4\sqrt{e_0^\top \bigl(Q_{x,h}^{\holderExponent}\bigr)^{-1} e_0 \cdot |\mathcal{N}_{x,h}|}\Bigr)  r^{\holderExponent \wedge 1} -\sqrt{\frac{1}{2}\cdot e_0^\top \bigl(Q_{x,h}^{\holderExponent}\bigr)^{-1} e_0 \cdot \log(2/\delta)} \\
 & > \Delta - 2\holderConstant \Bigl(1+2\sqrt{e_0^\top \bigl(Q_{x,h}^{\holderExponent}\bigr)^{-1} e_0 \cdot |\mathcal{N}_{x,h}|}\Bigr)  r^{\holderExponent \wedge 1} -\sqrt{\frac{1}{2}\cdot e_0^\top \bigl(Q_{x,h}^{\holderExponent}\bigr)^{-1} e_0 \cdot \log(2/\delta)} 
 \\& \geq \Delta - 2\holderConstant \biggl(1+\sqrt{\frac{2^{3d+5}}{ c^0_{\min} \cdot \regularityConstant}}\biggr)  r^{\holderExponent \wedge 1} - \sqrt{\frac{2^{4d+1}\cdot  \log(2/\delta)}{c^0_{\min} \cdot \regularityConstant^2 \cdot \xi \cdot n \cdot r^{d  (\holderExponent\wedge 1)/\holderExponent}}} \\
 & \geq \sqrt{\frac{2^{4d+1}\cdot  \log(2/\alpha)}{c^0_{\min} \cdot \regularityConstant^2 \cdot \xi \cdot n \cdot r^{d  (\holderExponent\wedge 1)/\holderExponent}}}\geq \sqrt{\frac{1}{2}\cdot e_0^\top \bigl(Q_{x,h}^{\holderExponent}\bigr)^{-1} e_0 \cdot \log(1/\alpha)}. 
\end{align*}
Hence, on the event $\mathcal{E}_{\delta}^\eta\cap \mathcal{E}_{\delta}^\mu$ we have $\pValueNHO\bigl(\closedMetricBallSupNorm{x}{r}\bigr) \leq \alpha$. Now by Lemma \ref{lemma:hoeffdingConsequenceHOSmoothness} we have $\Prob\bigl((\mathcal{E}_{\delta}^\regressionFunction)^{\mathrm{c}} \cap E_\delta^\mu\bigr) \leq \delta/2$. Moreover, by Lemma~\ref{lemma:leastSingularValuePopulation} we have $\mu\bigl(\closedMetricBallSupNorm{x}{h}\bigr) \geq \regularityConstant \cdot \xi \cdot (h/2)^d$. Hence, by Lemma~\ref{lemma:chernoffApplications} we have $\Prob\bigl((\mathcal{E}_{\delta}^\mu)^{\mathrm{c}}\bigr) \leq \delta/2$.  Thus $\Prob\bigl((\mathcal{E}_{\delta}^\regressionFunction)^{\mathrm{c}} \cup (E_\delta^\mu)^{\mathrm{c}}\bigr) \leq \delta$, and the conclusion follows.
\end{proof}
Given any $\regularityConstant \in (0,1)$, $\xi$, $\Delta \in (0,\infty)$, $r \in (0,1/2]$, we let
\[
{\mathcal{H}}'_{\regularityConstant}(\xi,\Delta,r) :=\bigl\{B \in \setOfHypercubesHO: \diamSup(B) = 2r \text{ and } B \cap \muRegularSet \cap \densitySuperLevelSet{\xi} \cap \etaSuperLevelSet{\tau+\Delta}\neq \emptyset\bigr\}.
\]

\begin{lemma}\label{lemma:countingCubesElementary} We have $|{\mathcal{H}}'_{\regularityConstant}(\xi,\Delta,r)| \leq (2/r)^d\cdot (\regularityConstant \cdot \xi)^{-1}$ for every $\regularityConstant \in (0,1)$, $\xi$, $\Delta \in (0,\infty)$ and $r \in (0,1/2]$.
\end{lemma}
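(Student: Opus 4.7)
The plan is a straightforward double-counting argument, in which $\Delta$ and the set $\etaSuperLevelSet{\tau+\Delta}$ play no essential role. If $\mathcal{H}'_{\regularityConstant}(\xi,\Delta,r) = \emptyset$ the bound is trivial, so I may assume it is non-empty. By the definition of $\setOfHypercubesHO$, every $B \in \mathcal{H}'_{\regularityConstant}(\xi,\Delta,r)$ satisfies $\diamSup(B) = 2r = 2^{-q}$ for a single common value of $q \in \N$; the cubes in $\mathcal{H}'_{\regularityConstant}(\xi,\Delta,r)$ are therefore drawn from the partition of $\R^d$ into closed level-$q$ dyadic cubes of side length $2r$, which have pairwise-disjoint interiors.

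For each such $B$, I would select a witness $x_B \in B \cap \muRegularSet \cap \densitySuperLevelSet{\xi} \cap \etaSuperLevelSet{\tau+\Delta}$, which is possible by the defining non-emptiness condition. Since $r \leq 1/2 < 1$, applying the inequality defining $\muRegularSet$ at scale $r$, together with $f_\mu(x_B) \geq \xi$, gives
\begin{align*}
\mu\bigl(\openMetricBallSupNorm{x_B}{r}\bigr) \;\geq\; \regularityConstant \cdot r^d \cdot \sup_{x' \in \openMetricBallSupNorm{x_B}{(1+\regularityConstant)r}} f_\mu(x') \;\geq\; \regularityConstant \cdot \xi \cdot r^d.
\end{align*}
Summing over $B$ produces the lower bound $\bigl|\mathcal{H}'_{\regularityConstant}(\xi,\Delta,r)\bigr| \cdot \regularityConstant \xi r^d \leq \sum_B \mu\bigl(\openMetricBallSupNorm{x_B}{r}\bigr)$.

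For the matching upper bound on the right-hand side, Fubini's theorem gives
\begin{align*}
\sum_{B \in \mathcal{H}'_{\regularityConstant}(\xi,\Delta,r)} \mu\bigl(\openMetricBallSupNorm{x_B}{r}\bigr) \;=\; \int_{\R^d} \bigl|\bigl\{ B \in \mathcal{H}'_{\regularityConstant}(\xi,\Delta,r) : x_B \in \openMetricBallSupNorm{y}{r}\bigr\}\bigr| \, d\mu(y).
\end{align*}
The key combinatorial observation is that for any $y \in \R^d$, the open cube $\openMetricBallSupNorm{y}{r}$ has side $2r = 2^{-q}$ and, in each coordinate, an open interval of length $2^{-q}$ meets at most two of the closed intervals $[2^{-q}a_j, 2^{-q}(a_j+1)]$; a tensor-product count then shows that $\openMetricBallSupNorm{y}{r}$ meets at most $2^d$ level-$q$ dyadic cubes. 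Since each witness $x_B$ lies in its own cube $B$, the integrand is bounded by $2^d$, whence $\sum_B \mu\bigl(\openMetricBallSupNorm{x_B}{r}\bigr) \leq 2^d$.

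Combining the two displayed bounds and dividing by $\regularityConstant \xi r^d$ yields $\bigl|\mathcal{H}'_{\regularityConstant}(\xi,\Delta,r)\bigr| \leq 2^d / (\regularityConstant \xi r^d) = (2/r)^d (\regularityConstant \xi)^{-1}$, as required. The only step requiring any care is the combinatorial intersection-multiplicity bound of $2^d$ for an open cube against matched-scale dyadic cubes, which is exactly what produces the constant $2^d$ in the conclusion; everything else is a direct unwinding of the regularity condition defining $\muRegularSet$.
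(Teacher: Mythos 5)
Your argument is correct, and it reaches the same constant by a slightly different counting mechanism than the paper. The paper fixes the centre $x$ of each cube $B$, passes to the enlarged cube $\psi(B)=\closedMetricBallSupNorm{x}{2r}$, invokes Lemma~\ref{lemma:leastSingularValuePopulation} to get $\mu\bigl(\psi(B)\bigr)\geq \regularityConstant\xi r^d$, and then splits the dyadic cubes into $2^d$ parity classes $(a_j \bmod 2)_{j\in[d]}$ so that within each class the enlarged cubes overlap only on Lebesgue-null sets (here the absolute continuity of $\mu$ is used), giving total mass at most $2^d$. You instead centre a radius-$r$ ball at a witness $x_B\in B\cap\muRegularSet\cap\densitySuperLevelSet{\xi}$, obtain the same lower bound $\mu\bigl(\openMetricBallSupNorm{x_B}{r}\bigr)\geq\regularityConstant\xi r^d$ directly from the definition of $\muRegularSet$ (which is exactly the computation inside the first part of Lemma~\ref{lemma:leastSingularValuePopulation}, legitimate since $r\leq 1/2<1$), and replace the parity trick by a bounded-overlap count: writing the sum as $\int |\{B: x_B\in\openMetricBallSupNorm{y}{r}\}|\,d\mu(y)$ and noting that an open cube of side $2r=2^{-q}$ meets at most $2$ matched-scale dyadic intervals per coordinate, hence at most $2^d$ cubes. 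Both routes are packing/double-counting arguments producing the factor $2^d$; yours avoids the enlargement and the appeal to absolute continuity in the counting step (it only enters through the definitions of $f_\mu$ and $\muRegularSet$), while the paper's parity decomposition reuses its Lemma~\ref{lemma:leastSingularValuePopulation} and keeps all balls centred at cube centres. Your handling of the degenerate cases ($\mathcal{H}'$ empty, or $2r$ not dyadic) is also fine.
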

\begin{proof} Given $B= 2r\prod_{j \in [d]}[a_j,a_j+1] \in {\mathcal{H}}'_{\regularityConstant}(\xi,\Delta,r)$, for some $(a_j)_{j \in [d]} \in \Z^d$, we write $\phi(B) := (a_j \mod 2)_{j \in [d]}\in \{0,1\}^d$ and $\psi(B):=r\prod_{j \in [d]}[2a_j-1,2a_j+3]$. Note that if $\phi(B_0)=\phi(B_1)$ for distinct $B_0$, $B_1 \in {\mathcal{H}}'_{\regularityConstant}(\xi,\Delta,r)$ then $\mu\bigl(\psi(B_0) \cap \psi(B_1)\bigr)=0$, since $\mu$ is absolutely continuous with respect to Lebesgue measure on $\R^d$. Moreover, by Lemma~\ref{lemma:leastSingularValuePopulation} we have $\mu\bigl(\psi(B)\bigr)\geq \regularityConstant \cdot \xi \cdot r^d$, so
\begin{align*}
|{\mathcal{H}}'_{\regularityConstant}(\xi,\Delta,r)|\cdot \regularityConstant \xi r^d \leq \sum_{B \in {\mathcal{H}}'_{\regularityConstant}(\xi,\Delta,r)}\mu\bigl(\psi(B)\bigr) &= \sum_{z \in \{0,1\}^d}~\sum_{B \in {\mathcal{H}}'_{\regularityConstant}(\xi,\Delta,r)\cap\phi^{-1}\{z\}}\mu\bigl(\psi(B)\bigr) \leq 2^d,
\end{align*}
as required.
\end{proof}

\begin{proof}[Proof of Proposition~\ref{thm:powerBoundHO}] Let 
\begin{align*}
\rho &:= \approximableDensityExponent(2\holderExponent+d)+\holderExponent\approximableMarginExponent, \quad \theta := \frac{\holderConstant^{d/\holderExponent}}{n}\log_+\biggl(\frac{8n^{\frac{\holderExponent}{\holderExponent \wedge 1}}|\mathcal{V}(\holderExponent)|\log_2 n}{\alpha \wedge \delta}\biggr), \\
\xi &:= \thetaNDelta^{\holderExponent\approximableMarginExponent/\rho}, \quad r_* := 2^{-\big\lfloor \frac{1}{\holderExponent \wedge 1}\{ \frac{\holderExponent\approximableDensityExponent}{\rho}\log_2(1/\theta)+\log_2 \holderConstant\}\big\rfloor},
\end{align*}
and, recalling the definition of ${C}_{\mathrm{pv}}:=2^{2d+5} \cdot \bigl(\regularityConstant \cdot \sqrt{c_{\min}^0}\bigr)^{-1}$ from Lemma~\ref{lemma:powerForSingleHOPValue}, define
\begin{align*}
A_0:=\begin{cases} 2 \vee C_{\mathrm{pv}} |\mathcal{V}(\holderExponent)|^{1/2} \vee 2^{\frac{\holderExponent-2}{\holderExponent-1}}(3/\regularityConstant)^{\frac{\holderExponent}{\holderExponent-1}}   &\text{ if } \holderExponent>1\\
2^\holderExponent \vee C_{\mathrm{pv}} |\mathcal{V}(\holderExponent)|^{1/2}   &\text{ if } \holderExponent \leq 1.
\end{cases}
\end{align*}
Then for $\thetaNDelta^\holderExponent  \leq A_0^{-\frac{\rho}{ \approximableDensityExponent}}$ we have 
\[
\biggl( \frac{C_{\mathrm{pv}}^2 \cdot |\mathcal{V}(\holderExponent)|}{ \xi }\cdot \thetaNDelta \holderConstant^{-d/\holderExponent}\biggr)^{\frac{\holderExponent}{d(\holderExponent \wedge 1)}} \leq r_* \leq \frac{1}{2},
\]
and $r_* \leq  \{2(\regularityConstant/3)^\holderExponent\}^{\frac{1}{{\holderExponent- 1}}}$ if $\holderExponent>1$.  Now let 
\[
\Delta := C_{\mathrm{pv}} \Biggl( \holderConstant r_*^{\holderExponent\wedge 1}+\sqrt{\frac{ \log\bigl(2/(\alpha \wedge \delta)\bigr)}{ \xi \cdot n \cdot r_*^{d  (\holderExponent\wedge 1)/\holderExponent}}}\Biggr),\]
so that $\Delta \leq 3 \cdot C_{\mathrm{pv}}\thetaNDelta^{\frac{\holderExponent\approximableDensityExponent}{\rho}}$ when $\thetaNDelta \leq A_0^{-\frac{\rho}{\holderExponent  \approximableDensityExponent}}$.  By Lemma~\ref{lemma:countingCubesElementary}, we have $|{\mathcal{H}}'_{\regularityConstant}(\xi,\Delta,r_*)| \leq (2/r_*)^d\cdot(\regularityConstant \cdot \xi)^{-1} \leq (\holderConstant^d\theta^{-\holderExponent})^\frac{1}{\holderExponent \wedge 1} \leq n^{\frac{\holderExponent}{\holderExponent \wedge 1}}$ when  $\thetaNDelta \leq A_0^{-\frac{\rho}{\holderExponent  \approximableDensityExponent}}$.  Hence we may apply a union bound and Lemma~\ref{lemma:powerForSingleHOPValue} with $\delta/(2n^{\frac{\holderExponent}{\holderExponent \wedge 1}})$ in place of $\delta$ and $\alpha /(n \log_2 n)$ in place of $\alpha$ to deduce that whenever  $\theta \leq A_0^{-\frac{\rho}{\holderExponent \approximableDensityExponent}}$, we have
\[
\Prob\biggl(\bigcup_{B \in {\mathcal{H}}'_{\regularityConstant}(\xi,\Delta,r_*)} \biggl\{\pValueNHO(B) > \frac{\alpha}{|\setOfHypercubesHO(\sampleX)|}\biggr\}\biggr) \leq \sum_{B \in {\mathcal{H}}'_{\regularityConstant}(\xi,\Delta,r_*)}\Prob \biggl(\pValueNHO(B) > \frac{\alpha}{n \log_2 n}\biggr) \leq \frac{\delta}{2}.
\]
Hence, whenever $\theta \leq A_0^{-\frac{\rho}{\holderExponent \approximableDensityExponent}}$, we have 
\[
\muRegularSet \cap \densitySuperLevelSet{\xi} \cap \etaSuperLevelSet{\tau+\Delta} \subseteq \bigcup_{B \in {\mathcal{H}}'_{\regularityConstant}(\xi,\Delta,r_*)}B \subseteq \bigcup_{\ell \in [\ell_{\alpha}]}B_{(\ell)},
\]
with probability at least $1-\delta/2$. Thus, with probability at least $1 - \delta/2$,
\begin{align}
M_\tau-\sup\biggl\{ \mu(A):A\in &\mathcal{A}\cap \powerSet\biggl(\bigcup_{\ell \in [\ell_{\alpha}]}B_{(\ell)}\biggr) \biggr\} \nonumber \\
&\leq M_\tau-\sup\Bigl\{ \mu(A):A\in \mathcal{A}\cap \powerSet\bigl(\muRegularSet \cap \densitySuperLevelSet{\xi} \cap \etaSuperLevelSet{\tau+\Delta}\bigr) \Bigr\} \nonumber \\
&\leq \approximableSetsConstant \cdot (\xi^{\kappa} + \Delta^{\gamma}) \mathbbm{1}_{\{\theta \leq A_0^{-\frac{\rho}{\holderExponent \approximableDensityExponent}} \}} + \mathbbm{1}_{\{\theta > A_0^{-\frac{\rho}{\holderExponent \approximableDensityExponent}}\}} \nonumber \\ 
& \leq \approximableSetsConstant \cdot \bigl\{1 + (3C_{\mathrm{pv}})^\gamma + A_0^{\gamma} \bigr\} \cdot \theta^{\beta\kappa\gamma/\rho}.
\end{align}
Finally, since $\hat{A}_{\mathrm{OSS}}^+$ is chosen from $ \mathcal{A}\cap \powerSet\bigl(\bigcup_{\ell \in [\ell_{\alpha}]}B_{(\ell)}\bigr)$ with maximal empirical measure, it follows from Lemma \ref{lemma:vapnikChervonenkisConcentration} as in the proof of Proposition~\ref{prop:generalPowerBound} that with probability at least $1-\delta$,
\begin{align*}
M_\tau-\mu(\hat{A}_{\mathrm{OSS}}^+) \leq \approximableSetsConstant &\bigl\{1 + (3C_{\mathrm{pv}})^\gamma + A_0^{\gamma} \bigr\} \cdot \theta^{\holderExponent\kappa\gamma/\rho} \\
&+ 2C_{\mathrm{VC}} \sqrt{\frac{\vcDim(\mathcal{A})}{n}}+\sqrt{\frac{2\log(2/\delta)}{n}}. 
\end{align*}
The second part of Proposition~\ref{thm:powerBoundHO} follows integrating the tail bound and applying Proposition~\ref{thm:typeIControlHigherOrderSmoothness}, as at the end of the proof of Theorem~\ref{thm:powerBound}.
\end{proof}

\section{Proofs of the lower bounds in Theorems~\ref{thm:minimaxRate} and~\ref{Thm:minimaxRateHOS}}

Recall the construction of the probability distributions $\{P_{L,r,w,s,\theta}^\ell:\ell \in [L]\}$ on $\R^d \times \{0,1\}$ from Section~\ref{Sec:LowerBound}, with corresponding regression functions $\{\eta_{L,r,w,s,\theta}^\ell:\ell \in [L]\}$ and common marginal distribution $\mu_{L,r,w}$ on $\R^d$.  Recall also the definition of $\muRegularSetArg{\cdot}$ from~\eqref{eq:defRegularSet}.  Our initial goal is to prove that $\{P_{L,r,w,s,\theta}^\ell:\ell \in [L]\}$ is a subset of $\classOfHolderDistributions$ (see Lemma~\ref{lemma:LB1HolderRegFunctions}) and $\classOfWellApproximableSets \cap \classOfWellApproximableSetsHO$ (see Lemma~\ref{lemma:approximableConditionForLB1}) for suitable $L$, $r$, $w$, $s$ and~$\theta$.  The first of these lemmas will rely on several auxiliary results, 

Given two multi-indices $\nu = (\nu_1,\ldots,\nu_d)^\top, \nu' = (\nu_1',\ldots,\nu_d')^\top \in \N_0^d$, we write $\nu \prec \nu'$ if either $\|\nu\|_1 < \|\nu'\|_1$ or both $\|\nu\|_1 = \|\nu'\|_1$ and there exists $j \in \{0,1,\ldots,d-1\}$ such that $\nu_1=\nu'_1,\ldots,\nu_j = \nu'_j$ and $\nu_{j+1} < \nu'_{j+1}$.  Now, given $m \in \N$ and $j \in [m]$, we write
\begin{align*}
\mathcal{Q}_j&(\nu,m) \\
&:= \biggl\{\! (k_1,\ldots,k_j,\ell_1,\ldots,\ell_j) \in \N^j \times (\N_0^d)^j \! : 0 \! \prec \! \ell_1 \! \prec \! \ldots \! \prec \! \ell_j, \sum_{q=1}^j k_q = m, \sum_{q=1}^j k_q \ell_q = \nu\biggr\}.
\end{align*}
In addition, for multi-indices $\nu = (\nu_1,\ldots,\nu_d)^\top \in \N_0^d$, we let $\nu!:= \prod_{m=1}^{\|\nu\|_1}\nu_j!$.  The following lemma is a version of the Fa\`a di Bruno formula.

\begin{lemma}[Corollary~2.10 of \citet{constantine1996multivariate}]\label{lemma:faaDiBruno} Let $x \in \R^d$ and $\nu = (\nu_1,\ldots,\nu_d)^\top \in \N_0^d$.  Suppose that all partial derivative of order $\|\nu\|_1$ of $f:\R^d \rightarrow \R$ exist and are continuous in a neighbourhood of $x$, and that $g:\R \rightarrow \R$ is $\|\nu\|_1$-times continuously differentiable in a neighbourhood of $f(x)$. Then 
\begin{align*}
\partial^\nu_x(g\circ f)=\nu!\cdot \sum_{m=1}^{\|\nu\|_1}g^{(m)}\bigl(f(x)\bigr) \sum_{j =1}^{\|\nu\|_1} \sum_{\substack{(k_1,\ldots,k_j,\ell_1,\ldots,\ell_j) \\ \in \mathcal{Q}_j(\nu,m)}}\prod_{q=1}^j\frac{\{\partial^{\ell_q}_x(f)\}^{k_q}}{k_q! \cdot (\ell_q!)^{k_q}}.
\end{align*}
\end{lemma}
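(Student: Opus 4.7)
This result is stated as Corollary~2.10 of \citet{constantine1996multivariate}, so the paper itself simply invokes it; nevertheless, the plan for a direct verification is to match coefficients in a formal power series expansion of $g(f(x+h))$ around $x$. First, I would use the univariate Taylor expansion of $g$ around $f(x)$ to write
\[
g\bigl(f(x+h)\bigr) = \sum_{m=0}^{\infty} \frac{g^{(m)}\bigl(f(x)\bigr)}{m!} \bigl(f(x+h) - f(x)\bigr)^m,
\]
and then the multivariate Taylor expansion of $f$ at $x$ to write
\[
f(x+h) - f(x) = \sum_{\ell \succ 0} \frac{h^\ell}{\ell!} \, \partial^\ell_x(f),
\]
where both are understood formally in $h$, since for fixed $\nu$ only finitely many terms contribute to the coefficient of $h^\nu$.

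Next, I would expand the $m$-th power of this inner sum using the multinomial theorem. Any tuple with distinct multi-indices ordered as $0 \prec \ell_1 \prec \cdots \prec \ell_j$ and multiplicities $k_1,\ldots,k_j \geq 1$ summing to $m$ contributes
\[
\frac{m!}{\prod_{q=1}^j k_q!} \prod_{q=1}^j \biggl(\frac{h^{\ell_q}}{\ell_q!}\biggr)^{k_q} \bigl(\partial^{\ell_q}_x f\bigr)^{k_q}
\]
to $(f(x+h) - f(x))^m$. Extracting the coefficient of $h^\nu$ enforces the extra constraint $\sum_q k_q \ell_q = \nu$, which is precisely the condition defining $\mathcal{Q}_j(\nu,m)$. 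Matching this with the coefficient $\partial^\nu_x(g\circ f)/\nu!$ on the left-hand side — and noting that only values $1 \leq m \leq \|\nu\|_1$ and $1 \leq j \leq \|\nu\|_1$ can contribute — yields the claimed identity.

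The main obstacle is the combinatorial bookkeeping for the symmetry factor: the sum in the claim is indexed by strictly ordered chains $\ell_1 \prec \cdots \prec \ell_j$, whereas a naive multinomial expansion produces terms indexed by tuples defined up to permutation of the $(\ell_q,k_q)$ pairs. The strict inequality kills exactly the ambiguity of permuting pairs with distinct $\ell_q$, so no further $1/j!$ correction appears; the $k_q!$ in the denominator then accounts for identical factors within each group. Should coefficient matching prove too fiddly, a fallback is induction on $\|\nu\|_1$: differentiate the formula for $\nu - e_i$ with respect to $x_i$, and verify the resulting recursion among the sets $\mathcal{Q}_j(\cdot,m)$ by case analysis on whether $e_i$ augments an existing $\ell_q$ or introduces a new multi-index into the chain.
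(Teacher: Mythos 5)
The paper does not prove this lemma at all: it cites Corollary~2.10 of \citet{constantine1996multivariate} and invokes the result as a black box, so there is no internal argument against which to compare. Your sketch is a standard and essentially correct direct derivation via formal power series and coefficient matching, and you handle the main combinatorial subtlety correctly --- the strict ordering $0 \prec \ell_1 \prec \cdots \prec \ell_j$ precisely removes the would-be $1/j!$ symmetry factor, and the $k_q!$ factors then account for repeated factors within each group. The one gap you leave unaddressed is a regularity issue: the hypotheses assume only $\|\nu\|_1$-times continuous differentiability of $f$ and $g$, not analyticity, so the literal Taylor series for $g$ and $f$ need not converge and the ``formal power series'' manipulation cannot be taken at face value. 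The standard repair is to observe that both sides of the claimed identity are universal polynomials in the finitely many derivatives $g^{(m)}(f(x))$ and $\partial^{\ell}_x f$ with $m, \|\ell\|_1 \leq \|\nu\|_1$, so it suffices to verify the identity when $f$ and $g$ are polynomials of degree $\|\nu\|_1$ (their Taylor polynomials at $x$ and $f(x)$), in which case the power-series argument is literal and the coefficient extraction you describe goes through unchanged. Your fallback of induction on $\|\nu\|_1$ would avoid this issue altogether and is the route taken in the cited reference. Either way, the outline would yield a valid proof once that point is made explicit.
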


\begin{lemma}\label{lemma:smoothnessNorm} Given $\nu = (\nu_1,\ldots,\nu_d)^\top \in \N_0^d$, we have $\sup_{\|x\|_2 \geq 1} |\partial^\nu_x (\|\cdot \|_2)| < \infty$.
\end{lemma}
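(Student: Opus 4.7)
The plan is to exploit the positive homogeneity of the Euclidean norm, together with continuity and the compactness of the unit Euclidean sphere $S^{d-1} := \{y \in \R^d : \|y\|_2 = 1\}$. Since $\|\cdot\|_2$ is $C^\infty$ on $\R^d \setminus \{0\}$, the partial derivative $\partial^\nu_x(\|\cdot\|_2)$ is well-defined and continuous for every $x \neq 0$. Note that the assumption $\|\nu\|_1 \geq 1$ is implicit in the lemma (for otherwise $\partial^0_x(\|\cdot\|_2) = \|x\|_2$ would be unbounded on $\{\|x\|_2 \geq 1\}$); this restriction is harmless, since the lemma is only invoked for $\|\nu\|_1 \geq 1$ in the subsequent Fa\`a di Bruno style estimates.

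Writing $m := \|\nu\|_1 \geq 1$, the first step is to establish the scaling identity $\partial^\nu_{tx}(\|\cdot\|_2) = t^{1-m}\,\partial^\nu_x(\|\cdot\|_2)$ for every $t > 0$ and $x \neq 0$. This follows by differentiating the elementary identity $\|tx\|_2 = t\|x\|_2$ a total of $m$ times with respect to $x$: each application of $\partial_{x_j}$ to a function of $tx$ contributes one factor of $t$ via the chain rule, producing $t^m$ on the left and only $t^1$ on the right. Equivalently, an induction on $m$ shows that $\partial^\nu_x(\|\cdot\|_2)$ admits a representation as a finite linear combination of rational terms $x^\alpha/\|x\|_2^{2k+1}$ with $\alpha \in \N_0^d$ and $k \in \N_0$ satisfying $|\alpha| = 2k + 2 - m$, each of which is visibly positively homogeneous of degree $1 - m$.

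The second step is to reduce to the sphere. For any $x$ with $\|x\|_2 \geq 1$, set $y := x/\|x\|_2 \in S^{d-1}$; applying the scaling identity with $t = \|x\|_2$ gives
\[
|\partial^\nu_x(\|\cdot\|_2)| = \|x\|_2^{1-m} \cdot |\partial^\nu_y(\|\cdot\|_2)| \leq \sup_{y \in S^{d-1}} |\partial^\nu_y(\|\cdot\|_2)|,
\]
where the inequality uses $m \geq 1$ and $\|x\|_2 \geq 1$. Since the map $y \mapsto \partial^\nu_y(\|\cdot\|_2)$ is continuous on the open set $\R^d \setminus \{0\}$, which contains the compact set $S^{d-1}$, the right-hand side is finite, and the lemma follows. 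The only step requiring any bookkeeping is the verification of the homogeneity identity, but this is a routine chain-rule computation and not a genuine obstacle; the remainder of the argument is a standard compactness argument.
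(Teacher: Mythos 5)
Your proof is correct, and it takes a genuinely different route from the paper's. The paper applies its Fa\`a di Bruno formula (Lemma~\ref{lemma:faaDiBruno}) with $f(x) = \|x\|_2^2$ and $g(z) = \sqrt{z}$ to obtain an explicit formula for $\partial^\nu_x(\|\cdot\|_2)$ as a finite sum of terms of the form (combinatorial coefficient) $\times \|x\|_2^{-(2m-1)} \times$ (polynomial of degree $\leq m$ in $x$), and then bounds each term directly when $\|x\|_2 \geq 1$. You instead observe that $\|\cdot\|_2$ is positively homogeneous of degree $1$ and $C^\infty$ away from the origin, so that $\partial^\nu_x(\|\cdot\|_2)$ is positively homogeneous of degree $1 - \|\nu\|_1$; the reduction to the compact sphere then gives the bound for free, with no combinatorial bookkeeping. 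Your scaling argument (differentiating $\|tx\|_2 = t\|x\|_2$ a total of $m$ times in $x$) is sound, and your explicit observation that the lemma must implicitly restrict to $\|\nu\|_1 \geq 1$ is both correct and sharper than the paper's presentation, which leaves this restriction implicit in the vanishing of the empty Fa\`a di Bruno sum when $\|\nu\|_1 = 0$. The trade-off is contextual: the paper's route reuses machinery (Lemma~\ref{lemma:faaDiBruno}) that it needs anyway for Lemma~\ref{lemma:smoothnesFunctionsOfNorm}, whereas your homogeneity-plus-compactness argument is more self-contained and, as a standalone proof of this lemma, cleaner.
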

\begin{proof}
For $t \in [d]$, write $e_t = (0,\ldots,0,1,0,\ldots,0)^\top \in \mathbb{R}^d$ for the $t^{\mathrm{th}}$ standard basis vector in~$\mathbb{R}^d$.  By Lemma~\ref{lemma:faaDiBruno} with $f(x)=\|x\|_2^2$ and $g(z) = \sqrt{z}$, we have for $x=(x_1,\ldots,x_d)^\top \in \R^d$ that
\begin{align*}
&\partial^\nu_x (\|\cdot \|_2) \\
&= \nu!\sum_{m=1}^{\|\nu\|_1} \frac{(-1)^{m+1} (2m\!-\!3)!!}{2^{m}\|x\|_2^{2m-1}} \sum_{j =1}^{\|\nu\|_1}\! \sum_{\substack{(k_1,\ldots,k_j,\ell_1,\ldots,\ell_j) \\ \in \mathcal{Q}_j(\nu,m)}}\!\prod_{q=1}^j\frac{2^{k_q}\bigl\{\sum_{t=1}^d (x_t \mathbbm{1}_{\{\ell_q = e_t\}}\!+\!\mathbbm{1}_{\{\ell_q = 2e_t\}})\bigr\}^{k_q}}{k_q! \cdot(\ell_q!)^{k_q}}.
\end{align*}
It follows that for all ${x \in \R^d}$ with $\|x\|_2 \geq 1$ we have
\begin{align*}
|\partial^\nu_x (\|\cdot \|_2)|& \leq \nu! \sum_{m=1}^{\|\nu\|_1}  \frac{(2m\!-\!3)!!}{\|x\|_2^{2m-1}}  \sum_{j =1}^{\|\nu\|_1} \sum_{\substack{(k_1,\ldots,k_j,\ell_1,\ldots,\ell_j) \\ \in \mathcal{Q}_j(\nu,m)}}\prod_{q=1}^j\frac{\|x\|_2^{k_q}}{k_q! \cdot (\ell_q!)^{k_q}}\\
&\leq \nu! \sum_{m=1}^{\|\nu\|_1} (2m\!-\!3)!!\cdot  \sum_{j =1}^{\|\nu\|_1} \sum_{\substack{(k_1,\ldots,k_j,\ell_1,\ldots,\ell_j) \\ \in \mathcal{Q}_j(\nu,m)}}\prod_{q=1}^j\frac{1}{k_q! \cdot (\ell_q!)^{k_q}}<\infty,
\end{align*}
as required.
\end{proof}

\begin{lemma}\label{lemma:smoothnesFunctionsOfNorm} For each $m,d \in \N$, there exists $C_{m,d}>0$, depending only on $m$ and $d$, such that for any infinitely differentiable function $g:[0,\infty) \rightarrow [0,\infty)$ with $g'(z)=0$ for all $z \in [0,1]$, and any $\nu=(\nu_1,\ldots,\nu_d)^\top \in \N_0^d$ with $\|\nu\|_1=m$, we have
\begin{align*}
\bigl|\partial_x^\nu (g \circ \|\cdot \|_2)\bigr| \leq C_{m,d} \cdot \max_{k \in [m]} \sup_{z \in [0,\infty)} |g^{(k)}(z)|
\end{align*}
for all $x \in \R^d$. 
\end{lemma}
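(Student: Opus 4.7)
The plan is to split into the cases $\|x\|_2 \leq 1$ and $\|x\|_2 > 1$ and reduce the second to a direct application of Faà di Bruno (Lemma~\ref{lemma:faaDiBruno}) combined with the polynomial-in-$1/\|x\|_2$ control provided by Lemma~\ref{lemma:smoothnessNorm}.

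First I would dispose of the case $\|x\|_2 \leq 1$. Since $g'(z) = 0$ for every $z \in [0,1]$, the function $g$ is constant on $[0,1]$, so $F := g \circ \|\cdot\|_2$ is constant on the open unit ball. Moreover, because $g$ is $C^\infty$ on $[0,\infty)$ and constant on $[0,1]$, the one-sided derivatives $g^{(k)}(1^-) = 0$ for every $k \geq 1$, and smoothness of $g$ forces $g^{(k)}(1) = 0$ for all $k \geq 1$. Applying Lemma~\ref{lemma:faaDiBruno} to $F$ at any point with $\|x\|_2 = 1$ (using that $\|\cdot\|_2$ is smooth away from $0$) shows that every term in the Faà di Bruno expansion carries a factor $g^{(m')}(\|x\|_2)$ with $m' \geq 1$, which vanishes. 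By continuity of mixed partials, $\partial^\nu_x F \equiv 0$ on the closed unit ball, and the bound holds trivially there.

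Next, for $\|x\|_2 > 1$, I would apply Lemma~\ref{lemma:faaDiBruno} with $f = \|\cdot\|_2$ (which is smooth on $\R^d \setminus \{0\}$) to obtain
\begin{align*}
\bigl|\partial^\nu_x(g \circ \|\cdot\|_2)\bigr| \leq \nu! \sum_{m'=1}^{m} \bigl|g^{(m')}(\|x\|_2)\bigr| \sum_{j=1}^{m} \sum_{\substack{(k_1,\ldots,k_j,\ell_1,\ldots,\ell_j) \\ \in \mathcal{Q}_j(\nu,m')}} \prod_{q=1}^{j} \frac{\bigl|\partial^{\ell_q}_x(\|\cdot\|_2)\bigr|^{k_q}}{k_q! \,(\ell_q!)^{k_q}}.
\end{align*}
Lemma~\ref{lemma:smoothnessNorm} shows that each $|\partial^{\ell_q}_x(\|\cdot\|_2)|$ is bounded on $\{\|x\|_2 \geq 1\}$ by a constant depending only on $\ell_q$ and $d$, hence by a constant depending only on $m$ and $d$ (since $\|\ell_q\|_1 \leq m$). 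The set $\mathcal{Q}_j(\nu,m')$ is finite with cardinality controlled by a function of $m$ and $d$ alone. Factoring out $\max_{k \in [m]} \sup_{z \in [0,\infty)} |g^{(k)}(z)|$ yields the desired bound with some constant $C_{m,d}$.

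I expect the main (mild) obstacle is the first case: one needs to justify that $F$ is genuinely $C^\infty$ across the sphere $\{\|x\|_2 = 1\}$ despite $\|\cdot\|_2$ failing to be smooth at the origin, and that its partials all vanish on $\{\|x\|_2 \leq 1\}$. Once the vanishing of $g^{(k)}(1)$ for all $k \geq 1$ is noted, a one-line Faà di Bruno argument at points with $\|x\|_2 = 1$ closes this gap. The remaining work is the routine bookkeeping in the second case.
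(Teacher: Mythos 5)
Your proposal is correct and takes the same approach as the paper: the paper's proof is exactly ``combine Lemmas~\ref{lemma:faaDiBruno} and~\ref{lemma:smoothnessNorm} and split on $\|x\|_2$''. The only cosmetic difference is the split point — the paper uses $\|x\|_2<1$ versus $\|x\|_2\geq 1$, which folds the sphere $\{\|x\|_2=1\}$ into the second case (Lemma~\ref{lemma:smoothnessNorm} already covers $\|x\|_2=1$), so your separate vanishing argument at the sphere, while valid, is unnecessary.
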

\begin{proof} The lemma follows from combining Lemmas \ref{lemma:faaDiBruno} and \ref{lemma:smoothnessNorm}, and by considering the cases $\|x\|_2 <1$ and $\|x\|_2 \geq 1$ separately.
\end{proof}

\begin{lemma}\label{lemma:LB1DerivsBound} Let $L,d \in \N$, $r \in (0,\infty)$, $s \in (0,1\wedge(r/2)]$, $w \in \bigl(0,(2r)^{-d} \wedge 1\bigr)$ and $\theta \in (0,\epsilon_0/2]$. Then for $\ell \in [L]$,   $\nu=(\nu_1,\ldots,\nu_d)^\top \in \N_0^d$ with $\|\nu\|_1=m$ and $x \in \R^d$, we have
\begin{align}\label{eq:LB1DerivsBoundClaim1}
\bigl|\partial_x^\nu (\eta_{L,r,w,s,\theta}^{\ell})\bigr| &\leq 2A_{m} C_{m,d} \cdot \frac{\theta}{s^{m}},
\end{align}
where $A_m$ is taken from~\eqref{Eq:Am} and $C_{m,d}$ is taken from Lemma~\ref{lemma:smoothnesFunctionsOfNorm}.
Hence, given any $\xi \in [0,1]$ and $x$, $x' \in \R^d$, we have
\begin{align}\label{eq:LB1DerivsBoundClaim2}
\bigl|\partial_x^\nu (\eta_{L,r,w,s,\theta}^{\ell})-\partial_{x'}^\nu (\eta_{L,r,w,s,\theta}^{\ell})\bigr| &\leq 2A_{m+1} (2C_{m,d}\vee d C_{m+1,d})\cdot \frac{\theta}{s^{m+\xi}}\cdot \supNorm{x-x'}^\xi.
\end{align}
\end{lemma}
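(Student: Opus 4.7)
The plan is to exploit the fact that near any fixed point the function $\eta_{L,r,w,s,\theta}^\ell$ is built from a single radial bump, then reduce the derivative estimates to a rescaled application of Lemma~\ref{lemma:smoothnesFunctionsOfNorm}.

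First, for \eqref{eq:LB1DerivsBoundClaim1}, I would observe that the separation $\supNorm{z_\ell-z_{\ell'}}>2(r_\sharp(w)+1)$ combined with $s \leq r/2 \leq d^{1/2}r$ makes the piecewise definition \eqref{def:etaFuncFirstPartLBExtension} consistent and allows the rewriting
\[
\eta_{L,r,w,s,\theta}^{\ell}(x) = (\tau-\theta) + 2\theta\sum_{\ell' \in [L]}\phi_{\ell,\ell'}(x),
\]
where each $\phi_{\ell,\ell'}(x) = g_{\ell,\ell'}(\|x-z_{\ell'}\|_2)$ is a smooth, $[0,1]$-valued radial bump whose only non-constant behaviour occurs on the intervals $[s,2s]$ (for $\ell' \neq \ell$) and $[d^{1/2}r,2d^{1/2}r]$, and these transitions are built from appropriately scaled shifts of $h$. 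The supports of the $\phi_{\ell,\ell'}$ are pairwise disjoint, so at each $x$ at most one summand is non-constant. Then I rescale: put $\tilde g_{\ell,\ell'}(z):=g_{\ell,\ell'}(sz)$, so that $\tilde g_{\ell,\ell'}' \equiv 0$ on $[0,1]$ (because $g_{\ell,\ell'}'$ vanishes on $[0,s]$) and, using $s \leq r$ and the definition \eqref{Eq:Am} of $A_m$, a direct calculation gives $\sup_z |\tilde g_{\ell,\ell'}^{(k)}(z)| \leq A_k \leq A_m$ for every $k \in [m]$. Since $\phi_{\ell,\ell'}(x) = \tilde g_{\ell,\ell'}\bigl(\|(x-z_{\ell'})/s\|_2\bigr)$, applying Lemma~\ref{lemma:smoothnesFunctionsOfNorm} to $\tilde g_{\ell,\ell'}$ together with the chain rule for the dilation by $s$ yields $|\partial_x^\nu \phi_{\ell,\ell'}(x)| \leq C_{m,d}A_m s^{-m}$, and disjointness of supports extends this bound to the sum, giving \eqref{eq:LB1DerivsBoundClaim1}.

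For \eqref{eq:LB1DerivsBoundClaim2} I would split on the size of $\supNorm{x-x'}$. If $\supNorm{x-x'} \geq s$, the triangle inequality and \eqref{eq:LB1DerivsBoundClaim1} give $|\partial_x^\nu \eta_{L,r,w,s,\theta}^\ell - \partial_{x'}^\nu \eta_{L,r,w,s,\theta}^\ell| \leq 4A_m C_{m,d}\theta s^{-m}$, and I absorb the missing factor via $s^{-m} = s^{-(m+\xi)}\cdot s^\xi \leq s^{-(m+\xi)}\supNorm{x-x'}^\xi$, bounding by $4A_{m+1}C_{m,d}\theta s^{-(m+\xi)}\supNorm{x-x'}^\xi$. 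If instead $\supNorm{x-x'} < s$, an axis-by-axis telescoping argument combined with \eqref{eq:LB1DerivsBoundClaim1} applied to each of the $d$ multi-indices $\nu+e_j$ (of order $m+1$) yields $|\partial_x^\nu\eta^\ell-\partial_{x'}^\nu\eta^\ell| \leq 2dA_{m+1}C_{m+1,d}\theta s^{-(m+1)}\supNorm{x-x'}$, and the bound $\supNorm{x-x'}^{1-\xi} \leq s^{1-\xi}$ converts this to $2dA_{m+1}C_{m+1,d}\theta s^{-(m+\xi)}\supNorm{x-x'}^\xi$. Taking the maximum of the two constants produces the claimed $2A_{m+1}(2C_{m,d}\vee d C_{m+1,d})$ prefactor.

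The only real obstacle is identifying the correct rescaling (the dilation by $s$ of the radial variable) that converts $g_{\ell,\ell'}$ into a function meeting the hypothesis $g'\equiv 0$ on $[0,1]$ of Lemma~\ref{lemma:smoothnesFunctionsOfNorm}, while still tracking the resulting $s^{-m}$ factor exactly; once this reduction is in place, everything else is routine chain-rule book-keeping and the two-regime mean-value argument.
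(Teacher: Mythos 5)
Your proof is correct and takes essentially the same route as the paper: both reduce to Lemma~\ref{lemma:smoothnesFunctionsOfNorm} by rescaling the radial variable by $s$ so the profile is flat on $[0,1]$, pick up the $s^{-m}$ via the chain rule for dilation, and then establish~\eqref{eq:LB1DerivsBoundClaim2} by splitting on whether $\supNorm{x-x'}$ is above or below $s$. Your ``sum of disjointly supported radial bumps'' decomposition is a cosmetic variant of the paper's open-cover case analysis, but the underlying computation is identical.
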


\begin{proof} To prove~\eqref{eq:LB1DerivsBoundClaim1}, we construct an open cover of $\R^d$ by $\{U_1,\ldots,U_{L+1}\}$ where  $U_{\ell'}:=\openMetricBallSupNorm{{z}_{\ell'}}{r_\sharp(w)}$ for $\ell' \in [L]$ and $U_{L+1}:= \R^d \setminus \bigcup_{\ell' \in [L]}\closedMetricBallSupNorm{{z}_{\ell'}}{2d^{1/2}r}$. First suppose that $\ell' \in [L] \backslash \{\ell\}$ and consider the function $g_0:[0,\infty)\rightarrow [0,\infty)$ defined by 
\begin{align*}
g_0(t):= \begin{cases} \tau-\theta  &\text{if } t \leq 1   \\
\tau + \theta - 2\theta \cdot h\bigl(t \!-\! 1\bigr) &\text{if } 1 < t \leq 2 \\
\tau + \theta &\text{if } 2 < t \leq \frac{d^{1/2}r}{s} \\
\tau - \theta + 2\theta \cdot h\bigl(\frac{s \cdot t }{d^{1/2}r}\!-\!1\bigr) &\text{if } \frac{d^{1/2}r}{s} < t < \frac{2d^{1/2}r}{s}\\
\tau-\theta &\text{otherwise.}
\end{cases}
\end{align*}
By Lemma~\ref{lemma:smoothnesFunctionsOfNorm}, together with $s \leq r/2 \leq d^{1/2}r$, we have
\[\sup_{x \in \R^d}\bigl|\partial_x^\nu (g_0 \circ \|\cdot \|_2)\bigr| \leq C_{m,d} \cdot \max_{k \in [m]} \sup_{z \in [0,\infty)} |g_0^{(k)}(z)| \leq 2A_mC_{m,d} \theta.
\]
Moreover, for all $x \in U_{\ell'}$ we have $\eta_{L,r,w,s,\theta}^{\ell}(x)= g_0 (\|s^{-1}\cdot (x-{z}_{\ell'}) \|_2)$. Hence, for all ${x \in U_{\ell'}}$ we have $\bigl|\partial_x^\nu (\eta_{L,r,w,s,\theta}^{\ell})\bigr| \leq 2A_mC_{m,d} {\theta} s^{-m}$ since $\|\nu\|_1 =m$. Next, consider the open set $U_\ell$ and define a function $g_1:[0,\infty)\rightarrow [0,\infty)$ by 
\begin{align*}
g_1(t)&:= \begin{cases} \tau+\theta  &\text{if } t \leq 1  \\
\tau - \theta + 2\theta \cdot h\bigl(t\!-\!1\bigr) &\text{if } 1 < t < 2\\
\tau-\theta &\text{otherwise.}
\end{cases}
\end{align*}
By applying Lemma \ref{lemma:smoothnesFunctionsOfNorm} again, we have $\bigl|\partial_x^\nu (g_1 \circ \|\cdot \|_2)\bigr|  \leq 2A_mC_{m,d} {\theta}$ for all $x \in \R^d$. Moreover, for $x \in U_{\ell}$ we have $\eta_{L,r,w,s,\theta}^{\ell}(x)= g_1 (\|(d^{1/2}r)^{-1}\cdot (x-{z}_{\ell}) \|_2)$. Hence, for all ${x \in U_{\ell}}$, we have $\bigl|\partial_x^\nu (\eta_{L,r,w,s,\theta}^{\ell})\bigr| \leq 2A_mC_{m,d} {\theta} (d^{1/2}r)^{-m}\leq 2A_mC_{m,d} {\theta} s^{-m}$. Finally we note that $\eta_{L,r,w,s,\theta}^{\ell}\big|_{U_{L+1}}\equiv\tau-\theta$, so $\sup_{x \in U_{L+1}}\bigl|\partial_x^\nu (\eta_{L,r,w,s,\theta}^{\ell})\bigr| =0 \leq 2A_mC_{m,d} {\theta} s^{-m}$.  The claim \eqref{eq:LB1DerivsBoundClaim1} follows.

To prove \eqref{eq:LB1DerivsBoundClaim2}, we first consider the case where $\supNorm{x-x'} \leq s$, in which case, we may apply the mean value theorem combined with \eqref{eq:LB1DerivsBoundClaim1} and H\"older's inequality to obtain
\begin{align*}
\bigl|\partial_x^\nu (\eta_{L,r,w,s,\theta})-\partial_{x'}^\nu (\eta_{L,r,w,s,\theta})\bigr| 
& \leq d A_{m+1} C_{m+1,d} \cdot \frac{\theta}{s^{m+1}}\cdot \|{x-x'}\|_\infty\\
& \leq d A_{m+1} C_{m+1,d} \cdot \frac{\theta}{s^{m+\xi}}\cdot \|{x-x'}\|_\infty^{\xi}.
\end{align*}
Moreover, when $\supNorm{x-x'}> s$, \eqref{eq:LB1DerivsBoundClaim2} follows immediately from \eqref{eq:LB1DerivsBoundClaim1} and the triangle inequality.
\end{proof}
\begin{lemma}\label{lemma:TaylorFromHolderDerivs} Take $\holderExponent>0$, $C_f>0$ and let $f:\R^d \rightarrow \R$ be a $\lceil \holderExponent \rceil$-times differentiable function such that for all  $\nu=(\nu_1,\ldots,\nu_d)^\top \in \N_0^d$ with $\|\nu\|_1= \lceil \holderExponent \rceil-1 =: m$, and $x$, $x' \in \R^d$, we have
\begin{align*}
\bigl|\partial_{x'}^\nu (f)-\partial_{x}^\nu (f)\bigr| \leq C_f\cdot \supNorm{x'-x}^{\holderExponent-m}.
\end{align*}
Then for all $x, x'\in \R^d$ we have
\begin{align*}
\bigl|f(x') - \taylorSeries_{x}^{\holderExponent}[f](x')\bigr|\leq  C_f \cdot \binom{m+d-1}{d-1} \cdot \supNorm{x'-x}^{\holderExponent}.
\end{align*}
\end{lemma}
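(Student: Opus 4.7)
The plan is to reduce to the one-dimensional case by restricting $f$ to the line segment joining $x$ and $x'$. Concretely, define $\phi:[0,1]\to\R$ by $\phi(t):=f\bigl(x+t(x'-x)\bigr)$. Since $f$ is $\lceil\beta\rceil$-times differentiable, an iterated application of the chain rule gives
\[
\phi^{(k)}(t) \;=\; \sum_{\nu\in\N_0^d:\|\nu\|_1 = k}\binom{k}{\nu}\,(x'-x)^{\nu}\,\partial^{\nu}_{x+t(x'-x)}(f),
\]
for each $k\in\{0,1,\ldots,m+1\}$, where $\binom{k}{\nu}=k!/\nu!$. The Hölder hypothesis in particular ensures that $\phi^{(m)}$ is continuous on $[0,1]$, so the Lagrange form of the one-dimensional Taylor theorem yields some $\xi\in(0,1)$ with
\[
\phi(1) \;=\; \sum_{k=0}^{m-1}\frac{\phi^{(k)}(0)}{k!} \;+\; \frac{\phi^{(m)}(\xi)}{m!}.
\]

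Next I would substitute the chain-rule expressions for $\phi^{(k)}$ into this identity, using the coefficient identity $\binom{k}{\nu}/k! = 1/\nu!$ to rewrite each sum in the multi-index form that matches $\taylorSeries_{x}^{\holderExponent}[f](x')$. This gives
\[
f(x') \;=\; \sum_{\nu\in\N_0^d:\|\nu\|_1 \leq m-1}\frac{(x'-x)^{\nu}}{\nu!}\,\partial^{\nu}_{x}(f) \;+\; \sum_{\nu\in\N_0^d:\|\nu\|_1 = m}\frac{(x'-x)^{\nu}}{\nu!}\,\partial^{\nu}_{x+\xi(x'-x)}(f).
\]
Subtracting $\taylorSeries_{x}^{\holderExponent}[f](x') = \sum_{\|\nu\|_1 \leq m}\frac{(x'-x)^\nu}{\nu!}\partial^\nu_x(f)$, the low-order terms cancel and only the top-order remainder survives:
\[
f(x') - \taylorSeries_{x}^{\holderExponent}[f](x') \;=\; \sum_{\nu\in\N_0^d:\|\nu\|_1=m}\frac{(x'-x)^{\nu}}{\nu!}\,\bigl(\partial^{\nu}_{x+\xi(x'-x)}(f) - \partial^{\nu}_{x}(f)\bigr).
\]

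Finally, I would bound this termwise. The Hölder hypothesis with exponent $\beta-m\in(0,1]$, together with $\|\xi(x'-x)\|_\infty\leq\|x'-x\|_\infty$, gives
\[
\bigl|\partial^{\nu}_{x+\xi(x'-x)}(f) - \partial^{\nu}_{x}(f)\bigr| \;\leq\; C_f\,\supNorm{x'-x}^{\beta-m},
\]
while $|(x'-x)^{\nu}| \leq \supNorm{x'-x}^{m}$ and $1/\nu!\leq 1$. Summing over the $\binom{m+d-1}{d-1}$ multi-indices $\nu\in\N_0^d$ with $\|\nu\|_1=m$ (counted by stars-and-bars) yields the claimed bound. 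The only mild obstacle is the coefficient bookkeeping $\binom{k}{\nu}/k! = 1/\nu!$ and ensuring that the Taylor-polynomial terms of orders $0,\ldots,m-1$ cancel cleanly between $f(x')$ and $\taylorSeries_{x}^{\holderExponent}[f](x')$; everything else is routine.
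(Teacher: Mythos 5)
Your proof is correct and follows essentially the same route as the paper: both invoke the multivariate Taylor theorem with Lagrange-form remainder to produce exactly the identity $f(x') - \taylorSeries_{x}^{\holderExponent}[f](x') = \sum_{\|\nu\|_1=m}\frac{(x'-x)^\nu}{\nu!}\bigl(\partial^\nu_{x+t(x'-x)}(f)-\partial^\nu_x(f)\bigr)$ and then bound term-by-term using the H\"older hypothesis and the stars-and-bars count. The only cosmetic difference is that you derive this from the one-dimensional Taylor theorem via $\phi(t)=f\bigl(x+t(x'-x)\bigr)$, whereas the paper quotes the multivariate form directly.
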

\begin{proof} By Taylor's theorem, there exists $t \in (0,1)$ such that 
\begin{align*}
f(x') = \sum_{\nu \in \N_0^d:\|\nu\|_1<m}\frac{(x'-x)^\nu}{\nu!}\cdot \partial^\nu_x(f)+\sum_{\nu \in \N_0^d:\|\nu\|_1=m}\frac{(x'-x)^\nu}{\nu!}\cdot \partial^\nu_{x+t\cdot (x'-x)}(f).
\end{align*}
Hence, 
\begin{align*}
\bigl|f(x') - \taylorSeries_{x}^{\holderExponent}[f](x')\bigr|&=\biggl| \sum_{\nu \in \N_0^d:\|\nu\|_1=m}\frac{(x'-x)^\nu}{\nu!}\cdot \bigl(\partial^\nu_{x+t\cdot(x'-x)}(f)-\partial^\nu_{x}(f)\bigr)\biggr| \\
&\leq  C_f \cdot \binom{m+d-1}{d-1} \cdot \supNorm{x'-x}^{\holderExponent},
\end{align*}
as required.
\end{proof}
\begin{lemma}\label{lemma:LB1HolderRegFunctions} Let $\holderExponent >0$, $\holderConstant >1$, $L, d \in \N$, $r \in (0,\infty)$, $s \in (0,1\wedge(r/2) ]$, $w \in \bigl(0,(2r)^{-d} \wedge 1\bigr)$ and $\theta \in (0,\epsilon_0/2]$. There exists $c^{\flat}_{\holderExponent,d} >0$, depending only on $\holderExponent$ and $d$, such that whenever $\theta \leq c^{\flat}_{\holderExponent,d}\cdot \holderConstant \cdot s^{\holderExponent}$, we have that for each $\ell \in [L]$, the function $\eta_{L,r,w,s,\theta}^{\ell}$ is $(\holderExponent,\holderConstant)$-H\"{o}lder on~$\R^d$; i.e.~$\probDistribution_{L,r,w,s,\theta}^{\ell} \in \classOfHolderDistributions$.
\end{lemma}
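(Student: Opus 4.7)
The plan is to combine Lemmas~\ref{lemma:LB1DerivsBound} and~\ref{lemma:TaylorFromHolderDerivs}. Recall that in this section $\classOfHolderDistributions=\bigcap_{\holderExponent' \in (0,\holderExponent]}\tilde{\measureClass}_{\mathrm{H\ddot{o}l}}(\holderExponent',\holderConstant)$, so I must verify the Taylor-remainder condition of Definition~\ref{def:holderHigherOrder} for $\eta_{L,r,w,s,\theta}^\ell$ at every smoothness level $\holderExponent' \in (0,\holderExponent]$, using a single constant $c^\flat_{\holderExponent,d}$.

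Fix $\ell \in [L]$ and $\holderExponent' \in (0,\holderExponent]$, and set $m':=\lceil \holderExponent' \rceil - 1 \in \{0,1,\ldots,\lceil \holderExponent \rceil-1\}$. When $\holderExponent' \leq 1$, so $m'=0$, the required condition reduces to ordinary $(\holderExponent',\holderConstant)$-H\"{o}lder continuity of $\eta_{L,r,w,s,\theta}^\ell$, and follows directly from~\eqref{eq:LB1DerivsBoundClaim2} applied with $m=0$ and $\xi = \holderExponent'$. When $\holderExponent' > 1$, I apply~\eqref{eq:LB1DerivsBoundClaim2} with $m=m'$ and $\xi = \holderExponent' - m'$ to see that, for every multi-index $\nu$ with $\|\nu\|_1 = m'$, the partial derivative $\partial^\nu \eta_{L,r,w,s,\theta}^\ell$ satisfies an $(\holderExponent'-m')$-H\"{o}lder bound with constant $2A_{m'+1}(2C_{m',d}\vee dC_{m'+1,d}) \cdot \theta \cdot s^{-\holderExponent'}$. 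Lemma~\ref{lemma:TaylorFromHolderDerivs} then immediately yields the Taylor-remainder bound at level $\holderExponent'$ with constant $2A_{m'+1}(2C_{m',d}\vee dC_{m'+1,d})\binom{m'+d-1}{d-1} \cdot \theta \cdot s^{-\holderExponent'}$.

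To finish, I let $D_{\holderExponent,d}$ denote the maximum of the prefactors appearing above over $m' \in \{0,1,\ldots,\lceil \holderExponent \rceil -1\}$, a finite quantity depending only on $\holderExponent$ and $d$; and I use $s \in (0,1]$ together with $\holderExponent' \leq \holderExponent$ to deduce $s^{-\holderExponent'} \leq s^{-\holderExponent}$. Choosing $c^\flat_{\holderExponent,d}:=1/D_{\holderExponent,d}$ then guarantees that whenever $\theta \leq c^\flat_{\holderExponent,d} \cdot \holderConstant \cdot s^{\holderExponent}$, the $(\holderExponent',\holderConstant)$-condition of Definition~\ref{def:holderHigherOrder} holds for every $\holderExponent' \in (0,\holderExponent]$ simultaneously, and hence $\probDistribution_{L,r,w,s,\theta}^\ell \in \classOfHolderDistributions$. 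There is no genuine obstacle here once Lemmas~\ref{lemma:LB1DerivsBound} and~\ref{lemma:TaylorFromHolderDerivs} are in hand; the only work is bookkeeping the finitely many $(m',d)$-indexed constants and observing that the uniformity over $\holderExponent' \in (0,\holderExponent]$ is free because the prefactor depends on $\holderExponent'$ only through the integer $m'$.
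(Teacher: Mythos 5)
Your proof is correct and follows essentially the same route as the paper: combine the derivative bound~\eqref{eq:LB1DerivsBoundClaim2} of Lemma~\ref{lemma:LB1DerivsBound} with the Taylor-remainder bound of Lemma~\ref{lemma:TaylorFromHolderDerivs}, uniformly over $\holderExponent' \in (0,\holderExponent]$, exploiting $s \le 1$ so that $s^{-\holderExponent'} \le s^{-\holderExponent}$; your choice $c^\flat_{\holderExponent,d} = 1/D_{\holderExponent,d}$ matches the paper's $\min_{q \le \lceil\holderExponent\rceil-1}\{2A_{q+1}(2C_{q,d}\vee dC_{q+1,d})\binom{q+d-1}{d-1}\}^{-1}$ exactly. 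The paper states this as a one-liner; your write-up makes explicit the bookkeeping over $m'$ and the treatment of the $m'=0$ case, which is the content the terse proof leaves to the reader.
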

\begin{proof} By taking 
\[
c^{\flat}_{\holderExponent,d}:=\min_{q \in \N_0: q \leq \lceil \holderExponent\rceil -1}\biggl\{2A_{q+1} (2C_{q,d}\vee d C_{q+1,d})\cdot \binom{q+d-1}{d-1} \biggr\}^{-1},
\]
the result follows from Lemmas~\ref{lemma:LB1DerivsBound} and~\ref{lemma:TaylorFromHolderDerivs}.
\end{proof}
Lemma~\ref{lemma:approximableConditionForLB1} also requires one auxiliary lemma.
\begin{lemma}
\label{lemma:LB1densityLB} Given $L,d \in \N$, $r > 0$ and $w \in \bigl(0,(2r)^{-d} \wedge 1\bigr)$, we have $\lowerDensity_{\mu_{L,r,w},d}(x)\geq \frac{w}{L \cdot (4d^{1/2})^d}$ for all $x \in \bigcup_{\ell \in [L]} K^0_r(\ell)$.  Moreover, $\bigcup_{\ell \in [L]} K^0_r(\ell) \subseteq \mathcal{R}_{\regularityConstant}(\mu_{L,r,w})$ for every $\regularityConstant\leq (4d^{1/2})^{-d}$.
\end{lemma}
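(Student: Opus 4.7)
Write $\mu := \mu_{L,r,w}$ and $J := J_{L,r,w}$. The proof rests on a single geometric estimate that implies both claims. Since the $z_\ell$ are pairwise separated by more than $2(r_\sharp(w)+1)$ with respect to $\supNorm{\cdot}$ and $x \in K_r^0(\ell) \subseteq \closedMetricBallSupNorm{z_\ell}{r_\sharp(w)}$, every point of $\closedMetricBallSupNorm{x}{\rho}$ for $\rho \in (0,1)$ lies within $\supNorm{\cdot}$-distance $r_\sharp(w) + 1$ of $z_\ell$ and hence strictly further than $r_\sharp(w)+1$ from any $z_{\ell'}$ with $\ell' \neq \ell$; therefore $\closedMetricBallSupNorm{x}{\rho} \cap J = \closedMetricBallSupNorm{x}{\rho} \cap (K_r^0(\ell) \cup K_r^1(\ell))$. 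Because $\mu$ has the constant density $w/L$ on $J$, the first assertion reduces to the geometric inequality
\[
\Lebesgue\bigl(\closedMetricBallSupNorm{x}{\rho} \cap (K_r^0(\ell) \cup K_r^1(\ell))\bigr) \geq \frac{\rho^d}{(4\sqrt{d})^d} \qquad \text{for every } \rho \in (0,1).
\]

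My plan is to exhibit inside $\closedMetricBallSupNorm{x}{\rho} \cap (K_r^0(\ell) \cup K_r^1(\ell))$ a closed $\supNorm{\cdot}$-ball of radius $s := \rho/(4\sqrt{d})$, whose Lebesgue volume $(2s)^d = 2^d\rho^d/(4\sqrt{d})^d$ comfortably clears the target. In the regime $s \leq r$ (equivalently $\rho \leq 4\sqrt{d}r$), I set $y := z_\ell + (1-s/r)(x - z_\ell)$, so that $\supNorm{y - z_\ell} \leq r - s$ and $\supNorm{y - x} = (s/r)\supNorm{x - z_\ell} \leq s \leq \rho - s$, placing $\closedMetricBallSupNorm{y}{s}$ inside $K_r^0(\ell) \cap \closedMetricBallSupNorm{x}{\rho}$. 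In the regime $s > r$, I aim for $K_r^1(\ell)$: pick a coordinate $j$ attaining $\max_i |x_i - z_{\ell,i}|$, set $\sigma := \mathrm{sgn}(x_j - z_{\ell,j})$ (with $\sigma := 1$ when $x_j = z_{\ell,j}$), and take $y := x + (\rho-s)\sigma e_j$. A coordinate computation then gives $\min_{y' \in \closedMetricBallSupNorm{y}{s}} \supNorm{y' - z_\ell} = |x_j - z_{\ell,j}| + \rho - 2s \geq \rho(2\sqrt{d}-1)/(2\sqrt{d}) \geq 2\sqrt{d}r$, where the final inequality is equivalent to $\rho \geq 4d(2\sqrt{d}-1)^{-1}r$ and is implied by $\rho > 4\sqrt{d}r$ together with the elementary bound $4\sqrt{d}(2\sqrt{d}-1) \geq 4d$ (valid for all $d \geq 1$); combined with the outer bound $\supNorm{y - z_\ell} + s \leq r + \rho$, this places $\closedMetricBallSupNorm{y}{s}$ inside $K_r^1(\ell) \cap \closedMetricBallSupNorm{x}{\rho}$ whenever $\rho \leq r_\sharp(w) - r$.

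The principal obstacle is the residual regime $\rho > r_\sharp(w) - r$, in which the candidate sub-ball would protrude out of $\closedMetricBallSupNorm{z_\ell}{r_\sharp(w)}$. Here I abandon the sub-ball strategy and work directly from the identity
\[
\Lebesgue\bigl(\closedMetricBallSupNorm{x}{\rho} \cap (K_r^0(\ell) \cup K_r^1(\ell))\bigr) = \Lebesgue\bigl(\closedMetricBallSupNorm{x}{\rho} \cap \closedMetricBallSupNorm{z_\ell}{r_\sharp(w)}\bigr) - \Lebesgue\bigl(\closedMetricBallSupNorm{x}{\rho} \cap H_\ell\bigr),
\]
where $H_\ell := \openMetricBallSupNorm{z_\ell}{2\sqrt{d}r} \setminus \closedMetricBallSupNorm{z_\ell}{r}$ is the hole excised between $K_r^0(\ell)$ and $K_r^1(\ell)$. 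Since $\rho > 4\sqrt{d}r$ forces $\closedMetricBallSupNorm{x}{\rho} \supseteq \openMetricBallSupNorm{z_\ell}{2\sqrt{d}r}$, the last term equals the full $\Lebesgue(H_\ell) = (4\sqrt{d}r)^d - (2r)^d$; the first is at least $\min\{2\rho, 2r_\sharp(w), \rho + r_\sharp(w) - r\}^d$ via the standard coordinate-wise product formula for intersecting $\supNorm{\cdot}$-balls combined with $\supNorm{x - z_\ell} \leq r$. When $\rho \geq r_\sharp(w) + r$, the intersection captures the entire piece $K_r^0(\ell) \cup K_r^1(\ell)$, whose volume $w^{-1} \geq 1 > \rho^d/(4\sqrt{d})^d$ settles the bound directly; the narrow middle band $|\rho - r_\sharp(w)| < r$ is dispatched by an elementary algebraic comparison, using $r_\sharp(w) \geq 2\sqrt{d}r$ (forced by $w < (2r)^{-d}$) to lower-bound $(\rho + r_\sharp(w) - r)^d$ above the required threshold $(4\sqrt{d}r)^d - (2r)^d + \rho^d/(4\sqrt{d})^d$.

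The second claim is then immediate. Since $\mu$ is absolutely continuous with respect to Lebesgue measure and $f_\mu \leq w/L$ pointwise, for $\regularityConstant \leq (4\sqrt{d})^{-d}$ and $\rho \in (0,1)$ we obtain
\[
\mu\bigl(\openMetricBallSupNorm{x}{\rho}\bigr) = \mu\bigl(\closedMetricBallSupNorm{x}{\rho}\bigr) \geq \frac{w\rho^d}{L(4\sqrt{d})^d} \geq \regularityConstant \rho^d \sup_{x' \in \openMetricBallSupNorm{x}{(1+\regularityConstant)\rho}} f_\mu(x'),
\]
so that $x \in \muRegularSet$ by definition.
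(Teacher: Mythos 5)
Your proof is correct and, at the top level, mirrors the paper's: reduce to a Lebesgue lower bound on $\closedMetricBallSupNorm{x}{\rho}\cap\bigl(K_r^0(\ell)\cup K_r^1(\ell)\bigr)$, establish it by exhibiting sub-cubes, and deduce membership of $\muRegularSetArg{\mu_{L,r,w}}$ from the density cap $w/L$. Where you diverge is in the placement of the sub-cube inside $K_r^1(\ell)$. By shifting $x$ by $(\rho-s)\sigma e_j$ along a single axis, your ball of radius $s=\rho/(4\sqrt{d})$ is only guaranteed to stay inside $\closedMetricBallSupNorm{z_\ell}{r_\sharp(w)}$ for $\rho\le r_\sharp(w)-r$, which is what forces the explicit intersection-volume computation and the narrow-band algebra. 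The paper instead anchors the sub-cube at a corner of the excised hole, using $\closedMetricBallSupNorm{z_\ell+\sigma_x(2\sqrt{d}r+\tilde{r}/4)}{\tilde{r}/4}$ with $\sigma_x\in\{-1,1\}^d$; this protrudes from $z_\ell$ only to distance $2\sqrt{d}r+\tilde{r}/2$, which stays within $r_\sharp(w)$ for all $\tilde{r}\le r_\sharp(w)$ (the interval $(4\sqrt{d}r,r_\sharp(w)]$ being vacuous when $r_\sharp(w)<4\sqrt{d}r$), so a two-step rescaling plus the trivial regime $\tilde{r}>2r_\sharp(w)$ covers $(0,1)$ without further case analysis. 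Your residual regime does check out — in the band $\min\{2\rho,2r_\sharp(w),\rho+r_\sharp(w)-r\}=\rho+r_\sharp(w)-r$, and $\rho+r_\sharp(w)-r\ge 4\sqrt{d}r+\rho/(4\sqrt{d})$ follows from $\rho>4\sqrt{d}r$ together with $r_\sharp(w)>2\sqrt{d}r$, with $(a+b)^d\ge a^d+b^d$ completing the estimate — but the corner-anchored placement is the slicker route, avoiding the intersection formula entirely.
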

\begin{proof} Let $\ell \in [L]$, let $x \in K^0_r(\ell)=\closedMetricBallSupNorm{z_{\ell}}{r}$ and let $\tilde{r} \in (0,1)$.  If $\tilde{r} \in (0,2r]$, then $\closedMetricBallSupNorm{x}{\tilde{r}}\cap K^0_r(\ell)$ contains a hyper-cube of radius $\tilde{r}/2$, so $\mu_{L,r,w}\bigl(\closedMetricBallSupNorm{x}{\tilde{r}}\bigr)\geq w \cdot L^{-1} \cdot \tilde{r}^d$. Consequently, when $\tilde{r} \in \bigl(0,8d^{1/2}r\bigr]$, we have 
\[
\mu_{L,r,w}\bigl(\closedMetricBallSupNorm{x}{\tilde{r}}\bigr)\geq \mu_{L,r,w}\bigl(\closedMetricBallSupNorm{x}{\tilde{r}/(4d^{1/2})}\bigr) \geq  \frac{w}{L \cdot (4d^{1/2})^d} \cdot \tilde{r}^d. 
\]
Note also that since $x \in K^0_r(\ell) \subseteq \closedMetricBallSupNorm{z_{\ell}}{2d^{1/2}r}$ there exists $\sigma_x \in \{-1,1\}^d$ and $\tilde{x} =z_\ell+\sigma_x \cdot 2d^{1/2}r \in \R^d$ with $\supNorm{\tilde{x}-x}\leq 2d^{1/2}r$. Hence, if $\tilde{r} \in (4d^{1/2}r, r_\sharp(w)]$, then $\closedMetricBallSupNorm{{z}_\ell+\sigma_x \cdot ( 2d^{1/2}r+\tilde{r}/4)}{\tilde{r}/4} \subseteq \closedMetricBallSupNorm{x}{\tilde{r}} \cap K^1_r(\ell)$. Thus, we have $\mu_{L,r,w}\bigl(\closedMetricBallSupNorm{x}{\tilde{r}}\bigr)\geq (w/L) \cdot (\tilde{r}/2)^d$ for $\tilde{r} \in (4d^{1/2}r, r_\sharp(w)]$, and consequently, $\mu_{L,r,w}\bigl(\closedMetricBallSupNorm{x}{\tilde{r}}\bigr)\geq (w/L) \cdot (\tilde{r}/4)^d$ for $\tilde{r} \in (8d^{1/2}r,2r_\sharp(w)]$. Finally, if $\tilde{r} \in (2r_\sharp(w),1)$, then $K^0_r(\ell)\cup K_r^1(\ell) \subseteq \closedMetricBallSupNorm{{z}_\ell}{r_\sharp(w)} \subseteq \closedMetricBallSupNorm{x}{\tilde{r}}$ so $\mu_{L,r,w}\bigl(\closedMetricBallSupNorm{x}{\tilde{r}}\bigr)\geq 1/L > (w/L) \cdot \tilde{r}^d$.  The first conclusion of the lemma therefore follows.  The second part then follows from the fact that the Lebesgue density of $\mu_{L,r,w}$ is at most $w/L$ on $\R^d$.
\end{proof}

\begin{lemma}\label{lemma:approximableConditionForLB1} Let $\holderExponent >0$, $\holderConstant >1$, $L, d \in \N$, $r \in (0,\infty)$, $s \in (0,1\wedge(r/2)]$, $w \in \bigl(0,(2r)^{-d} \wedge 1\bigr)$ and $\theta \in (0,\epsilon_0/2]$, $\ell \in [L]$, $\regularityConstant\leq (4d^{1/2})^{-d}$ and let $\eta = \eta_{L,r,w,s,\theta}^{\ell}$, $\mu=\mu_{L,r,w}$ and $\probDistribution=\probDistribution_{L,r,w,s,\theta}^{\ell}$. Suppose also that $\mathcal{A}_{\mathrm{hpr}}  \subseteq \mathcal{A} \subseteq \mathcal{A}_{\mathrm{conv}}$. Given any $A \in \mathcal{A} \cap \powerSet\bigl(\etaSuperLevelSet{\tau}\bigr)$ and $\ell' \in [L]$ with $A\cap K^0_r(\ell')\neq \emptyset$ and ${z}_{\ell'} \notin A$ for some $\ell' \in [L]$, we have $\mu(A) \leq (w/L)\cdot (2r)^d/2$. In particular, $\mu(A) \leq (w/L)\cdot (2r)^d/2$ whenever $A\cap K^0_r(\ell')\neq \emptyset$ for some $A \in \mathcal{A} \cap \powerSet\bigl(\etaSuperLevelSet{\tau}\bigr)$ and $\ell' \in [L]\setminus \{\ell\}$. Moreover, $M_\tau(\probDistribution,\mathcal{A})= \mu\bigl(K^0_r(\ell)\bigr)= (w/L) \cdot (2r)^{d}$.  Finally, if $(w/L) \cdot (2r)^d \leq  \approximableSetsConstant \cdot \min\big\{ (w/\{(4d^{1/2})^d\cdot L\})^{\approximableDensityExponent}, \theta^{\approximableMarginExponent}\big\}$, then $\probDistribution \in \classOfWellApproximableSets\cap \classOfWellApproximableSetsHO$.
\end{lemma}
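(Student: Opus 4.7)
The plan is to verify the four assertions in order. The key analytic tool is the weak separating-hyperplane theorem applied to $A$ and $\{z_{\ell'}\}$, combined with the observation that $\etaSuperLevelSet{\tau}$ is disjoint from every $K_r^1(\ell'')$: any $x$ in the latter has $\supNorm{x-z_{\ell''}} \geq 2d^{1/2}r$, hence $\|x-z_{\ell''}\|_2 \geq 2d^{1/2}r$, and by the separation $\supNorm{z_{\ell''}-z_{\ell'''}} > 2(r_\sharp(w)+1)$ of the centres, the same Euclidean lower bound holds with every other $z_{\ell'''}$, placing $x$ in the final ``otherwise'' branch of~\eqref{def:etaFuncFirstPartLBExtension} where $\eta = \tau-\theta$.

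For the first claim, fix $A \in \mathcal{A}\cap \powerSet(\etaSuperLevelSet{\tau})$ with $A \cap K_r^0(\ell')\neq \emptyset$ and $z_{\ell'}\notin A$. I first establish $A\cap J_{L,r,w}\subseteq K_r^0(\ell')$: the preceding observation rules out all $K_r^1(\ell'')$, and for any $\ell''\neq \ell'$ a hypothetical $q\in A\cap K_r^0(\ell'')$ together with any $p\in A\cap K_r^0(\ell')$ would force the segment $[p,q]\subseteq A$ by convexity; but the convex map $t\mapsto \|(1-t)p+tq-z_{\ell'}\|_2$ takes value $\leq d^{1/2}r$ at $t=0$ and value $\geq 2(r_\sharp(w)+1)-d^{1/2}r > 2d^{1/2}r$ at $t=1$, so it equals $2d^{1/2}r$ at some interior $t_*$, and the triangle inequality at that $t_*$ forces Euclidean distance $>2d^{1/2}r$ from every other centre, placing $x_{t_*}$ in the $\eta=\tau-\theta$ region, a contradiction. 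Since $A$ is convex with $z_{\ell'}\notin A$, the weak separating-hyperplane theorem produces a closed half-space $H\supseteq A$ with $z_{\ell'}\notin \mathrm{int}(H)$; central symmetry of $K_r^0(\ell')=\closedMetricBallSupNorm{z_{\ell'}}{r}$ about $z_{\ell'}$ then yields $\Lebesgue(H\cap K_r^0(\ell')) \leq (2r)^d/2$, whence $\mu(A) = \mu(A\cap K_r^0(\ell'))\leq (w/L)(2r)^d/2$.

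The ``in particular'' follows because for $\ell'\neq \ell$, $\eta(z_{\ell'})=\tau-\theta<\tau$ forces $z_{\ell'}\notin \etaSuperLevelSet{\tau}\supseteq A$ automatically. For $M_\tau(P,\mathcal{A})$, the inclusions $K_r^0(\ell)\subseteq \closedMetricBallEuclideanNorm{z_\ell}{d^{1/2}r}\subseteq \etaSuperLevelSet{\tau}$ and $K_r^0(\ell)\in \mathcal{A}_{\mathrm{rect}}\subseteq \mathcal{A}$ give $M_\tau\geq \mu(K_r^0(\ell)) = (w/L)(2r)^d$; conversely any $A\in\mathcal{A}\cap\powerSet(\etaSuperLevelSet{\tau})$ either meets some $K_r^0(\ell')$ with $\ell'\neq \ell$ (in which case $\mu(A)\leq (w/L)(2r)^d/2$ by the previous step) or has $A\cap J_{L,r,w}\subseteq K_r^0(\ell)$, giving $\mu(A)\leq (w/L)(2r)^d$.

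Finally, for membership in both approximable classes I take $K_r^0(\ell)$ as approximant for every $(\xi,\Delta)$. By Lemma~\ref{lemma:LB1densityLB} together with $\regularityConstant\leq (4d^{1/2})^{-d}$, $K_r^0(\ell)\subseteq \muRegularSet$ and $\lowerDensity\geq w/(L(4d^{1/2})^d)$ there, while $f_\mu\equiv w/L$ on $J_{L,r,w}$ and $\eta\equiv \tau+\theta$ on $K_r^0(\ell)$. Hence, whenever $\xi\leq w/(L(4d^{1/2})^d)$ and $\Delta\leq \theta$, $K_r^0(\ell)$ lies inside both $\omegaSuperLevelSet{\xi}\cap \etaSuperLevelSet{\tau+\Delta}$ and $\muRegularSet\cap \densitySuperLevelSet{\xi}\cap \etaSuperLevelSet{\tau+\Delta}$, so the suprema in Definitions~\ref{defn:approximableMeasureClass} and~\ref{defn:approximableMeasureClassHO} both attain $M_\tau$. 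For $(\xi,\Delta)$ outside this rectangle, the hypothesis $(w/L)(2r)^d\leq \approximableSetsConstant\cdot \min\{(w/((4d^{1/2})^d L))^\kappa,\theta^\gamma\}$ forces $\approximableSetsConstant(\xi^\kappa+\Delta^\gamma)\geq M_\tau$, so the required inequality holds trivially. The only non-routine step is the connected-components/separating-hyperplane argument in the second paragraph; everything else is bookkeeping against the definitions.
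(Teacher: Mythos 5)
Your proof is correct, and in one place it is actually more careful than the paper's own argument. The overall plan is the same: show $A\cap J_{L,r,w}\subseteq K^0_r(\ell')$, then bound $\mu(A)$, then compute $M_\tau$, then verify the approximability inequality with $K_r^0(\ell)$ as the universal approximant. The interesting divergence is in the measure bound for the first claim. The paper's proof asserts that $A\cap\support(\mu)=A\cap K^0_r(\ell')$ ``is the intersection of two axis-aligned hyper-rectangles, so is itself an axis-aligned hyper-rectangle'' and then uses the fact that an axis-aligned box inside a cube missing the centre occupies at most half the cube. That assertion is literally false when $A$ is a general convex set in $\mathcal{A}_{\mathrm{conv}}$ (which the lemma must allow). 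Your separating-hyperplane argument — a weak supporting hyperplane at $z_{\ell'}$ gives a half-space $H\supseteq A$ with $z_{\ell'}\notin\mathrm{int}(H)$, and central symmetry of the cube $K^0_r(\ell')$ about $z_{\ell'}$ forces $\Lebesgue(H\cap K_r^0(\ell'))\leq (2r)^d/2$ — is correct, applies to arbitrary convex $A$, and yields the stated bound. So this part of your proof closes a small gap in the paper rather than introducing one.

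Your route to $A\cap J_{L,r,w}\subseteq K_r^0(\ell')$ (ruling out all $K_r^1(\ell'')$ by noting $\eta\equiv\tau-\theta$ there, then ruling out other $K_r^0(\ell'')$ by an IVT/segment argument) is a bit more laborious than the paper's, which observes that $A$ cannot cross the annulus $K^1_r(\ell')$ and must therefore lie inside $\openMetricBallSupNorm{z_{\ell'}}{2d^{1/2}r}$, whence $A\cap\support(\mu)=A\cap K^0_r(\ell')$ since all other components of the support are at sup-norm distance $>2(r_\sharp(w)+1)$. Both get to the same place. The remaining parts — the ``in particular'' via $\eta(z_{\ell'})=\tau-\theta<\tau$, the two-sided argument for $M_\tau$, and the case split over $(\xi,\Delta)$ using Lemma~\ref{lemma:LB1densityLB} — match the paper's treatment.
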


\begin{proof} First take $A \in \mathcal{A} \cap \powerSet\bigl(\etaSuperLevelSet{\tau}\bigr)$ and $\ell' \in [L]$ with $A\cap  K^0_r(\ell')\neq \emptyset$ and ${z}_{\ell'} \notin A$.  Since $\eta(x)=\tau-\theta$ for all $x \in K^1_r(\ell')$, we must have $A \cap  \bigl(K^1_r(\ell')\cup \{{z}_{\ell'}\}\bigr) = \emptyset$. Moreover, $A$ is convex with $A\cap K^0_r(\ell')\neq \emptyset$, and it follows that $A \subseteq \{x \in \R^d~:~\supNorm{x-{z}_{\ell'}}<2d^{1/2}r\}$. Thus $A \cap \support(\mu)=A \cap \{x \in \R^d~:~\supNorm{x-{z}_{\ell'}}<2d^{1/2}r\} \cap \support(\mu)=A \cap K^0_r(\ell')$ is the intersection of two axis-aligned hyper-rectangles, so is itself an axis-aligned hyper-rectangle. Since $A \cap \support(\mu) \subseteq K^0_r(\ell')\setminus\{{z}_{\ell'}\}=\closedMetricBallSupNorm{{z}_{\ell'}}{r}\setminus\{{z}_{\ell'}\}$, we deduce that $\mu(A) \leq (w/L)\cdot (2r)^d/2$.  In particular, if $\ell' \in [L] \setminus \{\ell\}$, then $\eta({z}_{\ell'})=\tau-\theta$, so ${z}_{\ell'} \notin \etaSuperLevelSet{\tau}$ and the conclusion $\mu(A) \leq (w/L)\cdot (2r)^d/2$ holds.

For the next part, note that $K^0_r(\ell)=\closedMetricBallSupNorm{{z}_\ell}{r} \in \mathcal{A}_{\mathrm{hpr}}  \cap \powerSet\bigl(\etaSuperLevelSet{\tau}\bigr)\subseteq \mathcal{A} \cap \powerSet\bigl(\etaSuperLevelSet{\tau}\bigr)$ since $\eta(x) =\tau+\theta$ for all $x \in K^0_r(\ell)$. Hence, $M_\tau(\probDistribution,\mathcal{A}) \geq \mu\big(K^0_r(\ell)\big)= (w/L) \cdot (2r)^{d}$.  On the other hand, given $A \in \mathcal{A} \cap \powerSet\bigl(\etaSuperLevelSet{\tau}\bigr)$, we have either $A \cap \support(\mu) \subseteq K^0_r(\ell)$, in which case $\mu(A) \leq \mu\big(K^0_r(\ell)\big)= (w/L) \cdot (2r)^{d}$, or $A \cap \support(\mu) \cap K^0_r(\ell') \neq \emptyset$ for some $\ell' \in [L]\setminus \{\ell\}$, since $\support(\mu) \cap \etaSuperLevelSet{\tau} \subseteq \bigcup_{\ell \in [L]}K^0_r(\ell)$, in which case $\mu(A) \leq (w/L)\cdot (2r)^d/2$. Hence $M_\tau(\probDistribution,\mathcal{A}) = (w/L) \cdot (2r)^{d}$.

For the final part, assume that $(w/L) \cdot (2r)^d \leq  \approximableSetsConstant \cdot \min\big\{ \{w/(4^{d}d^{1/2}\cdot L)\}^{\approximableDensityExponent}, \theta^{\approximableMarginExponent}\big\}$, and fix $(\xi,\Delta) \in (0,\infty)^2$.  We consider two cases: first suppose that $\xi \leq {w}/\{L \cdot (4d^{1/2})^d\}$ and $\Delta \leq \theta$.  By Lemma~\ref{lemma:LB1densityLB}, we have $K^0_r(\ell) \subseteq  \mathcal{X}_\xi({\lowerDensity_{\mu,d}})$. Moreover, it follows from the construction of $\eta$ that $K^0_r(\ell) \subseteq \mathcal{X}_{\tau+\theta}(\eta) \subseteq \mathcal{X}_{\tau+\Delta}(\eta)$. Thus, with $A_{\xi,\Delta} = K^0_r(\ell) \in \mathcal{A}\cap \powerSet\bigl(\mathcal{X}_{\xi}(\lowerDensity_{\mu,d}) \cap \mathcal{X}_{\tau+\Delta}(\eta)\bigr)$, we have
\begin{align*}
 \mu(A_{\xi,\Delta}) = \frac{w}{L} \cdot (2r)^d \geq \frac{w}{L} \cdot (2r)^d- \approximableSetsConstant \cdot (\xi^{\approximableDensityExponent}+\Delta^{\approximableMarginExponent}) = M_\tau - \approximableSetsConstant \cdot (\xi^{\approximableDensityExponent}+\Delta^{\approximableMarginExponent}).
\end{align*}
On the other hand, if $\xi > {w}/\{L \cdot (4d^{1/2})^d\}$ or $\Delta > \theta$, then with $A_{\xi,\Delta}=\emptyset \in \mathcal{A}\cap \powerSet\bigl(\mathcal{X}_{\xi}(\lowerDensity_{\mu,d}) \cap \mathcal{X}_{\tau+\Delta}(\eta)\bigr)$, we have
\begin{align*}
\mu(A_{\xi,\Delta}) = 0 \geq \frac{w}{L} \cdot (2r)^d  - \approximableSetsConstant \cdot \min\biggl\{ \biggl( \frac{w}{(4d^{1/2})^{d}\cdot L}\biggr)^{\approximableDensityExponent}\!\!, \theta^{\approximableMarginExponent}\biggr\} \geq M_\tau - \approximableSetsConstant \cdot (\xi^{\approximableDensityExponent}+\Delta^{\approximableMarginExponent}).
\end{align*}
We conclude that $\probDistribution \in \classOfWellApproximableSets$. To prove $\probDistribution \in \classOfWellApproximableSetsHO$, we proceed similarly using the facts that $K^0_r(\ell) \subseteq \mathcal{R}_{\regularityConstant}(\mu)$ by Lemma~\ref{lemma:LB1densityLB} and that $\mu$ has Lebesgue density at least $w/L$ on $K^0_r(\ell)$.
\end{proof}
Lemma~\ref{lemma:chiSqrDistanceFirstPartNegative} below bounds the $\chi^2$-divergence between pairs of  distributions in our class $\{P_{L,r,w,s,\theta}^\ell:\ell \in [L]\}$.
\begin{lemma}\label{lemma:chiSqrDistanceFirstPartNegative} Suppose that  $\epsilon_0 \in (0,1/2)$, $\tau \in [\epsilon_0,1-\epsilon_0]$, $L,d \in \N$, $r \in (0,\infty)$, $s \in (0,1 \wedge (r/2)]$, $w \in \bigl(0,(2r)^{-d} \wedge 1\bigr)$ and $\theta \in (0,\epsilon_0/2]$. Then 
\[
\chi^2\big(\probDistribution^\ell_{L,r,w,s,\theta} ,\probDistribution^{\ell'}_{L,r,w,s,\theta}  \big)\leq  \frac{2^{5+2d} \cdot w  \cdot \theta^2 \cdot s^d}{\epsilon_0 \cdot L}
\]
for all $\ell,\ell' \in [L]$. 
\end{lemma}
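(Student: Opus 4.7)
The plan is to exploit the fact that $P^{\ell} := P^\ell_{L,r,w,s,\theta}$ and $P^{\ell'}$ share a common marginal $\mu = \mu_{L,r,w}$ on $\R^d$ and have Bernoulli conditional distributions. Under these two conditions, a direct calculation with the Radon--Nikodym derivative gives the decomposition
\begin{align*}
\chi^2(P^\ell, P^{\ell'}) = \int_{\R^d} \chi^2\bigl(\mathrm{Bern}(\eta^\ell(x)), \mathrm{Bern}(\eta^{\ell'}(x))\bigr)\, d\mu(x) = \int_{\R^d} \frac{\bigl(\eta^\ell(x) - \eta^{\ell'}(x)\bigr)^2}{\eta^{\ell'}(x)\bigl(1-\eta^{\ell'}(x)\bigr)}\, d\mu(x),
\end{align*}
where $\eta^\ell := \eta^\ell_{L,r,w,s,\theta}$. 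This reduces the problem to a weighted $L^2$-estimate of $\eta^\ell - \eta^{\ell'}$.

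Next, the plan is to handle the denominator uniformly and compute the support of the numerator. Since $\theta \le \epsilon_0/2$ and $\tau \in [\epsilon_0, 1-\epsilon_0]$, every value taken by $\eta^{\ell'}$ lies in $[\tau-\theta,\tau+\theta] \subseteq [\epsilon_0/2, 1-\epsilon_0/2]$; concavity of $y\mapsto y(1-y)$ then gives $\eta^{\ell'}(x)\bigl(1-\eta^{\ell'}(x)\bigr) \geq (\epsilon_0/2)(1-\epsilon_0/2) \geq \epsilon_0/4$. Combining these,
\begin{align*}
\chi^2(P^\ell, P^{\ell'}) \leq \frac{4}{\epsilon_0}\int_{\R^d} \bigl(\eta^\ell(x) - \eta^{\ell'}(x)\bigr)^2\, d\mu(x).
\end{align*}

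The main (though not difficult) step is to locate the support of $\eta^\ell - \eta^{\ell'}$. A careful case analysis of the piecewise definition \eqref{def:etaFuncFirstPartLBExtension} shows: on every $K_r^0(\ell'')$ with $\ell'' \notin \{\ell,\ell'\}$ and on every annular transition region around such $z_{\ell''}$, the two functions agree; on each annulus $\{d^{1/2}r < \|x-z_{\ell''}\|_2 < 2d^{1/2}r\}$ they also agree because the final case of \eqref{def:etaFuncFirstPartLBExtension} applies to \emph{every} centre. Only near $z_\ell$ and $z_{\ell'}$ do the functions disagree: in $\closedMetricBallEuclideanNorm{z_\ell}{d^{1/2}r}$ we have $\eta^\ell \equiv \tau+\theta$, while $\eta^{\ell'}$ equals $\tau-\theta$ inside $\closedMetricBallEuclideanNorm{z_\ell}{s}$, interpolates in the shell $s < \|x-z_\ell\|_2 \le 2s$, and equals $\tau+\theta$ for $\|x-z_\ell\|_2>2s$ (and symmetrically around $z_{\ell'}$). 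Thus $|\eta^\ell - \eta^{\ell'}| \leq 2\theta$ and the difference is supported on $\closedMetricBallEuclideanNorm{z_\ell}{2s}\cup \closedMetricBallEuclideanNorm{z_{\ell'}}{2s}$.

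Finally, since $\mu$ has a Lebesgue density bounded by $w/L$ on its support, and each Euclidean ball $\closedMetricBallEuclideanNorm{z_{\ell''}}{2s}$ is contained in an $\ell_\infty$-ball of volume $(4s)^d = 2^{2d}s^d$, we obtain
\begin{align*}
\int (\eta^\ell - \eta^{\ell'})^2\, d\mu \leq (2\theta)^2 \cdot 2 \cdot \frac{w}{L}\cdot 2^{2d} s^d = \frac{2^{2d+3}\, w\, \theta^2\, s^d}{L},
\end{align*}
which combined with the previous display yields the stated bound $2^{2d+5}\, w\, \theta^2\, s^d/(\epsilon_0 L)$. The only place requiring real care is the verification that the transition rings around the $d^{1/2}r$-balls match across $\eta^\ell$ and $\eta^{\ell'}$; once that is noted, the remainder is a routine variance computation.
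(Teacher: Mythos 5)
Your proposal is correct and follows essentially the same route as the paper's proof: both reduce $\chi^2(P^\ell,P^{\ell'})$ to $\int (\eta^\ell-\eta^{\ell'})^2\bigl\{\tfrac{1}{\eta^{\ell'}}+\tfrac{1}{1-\eta^{\ell'}}\bigr\}\,d\mu \leq \tfrac{4}{\epsilon_0}\int(\eta^\ell-\eta^{\ell'})^2\,d\mu$, observe that the difference is bounded by $2\theta$ and supported only in the radius-$2s$ neighbourhoods of $z_\ell$ and $z_{\ell'}$ (the transition annuli at radius $d^{1/2}r$ being identical for both regression functions), and bound the $\mu$-mass of those neighbourhoods by $2(w/L)(4s)^d$, yielding the constant $2^{5+2d}$. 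The only cosmetic difference is that you work with Euclidean balls enclosed in $\ell_\infty$-balls while the paper uses $\ell_\infty$-balls directly, which changes nothing.
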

\begin{proof} Let $Q_{L,r,w}:=\mu_{L,r,w} \times m_{\mathrm{ct}}$ where $m_{\mathrm{ct}}$ denotes the counting measure on $\{0,1\}$. Note that $\probDistribution^{\ell}_{L,r,w,s,\theta}$ is absolutely continuous with respect to $Q_{L,r,w}$, for all $\ell \in [L]$. Given $\ell \in [L]$, define $p_\ell:\R^d \times \{0,1\} \rightarrow \R$ by
\begin{align*}
p_{\ell}(x,y):&=\frac{d\probDistribution^{\ell}_{L,r,w,s,\theta}}{dQ_{L,r,w}}(x,y)=  (1-y) \cdot \bigl(1-\regressionFunction_{L,r,\theta}^{\ell}(x)\bigr)+y\cdot \regressionFunction_{L,r,\theta}^{\ell}(x).
\end{align*}
Take $\ell,~\ell' \in [L]$ with $\ell\neq \ell'$ and observe that $\eta^{\ell}_{L,r,w,s,\theta}(x)= \eta_{L,r,w,s,\theta}^{\ell'}(x)$ for all $x \in J_{L,r,w} \setminus \big(\closedMetricBallSupNorm{{z}_\ell}{2s}\cup \closedMetricBallSupNorm{{z}_{\ell'}}{2s}\big)$. Note also that $\mu_{L,r,w}\big(\closedMetricBallSupNorm{{z}_\ell}{2s}\cup \closedMetricBallSupNorm{{z}_{\ell'}}{2s}\big) = (2w/L) \cdot (4s)^d$; moreover, $\regressionFunction_{L,r,\theta}^{\ell}(x) \in [\tau-\theta,\tau+\theta] \subseteq [\epsilon_0/2,1-\epsilon_0/2]$ for all $x \in J_{L,r,w}$ and $\ell \in [L]$. Hence, 
\begin{align*}
\chi^2&(\probDistribution^\ell_{L,r,w,s,\theta},\probDistribution^{\ell'}_{L,r,w,s,\theta})\\ &=\int_{\R^d\times \{0,1\}} \biggl(\frac{d\probDistribution^{\ell}_{L,r,w,s,\theta}}{d \probDistribution^{\ell'}_{L,r,\theta}}-1\biggr)^2d\probDistribution^{\ell'}_{L,r,\theta}\\
&=\int_{\R^d\times \{0,1\}} \biggl(\frac{p_{\ell}(x,y)}{p_{\ell'}(x,y)}-1\biggr)^2 p_{\ell'}(x,y) \, dQ_{L,r,w}(x,y)\\
&=\int_{\R^d\times \{0,1\}} \frac{\{p_{\ell}(x,y)-p_{\ell'}(x,y)\}^2}{p_{\ell'}(x,y)} \, dQ_{L,r,w}(x,y)\\
&=\int_{\R^d} \biggl( \frac{\{\eta^{\ell}_{L,r,w,s,\theta}(x)- \eta_{L,r,w,s,\theta}^{\ell'}(x)\}^2}{1-\eta_{L,r,w,s,\theta}^{\ell'}(x)}+\frac{\{\eta^{\ell}_{L,r,w,s,\theta}(x)- \eta_{L,r,w,s,\theta}^{\ell'}(x)\}^2}{\eta_{L,r,w,s,\theta}^{\ell'}(x)} \biggr) \, d\mu_{L,r,w}(x)\\
&\leq \frac{4}{\epsilon_0} \int_{\closedMetricBallSupNorm{{z}_\ell}{2s}\cup \closedMetricBallSupNorm{{z}_{\ell'}}{2s}}  \bigl\{\eta^{\ell}_{L,r,w,s,\theta}(x)- \eta_{L,r,w,s,\theta}^{\ell'}(x)\bigr\}^2 \, d\mu_{L,r,w}(x)\\
&\leq \frac{4}{\epsilon_0} \cdot (2\theta)^2 \cdot \mu_{L,r,w}\big(\closedMetricBallSupNorm{{z}_\ell}{2s}\cup \closedMetricBallSupNorm{{z}_{\ell'}}{2s}\big) = \frac{2^{5+2d} \cdot w \cdot \theta^2 \cdot s^d}{ \epsilon_0 \cdot L} ,
\end{align*}
as required.
\end{proof}
\newcommand{\xiForProofNZeta}{\xi_{n,\zeta,\holderConstant}}
We are now in a position to state the crucial proposition for the proof of Proposition~\ref{lemma:constrainedRiskApplicationfanoApplication}.
\begin{prop}\label{lemma:completingFirstLBConstructions}  Assume that $\mathcal{A}_{\mathrm{hpr}}  \subseteq \mathcal{A} \subseteq \mathcal{A}_{\mathrm{conv}}$. Fix $(\holderExponent,\approximableMarginExponent,\approximableDensityExponent,\holderConstant,\approximableSetsConstant)\in (0,\infty)^3\times [1,\infty)^2$ with $ \holderExponent\approximableMarginExponent(\approximableDensityExponent-1) < d \approximableDensityExponent$, $\epsilon_0 \in (0,1/2)$, $\tau \in [\epsilon_0,1-\epsilon_0]$,  $\regularityConstant\leq (4d^{1/2})^{-d}$ and $\zeta >0$. There exist $C_0\ge 1$, $c_0, c_1 > 0$, depending only on $d$, $\holderExponent$, $\approximableMarginExponent$, $\approximableDensityExponent$, $\approximableSetsConstant$ and $\epsilon_0$, such that for any $n \geq C_0 \holderConstant^{d/\holderExponent} \log(1+ \zeta)$ there exists a family of $L \geq \bigl\{ c_0 \bigl({n}/{\{\holderConstant^{d/\holderExponent}\log(1+\zeta)}\}\bigr)^{\frac{\holderExponent \approximableMarginExponent(\approximableDensityExponent \wedge 1)}{\approximableDensityExponent(2\holderExponent+d)+\holderExponent\approximableMarginExponent}} \bigr\}\vee 4$ distributions 
\[
\{P_1,\ldots, P_L \}\subseteq \classOfLBDistributions
\] 
with regression functions $\eta_1,\ldots,\eta_L$ and common marginal distribution $\mu$ on $\R^d$, such that
\begin{enumerate}
\item [(a)] $\chi^2\bigl(P_\ell^{\otimes n},P_{\ell'}^{\otimes n}\bigr) \leq \zeta$ for all $\ell$, $\ell' \in [L]$;
\item[(b)] if $A \in \mathcal{A} \cap \powerSet\bigl( \mathcal{X}_{\tau}(\eta_\ell)\cap \mathcal{X}_{\tau}(\eta_{\ell'})\bigr)$ for some $\ell$,$\ell'\in [L]$ with $\ell \neq \ell'$, then
\begin{align}\label{eq:conclusionCompletingFirstLBConstructions}
M_\tau\bigl(P_\ell,\mathcal{A}\bigr)-\mu(A) \geq c_1 \cdot \biggl( \frac{\holderConstant^{d/\holderExponent} \cdot \log(1+\zeta)}{n}\biggr)^{\frac{\holderExponent \approximableMarginExponent \approximableDensityExponent}{\approximableDensityExponent(2\holderExponent+d)+\holderExponent\approximableMarginExponent}}.
\end{align}
\end{enumerate}
\end{prop}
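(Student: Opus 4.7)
The plan is to specialise the construction $\{P_{L,r,w,s,\theta}^\ell : \ell \in [L]\}$ from Section~\ref{Sec:LowerBound} to a particular choice of the parameters $(L,r,w,s,\theta)$ and then apply, in sequence: Lemma~\ref{lemma:LB1HolderRegFunctions} for H\"older regularity; Lemma~\ref{lemma:approximableConditionForLB1} for membership in the approximable classes and for the separation conclusion~(b); and Lemma~\ref{lemma:chiSqrDistanceFirstPartNegative} together with the tensorisation identity $\chi^2(P_\ell^{\otimes n},P_{\ell'}^{\otimes n}) = (1+\chi^2(P_\ell,P_{\ell'}))^n - 1$ and $(1+x)^n \leq e^{nx}$ for~(a). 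Writing $t := \log(1+\zeta)/n$ and $\rho := \approximableDensityExponent(2\holderExponent+d)+\holderExponent\approximableMarginExponent$, the target separation on the right-hand side of~\eqref{eq:conclusionCompletingFirstLBConstructions} is $t^{\holderExponent\approximableMarginExponent\approximableDensityExponent/\rho}$; since Lemma~\ref{lemma:approximableConditionForLB1} identifies $M_\tau(P_\ell,\mathcal{A})$ with $(w/L)(2r)^d$ (and bounds $\mu(A)$ by half of this for the sets $A$ of interest), this translates into the scaling $(w/L)(2r)^d \asymp t^{\holderExponent\approximableMarginExponent\approximableDensityExponent/\rho}$.

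I would set $s := c_s t^{\approximableDensityExponent/\rho}$, $\theta := c_\theta t^{\holderExponent\approximableDensityExponent/\rho}$ and $w/L := c_u t^{\holderExponent\approximableMarginExponent/\rho}$, with $c_\theta$ small enough that $\theta \leq c^{\flat}_{\holderExponent,d}\holderConstant s^\holderExponent$ (enabling Lemma~\ref{lemma:LB1HolderRegFunctions}) and $c_u c_\theta^2 c_s^d \leq \epsilon_0/2^{5+2d}$, so that the $\chi^2$ bound of Lemma~\ref{lemma:chiSqrDistanceFirstPartNegative} reads $\chi^2(P_\ell,P_{\ell'}) \leq t$, and hence $\chi^2(P_\ell^{\otimes n},P_{\ell'}^{\otimes n}) \leq e^{nt}-1 = \zeta$. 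To pin down $r$ and $w$ individually I split into two regimes: when $\approximableDensityExponent \geq 1$, fix $w$ to be a small constant and set $r := c_r t^{\holderExponent\approximableMarginExponent(\approximableDensityExponent-1)/(d\rho)}$, which tends to $0$ as $t\to 0$, yielding $L = w/u \asymp t^{-\holderExponent\approximableMarginExponent/\rho}$; when $\approximableDensityExponent<1$ take $r := c_r t^{-\holderExponent\approximableMarginExponent(1-\approximableDensityExponent)/(d\rho)}$ and $w := c_w t^{\holderExponent\approximableMarginExponent(1-\approximableDensityExponent)/\rho}$, giving $L \asymp t^{-\holderExponent\approximableMarginExponent\approximableDensityExponent/\rho}$. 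In both regimes the two inequalities required in the final clause of Lemma~\ref{lemma:approximableConditionForLB1}, namely $(w/L)(2r)^d \leq \approximableSetsConstant(w/\{(4d^{1/2})^d L\})^{\approximableDensityExponent}$ and $(w/L)(2r)^d \leq \approximableSetsConstant \theta^{\approximableMarginExponent}$, hold by choice of constants (each side scales as $t^{\holderExponent\approximableMarginExponent\approximableDensityExponent/\rho}$); the side conditions $s \in (0,r/2\wedge 1]$, $\theta \in (0,\epsilon_0/2]$ and $w \in (0,1\wedge (2r)^{-d})$ all hold once $t$ is smaller than a constant depending only on the model parameters, which is the role of the hypothesis $n \geq C_0 \log(1+\zeta)$ (also chosen large enough that $L \geq 4$). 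The centres $z_1,\ldots,z_L$ can then be placed on an integer-scaled grid in $\R^d$ with spacing $3(r_\sharp(w)+1)$.

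For (b), I would argue that if $A \in \mathcal{A} \cap \powerSet(\mathcal{X}_\tau(\eta_\ell)\cap \mathcal{X}_\tau(\eta_{\ell'}))$ for distinct $\ell,\ell'$, then no centre $z_m$ lies in $A$: the construction~\eqref{def:etaFuncFirstPartLBExtension} gives $\eta_\ell(z_m) = \tau-\theta<\tau$ for $m \neq \ell$ (so $z_m \notin A$), while the second branch of the definition of $\eta_{\ell'}$ with $\ell''=\ell$ gives $\eta_{\ell'}(z_\ell) = \tau-\theta$ (so $z_\ell \notin A$ as well). The construction also forces $\eta_\ell \equiv \tau-\theta$ on $\bigcup_m K_r^1(m)$ (since any $x\in K_r^1(m)$ has $\supNorm{x-z_{\ell'}}>d^{1/2}r$ for every $\ell'\in[L]$ once $r$ is small), so any point of $A \cap \support(\mu)$ must lie in some $K_r^0(m^*)$. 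We may assume $A \cap \support(\mu)\neq\emptyset$ (else $\mu(A)=0$ and the claim is immediate), and then the first statement of Lemma~\ref{lemma:approximableConditionForLB1}, applied with this $m^*$ and using $z_{m^*} \notin A$, yields $\mu(A) \leq (w/L)(2r)^d/2 = M_\tau(P_\ell,\mathcal{A})/2$, which gives the required separation.

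I expect the main obstacle to be the bookkeeping across the two regimes $\approximableDensityExponent\geq 1$ and $\approximableDensityExponent<1$, and in particular verifying $s \leq r/2$ in the $\approximableDensityExponent \geq 1$ regime. This is where the hypothesis $\holderExponent\approximableMarginExponent(\approximableDensityExponent-1) < d\approximableDensityExponent$ enters: the exponent of $t$ in $s^d/r^d$ equals $[d\approximableDensityExponent - \holderExponent\approximableMarginExponent(\approximableDensityExponent-1)]/\rho$, which is strictly positive exactly under this hypothesis, so that $s^d/r^d \to 0$ as $t\to 0$.
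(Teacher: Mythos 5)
Your proposal is correct and follows essentially the same route as the paper's proof: the paper also sets $s \asymp t^{\approximableDensityExponent/\rho}$, $\theta \asymp t^{\holderExponent\approximableDensityExponent/\rho}$, $w/L \asymp t^{\holderExponent\approximableMarginExponent/\rho}$, $r \asymp \theta^{\approximableMarginExponent(\approximableDensityExponent-1)/(d\approximableDensityExponent)}$ and $L := \lfloor (8d^{1/2})^{-1}\theta^{-\approximableMarginExponent/\approximableDensityExponent}\{(2r)^{-d}\wedge 1\}\rfloor$, which simply encodes your two regimes $\approximableDensityExponent \ge 1$ (where $(2r)^{-d}\wedge 1 = 1$ and $w$ is constant) and $\approximableDensityExponent<1$ (where $(2r)^{-d}$ takes over) in a single formula; it then invokes the same three lemmas in the same order, and for (b) uses the ``in particular'' clause of Lemma~\ref{lemma:approximableConditionForLB1} rather than first showing $z_m \notin A$ for all $m$, but this is an equivalent reorganisation. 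One small slip in your write-up: the branch conditions of~\eqref{def:etaFuncFirstPartLBExtension} are stated in the Euclidean norm, so the relevant observation is that for $x \in K_r^1(m)$ one has $\|x-z_{\ell'}\|_2 \geq \supNorm{x-z_{m}} \geq 2d^{1/2}r$ for all $\ell'$ (by construction, not ``once $r$ is small''), not the $\ell_\infty$ inequality you wrote; the conclusion $\eta_\ell \equiv \tau-\theta$ on $\bigcup_m K_r^1(m)$ is nonetheless correct.
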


\begin{proof} We first define some quantities for our construction.  Let $\rho:={\approximableDensityExponent(2\holderExponent+d)+\holderExponent\approximableMarginExponent}$, 
\begin{align*}
\xiForProofNZeta &:= \frac{\epsilon_0 \cdot ( c^\flat_{\holderExponent,d})^{d/\holderExponent} }{2^{5+4d}d^{d/2}} \cdot \frac{ \holderConstant^{d/\holderExponent}\cdot \log(1+\zeta)}{n}, \ \ \theta:= \xiForProofNZeta^{\approximableDensityExponent \holderExponent/\rho}, \ \ r:=\approximableSetsConstant^{1/d}\cdot ({8d^{1/2}})^{-1}\cdot{\theta^{\frac{\approximableMarginExponent(\approximableDensityExponent-1)}{d\approximableDensityExponent}}}, \\
 L &:= \big\lfloor (8d^{1/2})^{-1} \theta^{-\approximableMarginExponent/\approximableDensityExponent} \bigl\{(2r)^{-d} \wedge 1\bigr\} \big\rfloor, \ \ w := (4d^{1/2})^d \cdot L \cdot  \theta^{{\approximableMarginExponent}/{\approximableDensityExponent}} \ \ \text{and} \ \  s:=\Bigl(\frac{\theta}{c^\flat_{\holderExponent,d} \holderConstant}\Bigr)^{1/\holderExponent}.
 \end{align*}
Finally, let $C_0 := C_0^0 \cdot (C_0^1 \vee C_0^2 \vee C_0^3 \vee C_0^4 \vee C_0^5)$, where
\begin{align*}
C_0^0 &:= \frac{\epsilon_0 \cdot (c^\flat_{\holderExponent,d})^{d/\holderExponent}}{2^{5+4d} \cdot d^{d/2}}, \ \ C_0^1 := \biggl( \frac{16d^{1/2}}{ (c^\flat_{\holderExponent,d})^{1/\holderExponent}\cdot \approximableSetsConstant^{1/d}}\biggr)^{\frac{d\rho}{d\approximableDensityExponent - \holderExponent \approximableMarginExponent(\approximableDensityExponent-1)}}, \ \ C_0^2 := \frac{1}{(c^\flat_{\holderExponent,d} )^{\rho/(\approximableDensityExponent \holderExponent)}}, \\
C_0^3 &:=\Bigl(\frac{2}{\epsilon_0}\Bigr)^{\frac{\rho}{\approximableDensityExponent\holderExponent}}, \ \ C_0^4 :=  \biggl(\frac{\approximableSetsConstant}{2^{2d-5}d^{(d-1)/2}}\biggr)^{\frac{\rho}{\approximableDensityExponent\holderExponent\approximableMarginExponent}} \quad  \text{and} \quad  C_0^5 := (32d^{1/2})^{\frac{\rho}{\holderExponent\approximableMarginExponent}}.
\end{align*}
Observe that when $n \geq C_0  \cdot \holderConstant^{d/\holderExponent} \cdot \log(1+\zeta)$, we have $\xiForProofNZeta \leq 1/(C_0^1 \vee C_0^2 \vee C_0^3 \vee C_0^4 \vee C_0^5)$.  Hence, the choice of $C_0^1$ ensures that $s \leq r/2$, the choice of $C_0^2$ guarantees that $s \leq 1$, the choice of $C_0^3$ ensures that $\theta \leq \epsilon_0/2$, and $C_0^4$ and $C_0^5$ are chosen to guarantee that 
\begin{align*}
L &= \bigg\lfloor 4 \cdot \min\biggl\{ \biggl(\frac{n}{C_0^0C_0^4\cdot \holderConstant^{d/\holderExponent} \cdot\log(1+\zeta)}\biggr)^{\frac{\holderExponent \approximableMarginExponent\approximableDensityExponent}{\rho}} ,  \biggl(\frac{n}{C_0^0C_0^5\cdot \holderConstant^{d/\holderExponent} \cdot\log(1+\zeta)}\biggr)^{\frac{\holderExponent \approximableMarginExponent}{\rho}}\biggr\} \bigg\rfloor \\ &\geq \biggl\{ c_0 \cdot  \biggl(\frac{n}{ \holderConstant^{d/\holderExponent} \cdot\log(1+\zeta)}\biggr)^{\frac{\holderExponent \approximableMarginExponent(\approximableDensityExponent \wedge 1)}{\approximableDensityExponent(2\holderExponent+d)+\holderExponent\approximableMarginExponent}}\biggr\}\vee 4,
\end{align*}
where $c_0:=2\min\bigl\{(C_0^0C_0^4)^{-\frac{\holderExponent \approximableMarginExponent\approximableDensityExponent}{\rho}}, (C_0^0C_0^5)^{-\frac{\holderExponent \approximableMarginExponent}{\rho}} \bigr\}$.  Note also that $w \leq \frac{1}{2}\big\{(2r)^{-d} \wedge 1\bigr\}$.  We may therefore apply the construction following~\eqref{def:etaFuncFirstPartLBExtension} to define distributions  $\probDistribution_\ell:=\probDistribution^\ell_{L,r,w,s,\theta}$ for $\ell \in [L]$ when $n \geq C_0 \cdot  \holderConstant^{d/\holderExponent} \cdot\log(1+\zeta)$.  We write $\mu = \mu_{L,r,w}$ and $\eta_\ell = \eta_{L,r,w,s,\theta}^\ell$ in this construction.

Our choice of $s$ ensures that $\theta= c^{\flat}_{\holderExponent,d}\cdot \holderConstant \cdot s^{\holderExponent}$, so we may apply Lemma~\ref{lemma:LB1HolderRegFunctions} to deduce that $\probDistribution_\ell \in \classOfHolderDistributions$ for all $\ell \in [L]$.  Moreover, our choice of $w$ and $r$ guarantee that $(w/L) \cdot (2r)^d \leq  \approximableSetsConstant \cdot \min\big\{ (w/\{(4d^{1/2})^d\cdot L\})^{\approximableDensityExponent}, \theta^{\approximableMarginExponent}\big\}$, so we may apply Lemma~\ref{lemma:approximableConditionForLB1} to conclude that $\probDistribution_{\ell} \in \classOfWellApproximableSets \cap \classOfWellApproximableSetsHO$ for all $\ell \in [L]$.  Next, by Lemma~\ref{lemma:chiSqrDistanceFirstPartNegative}, for each $\ell$, $\ell' \in [L]$,
\[
\chi^2\big(\probDistribution_\ell ,\probDistribution_{\ell'}\big)\leq  \frac{2^{5+2d} \cdot w  \cdot \theta^2 \cdot s^d}{\epsilon_0 \cdot L} = \frac{\log(1+\zeta)}{n},
\]
by our choice of $w$, $\theta$ and $s$, so 
\begin{align*}
\chi^2(\probDistribution_\ell^{\otimes n},\probDistribution_{\ell'}^{\otimes n})&\leq \bigl\{1 + \chi^2\big(\probDistribution_\ell ,\probDistribution_{\ell'}\big)\bigr\}^n - 1 \leq \biggl( 1+ \frac{\log(1+\zeta)}{n}\biggr)^n-1 \leq \zeta.
\end{align*}
This proves \emph{(a)}.  To prove \emph{(b)}, we take $\ell$,$\ell'\in [L]$ with $\ell \neq \ell'$ and $A \in \mathcal{A} \cap \powerSet\bigl( \mathcal{X}_{\tau}(\eta_\ell)\cap \mathcal{X}_{\tau}(\eta_{\ell'})\bigr)$. By Lemma~\ref{lemma:approximableConditionForLB1}, we have $M_\tau(\probDistribution_{\ell},\mathcal{A})= \mu\big(K^0_r(\ell)\big)= (w/L) \cdot (2r)^{d}$. On the other hand, if $\mu(A)>0$, then since $\support(\mu)=\bigcup_{(\ell'',j) \in [L]\times \{0,1\}} K^j_r(\ell'')$ and $\bigcup_{\ell'' \in [L]} K^1_r(\ell'')\subseteq \R^d \setminus \mathcal{X}_{\tau}(\eta_{\ell''})$, we must have $A \cap K^0_r(\tilde{\ell})\neq \emptyset$ for some $\tilde{\ell} \in [L]$. Since at least one of $\tilde{\ell} \neq \ell$ or $\tilde{\ell} \neq \ell'$ must hold, it follows from Lemma \ref{lemma:approximableConditionForLB1} that $\mu(A) \leq (w/L) \cdot (2r)^d/2$. Hence
\[
M_\tau\bigl(P_\ell,\mathcal{A}\bigr)-\mu(A)\geq \frac{w}{L} \cdot 2^{d-1} \cdot r^d = \frac{\approximableSetsConstant}{2}\cdot \thetaNDelta^{\approximableMarginExponent}= \frac{\approximableSetsConstant}{2}  \biggl( \frac{C_0^0 \cdot \holderConstant^{d/\holderExponent} \cdot \log(1+\zeta)}{n}\biggr)^{\frac{\holderExponent \approximableMarginExponent \approximableDensityExponent}{\approximableDensityExponent(2\holderExponent+d)+\holderExponent\approximableMarginExponent}},
\]
so \eqref{eq:conclusionCompletingFirstLBConstructions} holds with $c_1:=\frac{\approximableSetsConstant}{2}\cdot (C_0^0)^{\frac{\holderExponent\approximableDensityExponent\approximableMarginExponent}{\rho}}$.
\end{proof}

\begin{proof}[Proof of Proposition~\ref{lemma:constrainedRiskApplicationfanoApplication}] Part~(i): We initially assume that $n \geq C_0 
 \holderConstant^{d/\holderExponent} \log\bigl(1/(4\alpha)\bigr)$.  By Proposition~\ref{lemma:completingFirstLBConstructions}, with $\zeta = 1/(4\alpha)-1>0$, there exists a pair of distributions $P_1$, $P_2 \in \classOfLBDistributions$ with common marginal distribution $\mu$ on $\R^d$ and corresponding regression functions $\eta_1$,$\eta_2$ such that $\chi^2\bigl(P_{2}^{\otimes n},P_{1}^{\otimes n}\bigr) +1\leq 1/(4\alpha)$ and if $A \in \mathcal{A} \cap \powerSet\bigl( \mathcal{X}_{\tau}(\eta_1)\cap \mathcal{X}_{\tau}(\eta_2)\bigr)$, then
\begin{align}\label{eq:firstAppLemmaCompletingFirstLBConstructions}
M_\tau\bigl(P_2,\mathcal{A} \bigr)-\mu(A)\geq c_1 \cdot \biggl( \frac{\holderConstant^{d/\holderExponent} \cdot\log\bigl(1/(4\alpha)\bigr)}{n}\biggr)^{\frac{\holderExponent \approximableMarginExponent \approximableDensityExponent}{\approximableDensityExponent(2\holderExponent+d)+\holderExponent\approximableMarginExponent}}.
\end{align}
We now define a test $\varphi:(\R^d\times [0,1])^n \rightarrow \{1,2\}$ by 
\begin{align*}
\varphi(D):=\begin{cases} 1 &\text{ if } \hat{A}(D) \subseteq \mathcal{X}_\tau(\eta_1) \\ 2 &\text{ otherwise.}\end{cases}    
\end{align*}
Since $\hat{A}$ controls the Type I error at level $\alpha$ over $\classOfLBDistributions$, we have
\begin{align*}
P_1^{\otimes n} \bigl( \{D \in (\R^d\times [0,1])^n:\varphi(D)=2\}\bigr)
=\Prob_{P_1}\bigl(\hat{A}(\sample) \nsubseteq \mathcal{X}_\tau({\eta_1})\bigr) \leq \alpha.
\end{align*}
Hence, by an immediate consequence of \citet[][Theorem~1]{brown1996constrained}, which we restate as Lemma~\ref{lemma:BrownLowInequality} for convenience, with $\epsilon=\sqrt{\alpha}$ we have 
\begin{align*}
\Prob_{P_2}\bigl(\hat{A}(\sample) \subseteq \mathcal{X}_\tau({\eta_1})\bigr) &= P_2^{\otimes n} \bigl( \{D \in (\R^d\times [0,1])^n:\varphi(D)=1\}\bigr) \\
&\geq \Bigl\{ 1-\epsilon \sqrt{\chi^2\bigl(P_{2}^{\otimes n},P_{1}^{\otimes n}\bigr) +1}\Bigr\}^2 \geq \frac{1}{4}.
\end{align*}
Thus, by \eqref{eq:firstAppLemmaCompletingFirstLBConstructions} we have
\begin{align}
\label{Eq:Conclusion}
\E_{\probDistribution_2} &\bigl[\bigl\{ M_\tau\bigl(P,\mathcal{A}\bigr)-\mu(\hat{A})\bigr\} \cdot \one_{\{\hat{A} \subseteq  \mathcal{X}_{\tau}(\eta_2) \}}\bigr] \nonumber \\
& \geq  \Prob_{P_2} \bigl(\bigl\{ \hat{A}(\sample) \subseteq \mathcal{X}_\tau({\eta_1})\bigr\} \cap \bigl\{ \hat{A}(\sample) \subseteq \mathcal{X}_\tau({\eta_2})\bigr\} \bigr)\cdot c_1 \biggl( \frac{\holderConstant^{d/\holderExponent} \log\bigl(1/(4\alpha)\bigr)}{n}\biggr)^{\frac{\holderExponent \approximableMarginExponent \approximableDensityExponent}{\approximableDensityExponent(2\holderExponent+d)+\holderExponent\approximableMarginExponent}} \nonumber \\
& \geq \bigl\{\Prob_{P_2} \bigl(\hat{A}(\sample) \subseteq \mathcal{X}_\tau({\eta_1})\bigr) - \Prob_{P_2}\bigl( \hat{A}(\sample) \nsubseteq \mathcal{X}_\tau({\eta_2})\bigr) \bigr\}\cdot c_1 \biggl( \frac{\holderConstant^{d/\holderExponent} \log\bigl(1/(4\alpha)\bigr)}{n}\biggr)^{\frac{\holderExponent \approximableMarginExponent \approximableDensityExponent}{\approximableDensityExponent(2\holderExponent+d)+\holderExponent\approximableMarginExponent}} \nonumber \\
& \geq \biggl(\frac{1}{4}-\alpha\biggr)\cdot c_1 \biggl( \frac{\log\bigl(1/(4\alpha)\bigr)}{n}\biggr)^{\frac{\holderExponent \approximableMarginExponent \approximableDensityExponent}{\approximableDensityExponent(2\holderExponent+d)+\holderExponent\approximableMarginExponent}} \geq \frac{c_1}{8} \cdot \biggl( \frac{\holderConstant^{d/\holderExponent} \cdot \log\bigl(1/(4\alpha)\bigr)}{n}\biggr)^{\frac{\holderExponent \approximableMarginExponent \approximableDensityExponent}{\approximableDensityExponent(2\holderExponent+d)+\holderExponent\approximableMarginExponent}},
\end{align}
as required.  Moreover, if $1 \leq n < C_0 \cdot \holderConstant^{d/\holderExponent} \cdot \log\bigl(1/(4\alpha)\bigr)$, then by~\eqref{Eq:Conclusion} with $n \!=\! \big\lceil C_0 \cdot \holderConstant^{d/\holderExponent} \cdot \log\bigl(1/(4\alpha)\bigr) \big\rceil$, we have
\[
\E_{\probDistribution_2} \bigl[\bigl\{ M_\tau\bigl(P,\mathcal{A}\bigr)-\mu(\hat{A})\bigr\} \cdot \one_{\{\hat{A} \subseteq \mathcal{X}_{\tau}(\eta_2) \}}\bigr] \geq \frac{c_1}{8} \cdot C_0^{-\frac{\holderExponent \approximableMarginExponent \approximableDensityExponent}{\approximableDensityExponent(2\holderExponent+d)+\holderExponent\approximableMarginExponent}},
\]
again.  This completes the proof of Part~\emph{(i)}.

\medskip

\noindent Part~(ii): Fix $\rho:={\approximableDensityExponent(2\holderExponent+d)+\holderExponent\approximableMarginExponent}$ and let $C_0\ge 1$ and $c_0,c_1>0$ be as in Proposition~\ref{lemma:completingFirstLBConstructions}.  Let $C_1 \equiv C_1(d, \holderExponent,  \approximableMarginExponent,\approximableDensityExponent,  \approximableSetsConstant,\epsilon_0) \geq e^{\frac{2\rho}{\holderExponent\approximableMarginExponent}} - 1$ be large enough that for all $n/\holderConstant^{d/\holderExponent} \geq C_1$, we have both $n/\holderConstant^{d/\holderExponent} \geq C_0 \log\bigl(1+(n/\holderConstant^{d/\holderExponent})^{\frac{\holderExponent \approximableMarginExponent(\approximableDensityExponent \wedge 1\}}{2\rho}}\bigr)$ and 
\begin{align*}
c_0 \epsilon_0^2 \biggl(\frac{(n/\holderConstant^{d/\holderExponent})}{\log\bigl(1+(n/\holderConstant^{d/\holderExponent})^{\frac{\holderExponent \approximableMarginExponent(\approximableDensityExponent \wedge 1)}{2\rho}}\bigr)}\biggr)^{\frac{\holderExponent \approximableMarginExponent(\approximableDensityExponent \wedge 1)}{\rho}}\geq 2^5(n/\holderConstant^{d/\holderExponent})^{\frac{\holderExponent \approximableMarginExponent(\approximableDensityExponent \wedge 1)}{2\rho}}.
\end{align*}
Next, for $n \geq C_1$, we apply Proposition~\ref{lemma:completingFirstLBConstructions} with $\zeta = (n/\holderConstant^{d/\holderExponent})^{\frac{\holderExponent \approximableMarginExponent(\approximableDensityExponent \wedge 1)}{2\rho}}$ to obtain a family of $L \geq c_0 \bigl\{{(n/\holderConstant^{d/\holderExponent})}/{\log\bigl(1+(n/\holderConstant^{d/\holderExponent})^{\frac{\holderExponent \approximableMarginExponent(\approximableDensityExponent \wedge 1)}{2\rho}}\bigr)}\bigr\}^{\frac{\holderExponent \approximableMarginExponent(\approximableDensityExponent \wedge 1)}{\rho}} \geq 2^5(n/\holderConstant^{d/\holderExponent})^{\frac{\holderExponent \approximableMarginExponent(\approximableDensityExponent \wedge 1)}{2\rho}}/\epsilon_0^2$ distributions $\{P_1,\ldots, P_L \}\subseteq \classOfHolderDistributions \cap \classOfWellApproximableSets \cap \classOfWellApproximableSetsHO$ with common marginal distribution $\mu$ on $\R^d$ and corresponding regression functions $\eta_1,\ldots,\eta_L$, such that
\begin{enumerate}
\item [\emph{(a)}] $\chi^2\bigl(P_\ell^{\otimes n},P_{\ell'}^{\otimes n}\bigr) \leq (n/\holderConstant^{d/\holderExponent})^{\frac{\holderExponent \approximableMarginExponent(\approximableDensityExponent \wedge 1)}{2\rho}} \leq ({\epsilon_0}/4)^2\cdot (L-1)$ for all $\ell$, $\ell' \in [L]$;
\item[\emph{(b)}] if $A \in \mathcal{A} \cap \powerSet\bigl( \mathcal{X}_{\tau}(\eta_\ell)\cap \mathcal{X}_{\tau}(\eta_{\ell'})\bigr)$ for some $\ell$,$\ell'\in [L]$ with $\ell \neq \ell'$, then
\begin{align}\label{eq:fanoApplicationCompletingFirstLBConstructions}
M_\tau\bigl(P_\ell,\mathcal{A}\bigr)-\mu(A)\geq c_1 \cdot \biggl( \frac{\holderConstant^{d/\holderExponent}\log\{1+(n/\holderConstant^{d/\holderExponent})^{\frac{\holderExponent \approximableMarginExponent(\approximableDensityExponent \wedge 1)}{2\rho}} \}}{n}\biggr)^{\holderExponent \approximableMarginExponent \approximableDensityExponent/\rho}.
\end{align} 
\end{enumerate}
Now define a test function $\varphi: (\R^d \times [0,1])^n \rightarrow [L]$ by
\begin{align*}
\varphi(D):=\begin{cases} \min\{ \ell \in [L]:\hat{A}(D) \subseteq \mathcal{X}_\tau(\eta_\ell)\bigr\} &\text{ if } \hat{A}(D) \subseteq \mathcal{X}_\tau(\eta_\ell) \text{ for some } \ell \in [L]\\ L &\text{ otherwise.}\end{cases}    
\end{align*}
By Lemma~\ref{lemma:chiSqrFano}, we have
\begin{align}
\label{Eq:FanoApp}
\max_{\ell \in [L]}P_\ell^{\otimes n}\bigl(\bigl\{D \in &(\R^d \times [0,1])^n: \varphi(D) \neq \ell\bigr\}\bigr) \nonumber \\
&\geq \frac{1}{L-1}\sum_{\ell=2}^L P_\ell^{\otimes n}\bigl(\bigl\{D \in (\R^d \times [0,1])^n: \varphi(D) \neq \ell\bigr\}\bigr) \nonumber \\
&\geq 1 - \frac{1}{L-1} - \sqrt{\frac{1}{L-1} \sum_{\ell=2}^L \chi^2(P_\ell^{\otimes n},P_1^{\otimes n}) \cdot \frac{1}{L-1}\biggl(1-\frac{1}{L-1}\biggr)} \nonumber \\
&\geq 1 - \Bigl(\frac{\epsilon_0}{4}\Bigr)^2 - \frac{\epsilon_0}{4} \geq 1 - \frac{\epsilon_0}{2}.
\end{align}
Now choose $\ell_0 \in [L]$ with $P_{\ell_0}^{\otimes n}\bigl(\bigl\{D \in (\R^d \times [0,1])^n: \varphi(D) \neq \ell_0\bigr\}\bigr) \geq 1-\epsilon_0/2$, and observe that if $\varphi(D) \neq \ell_0$ and $\hat{A}(D) \subseteq \mathcal{X}_\tau(\eta_{\ell_0})$ then we must also have $\hat{A}(D) \subseteq \mathcal{X}_\tau(\eta_{\ell_1})$ for some $\ell_1 \in [\ell_0-1]$.  It follows from this and~\eqref{Eq:FanoApp} that
\begin{align*}
\Prob_{P_{\ell_0}}\biggl(\bigl\{\hat{A}(\sample) \subseteq \mathcal{X}_\tau(\eta_{\ell_0}) \bigr\} &\cap \bigcup_{\ell_1=1}^{\ell_0-1} \bigl\{ \hat{A}(\sample) \subseteq \mathcal{X}_\tau(\eta_{\ell_1}) \bigr\} \biggr) \\
&\geq \Prob_{P_{\ell_0}}\bigl(\bigl\{\hat{A}(\sample) \subseteq \mathcal{X}_\tau(\eta_{\ell_0}) \bigr\} \cap \bigl\{ \varphi(\mathcal{D}) \neq \ell_0\bigr\}\bigr) \\
& \geq P_{\ell_0}^{\otimes n}\bigl(\bigl\{D \in (\R^d \!\times \! [0,1])^n: \varphi(D) \neq \ell_0\bigr\}\bigr) - \alpha \geq \frac{\epsilon_0}{2}.
\end{align*}
Thus, by~\eqref{eq:fanoApplicationCompletingFirstLBConstructions}, we have for all $n \geq C_1 \cdot \holderConstant^{d/\holderExponent}\geq e^{\frac{2\rho}{\holderExponent\approximableMarginExponent}} - 1$ that
\begin{align*}
&\E_{\probDistribution_{\ell_0}} \bigl[\bigl\{ M_\tau\bigl(P_{\ell_0}, \mathcal{A}\bigr) -\mu(\hat{A})\bigr\} \cdot \one_{\{\hat{A} \subseteq  \mathcal{X}_{\tau}(\eta_{\ell_0}) \}}\bigr] \\
&\hspace{0.2cm} \geq  \Prob_{P_{\ell_0}}\biggl(\bigl\{\hat{A}(\sample) \subseteq \mathcal{X}_\tau(\eta_{\ell_0}) \bigr\} \cap \bigcup_{\ell_1=1}^{\ell_0-1} \bigl\{ \hat{A}(\sample) \subseteq \mathcal{X}_\tau(\eta_{\ell_1}) \bigr\} \biggr) \\ & \hspace{1cm} \cdot  c_1 \cdot \biggl( \frac{\holderConstant^{d/\holderExponent}\log\{1+(n/\holderConstant^{d/\holderExponent})^{\frac{\holderExponent \approximableMarginExponent(\approximableDensityExponent \wedge 1)}{2\rho}} \}}{n}\biggr)^{\holderExponent \approximableMarginExponent \approximableDensityExponent/\rho}\\ &\hspace{0.2cm} \geq \frac{c_1\epsilon_0}{2} \cdot \biggl( \frac{{\holderExponent \approximableMarginExponent}(\approximableDensityExponent \wedge 1) \cdot \holderConstant^{d/\holderExponent} \cdot \log_+ (n/\holderConstant^{d/\holderExponent})}{{2\rho}n}\biggr)^{\holderExponent \approximableMarginExponent \approximableDensityExponent/\rho}.
\end{align*}
We extend the bound to $n < C_1 \cdot \holderConstant^{d/\holderExponent}$ by monotonicity as at the end of the proof of Proposition~\ref{lemma:constrainedRiskApplicationfanoApplication}\emph{(i)}, with 
\[
c_2 := \frac{c_1\epsilon_0}{2} \cdot \biggl( \frac{{\holderExponent \approximableMarginExponent}(\approximableDensityExponent \wedge 1)\log_+\lceil C_1\rceil}{2\rho \lceil C_1 \rceil }\biggr)^{\holderExponent \approximableMarginExponent \approximableDensityExponent/\rho},
\]
which completes the proof.
\end{proof}
Finally, we prove the parametric lower bounds in Theorems~\ref{thm:minimaxRate} and~\ref{Thm:minimaxRateHOS}.  Some care is required here to show that our constructed distributions belong to the relevant distributional classes.
\begin{proof}[Proof of Proposition~\ref{lemma:paramatricLB}] Observe that there exists $c_{\mathrm{H}} \equiv c_{\mathrm{H}}(\holderExponent) \in (0,1]$ such that when $\theta \leq c_{\mathrm{H}} \cdot \holderConstant \cdot s^{\holderExponent}$, we have that $\eta$ is $(\holderExponent,\holderConstant)$-H\"{o}lder, so $\{P_{\zeta}^{\ell}\}_{\ell \in \{-1,1\}} \subseteq \classOfHolderDistributions$. In addition, $\support(\mu_{\zeta}^{\ell}) \cap \mathcal{X}_\tau(\eta) = A_{-1}\cup A_1$ for $\ell \in \{-1,1\}$. Since $\mathcal{A} \subseteq \mathcal{A}_{\mathrm{conv}}$ and $A_0 \subseteq \R^d \setminus \mathcal{X}_\tau(\eta)$, it follows that  for $\ell \in \{-1,1\}$,
\[
M_\tau(P_{\zeta}^{\ell},\mathcal{A}) = \max_{j \in \{-1,1\}}\mu_{\zeta}^{\ell}(A_j)= \frac{s^d}{(2t)^d+2s^d}+\zeta.
\]  
Observe also that for any $\ell \in \{-1,1\}$, $x \in A_\ell$ and $r \in (0,4s]$, we have
\[
\mu_{\zeta}^{\ell}\bigl(\closedMetricBallSupNorm{x}{r}\bigr) \geq \biggl(\frac{1}{(2t)^{d}+2s^d}+\frac{\zeta }{s^d}\biggr) \cdot \Lebesgue\bigl(\closedMetricBallSupNorm{x}{r/4} \cap A_{\ell}\bigr) \geq \biggl(\frac{1}{(2t)^{d}+2s^d}+\frac{\zeta }{s^d}\biggr)\cdot \biggl(\frac{r}{4}\biggr)^d.
\]
Moreover, for any $\ell \in \{-1,1\}$, $x \in A_\ell$ and $r \in (4s,1]$, we have
\begin{align*}
\mu_{\zeta}^{\ell}\bigl(\closedMetricBallSupNorm{x}{r}\bigr) &\geq  \frac{\Lebesgue\bigl(\closedMetricBallSupNorm{x}{r} \cap A_0\bigr)}{(2t)^{d}+2s^d} \geq  \frac{r^{d-1}\cdot (r-2s)}{(2t)^{d}+2s^d} \\ &\geq \frac{r^d}{2\{(2t)^{d}+2s^d\}} \geq  \biggl(\frac{1}{(2t)^{d}+2s^d}+\frac{\zeta }{s^d}\biggr)\cdot \frac{r^d}{3}.
\end{align*}
Hence, $\lowerDensity_{\mu_{\zeta}^{\ell},d}(x) \geq \{(8t)^{d}+2(4s)^d\}^{-1}$ for all $x \in A_\ell$, and moreover $A_\ell \subseteq \mathcal{R}_{\regularityConstant}(\mu_{\zeta}^{\ell})$. Thus, for any $(\xi,\Delta) \in \bigl(0,\{(8t)^{d}+2(4s)^d\}^{-1}\bigr]\times (0,\theta]$, we have
\[
\sup\bigl\{ \mu_{\zeta}^{\ell}(A):A \in \mathcal{A}\cap \powerSet\bigl(\mathcal{X}_{\xi}(\lowerDensity_{\mu_{\zeta}^{\ell},d}) \cap \mathcal{X}_{\tau+\Delta}(\eta)\bigr)  \bigr\} = \mu_{\zeta}^\ell(A_\ell) =  M_\tau(P_{\zeta}^{\ell},\mathcal{A}),
\]
and similarly,
\[
\sup\bigl\{ \mu_{\zeta}^{\ell}(A):A \in \mathcal{A}\cap \powerSet\bigl(\mathcal{R}_{\regularityConstant}(\mu^{\ell}_\zeta)\cap \mathcal{X}_{\xi}(f_{\mu_{\zeta}^{\ell}}) \cap \mathcal{X}_{\tau+\Delta}(\eta)\bigr)  \bigr\} = \mu_{\zeta}^\ell(A_\ell) =  M_\tau(P_{\zeta}^{\ell},\mathcal{A}).
\]
On the other hand, if either $\xi> \{(8t)^{d}+2(4s)^d\}^{-1}$ or $\Delta>\theta$, then provided that $\frac{3s^d}{2\{(2t)^d+2s^d\}} \leq \approximableSetsConstant \cdot \bigl[ \{(8t)^{d}+2(4s)^d\}^{-\approximableDensityExponent}\wedge \theta^{\approximableMarginExponent}\bigr]$, we have
\begin{align*}\sup\bigl\{ &\mu_{\zeta}^{\ell}(A):A \in \mathcal{A}\cap \powerSet\bigl(\mathcal{R}_{\regularityConstant}(\mu^{\ell}_\zeta)\cap \mathcal{X}_{\xi}(f_{\mu_{\zeta}^{\ell}}) \cap \mathcal{X}_{\tau+\Delta}(\eta)\bigr)  \bigr\}\\&\wedge \sup\bigl\{ \mu_{\zeta}^{\ell}(A):A \in \mathcal{A}\cap \powerSet\bigl(\mathcal{X}_{\xi}(\lowerDensity_{\mu_{\zeta}^{\ell},d}) \cap \mathcal{X}_{\tau+\Delta}(\eta)\bigr)  \bigr\} \\ &\geq 0 \geq M_\tau(P_{\zeta}^{\ell},\mathcal{A}) - \approximableSetsConstant \cdot (\xi^{\approximableDensityExponent}+\Delta^{\approximableMarginExponent}).
\end{align*}
It follows that $\{P_{\zeta}^{\ell}\}_{\ell \in \{-1,1\}}\subseteq \classOfLBDistributions$ whenever $\frac{3s^d}{2\{(2t)^d+2s^d\}} \leq \approximableSetsConstant \cdot  \bigl[\{(8t)^{d}+2(4s)^d\}^{-\approximableDensityExponent}\wedge \theta^{\approximableMarginExponent}\bigr]$.  

In addition, recalling that $m_{\mathrm{ct}}$ denotes the counting measure, we have
\begin{align*}
\mathrm{KL}\bigl(P_{\zeta}^{-1},P_{\zeta}^{1}\bigr) &\leq \chi^2\bigl(P_{\zeta}^{-1},P_{\zeta}^{1}\bigr) \\
&= \int_{\R^d \times \{0,1\}} \frac{\bigl\{f_{\mu_{\zeta}^{-1}}(x) - f_{\mu_{\zeta}^{1}}(x)\bigr\}^2\eta(x)^y\{1 - \eta(x)\}^{1-y}}{f_{\mu_{\zeta}^{1}}(x)} \, d(\Lebesgue \times m_{\mathrm{ct}})(x,y) \\
&= \frac{(2\zeta/s^d)^2}{\frac{1}{(2t)^d + 2s^d} - \frac{\zeta}{s^d}} + \frac{(2\zeta/s^d)^2}{\frac{1}{(2t)^d + 2s^d} + \frac{\zeta}{s^d}} \leq \frac{32}{3} \cdot \frac{(2t)^d + 2s^d}{s^{2d}} \cdot \zeta^2.
\end{align*}
We deduce by Pinsker's inequality that 
\begin{align*}
\totalVariationDistance\bigl((P_{\zeta}^{-1})^{\otimes n},(P_{\zeta}^{1})^{\otimes n}\bigr) \leq  \sqrt{\mathrm{KL}\bigl((P_{\zeta}^{-1})^{\otimes n},(P_{\zeta}^{+1})^{\otimes n}\bigr)/2} &\leq 4\zeta\sqrt{n} \cdot \sqrt{\frac{(2t)^d + 2s^d}{3s^{2d}}} \\
&= 2a\cdot \zeta\sqrt{n},
\end{align*}
where $a\equiv a_{d,s,t}:=2 \cdot \sqrt{\frac{(2t)^d + 2s^d}{3s^{2d}}}$.  Thus, 
\begin{align*}
1&= (P_{\zeta}^{1})^{\otimes n} \bigl(\{D \in (\R^d \times \{0,1\})^n : A_1 \cap \hat{A}(D) = \emptyset\}\bigr) \\
&\hspace{5cm}+(P_{\zeta}^{1})^{\otimes n} \bigl(\{D \in (\R^d \times \{0,1\})^n : A_1 \cap \hat{A}(D) \neq \emptyset\}\bigr)\\ 
&\leq  (P_{\zeta}^{1})^{\otimes n} \bigl(\{D:A_1 \cap \hat{A}(D) = \emptyset\}\bigr)+(P_{\zeta}^{-1})^{\otimes n} \bigl(\{D:A_1 \cap \hat{A}(D) \neq \emptyset\}\bigr) + 2a \cdot \zeta\sqrt{n}.
\end{align*}
Hence, since at most one of $A_{-1}$ and $A_1$ can have non-empty intersection with $\hat{A}$ whenever $\hat{A} \subseteq \etaSuperLevelSet{\tau}$, we have
\begin{align*}
\max_{\ell \in \{-1,1\}}(P_{\zeta}^{\ell})^{\otimes n} \bigl(\{D:A_\ell \cap\hat{A}(D) = \emptyset\}\cap \bigl\{D:\hat{A}(D) \subseteq \etaSuperLevelSet{\tau}\bigr\}  \bigr) &\geq \frac{1}{2}-a \cdot \zeta\sqrt{n}-\alpha \\
&\geq \epsilon_0 - a \cdot \zeta\sqrt{n}.
\end{align*}
We conclude that 
\begin{align*}
\max_{\ell \in \{-1,1\}} \E_{P_{\zeta}^{\ell}} \bigl[\bigl\{ M_\tau\bigl(P_{\zeta}^\ell,\mathcal{A}\bigr)-\mu_{\zeta}^\ell(\hat{A})\bigr\} \cdot \one_{\{\hat{A} \subseteq \etaSuperLevelSet{\tau}\}}\bigr] \geq 2\zeta \bigl(\epsilon_0 - a \cdot \zeta\sqrt{n}\bigr).
\end{align*}
Taking $t:=\bigl\{\bigl(2^{\approximableDensityExponent(3d+1)} \vee (c_{\mathrm{H}} \cdot \holderConstant)^{\approximableMarginExponent}\bigr)/\approximableSetsConstant\bigr\}^{\frac{\holderExponent\approximableMarginExponent}{d(d\approximableDensityExponent-\holderExponent\approximableMarginExponent(\approximableDensityExponent-1))}}\vee 1$,  $s:=t^{-\frac{d\approximableDensityExponent}{\holderExponent\approximableMarginExponent}}$ and 
$\theta := c_{\mathrm{H}} \cdot \holderConstant \cdot s^{\holderExponent}$ ensures that the conditions $\frac{3s^d}{2\{(2t)^d+2s^d\}} \leq \approximableSetsConstant \cdot \bigl[ \{(8t)^{d}+2(4s)^d\}^{-\approximableDensityExponent}\wedge \bigl(c_{\mathrm{H}} \cdot \holderConstant \cdot s^{\holderExponent}\bigr)^{\approximableMarginExponent}\bigr]$ and $\theta \leq c_{\mathrm{H}} \cdot \holderConstant \cdot s^{\holderExponent}$ hold.  Hence, taking $\zeta := \epsilon_0/(2a\sqrt{n})$ yields the required lower bound.
\end{proof}

\begin{proof}[Proof of Theorems~\ref{thm:minimaxRate}\emph{(ii)} and~\ref{Thm:minimaxRateHOS}\emph{(ii)}] In light of the remarks following Proposition~\ref{lemma:constrainedRiskApplicationfanoApplication}, these results follow from Propositions~\ref{lemma:constrainedRiskApplicationfanoApplication} and~\ref{lemma:paramatricLB}.
\end{proof}

\section{Parameter constraints}

The following lemma reveals natural constraints satisfied by the parameters $\holderExponent$, $\approximableDensityExponent$ and $\approximableMarginExponent$.
\begin{lemma}\label{lemma:parameterConstraints} Take $\tau \in (0,1)$, $(\holderExponent, \approximableMarginExponent,\approximableDensityExponent,\regularityConstant, \holderConstant,\approximableSetsConstant) \in (0,1]\times (0,\infty)^2\times (0,1) \times [1,\infty)^2$. Let $\probDistribution \in \classOfHolderDistributions \cap \classOfWellApproximableSetsWithHyperCubes$ be a distribution on $\R^d \times [0,1]$ with regression function $\eta:\R^d \rightarrow [0,1]$ and with a Lebesgue absolutely continuous marginal $\mu$ on $\R^d$ with continuous density $f_\mu$. Suppose that $\etaSuperLevelSet{\tau} \subseteq \muRegularSet$, that  $\mu\bigl(\eta^{-1}((\tau,1])\bigr)>0$ and that $\eta^{-1}(\{\tau\}) \neq \emptyset$.  Then $\holderExponent\approximableMarginExponent (\approximableDensityExponent-1) \leq d \approximableDensityExponent$.
\end{lemma}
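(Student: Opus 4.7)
The inequality is immediate when $\approximableDensityExponent \leq 1$, since its left-hand side $\holderExponent\approximableMarginExponent(\approximableDensityExponent-1)$ is then non-positive. I therefore assume $\approximableDensityExponent > 1$ and argue by contradiction, supposing that $\holderExponent\approximableMarginExponent(\approximableDensityExponent-1) > d\approximableDensityExponent$. Fix any $x_0 \in \eta^{-1}(\{\tau\})$, which exists by hypothesis and lies in $\muRegularSet$ through $\etaSuperLevelSet{\tau} \subseteq \muRegularSet$. Hölder continuity gives $\eta(x) \leq \tau + \holderConstant \supNorm{x-x_0}^{\holderExponent}$ for every $x$, so with $r_\Delta := (\Delta/\holderConstant)^{1/\holderExponent}$ one has $\etaSuperLevelSet{\tau+\Delta} \cap \openMetricBallSupNorm{x_0}{r_\Delta} = \emptyset$; in particular, every rectangle $A_{\xi,\Delta}$ produced by the approximability condition at parameters $(\xi,\Delta)$ must miss this ball. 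The overall plan is to turn this Hölder ``barrier'', together with the regularity of $\mu$ at $x_0$, into a lower bound on the deficit $M_\tau - \mu(A_{\xi,\Delta})$ that is incompatible with the approximability upper bound $\approximableSetsConstant(\xi^{\approximableDensityExponent}+\Delta^{\approximableMarginExponent})$ once $\xi$ and $\Delta$ are chosen in a specific balanced regime.

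The key quantitative step is to extract a local decay exponent $a \in [0,\infty)$ for $f_\mu$ at $x_0$. Using continuity of $f_\mu$ and a pigeonhole argument over a dyadic sequence of radii $r_k \searrow 0$, one finds $a$ such that $\sup_{y \in \openMetricBallSupNorm{x_0}{r_k}} f_\mu(y) \asymp r_k^{a}$ along the sequence; the hypothesis $x_0 \in \muRegularSet$ then yields $\mu(\openMetricBallSupNorm{x_0}{r_k}) \asymp r_k^{a+d}$, and via the relation $\omega \geq \regularityConstant f_\mu$ on $\muRegularSet$ one also obtains the complementary bound $\mu(\openMetricBallSupNorm{x_0}{r_k} \setminus \omegaSuperLevelSet{\xi}) \asymp \xi^{(a+d)/a}$ for $\xi$ at the scale $\xi \asymp r_k^{a}$. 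Positivity of $M_\tau$ (a consequence of $\mu(\eta^{-1}((\tau,1])) > 0$, since $\mathcal{A}_{\mathrm{rect}}$ contains cubes inscribed in the open set $\{\eta > \tau\}$) supplies a near-optimal comparator $A^\star \in \mathcal{A}_{\mathrm{rect}} \cap \powerSet(\mathcal{X}_\tau(\eta))$. Comparing $A^\star$ with $A_{\xi,\Delta}$, and exploiting the observation that an axis-aligned rectangle missing $\openMetricBallSupNorm{x_0}{r_\Delta}$ must lie in one of the $2d$ half-spaces $\{x : \pm(x_j - x_0^{(j)}) \geq r_\Delta\}$, produces a deficit lower bound that combines the Hölder ball and the low-$\omega$ region near $x_0$, of order $\max\bigl\{r_\Delta^{a+d},\ \xi^{(a+d)/a}\bigr\}$ along the dyadic subsequence.

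Inserting this lower bound into the approximability inequality and choosing $\xi \asymp r_\Delta^{a}$ and $\Delta \asymp r_\Delta^{\holderExponent}$ produces the coupled constraints $\approximableDensityExponent \leq 1 + d/a$ and $\holderExponent\approximableMarginExponent \leq a+d$, i.e.\ $a \leq d/(\approximableDensityExponent-1)$ and $a \geq \holderExponent\approximableMarginExponent - d$. Eliminating $a$ yields $\holderExponent\approximableMarginExponent - d \leq d/(\approximableDensityExponent-1)$, which rearranges to $\holderExponent\approximableMarginExponent(\approximableDensityExponent-1) \leq d\approximableDensityExponent$, contradicting the supposition.

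The principal obstacle will be making the deficit lower bound $M_\tau - \mu(A_{\xi,\Delta}) \gtrsim \max\{r_\Delta^{a+d},\ \xi^{(a+d)/a}\}$ rigorous: for $d \geq 2$, an axis-aligned rectangle can easily ``flow around'' a small ball, so merely knowing $A_{\xi,\Delta} \cap \openMetricBallSupNorm{x_0}{r_\Delta} = \emptyset$ is not, by itself, enough to force a loss of the correct order. The remedy is to exploit simultaneously the $\omegaSuperLevelSet{\xi}$ restriction --- which prevents $A_{\xi,\Delta}$ from entering a nearby thin annulus of thickness $\asymp \xi^{1/a}$ around $x_0$ --- together with a judicious slab decomposition of the comparator $A^\star$ across the ball, using in an essential way the hypothesis $\etaSuperLevelSet{\tau} \subseteq \muRegularSet$ to pin down $\omega$ in terms of $f_\mu$. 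A secondary technical point is the definition of the effective exponent $a$ when $f_\mu(x_0) = 0$, which is handled by the dyadic pigeonhole argument above and may require choosing $x_0$ in the closure of $\{\eta > \tau\}$.
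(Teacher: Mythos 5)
Your proposal has a genuine gap at its pivotal step. The deficit lower bound $M_\tau - \mu(A_{\xi,\Delta}) \gtrsim \max\bigl\{r_\Delta^{a+d},\ \xi^{(a+d)/a}\bigr\}$ is exactly what would drive the contradiction, and you flag it yourself as the ``principal obstacle'' without supplying the argument. The remedy you sketch is unlikely to close it: if $f_\mu(x_0)>0$, the constraint $\omega\geq\xi$ is vacuous in a neighbourhood of $x_0$ for small $\xi$, so there is no excluded annulus, and in dimension $d\geq 2$ an axis-aligned rectangle can pass alongside $\openMetricBallSupNorm{x_0}{r_\Delta}$ in the remaining $d-1$ coordinates without losing mass of order $r_\Delta^{a+d}$. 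A secondary gap is the existence of a local decay exponent $a$ for $f_\mu$ at $x_0$: continuity alone does not yield a two-sided bound $\sup_{y\in\openMetricBallSupNorm{x_0}{r_k}}f_\mu(y)\asymp r_k^a$ along any subsequence, and without a genuine exponent the elimination step $(\approximableDensityExponent\leq 1+d/a$ and $\holderExponent\approximableMarginExponent\leq a+d)\Rightarrow\holderExponent\approximableMarginExponent(\approximableDensityExponent-1)\leq d\approximableDensityExponent$ has no foundation.

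The paper sidesteps both issues with a cleaner device and no contradiction argument. It sets $\xi=\Delta^{\approximableMarginExponent/\approximableDensityExponent}$ so that the approximability error is $\leq 2\approximableSetsConstant\Delta^{\approximableMarginExponent}$, extracts a near-optimal $A_\Delta\in\mathcal{A}_{\mathrm{rect}}\cap\powerSet\bigl(\omegaSuperLevelSet{\Delta^{\approximableMarginExponent/\approximableDensityExponent}}\cap\etaSuperLevelSet{\tau+\Delta}\bigr)$, and then picks an \emph{extremal} pair $(x_0,z_0)\in\eta^{-1}(\{\tau\})\times A_\Delta$ achieving $\inf_{x\in\eta^{-1}(\{\tau\})}\inf_{z\in A_\Delta}\supNorm{x-z}$, rather than a fixed $x_0$. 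The inflated rectangle $A_\Delta^\sharp:=A_\Delta+\closedMetricBallSupNorm{0}{\supNorm{x_0-z_0}}$ still lies in $\etaSuperLevelSet{\tau}$ (by extremality and the intermediate value theorem), so $\mu(A_\Delta^\sharp)\leq M_\tau$ and $\mu(A_\Delta^\sharp\setminus A_\Delta)\leq 2\approximableSetsConstant\Delta^{\approximableMarginExponent}$. But $A_\Delta^\sharp\setminus A_\Delta$ contains a ball of radius $r_\Delta=\tfrac{1}{3}(\Delta/\holderConstant)^{1/\holderExponent}$ around a point $w_0\in\etaSuperLevelSet{\tau}\subseteq\muRegularSet$ close enough to $z_0$ that regularity gives $\mu\bigl(\closedMetricBallSupNorm{w_0}{r_\Delta}\bigr)\geq\regularityConstant r_\Delta^d f_\mu(z_0)\geq\regularityConstant r_\Delta^d\cdot 2^{-d}\Delta^{\approximableMarginExponent/\approximableDensityExponent}$, using $f_\mu\geq 2^{-d}\omega$ and $z_0\in\omegaSuperLevelSet{\Delta^{\approximableMarginExponent/\approximableDensityExponent}}$. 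Comparing exponents as $\Delta\searrow0$ yields $\holderExponent\approximableMarginExponent(\approximableDensityExponent-1)\leq d\approximableDensityExponent$ directly. The idea you are missing is to inflate the near-optimal rectangle by the extremal distance: this pins the ball inside the \emph{added} region, so the flow-around problem never arises and no decay exponent is needed.
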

\begin{proof} Note that since $\mu\bigl(\eta^{-1}((\tau,1])\bigr)>0$ and $\eta$ is continuous, we must have $M_\tau>0$. Take $\Delta \in \bigl(0, \{M_{\tau}/(2\approximableSetsConstant)\}^{1/\approximableMarginExponent}\bigr)$, and write $\omega:= \omega_{\mu,d}$ for the lower density of $\mu$. Since $\probDistribution \in  \classOfWellApproximableSetsWithHyperCubes$, we may take $A_{\Delta}\in \mathcal{A}_{\mathrm{hpr}}\cap \powerSet\bigl(\omegaSuperLevelSet{\Delta^{\approximableMarginExponent/\approximableDensityExponent}}\cap \etaSuperLevelSet{\tau+\Delta}\bigr)$ with $\mu(A_{\Delta})\geq M_{\tau}-2\approximableSetsConstant \cdot \Delta^{\approximableMarginExponent}>0$.  Now $A_{\Delta}$ is a non-empty, compact subset of $\R^d$ and $\eta^{-1}(\{\tau\})$ is a non-empty closed subset of $\R^d$, so we may choose $x_0 \in \eta^{-1}(\{\tau\})$ and $z_0\in A_{\Delta}$ such that 
\begin{align*}
\supNorm{x_0-z_0}=\inf_{x \in \eta^{-1}(\{\tau\})}\inf_{z \in A_{\Delta}}\supNorm{x-z}.
\end{align*}
Let $A_{\Delta}^{\sharp} := \{ x \in \R^d: \supNorm{x-z} \leq \supNorm{x_0-z_0} \text{ for some } z \in A_\Delta\}$, and note that $A_{\Delta}^{\sharp} \in \mathcal{A}_{\mathrm{hpr}}\cap \powerSet\bigl(\etaSuperLevelSet{\tau}\bigr)$, so $\mu(A_{\Delta}^{\sharp}) \leq M_{\tau}$ and $\mu(A_{\Delta}^{\sharp}\setminus A_{\Delta}) \leq 2\approximableSetsConstant \cdot \Delta^{\approximableMarginExponent}$. In addition, since $\probDistribution \in \classOfHolderDistributions$, we have $\supNorm{x_0-z_0} \geq (\Delta/\holderConstant)^{1/\holderExponent} =: 3r_{\Delta}$. Hence, if we take 
\[
w_0:= z_0+\biggl(1+\frac{\regularityConstant}{2}\biggr)\cdot r_\Delta\cdot \frac{x_0-z_0}{\supNorm{x_0-z_0}},
\]
we have $\closedMetricBallSupNorm{w_0}{r_{\Delta}} \subseteq \etaSuperLevelSet{\tau}\cap (A_{\Delta}^{\sharp}\setminus A_{\Delta})$ and $z_0 \in \openMetricBallSupNorm{w_0}{(1+\regularityConstant)\cdot r_{\Delta}}$. Thus, as $f_{\mu}(z_0) \geq 2^{-d} \cdot  \omega(z_0) \geq 2^{-d} \cdot  \Delta^{\approximableMarginExponent/\approximableDensityExponent}$ and $w_0 \in \etaSuperLevelSet{\tau}\subseteq \muRegularSet$ and $r_\Delta \in (0,1)$, we have
\begin{align*}
2\approximableSetsConstant \cdot \Delta^{\approximableMarginExponent} \geq \mu(A_{\Delta}^{\sharp}\setminus A_{\Delta}) \geq \mu\bigl(\closedMetricBallSupNorm{w_0}{r_{\Delta}}\bigr) &\geq \regularityConstant \cdot r_{\Delta}^d \cdot \sup_{x' \in \openMetricBallSupNorm{w_0}{(1+\regularityConstant)r_{\Delta}}}f_\mu(x') \\
&\geq \regularityConstant \cdot r_{\Delta}^d \cdot f_\mu(z_0) \geq \frac{\regularityConstant}{6^d} \cdot \biggl(\frac{\Delta}{\holderConstant }\biggr)^{d/\holderExponent} \cdot \Delta^{\approximableMarginExponent/\approximableDensityExponent}.
\end{align*}
Letting $\Delta \searrow 0$ we deduce that $\holderExponent\approximableMarginExponent (\approximableDensityExponent-1) \leq d \approximableDensityExponent$.
\end{proof}

\section{Auxiliary results}
\label{Sec:AuxiliaryResults}

\subsection{Disintegration and measure-theoretic preliminaries}\label{Sec:Disintegration}

Suppose we have a pair of measurable spaces $(\mathcal{X},\mathcal{G}_X)$ and $(\mathcal{Y},\mathcal{G}_Y)$ along with a probability distribution $P$ on the product space $(\mathcal{X}\times \mathcal{Y}, \mathcal{G}_X \otimes \mathcal{G}_Y)$. Let $\mu$ denote the marginal distribution of $P$ on $(\mathcal{X},\mathcal{G}_X)$. We say that $(P_x)_{x \in \mathcal{X}}$ is a \emph{disintegration} of $P$ into conditional distributions on $\mathcal{Y}$ if
\begin{enumerate}
    \item $P_x$ is a probability measure on $(\mathcal{Y},\mathcal{G}_Y)$, for each $x \in \mathcal{X}$;
    \item $x \mapsto P_x(B)$ is a $\mathcal{G}_X$-measurable function, for every $B \in \mathcal{G}_Y$;
    \item $P(A\times B) = \int_A P_x(B) \, d\mu(x)$ for all $A \in \mathcal{G}_X$ and $B \in \mathcal{G}_Y$.
\end{enumerate}
We will make use of the following existence result: recall that a topological space $(\mathcal{X},\mathcal{T}_X)$ is said to be \emph{Polish} if there exists a metric $d_X$ on $\mathcal{X}$ that induces the topology $\mathcal{T}_X$ and for which $(\mathcal{X},d_X)$ is a complete, separable metric space.

\begin{lemma}\label{lemma:existenceOfDisintegrationFromDudley} Suppose that $(\mathcal{X},\mathcal{G}_X)$ and $(\mathcal{Y},\mathcal{G}_Y)$ are Polish spaces with their corresponding Borel $\sigma$-algebras.  Let $P$ be a probability distribution on $(\mathcal{X}\times \mathcal{Y}, \mathcal{G}_X \otimes \mathcal{G}_Y)$, with $\mu$ denoting the marginal distribution of $P$ on $(\mathcal{X},\mathcal{G}_X)$. Then there exists a disintegration $(P_x)_{x \in \mathcal{X}}$ of $P$ into conditional distributions on $\mathcal{Y}$ with the property that 
\begin{align}\label{eq:disintegrationProperty}
\int_{\mathcal{X}\times \mathcal{Y}}g(x,y)\,dP(x,y) = \int_{\mathcal{X}} \biggl(\int_{\mathcal{Y}} g(x,y) \,dP_x(y)\biggr) \,d\mu(x),
\end{align}
for every $P$-integrable function $g: \mathcal{X}\times \mathcal{Y} \rightarrow \R$. Moreover, the disintegration $(P_x)_{x \in \mathcal{X}}$ of $P$  is unique in the sense that if there exists another disintegration $(\tilde{P}_x)_{x \in \mathcal{X}}$ of $P$ into conditional distributions on $\mathcal{Y}$, then $\tilde{P}_x = P_x$ for $\mu$-almost every $x \in \mathcal{X}$.
\end{lemma}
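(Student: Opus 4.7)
The plan is to invoke the classical disintegration theorem for probability measures on products of Polish spaces (essentially Theorem~10.2.2 in Dudley's \emph{Real Analysis and Probability}), and then to bootstrap from the defining rectangle-identity to the general integral identity and to the stated uniqueness using standard measure-theoretic tools.

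For existence, I would first construct a candidate family $(P_x)_{x\in\mathcal{X}}$ via regular conditional probabilities. Fix a countable algebra $\mathcal{A}_0\subseteq\mathcal{G}_Y$ generating $\mathcal{G}_Y$ (which exists since $\mathcal{Y}$ is Polish, hence second countable). For each $B\in\mathcal{A}_0$ the set function $A\mapsto P(A\times B)$ on $\mathcal{G}_X$ is a finite measure absolutely continuous with respect to $\mu$, and I would set $P_x(B)$ equal to a $[0,1]$-valued version of its Radon--Nikodym derivative. Since $\mathcal{A}_0$ is countable, after discarding a single $\mu$-null set $N$, the map $B\mapsto P_x(B)$ is non-negative, normalised, and finitely additive on $\mathcal{A}_0$ for every $x\notin N$. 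The crucial step, where Polishness enters in an essential way, is to extend each such pre-measure to a genuine (countably additive) Borel probability measure on $\mathcal{Y}$; here I would use the Borel isomorphism of any uncountable Polish space with $[0,1]$ (or, equivalently, the existence of an inner-regular compact class in $\mathcal{G}_Y$) to invoke Carathéodory extension. For $x\in N$ we can set $P_x$ equal to an arbitrary fixed probability measure on $\mathcal{Y}$. Measurability of $x\mapsto P_x(B)$ for arbitrary $B\in\mathcal{G}_Y$ then follows from a monotone class argument starting from $B\in\mathcal{A}_0$, and property (c) holds by construction for $B\in\mathcal{A}_0$ and extends to all $B\in\mathcal{G}_Y$ by the same monotone class argument.

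Having established (a)--(c), the integral identity~\eqref{eq:disintegrationProperty} follows by a standard bootstrap. For $g=\mathbbm{1}_{A\times B}$ with $A\in\mathcal{G}_X$ and $B\in\mathcal{G}_Y$, both sides equal $P(A\times B)$ by (c); since measurable rectangles form a $\pi$-system generating $\mathcal{G}_X\otimes\mathcal{G}_Y$, a Dynkin class argument extends the identity to $g=\mathbbm{1}_E$ for every $E\in\mathcal{G}_X\otimes\mathcal{G}_Y$. Linearity then gives it for non-negative simple functions, monotone convergence lifts it to non-negative $\mathcal{G}_X\otimes\mathcal{G}_Y$-measurable functions, and the decomposition $g=g^+-g^-$ gives it for all $P$-integrable $g$.

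For uniqueness, suppose $(\tilde P_x)_{x\in\mathcal{X}}$ is a second disintegration. Fix $B\in\mathcal{G}_Y$: by (c) applied to both families, both $x\mapsto P_x(B)$ and $x\mapsto \tilde P_x(B)$ are versions of the Radon--Nikodym derivative $d[P(\,\cdot\,\times B)]/d\mu$, hence agree outside a $\mu$-null set $N_B$. Taking the countable generator $\mathcal{A}_0$ above and setting $N:=\bigcup_{B\in\mathcal{A}_0}N_B$, we obtain a $\mu$-null set $N$ such that for every $x\notin N$ the measures $P_x$ and $\tilde P_x$ coincide on the $\pi$-system $\mathcal{A}_0$. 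Dynkin's $\pi$-$\lambda$ theorem then forces $P_x=\tilde P_x$ on all of $\mathcal{G}_Y$ for every $x\notin N$. The main obstacle is the existence step, and specifically the countably additive extension from $\mathcal{A}_0$ to $\mathcal{G}_Y$ uniformly in $x$; this is where the Polish hypothesis is genuinely used, and I would offload it to the cited reference rather than reprove the Borel isomorphism theorem.
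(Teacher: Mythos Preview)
Your proposal is correct and in fact far more detailed than the paper's own proof, which consists of a single sentence citing Theorems~10.2.1 and~10.2.2 of Dudley's \emph{Real Analysis and Probability}. Your sketch is essentially an outline of what those theorems establish, so the two approaches coincide in substance; the paper simply offloads the entire argument to the reference, whereas you spell out the standard Radon--Nikodym construction on a countable generating algebra, the Polish-space extension step, and the $\pi$-$\lambda$ uniqueness argument.
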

\begin{proof}
This follows by combining Theorems 10.2.1 and 10.2.2 of \citet{dudley2018real}.
\end{proof}

A disintegration has the following useful interpretation. Suppose we have a pair of random variables $(X,Y)$ taking values in $\mathcal{X}\times \mathcal{Y}$ with joint distribution $P$ on $\mathcal{G}_X\times \mathcal{G}_Y$, where $(\mathcal{X},\mathcal{G}_X)$ and $(\mathcal{Y},\mathcal{G}_Y)$ are Polish spaces with their corresponding Borel $\sigma$-algebras. Let $\mu$ be the marginal on $\mathcal{X}$ and $(P_x)_{x \in \mathcal{X}}$ be a disintegration of $P$ into conditional distributions. Then for all $P$-integrable functions $g: \mathcal{X}\times \mathcal{Y} \rightarrow \R$ we have
\begin{align}\label{eq:conditionalExpectationInTermsOfDisintegration}
\E\big( g(X,Y) \mid X=x\big)=\int_{\mathcal{Y}} g(x,y)\,dP_x(y),
\end{align}
for $\mu$ almost every $x \in \mathcal{X}$. Indeed, by Lemma~\ref{lemma:existenceOfDisintegrationFromDudley} we see that $x \mapsto \int_{\mathcal{Y}} g(x,y)\,dP_x(y)$ is a $\mu$-integrable function, and hence $\mathcal{G}_X$-measurable. Moreover, given any $A \in \mathcal{G}_X$, we have
\begin{align*}
\int_A \biggl(\int_{\mathcal{Y}} g(x,y)\,dP_x(y) \biggr) \,d\mu(x) &= 
\int_{\mathcal{X}} \biggl(\int_{\mathcal{Y}} \one_{A}(x) \cdot g(x,y)\, dP_x(y) \biggr) \,d\mu(x) \\
& = 
\int_{\mathcal{X}\times \mathcal{Y}}  \one_{A}(x) \cdot g(x,y)\, dP(x,y)  = 
\int_{A\times \mathcal{Y}}  g(x,y)\, dP(x,y),
\end{align*}
where the second equality follows from \eqref{eq:disintegrationProperty} with $\one_{A}(x) \cdot g(x,y)$ in place of $g(x,y)$.

Recall that for Borel subsets $B_0$, $B_1 \subseteq \R^d$ and a Borel measure $\mu$ on $\R^d$, we write $B_0 \subseteq B_1$ if $\mu(B_0 \setminus B_1) = 0$ and $B_0 \not\subseteq B_1$ if $\mu(B_0 \setminus B_1) > 0$.

\begin{lemma}\label{lemma:tauSuperLevelSetConditionWellDefined} Suppose that $\eta_0$ and $\eta_1:\R^d \rightarrow [0,1]$ are regression functions for a Borel probability distribution $P$ on $\R^d \times [0,1]$ with marginal probability distribution $\mu$. Then, $\marginalDistribution(\{x \in \R^d : \eta_0(x)\neq \eta_1(x)\})=0$. Hence, given $A \in \borel(\R^d)$, we have $A \subseteq \mathcal{X}_\tau(\eta_0)$ if and only if $A \subseteq \mathcal{X}_\tau(\eta_1)$.
\end{lemma}
\begin{proof} Given $\epsilon>0$, let $B_\epsilon:=\{x \in \R^d: \eta_0(x) \geq \eta_1(x)+\epsilon\}$. Then, 
\begin{align*}
\epsilon \cdot \mu(B_\epsilon) & \leq \int_{B_\epsilon} \{ \eta_0(x)-\eta_1(x)\} \, d\marginalDistribution = \int_{B_\epsilon\times [0,1]}y \, dP(x,y) - \int_{B_\epsilon\times [0,1]} y \, dP(x,y) = 0,
\end{align*}
so $\mu(B_\epsilon)=0$. By taking a countable union we see that $\marginalDistribution\bigl(\{x \in \R^d : \eta_0(x)> \eta_1(x)\}\bigr)=0$, so by symmetry we have $\marginalDistribution(\{x \in \R^d : \eta_0(x)\neq \eta_1(x)\})=0$. Thus,  given $A \in \borel(\R^d)$ we have $\mu(A\setminus \mathcal{X}_\tau(\eta_0))=\mu(A\setminus \mathcal{X}_\tau(\eta_1))$, so $A \subseteq  \mathcal{X}_\tau(\eta_0)$ if and only if $A \subseteq \mathcal{X}_\tau(\eta_1)$.
\end{proof}

\subsection{Concentration results}

We will require the following classic result that gives a uniform concentration inequality over classes of finite Vapnik--Chervonenkis dimension; we state it for distributions on $\R^d$ for simplicity.
\begin{lemma}[Vapnik--Chervonenkis concentration]\label{lemma:vapnikChervonenkisConcentration} Let $\mu$ be a probability distribution on~$\R^d$, and let $X_1,\ldots,X_n \stackrel{\mathrm{iid}}{\sim} \mu$, with corresponding empirical distribution $\hat{\mu}_n$. There exists a universal constant $C_{\mathrm{VC}} > 0$ such that for any collection of sets $\mathcal{S} \subseteq \borel(\R^d)$ with $1 \leq \vcDim(\mathcal{S}) <\infty$, we have
\begin{align*}
\E\biggl( \sup_{S \in \mathcal{S}} \big| \hat{\mu}_n(S)-\mu(S) \big|\biggr) \leq C_{\mathrm{VC}}~ \sqrt{\frac{\vcDim(\mathcal{S})}{n}}.
\end{align*}
Moreover, for all $\delta \in (0,1)$ we have
\begin{align*}
\Prob\biggl( \sup_{S \in \mathcal{S}} \big| \hat{\mu}_n(S)-\mu(S) \big| > C_{\mathrm{VC}}~ \sqrt{\frac{\vcDim(\mathcal{S})}{n}}+\sqrt{\frac{\log(1/\delta)}{2n}}  \biggr) \leq \delta.
\end{align*}
\end{lemma}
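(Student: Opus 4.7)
The plan is to combine a standard symmetrisation step with a VC-based Rademacher complexity bound and a bounded-differences concentration inequality. First I would apply symmetrisation: introducing an independent ghost sample $X_1',\ldots,X_n' \stackrel{\mathrm{iid}}{\sim} \mu$ together with independent Rademacher variables $\varepsilon_1,\ldots,\varepsilon_n$, Jensen's inequality yields
\[
\E\sup_{S \in \mathcal{S}} |\hat{\mu}_n(S) - \mu(S)| \leq 2\,\E \sup_{S \in \mathcal{S}} \Bigl|\tfrac{1}{n}\sum_{i=1}^n \varepsilon_i \mathbbm{1}_{\{X_i \in S\}}\Bigr|,
\]
reducing the problem to controlling the conditional Rademacher complexity of the indicator class $\{\mathbbm{1}_S: S \in \mathcal{S}\}$ at the sample.

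Conditional on $X_{1:n}$, as $S$ ranges over $\mathcal{S}$ the binary vectors $\bigl(\mathbbm{1}_{\{X_i \in S\}}\bigr)_{i \in [n]}$ attain at most $\Pi_{\mathcal{S}}(n)$ distinct values, which by the Sauer--Shelah lemma is bounded by $\bigl(en/\vcDim(\mathcal{S})\bigr)^{\vcDim(\mathcal{S})}$. A Dudley chaining argument applied to the empirical $L^2$-pseudo-metric on $\mathcal{S}$, using the VC-based uniform entropy bound $N\bigl(\mathcal{S}, L^2(\hat\mu_n),\epsilon\bigr) \lesssim \epsilon^{-2\vcDim(\mathcal{S})}$, then yields
\[
\E_\varepsilon \sup_{S \in \mathcal{S}} \Bigl|\tfrac{1}{n}\sum_{i=1}^n \varepsilon_i \mathbbm{1}_{\{X_i \in S\}}\Bigr| \leq C_{\mathrm{VC}}'\sqrt{\tfrac{\vcDim(\mathcal{S})}{n}}
\]
for a universal constant $C_{\mathrm{VC}}' > 0$, and taking outer expectations delivers the first assertion.

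For the high-probability bound I would apply McDiarmid's bounded differences inequality to the function $F(X_1,\ldots,X_n) := \sup_{S \in \mathcal{S}} |\hat{\mu}_n(S) - \mu(S)|$. Because replacing any single coordinate alters $F$ by at most $1/n$, one obtains $\Prob\bigl\{F \geq \E F + t\bigr\} \leq \exp(-2nt^2)$; setting $t := \sqrt{\log(1/\delta)/(2n)}$ and combining with the expectation bound from the previous step yields the second assertion.

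The main technical obstacle is obtaining the sharp $\sqrt{\vcDim(\mathcal{S})/n}$ rate without an extraneous $\sqrt{\log n}$ factor: a naive Massart maximal inequality applied over the $\Pi_{\mathcal{S}}(n)$ distinct indicator patterns would introduce such a factor, and removing it is precisely the purpose of the chaining-plus-uniform-entropy step; alternatively one may appeal directly to the packaged VC concentration bound in \citet{vershynin2018high}.
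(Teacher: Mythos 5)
Your proposal is correct and follows essentially the same route as the paper: the paper simply cites Vershynin's Theorem~8.3.23 for the expectation bound (whose internal argument is precisely the symmetrisation--Rademacher--Sauer--Shelah/chaining reduction you sketch, and you note the direct citation as an alternative) and then applies McDiarmid's bounded-differences inequality with increments $1/n$ for the high-probability bound, exactly as you do.
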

\begin{proof}
For the expectation bound, see \citet[Theorem 8.3.23]{vershynin2018high}. The high-probability bound follows by McDiarmid's inequality \cite[Theorem 2.9.1]{vershynin2018high}.
\end{proof}
The following lemma is used in the proof of Lemma~\ref{consequenceOfGarivier2011kl}.
 \begin{lemma}[\cite{garivier2011kl}]\label{lemma:basicKLConcentrationInd} Let $(Z_j)_{j\in [m]}$ be independent random variables taking values in $[0,1]$ with $\max_{j \in [m]}\E[Z_j] \leq t$ for some $t \in (0,1)$.  Writing $\bar{Z}:= m^{-1} \sum_{j \in [m]}Z_j$, we have for $\kappa \in (t,1)$ that
 \begin{align*} 
 \Prob(\bar{Z}\geq \kappa)\leq e^{-m \cdot \kl(\kappa,t)}. 
 \end{align*}
 \end{lemma}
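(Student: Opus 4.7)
The plan is to apply a standard Chernoff-type argument, with the sharp binomial MGF bound in place of the cruder sub-Gaussian bound. First, for any $\lambda > 0$, Markov's inequality yields
\begin{align*}
\Prob(\bar{Z} \geq \kappa) = \Prob\biggl(e^{\lambda \sum_j Z_j} \geq e^{\lambda m \kappa}\biggr) \leq e^{-\lambda m \kappa} \prod_{j \in [m]} \E\bigl[e^{\lambda Z_j}\bigr],
\end{align*}
where independence is used to factorise the MGF of the sum.

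The core estimate is a bound on each individual MGF that exploits the fact that $Z_j \in [0,1]$: by convexity of $z \mapsto e^{\lambda z}$ and the endpoint evaluation at $0$ and $1$, we have $e^{\lambda z} \leq 1 - z + z e^{\lambda}$ for all $z \in [0,1]$. Taking expectations and writing $p_j := \E[Z_j] \leq t$, we deduce that $\E[e^{\lambda Z_j}] \leq 1 + p_j(e^{\lambda} - 1)$. Since $\lambda > 0$ forces $e^{\lambda} - 1 > 0$, the map $p \mapsto 1 + p(e^{\lambda}-1)$ is increasing, so each factor is at most $1 + t(e^{\lambda}-1) = (1-t) + te^{\lambda}$. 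Combining with the previous display gives
\begin{align*}
\Prob(\bar{Z} \geq \kappa) \leq \exp\Bigl\{m\bigl[\log\bigl((1-t) + te^{\lambda}\bigr) - \lambda \kappa\bigr]\Bigr\}.
\end{align*}

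Finally, I would optimise over $\lambda > 0$. Differentiating the exponent and setting it to zero produces the unique stationary point $e^{\lambda^*} = \kappa(1-t)/\{t(1-\kappa)\}$, which is strictly greater than $1$ precisely because $\kappa > t$, so $\lambda^* > 0$ is admissible. A direct substitution shows $(1-t) + te^{\lambda^*} = (1-t)/(1-\kappa)$, and the exponent then simplifies to
\begin{align*}
(1-\kappa)\log\frac{1-t}{1-\kappa} - \kappa \log\frac{\kappa}{t} = -\kl(\kappa,t),
\end{align*}
which is the required bound. The only step that requires any care is recognising the KL-divergence on the nose in this last algebraic simplification; everything else is routine Chernoff bookkeeping, and I do not expect any genuine obstacle.
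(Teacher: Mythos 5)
Your proof is correct and follows the same route as the paper: Markov's inequality combined with the linear interpolation bound $e^{\lambda z}\le(1-z)+ze^{\lambda}$ for $z\in[0,1]$ (which the paper attributes to Jensen, but it is exactly your convexity argument), followed by optimising $\lambda$ at $e^{\lambda^*}=\kappa(1-t)/\{t(1-\kappa)\}$. The only difference is cosmetic — you carry out the final algebraic simplification to $\kl(\kappa,t)$ explicitly, whereas the paper states the optimal $\theta$ and leaves the verification to the reader.
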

\begin{proof} 
By Jensen's inequality, for $\theta > 0$ and $j \in [m]$,
\[
\E(e^{\theta \cdot Z_j}) \leq 1-\E(Z_j) + e^{\theta}\cdot \E(Z_j) \leq 1 + t(e^{\theta}-1).
\]
Hence, by Markov's inequality, 
\begin{align*}
 \Prob(\bar{Z}\geq \kappa)&\leq e^{-m\theta\kappa} \prod_{j=1}^m  \E (e^{\theta \cdot Z_j}) \leq  \bigl[ e^{-\theta\kappa}\bigl\{1+t(e^\theta-1)\bigr\}\bigr]^m.
\end{align*}
The lemma follows on taking $\theta=\log\bigl(\frac{\kappa (1-{t})}{{t}(1-\kappa)}\bigr)>0$.
\end{proof}

\begin{lemma}\label{consequenceOfGarivier2011kl} Let $(Z_j)_{j\in [m]}$ be  independent random variables taking values in $[0,1]$ with $\max_{j \in [m]}\E(Z_j) \leq t$ for some $t \in (0,1)$. Let $\bar{Z}:=m^{-1}\sum_{j \in [m]}Z_j$.  Then for every $\alpha \in (0,1)$, we have
 \begin{align*}
  \Prob\biggl(\bar{Z}\geq t+\sqrt{\frac{\log(1/\alpha)}{2m}}\biggr) \leq \Prob\biggl(\biggl\{\kl(\bar{Z},t) \geq \frac{\log(1/\alpha)}{m}\biggr\} \bigcap \{\bar{Z}>t\} \biggr)\leq \alpha. 
 \end{align*}
\end{lemma}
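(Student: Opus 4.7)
The plan is to chain two simple observations: the leftmost inequality reduces to Pinsker's inequality applied to Bernoulli distributions, while the rightmost inequality follows directly from the preceding Chernoff bound (Lemma~\ref{lemma:basicKLConcentrationInd}) via a monotonicity argument for $\kappa \mapsto \kl(\kappa,t)$.

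First I would establish the leftmost inequality. Pinsker's inequality applied to the $\mathrm{Bern}(a)$ and $\mathrm{Bern}(b)$ distributions gives $\kl(a,b) \geq 2(a-b)^2$ for all $a,b \in [0,1]$. Hence on the event $\{\bar{Z} \geq t + \sqrt{\log(1/\alpha)/(2m)}\}$ we certainly have $\bar{Z} > t$, and moreover
\[
\kl(\bar{Z}, t) \geq 2(\bar{Z}-t)^2 \geq 2 \cdot \frac{\log(1/\alpha)}{2m} = \frac{\log(1/\alpha)}{m},
\]
so this event is contained in $\{\kl(\bar{Z},t) \geq \log(1/\alpha)/m\} \cap \{\bar{Z} > t\}$, yielding the first inequality.

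For the second inequality, I would exploit the fact that $\kappa \mapsto \kl(\kappa, t)$ is continuous and strictly increasing on $[t,1]$, with $\kl(t,t) = 0$ and $\kl(1,t) = -\log t$. If $\log(1/\alpha)/m > -\log t$, then the event in question is empty, since $\bar{Z} \in [0,1]$ forces $\kl(\bar{Z},t) \leq -\log t$, and there is nothing to prove. Otherwise, by the intermediate value theorem there exists $\kappa^\star \in (t,1]$ with $\kl(\kappa^\star, t) = \log(1/\alpha)/m$, and monotonicity yields
\[
\bigl\{\kl(\bar{Z},t) \geq \log(1/\alpha)/m\bigr\} \cap \{\bar{Z} > t\} \subseteq \{\bar{Z} \geq \kappa^\star\}.
\]
Applying Lemma~\ref{lemma:basicKLConcentrationInd} with this $\kappa^\star$ then gives
\[
\Prob(\bar{Z} \geq \kappa^\star) \leq e^{-m \cdot \kl(\kappa^\star, t)} = e^{-\log(1/\alpha)} = \alpha,
\]
which completes the argument. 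There is no substantive obstacle here: the only mild care required is handling the edge case $\log(1/\alpha)/m > -\log t$ separately, as above.
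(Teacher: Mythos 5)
Your proof is correct and follows essentially the same route as the paper: Pinsker's inequality for the first bound, and for the second, the monotonicity and continuity of $\kappa\mapsto\kl(\kappa,t)$ on $[t,1]$ to reduce to Lemma~\ref{lemma:basicKLConcentrationInd} via an intermediate-value-theorem choice of $\kappa^\star$, with a separate treatment of the degenerate case $\log(1/\alpha)/m > \kl(1,t)$. The only cosmetic difference is that you state the reduction as a set inclusion where the paper writes an equality, but since $\alpha<1$ forces $\kappa^\star>t$ both are valid and the argument is unchanged.
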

\begin{proof} 
The first inequality follows from the fact that 
\[
2(\bar Z-t)^2 = 2\totalVariationDistance^2\bigl(\mathrm{Bern}(\bar{Z}),\mathrm{Bern}(t)\bigr) \leq \kl(\bar Z,t),
\]
by Pinsker's inequality.  To prove the second inequality we begin by noting that $w \mapsto \kl(w,t)$ is continuous and strictly increasing on the interval $[t,1]$, and consider two cases. If $\alpha \in \bigl(0,e^{-m \cdot \kl(1,t)}\bigr)$ and $\bar{Z}>t$, then
\[
\kl(\bar{Z},t) \leq \kl(1,t) < \frac{\log(1/\alpha)}{m}.
\]
On the other hand, if $\alpha \in \bigl[e^{-m \cdot \kl(1,t)},1\bigr)$, then by the intermediate value theorem we can find $\kappa_{\alpha} \in [t,1]$ such that $\kl(\kappa_{\alpha},t) =m^{-1}\cdot \log(1/\alpha)$. Then by Lemma~\ref{lemma:basicKLConcentrationInd},
 \begin{align*}
 \Prob\biggl(\biggl\{\kl(\bar{Z},t) \geq \frac{\log(1/\alpha)}{m}\biggr\} \bigcap \{\bar{Z}>t\} \biggr) =\Prob\big( \bar{Z} \geq \kappa_{\alpha}\bigr) \leq e^{-m \cdot \kl(\kappa_\alpha,t)} = \alpha,
 \end{align*}
 as required.
\end{proof}
In addition we shall make use of the following Chernoff bounds. 
\begin{lemma}[Multiplicative Chernoff --- Theorem~2.3(b,c) of \citet{mcdiarmid1998concentration}]\label{lemma:multChernoff} Let $(Z_j)_{j\in [m]}$ be a sequence of independent random variables taking values in $[0,1]$. Then given any $\theta > 0$, \begin{align*}
\Prob\biggl(\sum_{j=1}^m Z_j\leq (1-\theta)\cdot \sum_{j=1}^m \E(Z_j)\biggr) &\leq \exp\biggl( - \frac{\theta^2}{2} \cdot \sum_{j=1}^m\E(Z_j)\biggr)\\
\Prob\biggl(\sum_{j=1}^m Z_j\geq (1+\theta)\cdot \sum_{j=1}^m \E(Z_j)\biggr) &\leq \exp\biggl( - \frac{\theta^2}{2(1+\theta/3)} \cdot \sum_{j=1}^m\E(Z_j)\biggr).
\end{align*}
\end{lemma}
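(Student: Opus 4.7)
The plan is the standard Cram\'er--Chernoff machinery: control the moment generating function of each $Z_j$, multiply by independence, and optimise a Markov bound in a free parameter. For the upper tail, I would fix $\lambda > 0$ and use convexity of $z \mapsto e^{\lambda z}$ on $[0,1]$ to obtain $\E(e^{\lambda Z_j}) \leq 1 + (e^\lambda - 1)\E(Z_j) \leq \exp\bigl\{(e^\lambda - 1)\E(Z_j)\bigr\}$, where the second step uses $1+x \leq e^x$. Writing $M := \sum_{j=1}^m \E(Z_j)$, independence of the $Z_j$ combined with Markov's inequality at level $(1+\theta)M$ then gives
\begin{align*}
\Prob\biggl(\sum_{j=1}^m Z_j \geq (1+\theta)M\biggr) \leq \exp\bigl\{M\bigl((e^\lambda - 1) - \lambda(1+\theta)\bigr)\bigr\}.
\end{align*}
Optimising via $\lambda := \log(1+\theta)$ produces the exponent $-M\psi(\theta)$, where $\psi(\theta) := (1+\theta)\log(1+\theta) - \theta$.

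For the lower tail the argument is dual: for $\lambda > 0$, convexity of $z \mapsto e^{-\lambda z}$ on $[0,1]$ yields $\E(e^{-\lambda Z_j}) \leq \exp\bigl\{(e^{-\lambda} - 1)\E(Z_j)\bigr\}$, and Markov's inequality applied at level $(1-\theta)M$ and optimised at $\lambda := -\log(1-\theta)$ delivers the exponent $-M\bigl\{(1-\theta)\log(1-\theta) + \theta\bigr\}$. Up to this point the work is purely a calculus exercise and presents no genuine difficulty.

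To convert these sharp Cram\'er rates into the polynomial forms stated in the lemma, I would then invoke two elementary real-variable inequalities: $\psi(\theta) \geq \theta^2/\{2(1+\theta/3)\}$ for $\theta \geq 0$, and $(1-\theta)\log(1-\theta) + \theta \geq \theta^2/2$ for $\theta \in [0,1)$. The second is straightforward from term-by-term comparison of Taylor series. The first, while classical, is the fiddlier of the two and is the main obstacle; I would prove it either by clearing denominators and checking sign of the difference $(2+2\theta/3)\psi(\theta) - \theta^2$ via derivative analysis on $[0,\infty)$, or simply by quoting McDiarmid's original derivation. Since neither inequality is novel, I would be content to cite them rather than grind through the algebra.
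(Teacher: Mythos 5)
The paper offers no proof of this lemma --- it simply cites Theorem~2.3(b,c) of McDiarmid (1998) --- so there is no in-paper argument to compare against, and what you have written is in effect a reconstruction of McDiarmid's own derivation. Your outline is correct: convexity of $z \mapsto e^{\lambda z}$ on $[0,1]$ plus $1+x\leq e^x$ gives the MGF bound $\E(e^{\lambda Z_j}) \leq \exp\{(e^\lambda-1)\E(Z_j)\}$, independence multiplies, Markov at the appropriate threshold gives the exponent $M\{(e^\lambda-1)-\lambda(1+\theta)\}$, and optimising at $\lambda=\log(1+\theta)$ yields $-M\psi(\theta)$ with $\psi(\theta)=(1+\theta)\log(1+\theta)-\theta$; the lower tail is the mirror image with $\lambda=-\log(1-\theta)$ giving $-M\{\theta+(1-\theta)\log(1-\theta)\}$. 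Both conversion inequalities you quote are right: $\theta+(1-\theta)\log(1-\theta)=\sum_{k\geq 2}\theta^k/\{k(k-1)\}\geq \theta^2/2$ for $\theta\in[0,1)$ is immediate from the series, and $(2+2\theta/3)\psi(\theta)\geq\theta^2$ can be confirmed by noting that the difference has third derivative $\tfrac{4}{3}\theta/(1+\theta)^2\geq 0$ and vanishes to second order at $\theta=0$.

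One small loose end: the lemma asserts the lower-tail bound for \emph{all} $\theta>0$, but your optimisation $\lambda=-\log(1-\theta)$ only makes sense for $\theta\in(0,1)$. You should add a sentence observing that for $\theta>1$ the event $\{\sum_j Z_j\leq(1-\theta)M\}$ is empty when $M>0$ (and trivial when $M=0$), so the bound holds vacuously, and for $\theta=1$ one lets $\lambda\to\infty$ in the pre-optimised Markov bound to get $\Prob(\sum_j Z_j\leq 0)\leq e^{-M}\leq e^{-M/2}$. With that patch the proof is complete.
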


\begin{lemma}[Multiplicative matrix Chernoff -- Theorem~1.1 of \citet{tropp2012user}]
\label{Lemma:Tropp}
Let $(\mathbf{Z}_j)_{j \in [m]}$ be independent, non-negative definite $q \times q$ matrices with $\eigenValueMaximal(\mathbf{Z}_j) \leq a_{\max}$ almost surely, for every $j \in [m]$.  Then, writing $a_{\min} := m^{-1}\cdot\eigenValueMinimal\bigl(\sum_{j \in [m]} \mathbb{E}\mathbf{Z}_j\bigr)$, we have for every $\theta \in [0,1]$ that
\[
\mathbb{P}\biggl\{\eigenValueMinimal\biggl(\sum_{j=1}^m \mathbf{Z}_j\biggr) \leq m(1-\theta)a_{\min}\biggr\} \leq q \cdot \biggl(\frac{e^{-\theta}}{(1-\theta)^{1-\theta}}\biggr)^{ma_{\min}/a_{\max}} \leq q \cdot e^{-\frac{\theta^2 m a_{\min}}{2a_{\max}}}.
\]

\end{lemma}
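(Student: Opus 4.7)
The plan is to follow Tropp's matrix Laplace transform approach. Since the result is simply cited from \citet{tropp2012user}, I will sketch how that proof proceeds. The starting point is a matrix Markov-type inequality: for any $\vartheta > 0$, monotonicity of the trace exponential gives
\[
\mathbb{P}\biggl\{\lambda_{\min}\biggl(\sum_{j=1}^m \mathbf{Z}_j\biggr) \leq (1-\theta)a_{\min}\biggr\} \leq e^{\vartheta(1-\theta)a_{\min}} \cdot \mathbb{E}\,\mathrm{tr}\exp\biggl(-\vartheta \sum_{j=1}^m \mathbf{Z}_j\biggr),
\]
and the task reduces to controlling the matrix moment generating function on the right.

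Next, I would invoke Lieb's concavity theorem, which implies that $\mathbf{H}\mapsto \mathrm{tr}\exp(\mathbf{H}+\log\mathbf{X})$ is concave on positive definite $\mathbf{X}$. A standard induction then yields the subadditivity bound
\[
\mathbb{E}\,\mathrm{tr}\exp\biggl(-\vartheta\sum_{j=1}^m \mathbf{Z}_j\biggr) \leq \mathrm{tr}\exp\biggl(\sum_{j=1}^m \log \mathbb{E}\, e^{-\vartheta \mathbf{Z}_j}\biggr).
\]
To bound each summand, I would use operator convexity of $z\mapsto e^{-\vartheta z}$ on $[0,\Lambda_{\max}]$, which gives $e^{-\vartheta \mathbf{Z}_j} \preceq \mathbf{I} + \frac{e^{-\vartheta\Lambda_{\max}}-1}{\Lambda_{\max}}\mathbf{Z}_j$; taking expectations and then logarithms (the latter being operator monotone) produces $\log \mathbb{E}\,e^{-\vartheta\mathbf{Z}_j} \preceq \frac{e^{-\vartheta\Lambda_{\max}}-1}{\Lambda_{\max}}\,\mathbb{E}\mathbf{Z}_j$. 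Summing and using $\mathrm{tr}\exp(\mathbf{A})\leq q\cdot e^{\lambda_{\max}(\mathbf{A})}$, combined with the fact that the coefficient $\frac{e^{-\vartheta\Lambda_{\max}}-1}{\Lambda_{\max}}$ is negative and hence converts $\lambda_{\max}$ of the sum to $\lambda_{\min}$, delivers
\[
\mathbb{E}\,\mathrm{tr}\exp\biggl(-\vartheta\sum_{j=1}^m\mathbf{Z}_j\biggr) \leq q\cdot \exp\biggl(\frac{e^{-\vartheta\Lambda_{\max}}-1}{\Lambda_{\max}}\cdot a_{\min}\biggr).
\]

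Finally, I would optimise the tilt by setting $\vartheta = -\log(1-\theta)/\Lambda_{\max}$, which is the stationary point of the exponent. Plugging this value back in produces the claimed bound
\[
q\cdot \biggl(\frac{e^{-\theta}}{(1-\theta)^{1-\theta}}\biggr)^{a_{\min}/\Lambda_{\max}}.
\]
The weaker sub-Gaussian form follows from the elementary real-variable inequality $-\theta - (1-\theta)\log(1-\theta) \leq -\theta^2/2$ for $\theta\in[0,1]$, verified by comparing derivatives. The main obstacle in this argument is the subadditivity step: the scalar Chernoff proof relies on independence via multiplicativity of MGFs, which fails in the non-commutative matrix setting, and Lieb's concavity theorem is the essential (and non-trivial) ingredient that provides the correct substitute.
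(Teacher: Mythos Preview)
The paper does not prove this lemma at all: it is stated as an auxiliary result with attribution to \citet{tropp2012user} and no proof is given. Your sketch is a faithful outline of Tropp's original argument (matrix Laplace transform, Lieb concavity for subadditivity, operator-convex MGF bound, optimisation of the tilt), so there is nothing to compare against and your proposal is correct.
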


\subsection{Useful lemmas for the lower bounds}

We shall make use of the following result from \cite{brown1996constrained}.

\begin{lemma}[Brown--Low constrained risk inequality]\label{lemma:BrownLowInequality} Let $\mathbb{Q}_1$, $\mathbb{Q}_2$ be probability measures on a measurable space $(\Omega,\mathcal{F})$ such that $\mathbb{Q}_2$ is absolutely continuous with respect to $\mathbb{Q}_1$, and assume that 
\begin{align*}
I:= \chi^2(\mathbb{Q}_2,\mathbb{Q}_1)+1 <\infty.   
\end{align*}
Let $\epsilon \in  \bigl(0,I^{-1/2}\bigr)$ and let $Z:\Omega \rightarrow \{1,2\}$ be a $\mathcal{F}$-measurable random variable with $\mathbb{Q}_1(Z=2) \leq \epsilon^2$. Then $\mathbb{Q}_2(Z=1) \geq \bigl( 1-\epsilon\sqrt{I}\bigr)^2$.
\end{lemma}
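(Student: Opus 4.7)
The approach is to apply the Cauchy--Schwarz inequality in $L^2(\mathbb{Q}_1)$ to the likelihood ratio $f := d\mathbb{Q}_2/d\mathbb{Q}_1$, which exists by the hypothesised absolute continuity. The definition of $\chi^2$-divergence from the introduction gives
\[
\int f^2 \, d\mathbb{Q}_1 = \chi^2(\mathbb{Q}_2,\mathbb{Q}_1) + 1 = I,
\]
so the right ingredient to bound $\mathbb{Q}_2(Z=2)$ is precisely an $L^2(\mathbb{Q}_1)$ Cauchy--Schwarz step against the indicator of the event $\{Z=2\}$.

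Writing $\mathbb{Q}_2(Z=2) = \int \mathbbm{1}_{\{Z=2\}} f \, d\mathbb{Q}_1$ and applying Cauchy--Schwarz yields
\[
\mathbb{Q}_2(Z=2) \leq \biggl(\int \mathbbm{1}_{\{Z=2\}} \, d\mathbb{Q}_1\biggr)^{1/2} \biggl(\int f^2 \, d\mathbb{Q}_1\biggr)^{1/2} = \sqrt{\mathbb{Q}_1(Z=2) \cdot I} \leq \epsilon \sqrt{I},
\]
where the last inequality uses the hypothesis $\mathbb{Q}_1(Z=2) \leq \epsilon^2$. Rearranging, $\mathbb{Q}_2(Z=1) = 1 - \mathbb{Q}_2(Z=2) \geq 1 - \epsilon\sqrt{I}$. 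Since by assumption $\epsilon\sqrt{I} \in (0,1)$, we have $1 - \epsilon\sqrt{I} \in (0,1]$, and so $1 - \epsilon\sqrt{I} \geq (1-\epsilon\sqrt{I})^2$, giving the stated conclusion.

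There is no real obstacle here: the only subtlety is that the Cauchy--Schwarz step actually delivers the strictly stronger linear bound $1 - \epsilon\sqrt{I}$, and the squared form $(1-\epsilon\sqrt{I})^2$ stated in the lemma is obtained for free from the elementary inequality $t \geq t^2$ valid on $[0,1]$. The squared form is presumably the natural output of the proof in \citet{brown1996constrained}, where the result is phrased in terms of an $L^2$ risk, but for the testing application required in Proposition~\ref{lemma:constrainedRiskApplicationfanoApplication}(i) it suffices.
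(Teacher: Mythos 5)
Your proof is correct. The paper offers no proof of this lemma at all: it simply restates it as an immediate consequence of Theorem~1 of \citet{brown1996constrained}, which is phrased as a constrained risk inequality for squared-error loss. Applying that theorem with $\hat\theta := Z$, $\theta_1 := 1$, $\theta_2 := 2$ and noting $\E_{\mathbb{Q}_1}(Z-1)^2 = \mathbb{Q}_1(Z=2)$, $\E_{\mathbb{Q}_2}(Z-2)^2 = \mathbb{Q}_2(Z=1)$ directly produces the squared form $\bigl(1-\epsilon\sqrt{I}\bigr)^2$, which is why the paper states it that way. Your one-line Cauchy--Schwarz argument against $\mathbbm{1}_{\{Z=2\}}$ is both simpler and strictly stronger: it delivers the linear bound $\mathbb{Q}_2(Z=1) \geq 1-\epsilon\sqrt{I}$ directly, and your final observation that $t\geq t^2$ on $[0,1]$ correctly recovers the stated (weaker) conclusion under the hypothesis $\epsilon\sqrt{I}<1$. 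Mechanically, Brown--Low's proof is also built on the same Cauchy--Schwarz step applied to the likelihood ratio, so the underlying idea coincides; the only difference is that their result is packaged for real-valued estimators and squared risk, which is why the squared form appears there rather than the sharper linear one you obtain.
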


The following version of Fano's lemma is a minor variant of \citet[][Lemma~4.3]{gerchinovitz2020fano}.
\begin{lemma}[Fano's lemma for $\chi^2$ divergences]\label{lemma:chiSqrFano}
Let $\Prob_1,\ldots,\Prob_M,\mathbb{Q}_1,\ldots,\mathbb{Q}_M$ denote probability measures on
$(\Omega,\mathcal{F})$, and let $A_1,\ldots,A_M \in
\mathcal{A}$.  Write $\bar{p} := M^{-1}\sum_{j=1}^M \Prob_j(A_j)$ and $\bar{q} := M^{-1}\sum_{j=1}^M \mathbb{Q}_j(A_j)$.  If $\bar{q} \in (0,1)$, then
\[
\bar{p} \leq \bar{q} + \sqrt{\frac{1}{M}\sum_{j=1}^M\chi^2(\Prob_j,\mathbb{Q}_j) \cdot \bar{q}(1-\bar{q})}.
\]
In particular, if $M \geq 2$ and $A_1,\ldots,A_M$ form a partition of
$\Omega$, then
\[
\frac{1}{M}\sum_{j=1}^M \Prob_j(A_j) \leq \frac{1}{M} + \sqrt{\inf_{\mathbb{Q} \in \mathcal{Q}} \frac{1}{M} \sum_{j=1}^M\chi^2(\Prob_j,\mathbb{Q}) \cdot \frac{1}{M}\biggl(1-\frac{1}{M}\biggr)},
\]
where $\mathcal{Q}$ denotes the set of all probability distributions on $\Omega$.
\end{lemma}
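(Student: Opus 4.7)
The plan is to bound $\bar{p}-\bar{q}$ by two successive applications of Cauchy--Schwarz, combined with Jensen's inequality for the concave map $x \mapsto x(1-x)$, and then to deduce the partition statement by specialising the $\mathbb{Q}_j$ to a common $\mathbb{Q}$.

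First, for each $j \in [M]$ I may assume $\Prob_j \ll \mathbb{Q}_j$, since otherwise $\chi^2(\Prob_j,\mathbb{Q}_j)=\infty$ and the corresponding summand is vacuously large. Using that $\int (d\Prob_j/d\mathbb{Q}_j - 1)\,d\mathbb{Q}_j = 0$, I would centre the indicator to write
\[
\Prob_j(A_j)-\mathbb{Q}_j(A_j) \;=\; \int_{\Omega}\bigl(\mathbf{1}_{A_j}-\mathbb{Q}_j(A_j)\bigr)\Bigl(\tfrac{d\Prob_j}{d\mathbb{Q}_j}-1\Bigr)\,d\mathbb{Q}_j,
\]
and then apply the Cauchy--Schwarz inequality in $L^2(\mathbb{Q}_j)$ to the two factors on the right to obtain
\[
\Prob_j(A_j)-\mathbb{Q}_j(A_j) \;\leq\; \sqrt{\mathbb{Q}_j(A_j)\bigl(1-\mathbb{Q}_j(A_j)\bigr)\cdot\chi^2(\Prob_j,\mathbb{Q}_j)}.
\]

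Averaging over $j \in [M]$ and applying Cauchy--Schwarz a second time yields
\[
\bar{p}-\bar{q} \;\leq\; \sqrt{\biggl(\tfrac{1}{M}\sum_{j=1}^M\mathbb{Q}_j(A_j)\bigl(1-\mathbb{Q}_j(A_j)\bigr)\biggr)\cdot \tfrac{1}{M}\sum_{j=1}^M\chi^2(\Prob_j,\mathbb{Q}_j)}.
\]
Because $x \mapsto x(1-x)$ is concave on $[0,1]$, Jensen's inequality gives $\tfrac{1}{M}\sum_{j=1}^M\mathbb{Q}_j(A_j)\bigl(1-\mathbb{Q}_j(A_j)\bigr) \leq \bar{q}(1-\bar{q})$, which combined with the previous display proves the first claimed inequality.

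For the partition version, I would pick any $\mathbb{Q}\in\mathcal{Q}$ and set $\mathbb{Q}_j=\mathbb{Q}$ for every $j$. Since $A_1,\ldots,A_M$ partition $\Omega$, we have $\sum_{j=1}^M\mathbb{Q}(A_j)=1$, hence $\bar{q}=1/M \in (0,1)$ because $M \geq 2$; the first inequality then delivers the bound for each fixed $\mathbb{Q}$, and taking the infimum over $\mathbb{Q}\in\mathcal{Q}$ completes the argument. There is no real obstacle here beyond routine bookkeeping: the only subtle points are verifying that the centring step is valid (which holds whenever $\Prob_j \ll \mathbb{Q}_j$) and that Jensen's inequality is applied in the correct direction, both of which are standard.
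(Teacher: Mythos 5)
Your argument is correct, and it reaches the conclusion by a slightly different (and more self-contained) route than the paper. The paper's proof is a three-line chain: write $\frac{(\bar p-\bar q)^2}{\bar q(1-\bar q)} = \chi^2\bigl(\mathrm{Bern}(\bar p),\mathrm{Bern}(\bar q)\bigr)$, apply the joint convexity of $\chi^2$ to push the average inside, and then invoke the data processing inequality to replace each $\chi^2\bigl(\mathrm{Bern}(\Prob_j(A_j)),\mathrm{Bern}(\mathbb{Q}_j(A_j))\bigr)$ by $\chi^2(\Prob_j,\mathbb{Q}_j)$. What you have done is essentially unpack both of those black boxes into elementary inequalities: your centred Cauchy--Schwarz step, which yields $\Prob_j(A_j)-\mathbb{Q}_j(A_j)\le\sqrt{\mathbb{Q}_j(A_j)(1-\mathbb{Q}_j(A_j))\,\chi^2(\Prob_j,\mathbb{Q}_j)}$, is precisely a direct proof of the data processing inequality for the two-point quantisation induced by $\mathbf{1}_{A_j}$, and your second Cauchy--Schwarz combined with Jensen for the concave map $x\mapsto x(1-x)$ plays exactly the role of joint convexity. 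The paper's version is shorter and makes the structural reason transparent (everything reduces to standard divergence inequalities), while yours has the virtue of being entirely elementary, requiring no appeal to facts about $f$-divergences beyond the definition of $\chi^2$. Both are fine; the partition specialisation is handled identically in the two proofs, and your observation that $\bar q = 1/M \in (0,1)$ for $M\ge 2$ correctly discharges the hypothesis of the first part.
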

\begin{proof}
By the joint convexity of $\chi^2$ divergence, together with the data processing inequality \citep[e.g.][Lemma~2.1]{gerchinovitz2020fano}, we have
\[
\frac{(\bar{p}-\bar{q})^2}{\bar{q}(1-\bar{q})} = \chi^2\bigl(\mathrm{Bern}(\bar{p}),\mathrm{Bern}(\bar{q})\bigr) \leq \frac{1}{M}\sum_{j=1}^M
\chi^2\bigl(\Prob_j(A_j),\mathbb{Q}_j(A_j)\bigr) \leq \frac{1}{M}\sum_{j=1}^M \chi^2(\Prob_j,\mathbb{Q}_j).
\]
The first result follows on rearranging this inequality, and the second follows by taking $\mathbb{Q}_1 = \cdots = \mathbb{Q}_M = \mathbb{Q}$ and then taking an infimum over $\mathbb{Q} \in \mathcal{Q}$. 
\end{proof}

\end{document}